\newcommand{\ud}{\mathrm{d}}
\newcommand{\ii}{\mathrm{i}}
\newcommand{\cH}{\mathcal{H}}
\newcommand{\op}{
  \mathop{
    \vphantom{\bigoplus} 
    \mathchoice
      {\vcenter{\hbox{\resizebox{\widthof{$\displaystyle\bigoplus$}}{!}{$\boxplus$}}}}
      {\vcenter{\hbox{\resizebox{\widthof{$\bigoplus$}}{!}{$\boxplus$}}}}
      {\vcenter{\hbox{\resizebox{\widthof{$\scriptstyle\oplus$}}{!}{$\boxplus$}}}}
      {\vcenter{\hbox{\resizebox{\widthof{$\scriptscriptstyle\oplus$}}{!}{$\boxplus$}}}}
  }\displaylimits 
}
\theoremstyle{plain}
\newtheorem{theorem}{Theorem}[section]
\newtheorem{lemma}[theorem]{Lemma}
\newtheorem{corollary}[theorem]{Corollary}
\newtheorem{proposition}[theorem]{Proposition}
\theoremstyle{definition}
\newtheorem{remark}[theorem]{Remark}
\newtheorem*{remark*}{Remark}
\numberwithin{equation}{section}
\begin{document}

\title[Quantum geometric confinement/transmission in Grushin cylinder]
{Quantum geometric confinement and dynamical transmission in Grushin cylinder}
\author[M.~Gallone]{Matteo Gallone}
\address[M.~Gallone]{Mathematics Department ``F.~Enriques'', University of Milan \\ via C.~Saldini 50 \\ Milano 20133 (Italy).}
\email{matteo.gallone@unimi.it}
\author[A.~Michelangeli]{Alessandro Michelangeli}
\address[A.~Michelangeli]{Institute of Applied Mathematics and Hausdorff Center for Mathematics, University of Bonn \\ Endenicher Allee 60 \\ 
Bonn 53115  (Germany).}
\email{michelangeli@iam.uni-bonn.de}
\author[E.~Pozzoli]{Eugenio Pozzoli}
\address[E.~Pozzoli]{Inria, Sorbonne Universit\'e, Universit\'e de Paris, CNRS, Laboratoire Jacques-Louis Lions, Paris (France). \\ and Institut de Math\'ematiques de Bourgogne, UMR 5584, CNRS, Universit\'e Bourgogne Franche-Comt\'e, F-21000 Dijon (France)}
\email{eugenio.pozzoli@inria.fr, eugenio.pozzoli@u-bourgogne.fr}


\begin{abstract}
We classify the self-adjoint realisations of the Laplace-Beltrami operator minimally defined on an infinite cylinder equipped with an incomplete Riemannian metric of Grushin type, in the class of metrics yielding an infinite deficiency index. Such realisations are naturally interpreted as Hamiltonians governing the geometric confinement of a Schr\"{o}dinger quantum particle away from the singularity, or the dynamical transmission across the singularity. In particular, we characterise all physically meaningful extensions qualified by explicit local boundary conditions at the singularity. Within our general classification we retrieve those distinguished extensions previously identified in the recent literature, namely the most confining and the most transmitting one.
\end{abstract}

\date{\today}

\subjclass[2010]{34A05,47B02,47Exx,47N20,53B21,81Q10,81Q35}

\keywords{Geometric quantum confinement; Grushin manifold; Laplace-Beltrami operator; almost-Riemannian structure; differential self-adjoint operators; constant-fibre direct sum; Friedrichs extension; Kre\u{\i}n-Vi\v{s}ik-Birman self-adjoint extension theory}


\maketitle

\vspace{-1cm} 

\tableofcontents

\section{Introduction, setting, main results}
\label{sec:intro}

\subsection{Grushin structures and geometric quantum confinement}~


The study of a quantum particle on degenerate Riemannian manifolds, and the problem of the purely geometric confinement away from the singularity locus of the metric, as opposite to the dynamical transmission across the singularity, has recently attracted a considerable amount of attention in relation to Grushin structures and to the induced confining effective potentials on cylinder, cone, and plane (as in the works \cite{Nenciu-Nenciu-2009,Boscain-Laurent-2013,Boscain-Prandi-Seri-2014-CPDE2016,Prandi-Rizzi-Seri-2016,Boscain-Prandi-JDE-2016,Franceschi-Prandi-Rizzi-2017,GMP-Grushin-2018,Boscain-Neel-2019,PozzoliGru-2020volume,Boscain-Beschastnnyi-Pozzoli-2020,GM-Grushin3-2020,GMP-heat-2021}), as well as, more generally, on two-step two-dimensional almost-Riemannian structures \cite{Boscain-Laurent-2013,Boscain-Beschastnnyi-Pozzoli-2020,IB-2021}, or also generalisations to almost-Riemannian structures in any dimension and of any step, and even to sub-Riemannian geometries, provided that certain geometrical assumptions on the singular set are taken \cite{Franceschi-Prandi-Rizzi-2017,Prandi-Rizzi-Seri-2016}. On a related note, a satisfactory interpretation of the heat-confinement in the Grushin cylinder is known in terms of Brownian motions \cite{Boscain-Neel-2019} and random walks \cite{Agra-Bosca-Neel-Rizzi-2018}.

Underlying such analyses there is a natural problem of \emph{control of essential self-adjointness or lack thereof}, whence also a natural problem of identification, classification, and analysis of self-adjoint extensions, for the minimally defined Laplace-Beltrami operator on manifold.

In this work we focus on the paradigmatic class of \emph{quantum models on Grushin cylinder}: the latter is a two-dimensional manifold built upon $\mathbb{R}\times\mathbb{S}^1$ with an incomplete Riemannian metric both on the right and the left open half-cylinder $\mathbb{R}^{\pm}\times\mathbb{S}^1$, and a singularity of the metric along the separation circle among the two halves. 

For such models, the \emph{geometric quantum confinement} in each half-cylinder corresponds to the essential self-adjointness of the Laplace-Beltrami operator on its minimal domain of smooth functions supported away from the singularity. The \emph{quantum transmission} between the two half-cylinders corresponds instead to the lack of essential self-adjointness, in which case the type of transmission is governed by a self-adjoint extension of the Laplace-Beltrami operator.

In the literature the regimes of confinement and transmission have been recently identified (see Theorem \ref{thm:Halpha_esa_or_not} below), but with no classification of the possible different protocols of transmissions, namely of the self-adjoint extensions of the Laplace-Beltrami operator. In this work, announced in 2020, we complete such programme, and study the family of inequivalent self-adjoint realisations of the differential operator by means of the general extension theory of Kre\u{\i}n, Vi\v{s}ik, and Birman \cite{GMO-KVB2017}.

Let us start with fixing the notation and setting up the general problem. Let us denote by $(x,y)$ a generic point in $\mathbb{R}_x\times\mathbb{S}^1_y$ and let us define
\begin{equation}
 M^{\pm}\;:=\;\mathbb{R}^{\pm}\times\mathbb{S}^1\,,\qquad\mathcal{Z}\;:=\;\{0\}\times\mathbb{S}^1\,,\qquad M\;:=\;M^+\cup M^-\,.
\end{equation}
We consider the family $\{M_\alpha\equiv(M,g_\alpha)\,|\,\alpha\in\mathbb{R}\}$ of Riemannian manifolds with metric
\begin{equation}\label{eq:galphaeverywhere}
  g_\alpha\;:=\;\ud x\otimes\ud x+\frac{1}{|x|^{2\alpha}}\,\ud y\otimes\ud y\,,
\end{equation}
that is, with global orthonormal frame
\begin{equation}\label{eq:frame}
\{X_1,X_2^{(\alpha)}\}\;=\;\left\{ 
\begin{pmatrix}
 1 \\ 0
\end{pmatrix},\;
\begin{pmatrix}
 0 \\ |x|^{\alpha}
\end{pmatrix}
\right\}\;\equiv\;\Big\{\frac{\partial}{\partial x},|x|^{\alpha}\frac{\partial}{\partial y}\Big\}.
\end{equation}
One refers to $M_1$ as the standard two-dimensional \emph{Grushin cylinder} \cite[Chapter 11]{Calin-Chang-SubRiemannianGeometry}, and more generally to $M_\alpha$ with $\alpha\neq 1$ as a \emph{Grushin(-type) cylinder}. In particular, $M_0$ is a juxtaposition of two Euclidean half-cylinders.

It is easily seen  \cite{Agrachev-Boscain-Sigalotti-2008,Boscain-Laurent-2013,Pozzoli_MSc2018} that $M_\alpha$ is a hyperbolic manifold whenever $\alpha>0$, with Gaussian (sectional) curvature
\begin{equation}
 K_\alpha(x,y)\;=\;-\frac{\,\alpha(\alpha+1)\,}{x^2}\,.
\end{equation}
One has the Lie bracket\index{Lie bracket}
\begin{equation}
 [X_1,X_2^{(\alpha)}]\;=\;\begin{pmatrix}
 0 \\ \alpha|x|^{\alpha-1}
\end{pmatrix},
\end{equation}
 and moreover, when $\alpha\in\mathbb{N}$ the fields $X_1,X_2^{(\alpha)}$ are smooth and define an \emph{almost-Riemannian structure}\index{almost-Riemannian structure} on $\mathbb{R}\times\mathbb{S}^1=M^+\cup\mathcal{Z}\cup M^-$, for a rigorous definition of which one may refer to \cite[Section 1]{Agrachev-Boscain-Sigalotti-2008} or \cite[Section 7.1]{Prandi-Rizzi-Seri-2016}: indeed the Lie bracket generating condition\index{Lie bracket generating condition} 
\begin{equation}
 \dim\mathrm{Lie}_{(x,y)}\,\mathrm{span}\{X_1,X_2^{(\alpha)}\}\;=\;2\qquad\forall(x,y)\in\mathbb{R}^2
\end{equation}
is satisfied in this case. For $\alpha\in\mathbb{R}\setminus\mathbb{N}$ the field $X_2^{(\alpha)}$ is \emph{not} smooth and in addition the structure is \emph{not} Lie-bracket-generating.

To each $M_\alpha$ one naturally associates the Riemannian volume form
\begin{equation}\label{eq:volumeform}
 \mu_\alpha\;:=\;\mathrm{vol}_{g_\alpha}\;=\;\sqrt{\det g_\alpha}\,\ud x\wedge\ud y\;=\;|x|^{-\alpha}\,\ud x\wedge\ud y
\end{equation}
and the corresponding Laplace-Beltrami operator
\begin{equation}\label{eq:Deltamualpha}
 \Delta_{\mu_\alpha}\;=\;\frac{\partial^2}{\partial x^2}+|x|^{2\alpha}\frac{\partial^2}{\partial y^2}-\frac{\alpha}{|x|}\,\frac{\partial}{\partial x}\,,
\end{equation}
as follows from \eqref{eq:frame} and \eqref{eq:volumeform}, through the formula
\begin{equation*}
\begin{split}
 \Delta_{\mu_\alpha}\;&=\;\mathrm{div}_{\mu_\alpha}\nabla \;=\;X_1^2+X_2^2+(\mathrm{div}_{\mu_\alpha}X_1)X_1+(\mathrm{div}_{\mu_\alpha}X_2^{(\alpha)})X_2^{(\alpha)}\,.
\end{split}
\end{equation*}

For any fixed $\alpha$, the manifold $M_\alpha$ is geodesically incomplete, and more precisely all geodesics passing through a generic point $(x_0,y_0)\in M$ reach $\mathcal{Z}$ (see, e.g., \cite[Theorem 2.2]{GMP-Grushin-2018}, or also, for the special case $\alpha=1$ only, \cite[Sect.~11.2]{Calin-Chang-SubRiemannianGeometry} or \cite[Sect.~3.1]{Boscain-Laurent-2013}).


Let us now consider the problem of whether, depending on the parameter $\alpha$ measuring the singularity of  the metric, a quantum particle on $M_\alpha$ exhibits purely geometric confinement in each of the two halves $M^{\pm}$, or instead undergoes a transmission between them across $\mathcal{Z}$. Notably, for the classical counterpart of the same problem there is only one scenario: the geodesics reach $\mathcal{Z}$ and hence the classical particle is \emph{never} confined inside a half-cylinder.

One thus wants to study for which $\alpha$, in the Hilbert space
\begin{equation}\label{eq:Halphaspace}
 \cH_\alpha\;:=\;L^2(M,\ud\mu_\alpha)\,,
\end{equation}
understood as the completion of $C^\infty_c(M)$ (the space of smooth and compactly supported functions on $M$) with respect to the scalar product
\begin{equation}
 \langle \psi,\varphi\rangle_{\alpha}\;:=\;\iint_{(\mathbb{R}\setminus\{0\})\times\mathbb{S}^1}\overline{\psi(x,y)}\,\varphi(x,y)\,\frac{1}{|x|^{\alpha}}\,\ud x\,\ud y\,,
\end{equation}
the `\emph{minimal free Hamiltonian}'
\begin{equation}\label{Halpha}
 H_\alpha\;:=\;-\Delta_{\mu_\alpha}\,,\qquad\mathcal{D}(H_\alpha)\;:=\;C^\infty_c(M)
\end{equation}
is or is not essentially self-adjoint.

In the latter case, since $H_\alpha$ is evidently a densely defined, symmetric, lower semi-bounded operator in $\cH_\alpha$ (symmetry in particular follows from Green's identity),
it admits an infinity of self-adjoint extensions, each of which has a domain of self-adjointness characterised by suitable \emph{boundary conditions} at $\mathcal{Z}$. For a generic such extension $\widetilde{H}_\alpha$, Schr\"{o}dinger's unitary flow $e^{-\ii t \widetilde{H}}$ evolves the quantum particle's wave-function so as to reach the boundary $\mathcal{Z}$ in finite time. Generically, $\widetilde{H}_\alpha$ induces a \emph{transmission} (a transfer of mass, hence of $L^2$-norm) across $\mathcal{Z}$, or simply an evolution that, while preserving the left/right confinement, couples the particle with the side of the boundary the particle travels against. The boundary conditions of self-adjointness encode a physical interaction of the boundary with the interior. 

On the other hand, if  $H_\alpha$ is already essentially self-adjoint on $C^\infty_c(M)$, then it is natural to argue that the dynamics generated by its closure $\overline{H_\alpha}$ exhibits \emph{geometric quantum confinement} within $M$. In fact,
 \begin{equation}\label{eq:decomp+-}
  L^2(M,\ud\mu_\alpha)\;\cong\;L^2(M^-,\ud \mu_\alpha)\oplus L^2(M^+,\ud \mu_\alpha)\,,
 \end{equation}
 and if we define $H_\alpha^{\pm}$ acting on $L^2(M^{\pm},\ud \mu_\alpha)$ in complete analogy to \eqref{Halpha} with domain $C^\infty_c(M^{\pm})$, then with respect to the decomposition \eqref{eq:decomp+-} one has
 \begin{equation}\label{eq:H-HpHm}
  H_\alpha\;=\;H_\alpha^-\oplus H_\alpha^+\,.
 \end{equation}
Thus, $H_\alpha$ is essentially self-adjoint if and only if so too are both $H_\alpha^+$ and $H_\alpha^-$, in which case $\overline{H_\alpha}=\overline{H_\alpha^-}\oplus\overline{H_\alpha^+}$ as a direct orthogonal sum of self-adjoint operators, where the operator closure $\overline{H_\alpha}$ (resp., $\overline{H_\alpha^\pm}$) is the unique self-adjoint extension of $H_\alpha$ (resp. $H_\alpha^\pm$), and the propagators satisfy
 \begin{equation}
  e^{-\ii t \overline{H_\alpha}}\;=\;e^{-\ii t \overline{H_\alpha^-}}\oplus e^{-\ii t \overline{H_\alpha^+}}\,,\qquad \forall\,t\in\mathbb{R}\,.
 \end{equation}
 Therefore, for any initial datum $\psi_0\in\mathcal{D}(\overline{H_\alpha})$ with support only within $M^+$, the \emph{unique} solution $\psi\in C^1(\mathbb{R}_t,L^2(M,\ud\mu_\alpha))$ to the Cauchy problem
 \begin{equation}
  \begin{cases}
   \;\ii\partial_t \psi \!\!&=\;\overline{H_\alpha}\,\psi \\
   \;\psi|_{t=0}\!\!&=\;\psi_0
  \end{cases}
 \end{equation}
 remains for all times supported (``confined'') in $M^+$ \emph{with no need of declaration of non-trivial boundary conditions at $\mathcal{Z}$}, whence the qualification of the confinement as \emph{purely geometric}. The quantum particle initially prepared, say, in the right open half-cylinder never crosses the $y$-axis towards the left half-cylinder. \emph{For all times} the quantum particle's wave-function need not be characterised by boundary conditions at $\mathcal{Z}$ -- pictorially, the quantum particle stays permanently away from $\mathcal{Z}$, no quantum information reaches $\partial M^+$ or escapes from $M^+$.

 In this respect, the geometric quantum confinement problem has the following answer.

 \begin{theorem}[Quantum confinement vs transmission in Grushin cylinder, \cite{Boscain-Laurent-2013,Boscain-Prandi-JDE-2016,GMP-Grushin-2018}]\label{thm:Halpha_esa_or_not}~
  \begin{itemize}
   \item[(i)] If $\alpha\in(-\infty,-3]\cup[1,+\infty)$, then the operator $H_\alpha$ is essentially self-adjoint.
   \item[(ii)] If $\alpha\in(-3,-1]$, then the operator $H_\alpha$ is not essentially self-adjoint and it has deficiency index $2$.
   \item[(iii)] If $\alpha\in(-1,1)$, then the operator $H_\alpha$ is not essentially self-adjoint and it has infinite deficiency index.
  \end{itemize}
 \end{theorem}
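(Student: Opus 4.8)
The plan is to reduce the two-dimensional problem to a family of one-dimensional Weyl limit-point/limit-circle problems by separating variables, and then to count deficiency indices fibre by fibre. By the orthogonal decomposition \eqref{eq:H-HpHm} it suffices to analyse $H_\alpha^+$ on the right half-cylinder $M^+=\mathbb{R}^+\times\mathbb{S}^1$, since the reflection $x\mapsto -x$ makes $H_\alpha^+$ and $H_\alpha^-$ unitarily equivalent and deficiency indices add across an orthogonal direct sum. Expanding in the Fourier basis $\{(2\pi)^{-1/2}e^{\ii k y}\}_{k\in\mathbb{Z}}$ of $L^2(\mathbb{S}^1)$ diagonalises the $y$-dependence of \eqref{eq:Deltamualpha}, and since the minimal operator commutes with the rotations of $\mathbb{S}^1$ it is a constant-fibre direct sum $H_\alpha^+\cong\bigoplus_{k\in\mathbb{Z}} h_k$, where each fibre
\begin{equation*}
 h_k \;=\; -\frac{d^2}{dx^2}+\frac{\alpha}{x}\,\frac{d}{dx}+k^2 x^{2\alpha}
\end{equation*}
acts in $L^2((0,\infty),x^{-\alpha}\,dx)$ on the core $C^\infty_c(0,\infty)$. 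The deficiency index of $H_\alpha^+$ is then the (possibly infinite) sum over $k\in\mathbb{Z}$ of the deficiency indices of the $h_k$, so the whole statement is reduced to a one-dimensional spectral question depending on the two parameters $\alpha$ and $k$.

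Next I would remove both the weight and the drift by the unitary multiplication $U\colon L^2((0,\infty),x^{-\alpha}dx)\to L^2((0,\infty),dx)$, $(Uf)(x)=x^{-\alpha/2}f(x)$. A direct computation shows that the first-order term cancels exactly and that $h_k$ is turned into the Schr\"{o}dinger operator
\begin{equation*}
 \widehat h_k \;=\; U h_k U^{-1}\;=\;-\frac{d^2}{dx^2}+\frac{\alpha(\alpha+2)}{4}\,\frac{1}{x^2}+k^2 x^{2\alpha}
\end{equation*}
on $L^2((0,\infty),dx)$. This is a regular Sturm–Liouville problem on every compact subinterval of $(0,\infty)$ with two singular endpoints, so its deficiency index equals the number of endpoints falling in the limit-circle case, and essential self-adjointness is equivalent to the limit-point case at both $0$ and $+\infty$.

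The core of the argument is the Weyl analysis of $\widehat h_k$ at its two endpoints. At $+\infty$ the potential is bounded below (indeed $k^2x^{2\alpha}\to +\infty$ for $\alpha>0,\ k\neq0$, and $\to 0$ otherwise, while the inverse-square term vanishes), so $+\infty$ is always in the limit-point case by the classical criterion for potentials bounded below. Everything therefore hinges on the endpoint $x=0$, where one must compare the two competing singularities $\tfrac{\alpha(\alpha+2)}{4}x^{-2}$ and $k^2 x^{2\alpha}$. Writing $c:=\tfrac{\alpha(\alpha+2)}{4}$, one has $1+4c=(\alpha+1)^2$, so the Frobenius indicial roots of $-u''+cx^{-2}u=0$ are $s_\pm=\tfrac{1\pm|\alpha+1|}{2}$, and $x^{s_-}$ fails to be square-integrable near $0$ precisely when $|\alpha+1|\geq 2$; thus in the purely inverse-square regime one is in the limit-circle case at $0$ iff $-3<\alpha<1$ and in the limit-point case otherwise. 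The delicate point is the role of the $k^2x^{2\alpha}$ term: for $\alpha>-1$ (so $2\alpha>-2$) it is subordinate to $x^{-2}$ and, by an asymptotic integration of the solutions near $0$, leaves the classification unchanged, so for $k\neq0$ the fibre is limit-circle iff $-1<\alpha<1$; for $\alpha=-1$ the two singularities coincide and the effective coefficient becomes $k^2-\tfrac14\geq\tfrac34$ for $k\neq0$, forcing limit-point; and for $\alpha<-1$ with $k\neq0$ the potential blows up strictly faster than $x^{-2}$, again giving limit-point. I expect this trichotomy at $x=0$—especially the rigorous justification that the lower-order term is genuinely negligible for $\alpha>-1$—to be the main technical obstacle.

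Assembling the fibrewise count then yields the three regimes. For $\alpha\leq -3$ or $\alpha\geq 1$ every $h_k$ is limit-point at $0$, hence each fibre is essentially self-adjoint and so is $H_\alpha^+$, which gives case (i). For $-3<\alpha\leq -1$ only the mode $k=0$ (governed by the pure inverse square with $-3<\alpha<1$) is limit-circle and contributes deficiency index $1$ on the half-line, while all $k\neq 0$ are limit-point; summing over the two half-cylinders gives deficiency index $2$, which is case (ii). Finally, for $-1<\alpha<1$ every mode $k\in\mathbb{Z}$ is limit-circle at $0$ and contributes, so $H_\alpha^+$—and a fortiori $H_\alpha$—has infinite deficiency index, which is case (iii) (here the interval printed as $(-1,-1)$ is a misprint for $(-1,1)$, the only regime producing an infinite index).
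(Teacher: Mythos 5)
This theorem is not proved in the paper at all: it is imported from the literature (the citations \cite{Boscain-Laurent-2013,Boscain-Prandi-JDE-2016,GMP-Grushin-2018} in its statement), so the relevant comparison is with the fibre machinery of Section \ref{sec:preparatory_direct-integral}, which reproduces the scheme of the cited proof in \cite{GMP-Grushin-2018}. Your route is exactly that scheme: the weight-removing unitary $(Uf)(x)=|x|^{-\alpha/2}f(x)$ is the map $U_\alpha$ of \eqref{eq:unit1}, your $\widehat h_k$ is precisely the fibre operator $A_\alpha(k)=-\frac{\ud^2}{\ud x^2}+k^2|x|^{2\alpha}+\frac{\alpha(2+\alpha)}{4x^2}$ of \eqref{eq:Axi}, and your Weyl trichotomy at $x=0$ is correct: $C_\alpha=\frac{\alpha(\alpha+2)}{4}\geqslant\frac34$ iff $\alpha\leqslant-3$ or $\alpha\geqslant 1$; for $-1<\alpha<1$, $k\neq 0$ the term $k^2x^{2\alpha}=o(x^{-2})$ leaves limit circle intact (for $\alpha\in[0,1)$ this is confirmed by the explicit solutions \eqref{eq:Phi_and_F} with asymptotics \eqref{eq:Asymtotics_0}, both square-integrable at $0$, consistent with deficiency index $1$ per half-fibre as recalled from \cite[Corollary 3.8]{GMP-Grushin-2018}); at $\alpha=-1$ the effective coupling $k^2-\frac14$ forces limit point for $k\neq0$; for $\alpha<-1$, $k\neq0$ the potential dominates $\frac34 x^{-2}$ from above. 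The resulting counts $(0,\,2,\,\infty)$ match the theorem, and you are right that $(-1,-1)$ in item (iii) is a misprint for $(-1,1)$.

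One step, however, is false as you wrote it and needs the paper's repair. You assert that rotation invariance makes the \emph{minimal} operator a constant-fibre direct sum, $H_\alpha^+\cong\bigoplus_{k\in\mathbb{Z}}h_k$ with each $h_k$ minimally defined on $C^\infty_c(\mathbb{R}^+)$. This is exactly what Remark \ref{rem:Halphanotsum} warns against: by \eqref{eq:Halphanotsum}, $\mathscr{H}_\alpha\varsubsetneq\bigoplus_{k\in\mathbb{Z}}A_\alpha(k)$, because the direct-sum domain already contains images of functions compactly supported and smooth in $x$ but non-smooth in $y$, which do not come from $C^\infty_c(M^\pm)$. Your fibrewise addition of deficiency indices nevertheless survives, but the correct justification is Lemma \ref{lem:Halphaadj-decomposable}: although the minimal operators differ, they have the same adjoint and closure, $\mathscr{H}_\alpha^*=\bigoplus_{k\in\mathbb{Z}}A_\alpha(k)^*$ and $\overline{\mathscr{H}_\alpha}=\bigoplus_{k\in\mathbb{Z}}\overline{A_\alpha(k)}$ (proved by localising test vectors in each fibre $\mathfrak{h}_k$), whence $\ker(\mathscr{H}_\alpha^*\mp\ii\mathbbm{1})=\bigoplus_{k\in\mathbb{Z}}\ker(A_\alpha(k)^*\mp\ii\mathbbm{1})$ and the deficiency indices do add up. Replace your ``commutes with rotations, hence is a direct sum'' sentence with this adjoint-level identity and the argument closes.
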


 In the present work we study the \emph{non-trivial regime of transmission}, namely lack of self-adjointness with infinite deficiency index, and of actual \emph{singularity of the Grushin metric}. Thus, we consider $\alpha\in[0,1)$.

 In fact, the case $\alpha=0$ corresponds to the ordinary Laplacian minimally defined in each of the two halves of the Euclidean cylinder, to the right and to the left of the singularity region at $x=0$. The discussion of this case is completely analogous to the self-adjointness problem for the minimally defined Laplacian on a half-plane (see, e.g., \cite[Chapt.~9]{Grubb-DistributionsAndOperators-2009}) and our analysis for generic $\alpha\in[0,1)$ includes it. Moreover, in retrospect it will be clear how the conceptual scheme of our analysis is the very same also for the counterpart regime $\alpha\in(-1,0)$, although of course new explicit computations need be worked out.
 
%
 
 \subsection{Scheme of our analysis. Main results}\label{sec:scheme-and-main-results}~
 
 The infinity of the deficiency index of $H_\alpha$ when $\alpha\in[0,1)$ leaves room for a huge variety of inequivalent self-adjoint realisations of the free Hamiltonian. Each one provides a different mechanism how the quantum particle `reaches' or `crosses' the singularity region $\mathcal{Z}$.

 As is typical also in other contexts where minimally defined operators suggested by physical modelling have \emph{infinite} deficiency index \cite{MO-2016,MO-2017,M2020-BosonicTrimerZeroRange}, a large part of the extensions of $H_\alpha$ when $\alpha\in[0,1)$, albeit unambiguous (i.e., self-adjoint), do not have any plausible physical interpretation, like all those extensions identified by \emph{non-local} boundary conditions, i.e., when the behaviour of the wave function around a point $(0,y_0)\in\mathcal{Z}$ depends also on the behaviour around $\mathcal{Z}$ in regions away from $(0,y_0)$.
 
 Our first main result (Theorem \ref{thm:H_alpha_fibred_extensions} below) is indeed an explicit classification of the physically meaningful sub-family of `local' self-adjoint extensions of $H_\alpha$, characterising their boundary conditions at the singularity of the Grushin cylinder, and hence the mechanism of transmission of the quantum particle across the singularity.

 In this class, we identify the \emph{only} extension that actually induces \emph{geometric confinement} of the particle away from $\mathcal{Z}$ (hence confinement inside either half-cylinder), as well as the extension that in a suitable sense \emph{maximises the transmission} across $\mathcal{Z}$ -- customarily referred to as the \emph{bridging extension}. This reproduces by alternative means the recent analysis on Grushin cylinder by Boscain and Prandi \cite{Boscain-Prandi-JDE-2016}, where a `bridging extension' was identified for the first time.

 Our second main result is in fact a classification of the whole family of self-adjoint extensions of a convenient, unitarily equivalent version of $H_\alpha$, that we shall call $\mathscr{H}_\alpha$, essentially obtained from $H_\alpha$ by a re-scaling in $x$ plus a Fourier transform in the compact variable $y$. Such a transformation naturally leads to the $\alpha$-independent Hilbert space
 \begin{equation}\label{eq:previewhilberth}
  \cH\;=\;\bigoplus_{k\in\mathbb{Z}}L^2(\mathbb{R},\ud x)\;\cong\;\ell^2(\mathbb{Z},L^2(\mathbb{R},\ud x))
 \end{equation}
 and to the study of $\mathscr{H}_\alpha$ in each Fourier mode $k$. The self-adjoint extension problem in the $(x,k)$-coordinates turns out to be structurally much more manageable, for the adjoint of $\mathscr{H}_\alpha$ has the form of a direct sum 
 \begin{equation}\label{eq:previewHalphastar}
  \mathscr{H}_\alpha^*\;=\;\bigoplus_{k\in\mathbb{Z}} A_\alpha(k)^*
 \end{equation}
 for suitable symmetric operators $A_\alpha(k)$ on $L^2(\mathbb{R},\ud x)$, where clearly the symbol of the adjoint in the two sides of \eqref{eq:previewHalphastar} refers, respectively, to the Hilbert spaces $\cH_\alpha$ and $L^2(\mathbb{R},\ud x)$. This allows for a characterisation of the self-adjoint extensions of $\mathscr{H}_\alpha$ as suitable \emph{restrictions} of the operator \eqref{eq:previewHalphastar}.

 We establish such a characterisation both in its full generality (Theorem \ref{thm:Halphageneralext}), thus covering the whole family of extensions, and for a sub-class of extensions whose boundary conditions of self-adjointness are formulated \emph{separately in each mode $k$} in the form of constraints on the behaviour of the elements of the domain of $\mathscr{H}_\alpha^*$ when $x\to 0^\pm$, thus from both sides of the singularity (formulas \eqref{eq:Halphageneralext}-\eqref{eq:fibredS}, Theorems \ref{thm:bifibre-extensions}, \ref{thm:bifibre-extensionsc0c1}, and \ref{prop:g_with_Pweight}). For the latter sub-class we use the self-explanatory name of `\emph{fibred extensions}', each $L^2$-space in the Hilbert direct sum \eqref{eq:previewhilberth} being one `\emph{fibre}'.

 For generic fibred extensions, the self-adjointness constraints do not have an equally clean and simple counterpart in the $(x,y)$ variables, due to the non-local character of the inverse Fourier transform needed to go back from $\mathscr{H}_\alpha$ to the original $H_\alpha$. However, a special sub-class that we call `\emph{uniformly fibred extensions}' display the feature of having in a sense the same type and magnitude of boundary condition in each mode $k$, and this yields finally to the above-mentioned \emph{local} boundary conditions at fixed $y$ as $x\to 0$ which characterise the the `physical', most relevant extensions (Theorem \ref{thm:classificationUF}).

 From this perspective, our analysis is organised in two levels. After setting up the problem (Sect.~\ref{sec:preparatory_direct-integral}), the first level (Sections \ref{sec:fibre-extensions} through \ref{sec:bilateralfibreext}) is the study of the self-adjointness problem fibre by fibre, of $k$-dependent, densely defined, symmetric differential operator of Schr\"{o}dinger type $A_\alpha(k)$ on $L^2(\mathbb{R})$. To this aim we use the Kre\u{\i}n-Vi\v{s}ik-Birman extension theory, which is particularly suited since the differential operator in each fibre is semi-bounded. This requires the identification of the ingredients of the theory, namely the precise Sobolev regularity and short-scale behaviour of the functions in the domain of the closure $\overline{A_\alpha(k)}$, the characterisation of its Friedrichs extensions $A_{\alpha,F}(k)$ and its inverse $(A_{\alpha,F}(k))^{-1}$, and the characterisation of the deficiency space $\ker A_\alpha(k)^*$.

 The second level of our analysis (Sections \ref{sec:genextscrHa} through \ref{sec:proof_xy_Euclidean}) is devoted instead to re-assembling the information on each fibre in order to produce the classes of fibred and uniformly fibred extensions of $\mathscr{H}_\alpha$. The study of the latter, which as said produces eventually the physically relevant, local extensions, is particularly troublesome, not much for the standard operation of taking the direct sum of self-adjoint operators from each fibre, but rather because of the necessity to obtain some kind of \emph{uniformity over all the modes $k$}, in order to unfold back the Fourier transform that initially led from $H_\alpha$ to $\mathscr{H}_\alpha$. This is non-trivial because the self-adjointness condition on each fibre is in a sense highly non-uniform in $k$. To convey a flavour of the somewhat odd line of reasoning that we are forced to follow (see Section \ref{sec:uniformlyfirbredext}), let us point out that we construct the uniformly fibred extensions of $\mathscr{H}_\alpha$ by restricting $\mathscr{H}_\alpha^*$ to functions $g$ given by an expression of the form
 \begin{equation}\label{eq:oddbutconvenient}
  g\;=\;\varphi+G_0+G_1
 \end{equation}
 where \emph{none} of the three canonical summands $\varphi$, $G_0$, $G_1$ actually belongs to $\mathcal{D}(\mathscr{H}_\alpha^*)$, but only their sum does, due to cancellations on which we lack any explicit control! Yet, \eqref{eq:oddbutconvenient} is the most practical expression to export the boundary conditions of self-adjointness, cleanly formulated in terms of $G_0$ and $G_1$ as $x\to 0^\pm$, by means of an inverse Fourier transform back to the original problem in the $(x,y)$-variables.

 Whereas the above-mentioned main results contained in Theorems  \ref{thm:bifibre-extensions}, \ref{thm:bifibre-extensionsc0c1}, \ref{prop:g_with_Pweight}, \ref{thm:Halphageneralext}, and \ref{thm:classificationUF} require additional preparation that we defer to the main body of this work, in this introduction we present our first main result, namely the classification of the local, physical extensions.

 As is going to be done throughout, motivated by the fact that transmission across the singularity region $\mathcal{Z}$ is characterised by a specific behaviour as $x\to 0^\pm$, let us canonically express the elements of $L^2(M,\ud\mu_\alpha)$ with respect to the decomposition \eqref{eq:decomp+-} as
 \begin{equation}
  f\;=\;f^-\oplus f^+\;\equiv\;\begin{pmatrix} f^- \\ f^+ \end{pmatrix}\,,\qquad f^\pm(x)\,:=\,f(x)\quad\textrm{for }x\in\mathbb{R}^\pm\,,
 \end{equation}
 thus with $f^\pm \in L^2(M^\pm,\ud \mu_\alpha)$.

 The first important observation is that $H_\alpha^*$ is decomposed with respect to \eqref{eq:decomp+-}.

 \begin{proposition}\label{prop:adjoint_on_M} 
  Let $\alpha\geqslant 0$. The adjoint of $H_\alpha$ with respect to the Hilbert space $L^2(M,\ud\mu_\alpha)$ is the differential operator
   \begin{equation}
    H_\alpha^*\;=\;(H_\alpha^-)^*\oplus(H_\alpha^+)^*
   \end{equation}
    where $(H_\alpha^\pm)^*$, the adjoint of $H_\alpha^\pm$ in $L^2(M^\pm,\ud\mu_\alpha)$, is the differential operator whose domain and action are given by
   \begin{equation}
   \begin{split}
     \mathcal{D}((H_\alpha^\pm)^*)\;&=\;\big\{f^\pm\in L^2(M^\pm,\ud\mu_\alpha)\,\big|\,-\Delta_{\mu_\alpha}f^\pm\in L^2(M^\pm,\ud\mu_\alpha)\big\} \\
     (H_\alpha^\pm)^*\,f^\pm\;&=\;-\Delta_{\mu_\alpha}f^\pm\,.
   \end{split}
   \end{equation}
 \end{proposition}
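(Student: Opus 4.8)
The plan is to separate the statement into its two logically independent parts: the direct-sum decomposition of $H_\alpha^*$, and the identification of each summand $(H_\alpha^\pm)^*$ with the maximal operator associated with the differential expression $-\Delta_{\mu_\alpha}$ of \eqref{eq:Deltamualpha}.

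For the first part I would exploit the decomposition \eqref{eq:H-HpHm}, namely $H_\alpha=H_\alpha^-\oplus H_\alpha^+$ relative to the orthogonal splitting \eqref{eq:decomp+-}. The key preliminary remark is that the domain genuinely factorises: since $M=M^+\cup M^-=(\R\setminus\{0\})\times\mathbb{S}^1$ is disconnected and every element of $C^\infty_c(M)$ has compact support bounded away from $\mathcal{Z}$, one has the algebraic orthogonal direct sum $C^\infty_c(M)=C^\infty_c(M^-)\oplus C^\infty_c(M^+)$. I would then invoke the elementary operator-theoretic fact that the adjoint of an orthogonal direct sum of densely defined operators, whose domain is the algebraic direct sum of the component domains, is the orthogonal direct sum of the component adjoints. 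Testing the defining identity of $(H_\alpha^-\oplus H_\alpha^+)^*$ separately against $\phi=\phi^-\oplus 0$ and $\phi=0\oplus\phi^+$ gives $H_\alpha^*=(H_\alpha^-)^*\oplus(H_\alpha^+)^*$ in two lines.

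For the second and substantial part I would work on $M^+$ (the analysis on $M^-$ being verbatim the same) and show that $(H_\alpha^+)^*$ coincides with the maximal operator $T_{\max}$, whose domain is $\{f\in L^2(M^+,\ud\mu_\alpha):-\Delta_{\mu_\alpha}f\in L^2(M^+,\ud\mu_\alpha)\}$ with $-\Delta_{\mu_\alpha}f$ understood distributionally, and whose action is $-\Delta_{\mu_\alpha}$. One inclusion is immediate: for $f\in\mathcal{D}(T_{\max})$ and any $\phi\in C^\infty_c(M^+)$, Green's identity for the Laplace--Beltrami operator (which, on the genuine Riemannian manifold $M^+$ where the metric is non-degenerate, is formally self-adjoint with respect to its volume form $\mu_\alpha$) produces no boundary contribution because $\phi$ is supported in the \emph{open} half-cylinder; hence $\langle -\Delta_{\mu_\alpha}\phi,f\rangle_{\alpha}=\langle\phi,-\Delta_{\mu_\alpha}f\rangle_{\alpha}$, and therefore $f\in\mathcal{D}((H_\alpha^+)^*)$ with $(H_\alpha^+)^*f=-\Delta_{\mu_\alpha}f$. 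The reverse inclusion amounts to reading the defining identity of the adjoint, $\langle -\Delta_{\mu_\alpha}\phi,f\rangle_{\alpha}=\langle\phi,h\rangle_{\alpha}$ for all $\phi\in C^\infty_c(M^+)$, as the assertion that $-\Delta_{\mu_\alpha}f=h$ holds in $\mathcal{D}'(M^+)$, whence $f\in\mathcal{D}(T_{\max})$ and $T_{\max}f=h$.

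The one technical step deserving care — and the place where I expect the only real friction — is the bookkeeping of the weight $|x|^{-\alpha}$ in this last reinterpretation, since $\langle\cdot,\cdot\rangle_\alpha$ is the weighted $L^2(\mu_\alpha)$ pairing whereas the distributional action of $-\Delta_{\mu_\alpha}$ is defined by duality against test functions relative to Lebesgue measure. The reconciling observation is that $-\Delta_{\mu_\alpha}$, as written in \eqref{eq:Deltamualpha}, has real and (on $M^+$, where $|x|>0$) smooth coefficients and is formally self-adjoint with respect to $|x|^{-\alpha}\,\ud x\,\ud y$; equivalently, its formal Lebesgue-transpose equals the conjugation $|x|^{-\alpha}(-\Delta_{\mu_\alpha})|x|^{\alpha}$, and inserting this identity is exactly what converts the weighted equation above into the plain distributional equation $-\Delta_{\mu_\alpha}f=h$. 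This is precisely the standard fact that the Hilbert-space adjoint of a minimally defined, formally self-adjoint differential operator on an open manifold is its maximal realisation (cf.~\cite[Chapt.~9]{Grubb-DistributionsAndOperators-2009}); notably, no ellipticity-based elliptic-regularity input is needed, only the duality pairing. Assembling the two parts yields the asserted decomposition of $H_\alpha^*$.
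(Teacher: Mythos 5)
Your proof is correct, but it follows a genuinely different route from the paper's. The paper does not re-derive the minimal-adjoint-equals-maximal fact in the weighted space at all: it conjugates by the unitary $U_\alpha^\pm$ of \eqref{eq:unit1}, $f\mapsto |x|^{-\alpha/2}f$, which transplants $H_\alpha^\pm$ onto the unweighted operator $\mathsf{H}_\alpha^\pm$ of \eqref{eq:explicit-tildeHalpha}, and then reads off $(H_\alpha^\pm)^*=(U_\alpha^\pm)^{-1}(\mathsf{H}_\alpha^\pm)^*\,U_\alpha^\pm$ from the identification \eqref{eq:HHalphaadjoint}/\eqref{eq:HHalphaadjointagain} of $(\mathsf{H}_\alpha^\pm)^*$ as the maximal realisation --- a fact the paper in turn imports from \cite[Lemma 3.2]{GMP-Grushin-2018} rather than proving from scratch. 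The left/right splitting is common to both arguments: the paper invokes the direct-sum-of-adjoints fact ``tacitly'', having reviewed its infinite-sum version in Lemma \ref{lem:sumstar-sumclosure}, and your two-line test against $\phi^-\oplus 0$ and $0\oplus\phi^+$, legitimised by the clopen decomposition $C^\infty_c(M)=C^\infty_c(M^-)\oplus C^\infty_c(M^+)$, is exactly the finite-sum instance. What you do differently is to stay in $L^2(M^\pm,\ud\mu_\alpha)$ and absorb the weight head-on through the formal-transpose identity ${}^t(-\Delta_{\mu_\alpha})=|x|^{-\alpha}(-\Delta_{\mu_\alpha})\,|x|^{\alpha}$, and this is the correct reconciliation: since $|x|^{\pm\alpha}$ is smooth on each \emph{open} half-cylinder and multiplication by it preserves $C^\infty_c(M^\pm)$, the weighted adjoint identity and the Lebesgue-distributional equation $-\Delta_{\mu_\alpha}f=h$ are indeed equivalent, with no elliptic regularity needed. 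Your route buys self-containedness --- no appeal to the prior literature, nor to Theorem \ref{thm:classificationUF}, which the paper's proof cites even though that theorem is stated only for $\alpha\in[0,1)$ while the proposition is claimed for all $\alpha\geqslant 0$ --- at the price of redoing a standard duality argument; the paper's route buys brevity by recycling the unitary framework it needs anyway for the whole fibred analysis.
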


 Next, we describe the special sub-class of self-adjoint restrictions of $(H_\alpha^\pm)^*$, hence extensions of $H_\alpha$, characterised by local boundary conditions.

    \begin{theorem}\label{thm:H_alpha_fibred_extensions}
 Let $\alpha\in[0,1)$. The operator $H_\alpha$ admits, among others, the following families of self-adjoint extensions in $L^2(M,\ud\mu_\alpha)$:
 \begin{itemize}
  \item \underline{Friedrichs extension}: $H_{\alpha,F}$;
  \item \underline{Family $\mathrm{I_R}$}: $\{H_{\alpha,R}^{[\gamma]}\,|\,\gamma\in\mathbb{R}\}$;
  \item \underline{Family $\mathrm{I_L}$}: $\{H_{\alpha,L}^{[\gamma]}\,|\,\gamma\in\mathbb{R}\}$;
  \item \underline{Family $\mathrm{II}_a$} with $a\in\mathbb{C}$: $\{H_{\alpha,a}^{[\gamma]}\,|\,\gamma\in\mathbb{R}\}$;
  \item \underline{Family $\mathrm{III}$}: $\{H_{\alpha}^{[\Gamma]}\,|\,\Gamma\equiv(\gamma_1,\gamma_2,\gamma_3,\gamma_4)\in\mathbb{R}^4\}$.
 \end{itemize}
 Each operator belonging to any such family is a restriction of $H_\alpha^*$, and hence its differential action is precisely $-\Delta_{\mu_\alpha}$. The domain of each of the above extensions is characterised as the space of the functions $f\in L^2(M,\ud\mu_\alpha)$ satisfying the following properties.
  \begin{itemize}
  \item[(i)] \underline{Integrability and regularity}:
  \begin{equation}\label{eq:DHalpha_cond1}
  \sum_{\pm}\;\iint_{\mathbb{R}_x^\pm\times\mathbb{S}^1_y}\big|(\Delta_{\mu_\alpha}f^\pm)(x,y)\big|^2\,\ud\mu_\alpha(x,y)\;<\;+\infty\,.
 \end{equation}
  \item[(ii)] \underline{Boundary condition}: The limits
 \begin{eqnarray}
  f_0^\pm(y)\!\!&:=&\!\!\lim_{x\to 0^\pm}f^\pm(x,y) \label{eq:DHalpha_cond2_limits-1}\\
  f_1^\pm(y)\!\!&:=&\!\!\pm(1+\alpha)^{-1}\lim_{x\to 0^\pm}\Big(\frac{1}{\:|x|^\alpha}\,\frac{\partial f(x,y)}{\partial x}\Big) \label{eq:DHalpha_cond2_limits-2}
  \end{eqnarray}
 exist and are finite for almost every $y\in\mathbb{S}^1$; depending on the considered type of extension, and for almost  every $y\in\mathbb{R}$, they satisfy
 \begin{eqnarray}
  f_0^\pm(y)\,=\,0 \qquad \quad\;\;& & \textrm{if }\;  f\in\mathcal{D}(H_{\alpha,F})\,, \label{eq:DHalpha_cond3_Friedrichs}\\
  \begin{cases}
   \;f_0^-(y)= 0  \\
   \;f_1^+(y)=\gamma f_0^+(y)
  \end{cases} & & \textrm{if }\;  f\in\mathcal{D}(H_{\alpha,R}^{[\gamma]})\,, \\
   \begin{cases}
   \;f_1^-(y)=\gamma f_0^-(y) \\
   \;f_0^+(y)= 0 
  \end{cases} & & \textrm{if }\;  f\in\mathcal{D}(H_{\alpha,L}^{[\gamma]}) \,, \label{eq:DHalpha_cond3_L}\\
     \begin{cases}
   \;f_0^+(y)=a\,f_0^-(y) \\
   \;f_1^-(y)+\overline{a}\,f_1^+(y)=\gamma f_0^-(y)
  \end{cases} & & \textrm{if }\;  f\in\mathcal{D}(H_{\alpha,a}^{[\gamma]})\,, \label{eq:DHalpha_cond3_IIa} \\
   \begin{cases}
   \;f_1^-(y)=\gamma_1 f_0^-(y)+(\gamma_2+\ii\gamma_3) f_0^+(y) \\
   \;f_1^+(y)=(\gamma_2-\ii\gamma_3) f_0^-(y)+\gamma_4 f_0^+(y)
  \end{cases} & & \textrm{if }\;  f\in\mathcal{D}(H_{\alpha}^{[\Gamma]})\,. \label{eq:DHalpha_cond3_III}
 \end{eqnarray} 
 \end{itemize} 
     Moreover,
 \begin{equation}\label{eq:traceregularity}
  f_0^\pm \in H^{s_{0,\pm}}(\mathbb{S}^1, \ud y)\qquad\textrm{ and }\qquad f_1^\pm\in H^{s_{1,\pm}}(\mathbb{S}^1,\ud y)
 \end{equation}
 with
 \begin{itemize}
 	\item $s_{1,\pm}=\frac{1}{2}\frac{1-\alpha}{1+\alpha}$\qquad\qquad\qquad\qquad\qquad\; for the Friedrichs extension,
 	\item $s_{1,-}=\frac{1}{2}\frac{1-\alpha}{1+\alpha}$, $s_{0,+}=s_{1,+}=\frac{1}{2}\frac{3+\alpha}{1+\alpha}$ \quad for extensions of type $\mathrm{I}_R$,
 	\item  $s_{1,+}=\frac{1}{2}\frac{1-\alpha}{1+\alpha}$, $s_{0,-}=s_{1,-}=\frac{1}{2}\frac{3+\alpha}{1+\alpha}$ \quad for extensions of type $\mathrm{I}_L$,
 	\item $s_{1,\pm}=s_{0,\pm}=\frac{1}{2}\frac{1-\alpha}{1+\alpha}$ \qquad\qquad\qquad \;\;\;\,for extensions of type $\mathrm{II}_a$,
 	\item $s_{1,\pm}=s_{0,\pm}=\frac{1}{2}\frac{3+\alpha}{1+\alpha}$ \qquad\qquad\qquad \;\; for extensions of type $\mathrm{III}$.
 \end{itemize}
\end{theorem}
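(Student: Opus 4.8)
The plan is to obtain this classification as the specialisation, to boundary conditions that are uniform across Fourier modes, of the general fibred extension theory announced in Subsection~\ref{sec:scheme-and-main-results}. By Proposition~\ref{prop:adjoint_on_M} the adjoint $H_\alpha^*$ splits as $(H_\alpha^-)^*\oplus(H_\alpha^+)^*$ and acts as $-\Delta_{\mu_\alpha}$; hence every self-adjoint extension is a restriction of $H_\alpha^*$ with that same differential action, condition~\eqref{eq:DHalpha_cond1} is nothing but the membership $-\Delta_{\mu_\alpha}f^\pm\in L^2$ defining $\mathcal{D}(H_\alpha^*)$, and the whole task reduces to identifying the admissible boundary data and their regularity. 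First I would realise the unitary equivalence $H_\alpha\cong\mathscr{H}_\alpha$ through the multiplier $f\mapsto|x|^{-\alpha/2}f$, which flattens the weight $\ud\mu_\alpha$, followed by a Fourier series in $y$. A direct computation turns the operator in the $k$-th mode into the Schr\"odinger operator $A_\alpha(k)=-\partial_x^2+\frac{\alpha(\alpha+2)}{4}\,x^{-2}+k^2|x|^{2\alpha}$ on $L^2(\R,\ud x)$, so that $\mathscr{H}_\alpha^*=\bigoplus_{k}A_\alpha(k)^*$ as in \eqref{eq:previewHalphastar}.

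Next I would carry out the fibre-by-fibre study. The indicial equation of $A_\alpha(k)^*u=0$ is $r(r-1)=\frac{\alpha(\alpha+2)}{4}$, with roots $r=-\frac{\alpha}{2}$ and $r=1+\frac{\alpha}{2}$, so near $x=0$ every element of $\mathcal{D}(A_\alpha(k)^*)$ behaves as $c_0|x|^{-\alpha/2}+c_1|x|^{1+\alpha/2}$; both powers are square-integrable precisely because $\alpha\in[0,1)$, whereas the growth of $k^2|x|^{2\alpha}$ at infinity (respectively the behaviour at infinity for $k=0$) places each half-line in the limit-point case there, so that $A_\alpha(k)$ has deficiency indices $(2,2)$ in every mode, one defect direction from each side of $x=0$. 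Undoing the multiplier via $f=|x|^{\alpha/2}(\,\cdot\,)$ turns $c_0,c_1$ into exactly the traces \eqref{eq:DHalpha_cond2_limits-1}--\eqref{eq:DHalpha_cond2_limits-2}, $f_0^\pm$ being the value of the constant part and $f_1^\pm$ the coefficient of $|x|^{1+\alpha}$. I would then run the Kre\u{\i}n--Vi\v{s}ik--Birman scheme, legitimate because each $A_\alpha(k)$ is lower semi-bounded, identifying the Friedrichs extension with the condition $f_0^\pm=0$, qualifying the Friedrichs inverse $A_{\alpha,F}(k)^{-1}$ and the deficiency space, and parametrising the self-adjoint extensions by self-adjoint relations between $(f_0^-,f_0^+)$ and $(f_1^-,f_1^+)$. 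The five families in the statement are the five canonical shapes of such a relation, and their admissibility is dictated by the Green (symplectic) pairing of the boundary data.

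I would then reassemble the fibres (the second level, Theorems~\ref{thm:bifibre-extensions}, \ref{thm:bifibre-extensionsc0c1}, \ref{prop:g_with_Pweight}, \ref{thm:Halphageneralext}) and impose that the boundary-condition matrix be \emph{the same in every mode $k$}. This uniformity is exactly what lets the inverse Fourier transform collapse the fibre-wise constraints into the pointwise-in-$y$ conditions \eqref{eq:DHalpha_cond3_Friedrichs}--\eqref{eq:DHalpha_cond3_III}: a relation such as $f_1^+(k)=\gamma f_0^+(k)$ valid for all $k$ becomes $f_1^+(y)=\gamma f_0^+(y)$, and the self-adjointness of $\mathscr{H}_\alpha$, hence of $H_\alpha$, follows from fibre-wise self-adjointness together with the $k$-independence of the relation (the uniformly fibred case, Theorem~\ref{thm:classificationUF}).

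The hard part will be the trace-regularity statement \eqref{eq:traceregularity}. Its exponents emerge from the rescaling $x=|k|^{-1/(1+\alpha)}\xi$, under which $A_\alpha(k)=|k|^{2/(1+\alpha)}\widehat A$ with $\widehat A=-\partial_\xi^2+\frac{\alpha(\alpha+2)}{4}\xi^{-2}+|\xi|^{2\alpha}$ independent of $k$: the unitary pullback multiplies the two boundary coefficients by the distinct powers $f_0^\pm(k)\sim|k|^{\frac12\frac{1-\alpha}{1+\alpha}}\,\widehat f_0$ and $f_1^\pm(k)\sim|k|^{\frac12\frac{3+\alpha}{1+\alpha}}\,\widehat f_1$, while the $k$-th fibre contributes $\|\widehat g_k\|^2+|k|^{4/(1+\alpha)}\|\widehat A^*\widehat g_k\|^2$ to the graph norm. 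Feeding the uniform boundary relation into the summability of the graph norm over $k$ converts these weights into the asserted $H^s(\mathbb{S}^1)$ memberships, the binding constraint coming from $\mathscr{H}_\alpha^*g\in L^2$: the strong exponent $\frac12\frac{3+\alpha}{1+\alpha}$ governs the decoupled Robin-type data (family $\mathrm{III}$ and the Robin sides of $\mathrm I_{R},\mathrm I_L$), whereas $\frac12\frac{1-\alpha}{1+\alpha}$ governs Dirichlet data and, crucially, the coupled family $\mathrm{II}_a$. The genuine obstacle is precisely this last point: for the coupled extensions the weaker, sharp regularity is attainable only by exploiting the cancellations in the decomposition \eqref{eq:oddbutconvenient} $g=\varphi+G_0+G_1$, whose summands individually leave $\mathcal{D}(\mathscr{H}_\alpha^*)$. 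Establishing these cancellations, and controlling them uniformly in $k$ so as to beat the naive graph-norm estimate down to the correct Sobolev scale, is the delicate, highly non-uniform-in-$k$ analysis that the bulk of the second level must secure.
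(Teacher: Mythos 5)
Your overall route coincides with the paper's own two-level scheme --- unitary removal of the weight $|x|^{-\alpha}$, Fourier series in $y$, fibrewise Kre\u{\i}n--Vi\v{s}ik--Birman classification of the bilateral operators $A_\alpha(k)$ (Theorems \ref{thm:fibre-thm}, \ref{thm:fibre-thm_zero_mode}, \ref{thm:bifibre-extensions}), uniform fibring, and inverse Fourier transform back to $(x,y)$ --- and your scaling $x=|k|^{-1/(1+\alpha)}\xi$, conjugating $A_\alpha(k)$ into $|k|^{2/(1+\alpha)}\widehat{A}$ with a $k$-independent model operator, is a tidy repackaging of what the paper extracts from the explicit Bessel solutions: it is in substance the homogeneity $h_{j,k}(x)=w_j(|k|\,x^{1+\alpha})$ of Lemma \ref{lem:homogeneity} together with the normalisation $\|\Phi_{\alpha,k}\|_{L^2}^2\sim |k|^{-2/(1+\alpha)}$ of \eqref{eq:Phinorm}, and it does reproduce the exponents appearing in Proposition \ref{prop:g0g1Sobolev}.

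The genuine gap sits exactly where you write that a relation such as $f_1^+(k)=\gamma f_0^+(k)$, valid in every mode, ``becomes'' $f_1^+(y)=\gamma f_0^+(y)$. The linear, $k$-independent relation does pass to the $L^2(\mathbb{S}^1)$ functions $g_0,g_1$ assembled from the Fourier coefficients; what does not follow automatically --- and is the actual content of the paper's Section \ref{sec:uniformlyfirbredext} --- is that these $L^2$ functions coincide, for almost every $y$, with the pointwise limits \eqref{eq:DHalpha_cond2_limits-1}--\eqref{eq:DHalpha_cond2_limits-2} of $f$ and of its weighted derivative. In the representation \eqref{eq:gwithPweight_k} the remainder $\varphi=\mathcal{F}_2^{-1}(\varphi_k)_{k\in\mathbb{Z}}$ in general fails to belong to $\mathcal{D}(\mathsf{H}_\alpha^*)$ (Lemma \ref{lem:singular_decomposition_adjoint}), so no a priori trace theory applies to it: one must prove directly that $\varphi(x,y)=o(|x|^{3/2})$ and $\partial_x\varphi(x,y)=o(|x|^{1/2})$ for a.e.\ $y$, which the paper achieves by splitting $\varphi=\widetilde{\varphi}+\vartheta$, controlling $\widetilde{\varphi}$ via the double-Hardy inequality and the $k$-uniform bound on $x^{2\alpha}k^2 R_{G_{\alpha,k}}$ (Lemma \ref{lem:RGbddsa}(ii), Proposition \ref{prop:Hclosurecontrol}), controlling $\vartheta$ via the very scaling you noticed (Proposition \ref{prop:varthetacontrol}), and finally invoking the a.e.-$y$ one-dimensional Lemma \ref{lem:grand_auxiliary_lemma}. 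Your plan defers all of this as ``to be secured'', and moreover assigns it the wrong purpose: the cancellations in \eqref{eq:oddbutconvenient} are \emph{not} what produces the sharp Sobolev scale for the coupled family $\mathrm{II}_a$ --- those trace regularities follow by direct summability from the fibre self-adjointness constraints together with the enhanced summability of the charges (Proposition \ref{prop:g0g1Sobolev}(iv), Corollary \ref{cor:(g0)k_(g1)k_in_Hs}; indeed for $\mathrm{II}_a$ the trace $g_0^\pm$ even lies in the stronger space $H^{\frac{1}{2}\frac{3+\alpha}{1+\alpha}}$, and only $g_1^\pm$ individually is confined to the weak scale, the combination $g_1^-+\overline{a}\,g_1^+$ being again strong). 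The delicate decomposition analysis is instead what legitimises taking the limits $x\to 0^\pm$ term by term at all, for \emph{every} family, the Friedrichs extension included; as stated, your proposal omits the idea that closes this step.
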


 The requirement \eqref{eq:DHalpha_cond1} amounts to saying that all the considered extensions are contained in $H_\alpha^*$. Each of the requirements \eqref{eq:DHalpha_cond3_Friedrichs}-\eqref{eq:DHalpha_cond3_III} then expresses the corresponding condition of self-adjointness.

 The common feature of all such extensions is that their boundary conditions as $x\to 0$ have the \emph{same} form uniformly in $y\in\mathbb{R}$. In this precise sense, those are \emph{local} extensions.

 It is also clear that the Friedrichs extension, as well as type-$\mathrm{I_R}$ and type-$\mathrm{I_L}$ extensions, are reduced with respect to the Hilbert space decomposition \eqref{eq:decomp+-}: each such operator is the orthogonal sum of two self-adjoint operators, respectively on $L^2(M^+,\ud\mu_\alpha)$ and $L^2(M^-,\ud\mu_\alpha)$, characterised by independent boundary conditions at the singularity region $\mathcal{Z}$ from the right and from the left. On the contrary, type-$\mathrm{II}_a$ (with $a\neq 0$) and type-$\mathrm{III}$ extensions are not reduced \emph{in general}: the boundary condition couples the behaviour as $x\to 0^+$ and $x\to 0^-$.

The left-right reducibility
\begin{equation}\label{eq:reducedext}
 \widetilde{H}_\alpha\;\cong\;\widetilde{H}_\alpha^-\oplus\widetilde{H}_\alpha^+\,.
\end{equation}
of the extension $\widetilde{H}_\alpha=H_{\alpha,F}$, or $\widetilde{H}_\alpha=H_{\alpha,R}^{[\gamma]}$, or $\widetilde{H}_\alpha=H_{\alpha,L}^{[\gamma]}$, results in a decoupled independent Schr\"{o}dinger evolution of the two components $f^+$ and $f^-$ of the solution $f\in C^1(\mathbb{R}_t,L^2(M,\ud\mu_\alpha))$ to the Cauchy problem
 \begin{equation}
  \begin{cases}
   \;\ii\,\partial_t f \!\!&=\;\widetilde{H}_\alpha\,f \\
   \;f|_{t=0}\!\!&=\;u_0\;\in\;\mathcal{D}(\widetilde{H}_\alpha)\,.
  \end{cases}
 \end{equation}
This means that, separately on each half-cylinder,
\begin{equation}\label{eq:reducedevol}
 f^\pm(t)\;=\;e^{-\ii t \widetilde{H}_\alpha^\pm}u_0^\pm\,,
\end{equation}
with no exchange between left and right at the interface $\mathcal{Z}$.

The picture is then the following.
\begin{itemize}
 \item Friedrichs extension $H_{\alpha,F}$: geometric quantum confinement on each half of the Grushin cylinder, with no interaction of the particle with the boundary and no dynamical transmission between the two halves. 
 \item Type-$\mathrm{I_R}$ and type-$\mathrm{I_L}$ extensions: no dynamical transmission across $\mathcal{Z}$, but possible non-trivial interaction of the quantum particle with the boundary respectively from the right or from the left, with geometric quantum confinement on the opposite side. (Thus, for instance, a quantum particle governed by $H_{\alpha,R}^{[\gamma]}$ may `touch' the boundary from the right, but not from the left, and moreover it cannot trespass the singularity region.)
 \item Type-$\mathrm{II}_a$ and type-$\mathrm{III}$ extensions: in general, dynamical transmission through the boundary.
\end{itemize}

Among the latter group of extensions, a special status is deserved by the Laplace-Beltrami realisation
\begin{equation}\label{eq:HB}
 H_{\alpha,B}\;:=\;H_{\alpha,a}^{[\gamma]}\quad\textrm{ with $a=1$ and $\textstyle\gamma=0$}\,.
\end{equation}
In this case the boundary condition \eqref{eq:DHalpha_cond3_IIa} takes the form
\begin{equation}\label{eq:bridging_conditions}
 \begin{split}
  \lim_{x\to 0^-}f(x,y)\;&=\;\lim_{x\to 0^+}f(x,y) \\
  \lim_{x\to 0^-}\Big(\frac{1}{\:|x|^\alpha}\,\frac{\partial f(x,y)}{\partial x}\Big)\;&=\;\lim_{x\to 0^+}\Big(\frac{1}{\:|x|^\alpha}\,\frac{\partial f(x,y)}{\partial x}\Big)
 \end{split}
\end{equation}
for almost every $y\in\mathbb{S}^1$. Formula \eqref{eq:bridging_conditions} expresses the \emph{continuity} across the singularity region $\mathcal{Z}$, along (almost) any horizontal direction, both of a generic $f\in\mathcal{D}(H_{\alpha,B})$ and of the partial derivative in $x$ of $f$, when such a derivative is suitably weighted with the $|x|^{-\alpha}$-weight. It is easily seen by inspection of \eqref{eq:DHalpha_cond3_Friedrichs}-\eqref{eq:DHalpha_cond3_III} that no other boundary condition of self-adjointness allows for such a two-fold continuity for any other weight.

Quantum-mechanically, \eqref{eq:HB}-\eqref{eq:bridging_conditions} are interpreted as the continuity of the spatial probability density of the particle in the region around $\mathcal{Z}$ and of the momentum in the direction orthogonal to $\mathcal{Z}$, defined with respect to the weight $|x|^{-\alpha}$ induced by the metric. This occurrence corresponds to the `optimal' transmission across the boundary $\mathcal{Z}$, with no discrepancy in spatial density and momentum between left and right: the dynamics generated by $H_{\alpha,B}$ develops the best `bridging' between the left and the right side of the Grushin cylinder. For this reason $H_{\alpha,B}$ shall be referred to as the `\emph{bridging extension}' of $H_\alpha$. It is precisely the bridging extension introduced by Boscain and Prandi in \cite[Proposition 3.11]{Boscain-Prandi-JDE-2016}, which we recover here \emph{as a distinguished element of our general classification}.

One further observation on Theorem \ref{thm:H_alpha_fibred_extensions} (see also Remark \ref{rem:regularitydeficiencyspace} for a more explicit comment on this point) concerns the regularity \eqref{eq:traceregularity} of the boundary functions $f_0$ and $f_1$ in terms of which the various conditions of self-adjointness are expressed. In fact, \eqref{eq:DHalpha_cond2_limits-1}-\eqref{eq:DHalpha_cond2_limits-2} define so-called `\emph{trace maps}'
\[
\begin{split}
& \gamma_0^\pm:\mathcal{D}(\widetilde{H}_\alpha)\cap L^2(M^\pm,\ud\mu_\alpha)\to H^{s_0,\pm}(\mathbb{S}^1)  \\
& \gamma_1^\pm:\mathcal{D}(\widetilde{H}_\alpha)\cap L^2(M^\pm,\ud\mu_\alpha)\to H^{s_1,\pm}(\mathbb{S}^1)\,,
\end{split}
\]
(actually, concrete examples of what one customarily refers to as `\emph{abstract trace maps}' -- see, e.g., \cite[Sect.~2]{Posilicano-2014-sum-trace-maps}), where $\widetilde{H}_\alpha$ stands for one of the considered extensions of $H_\alpha$. Noticeably, although we do not carry this comparison further on here, and we defer it to a subsequent study, our \eqref{eq:traceregularity} is completely consistent with the abstract analysis developed recently by Posilicano \cite{Posilicano-2014-sum-trace-maps} of the trace space, and hence also, isomorphically speaking, of the deficiency space, of the operator $H_\alpha$.

Once again it is worth underlying that also in the language of \cite{Posilicano-2014-sum-trace-maps}, namely the framework of direct sum of trace maps, the hard part of the job that remains to be done, and that we completed here, for the classification of the (local) extensions of $H_\alpha$, is the passage from the `natural' direct sum setting, namely the description of the restrictions of the direct sum operator $\mathscr{H}_\alpha^*=\bigoplus_{k\in\mathbb{Z}}(A_\alpha(k))^*$, to the original Grushin setting, namely the corresponding descriptions of the restrictions of $H_\alpha^*$.

 Last, we should like to emphasise that the present analysis, characterising the relevant Hamiltonians of quantum transmission across the singularity of a Grushin-type cylinder, naturally opens new related and significant questions which have \emph{already} come on top of the agenda of this segment of research on self-adjoint Laplace-Beltrami operators on degenerate Riemannian manifolds.
 
 Two directions, in particular, are worth mentioning. First, each such Hamiltonian encoding a protocol of transmission, or at least of non-trivial interaction with the boundary, it is of relevance to investigate the spectral and scattering properties of the various admissible protocols. This includes the study of transmission and reflection coefficients (the fraction of flux of particles, initially shot along the cylinder from the far infinity against $\mathcal{Z}$, and which trespass the singularity locus or re-bounce backwards), as well as the bound states of each admissible Hamiltonian. A systematic analysis in this direction has been recently completed in \cite{GM-Grushin3-2020}.
 
 The other notable direction, more analytic in nature, is the study of the qualitative (dispersive, in particular) properties of the heat and the Schr\"{o}dinger flow generated by the self-adjoint operators classified in Theorem \ref{thm:H_alpha_fibred_extensions} -- in the case of the heat flow, the problem can be re-phrased in terms of Brownian motion on the Grushin-type cylinder, its possible stochastic completeness, recurrence, etc., in the spirit of \cite{Fukushima-Oshima-Takeda,Boscain-Prandi-JDE-2016}. A general discussion on this problem for the Grushin-type cylinder, with first numerical evidence, have been recently developed in \cite{GMP-heat-2021}.

\bigskip

 \textbf{Notation.} Besides all the standard functional-analytic and operator-theoretic notation adopted in this work, let us specify the following symbols and conventions.
 
 \begin{center}
\begin{tabular}{ ccl } 
 $\mathbb{R}^+$ & & $(0,+\infty)$, \emph{open} right half-line \\ 
 $\mathbb{R}^-$ & & $(-\infty,0)$, \emph{open} left half-line  \\ 
 $\mathring{K}$ & & interior of the subset $K\subset \mathbb{R}$ \\
 $\langle x\rangle$ & & $\sqrt{1+x^2}$ \\ 
 $\mathbbm{1}$ & & identity operator, acting on the space that is clear from the context \\
 $\mathbb{O}$ & & zero operator, acting on the space that is clear from the context \\
 $\mathbf{1}_K$ & & characteristic function of the set $K$ \\
 $\langle\cdot,\cdot\rangle$ & & Hilbert scalar product, anti-linear in the first entry \\
 $\delta_{k,\ell}$ & & Kronecker delta \\
 $V^\perp$ & & Hilbert orthogonal complement of the subspace $V$ \\
 $\dotplus$ & & direct sum between vector spaces \\
 $\oplus$ & & (if referred to operators) reduced direct sum of operators \\
 $\oplus$ & & (if referred to vector spaces) Hilbert orthogonal direct sum \\
 $\boxplus$ & & Hilbert orthogonal direct sum of non-closed subspaces \\
 $\sim$ & & identity up to the leading order and/or to a multiplicative constant.
\end{tabular}
\end{center}

\section{Preparatory materials}\label{sec:preparatory_direct-integral}

\subsection{Unitary equivalence to a constant-fibre orthogonal sum structure}~

Following the same steps we made in \cite{GMP-Grushin-2018}, let us introduce a natural, unitarily equivalent re-formulation of the problem of the self-adjoint extensions of $H_\alpha$ in $L^2(M,\ud\mu_\alpha)$, where $M=(\mathbb{R}\setminus\{0\})\times\mathbb{S}^1$ and $\ud\mu_\alpha=|x|^{-\alpha}\ud x\,\ud y$.

We recall that $H_\alpha$ is reduced with respect to the decomposition \eqref{eq:decomp+-} -- see \eqref{eq:H-HpHm} above -- hence it is natural to manipulate $H_\alpha^+$ and $H_\alpha^-$ separately.

We shall map $L^2(M^\pm,\ud\mu_\alpha)$ unitarily onto the space
\begin{equation}\label{eq:defHpm-uptoiso}
\cH^\pm\;:=\;\bigoplus_{k\in\mathbb{Z}} L^2(\mathbb{R}^\pm,\ud x)\;\cong\;\ell^2(\mathbb{Z},L^2(\mathbb{R}^\pm,\ud x))\;\cong\;L^2(\mathbb{R}^\pm,\ud x)\otimes\ell^2(\mathbb{Z})
\end{equation}
(with obvious canonical isomorphisms in the r.h.s.~of \eqref{eq:defHpm-uptoiso}).

We first apply the unitary transformation
\begin{equation}\label{eq:unit1}
\begin{split}
 U_\alpha^\pm:L^2(\mathbb{R}^\pm\times\mathbb{S}^1,|x|^{-\alpha}\ud x\ud y)&\stackrel{\cong}{\longrightarrow}L^2(\mathbb{R}^\pm\times\mathbb{S}^1,\ud x\ud y)\,, \\
 f &\; \mapsto\;\phi\;:=\;  |x|^{-\frac{\alpha}{2}}f
\end{split}
\end{equation}
(thus restoring the standard Euclidean metric by removing the weight), 
and then the further unitary transformation
\begin{equation}\label{eq:unit2}
 \mathcal{F}_2^{\pm}:L^2(\mathbb{R}^\pm\times\mathbb{S}^1,\ud x\ud y)\stackrel{\cong}{\longrightarrow}
 L^2(\mathbb{R}^\pm,\ud x)\otimes\ell^2(\mathbb{Z})\;=:\;\cH^\pm\,,
\end{equation}
consisting of the discrete Fourier transform in the $y$-variable only, that is, the mapping
\begin{equation}\label{eq:defF2}
 \begin{split}
  &\phi\;\mapsto\;\psi\;\equiv\;(\psi_k)_{k\in\mathbb{Z}}\,, \\
  e_k(y)\;:=\;\frac{e^{\ii k y}}{\sqrt{2\pi}}\,,&\qquad \psi_k(x)\;:=\int_0^{2\pi}\overline{e_k(y)}\,\phi(x,y)\,\ud y\,,\qquad x\in\mathbb{R}^{\pm}\,.
 \end{split}
\end{equation}
This is the customary way to re-write $\phi(x,y)=\sum_{k\in\mathbb{Z}}\psi_k(x)e_k(y)$ in the $L^2$-convergent sense. Each $\psi_k\in L^2(\mathbb{R}^\pm,\ud x)$ and $\sum_{k\in\mathbb{Z}}\|\psi_k\|_{L^2}^2<+\infty$.

Thus,
\begin{equation}
  \cH^\pm\;=\;\mathcal{F}_2^{\pm} U_\alpha^{\pm}L^2(M^\pm,\ud\mu_\alpha)
\end{equation}
with a natural \emph{`constant-fibre' orthogonal sum structure} on such space, namely,
\begin{equation}\label{L^2directDecomp}
  \cH^\pm\;=\;\bigoplus_{k\in\mathbb{Z}} \;\mathfrak{h}^\pm\,,\qquad \mathfrak{h}_\pm\;:=\;L^2(\mathbb{R}^\pm,\ud x)\,,
\end{equation}
with \emph{constant fibre} $\mathfrak{h}_\pm$ and scalar product 
\begin{equation}
 \big\langle (\psi_k)_{k\in\mathbb{Z}} , (\widetilde{\psi}_k)_{k\in\mathbb{Z}} \big\rangle_{\cH^{\pm}}=\;\sum_{k\in\mathbb{Z}}\,\int_{\mathbb{R}^\pm}\overline{\psi_k(x)}\,\widetilde{\psi}_k(x)\,\ud x\;\equiv\;\sum_{k\in\mathbb{Z}}\,\langle \psi_k,\widetilde{\psi}_k\rangle_{\mathfrak{h}^\pm}\,.
\end{equation}

Analogously, and with self-explanatory notation, $\mathcal{F}_2:=\mathcal{F}_2^-\oplus\mathcal{F}_2^+$, $U_\alpha:=U_\alpha^-\oplus U_\alpha^+$, whence $\mathcal{F}_2U_\alpha=\mathcal{F}_2^- U_\alpha^-\oplus \mathcal{F}_2^+ U_\alpha^+$, and
\begin{equation}\label{eq:Hxispace}
 \cH\;:=\;\mathcal{F}_2U_\alpha L^2(M,\ud\mu_\alpha)\;\cong\;\ell^2(\mathbb{Z},L^2(\mathbb{R},\ud x))\;\cong\;\cH^-\oplus\cH^+\;\cong\;\bigoplus_{k\in\mathbb{Z}}\;\mathfrak{h}
\end{equation}
with \emph{`bilateral' fibre}
\begin{equation}
 \mathfrak{h}\;:=\;L^2(\mathbb{R}^-,\ud x)\oplus L^2(\mathbb{R}^+,\ud x)\;\cong\;L^2(\mathbb{R},\ud x)\,.
\end{equation}

The above scheme is the discrete version of the \emph{constant-fibre direct integral structure}, the well-known natural formalism for the multiplication operator form of the spectral theorem \cite[Sect.~7.3]{Hall-2013_QuantumTheoryMathematicians}, as well as for the analysis of Schr\"{o}dinger's operators with periodic potentials \cite[Sect.~XIII.16]{rs4}. 

By means of \eqref{eq:unit1} and \eqref{eq:unit2} we obtain the operators
\begin{equation}\label{eq:tildeHalpha}
 \mathsf{H}_\alpha^\pm\;:=\;U_\alpha^\pm \,H_\alpha^\pm \,(U_\alpha^\pm)^{-1}
\end{equation}
acting on $L^2(\mathbb{R}^\pm\times\mathbb{S}^1,\ud x\ud y)$ as
\begin{equation}\label{eq:explicit-tildeHalpha}
 \begin{split}
  \mathcal{D}(\mathsf{H}_\alpha^\pm)\;&=\;C^\infty_c(\mathbb{R}^\pm_x\times\mathbb{S}^1_y) \,,\\
  \mathsf{H}_\alpha^\pm\phi\;&=\;\Big(-\frac{\partial^2}{\partial x^2}- |x|^{2\alpha}\frac{\partial^2}{\partial y^2}+\frac{\,\alpha(2+\alpha)\,}{4x^2}\Big)\phi\,,
 \end{split}
\end{equation}
as well as the operators 
\begin{equation}\label{eq:unitary_transf_pm}
 \mathscr{H}_\alpha^\pm\;:=\;\mathcal{F}^{\pm}_2\, U_\alpha^\pm \,H_\alpha^\pm \,(U_\alpha^\pm)^{-1}(\mathcal{F}_2^{\pm})^{-1}\;=\;\mathcal{F}^{\pm}_2\,\mathsf{H}_\alpha^\pm(\mathcal{F}_2^{\pm})^{-1}
\end{equation}
acting on $\cH^{\pm}$ as
\begin{equation}\label{eq:actiondomainHalpha}
  \begin{split}
  \mathcal{D}(\mathscr{H}_\alpha^\pm)\;&=\;\Big\{\psi\equiv(\psi_k)_{k\in\mathbb{Z}}\in \bigoplus_{k\in\mathbb{Z}} L^2(\mathbb{R}^\pm,\ud x)\,\Big|\,\psi\in\mathcal{F}_2^{\pm}C^\infty_c(\mathbb{R}^\pm_x\times\mathbb{S}^1_y)\Big\}, \\
    \mathscr{H}_\alpha^\pm\psi\;&=\;\Big(\Big(-\frac{\ud^2}{\ud x^2}+k^2 |x|^{2\alpha}+\frac{\,\alpha(2+\alpha)\,}{4x^2}\Big)\psi_k\Big)_{k\in\mathbb{Z}} \,.
 \end{split}
\end{equation}
Completely analogous formulas hold for $\mathsf{H}_\alpha$ and $\mathscr{H}_\alpha$, defined in the obvious way.

In particular, for each $\psi^\pm\in\mathcal{D}(\mathscr{H}_\alpha^\pm)$ the component functions $\psi^\pm_k(\cdot)$ are compactly supported in $x$ inside $\mathbb{R}^\pm$ for every $k\in\mathbb{Z}$, and moreover
\begin{equation}\label{eq:forcondii}
 \begin{split}
  \sum_{k\in\mathbb{Z}}\Big\|&\Big(-\frac{\ud^2}{\ud x^2}+k^2 |x|^{2\alpha}+\frac{\,\alpha(2+\alpha)\,}{4x^2}\Big)\psi^\pm_k \Big\|_{L^2(\mathbb{R}^\pm,\ud x)}^2 \\
  &=\;\|\mathscr{H}_\alpha^\pm\psi^\pm\|_{\cH^\pm}^2\;=\;\|(\mathcal{F}_2^{\pm})^{-1}\mathscr{H}_\alpha^\pm\mathcal{F}^{\pm}_2\phi^\pm\|_{L^2(\mathbb{R}^\pm_x\times\mathbb{S}^1_y)}^2 \\
  &=\;\Big\| \Big(-\frac{\partial^2}{\partial x^2}- |x|^{2\alpha}\frac{\partial^2}{\partial y^2}+\frac{\,\alpha(2+\alpha)\,}{4x^2}\Big)\phi^\pm  \Big\|_{L^2(\mathbb{R}^\pm_x\times\mathbb{S}^1_y)}^2\;<\;+\infty\,,
 \end{split}
\end{equation}
where $\phi^\pm=\mathcal{F}_2^\pm\psi\in C^\infty_c(\mathbb{R}^\pm_x\times\mathbb{S}^1_y)$.

%

The above construction establishes a unitarily equivalent version of the operators of interest. Thus, the self-adjointness problem for $H_\alpha^\pm$ in $L^2(M^\pm,\ud\mu_\alpha)$ is tantamount as the self-adjointness problem for $\mathscr{H}_\alpha^\pm$ in $\cH^\pm$, and the same holds for $H_\alpha$ with respect to $\mathscr{H}_\alpha$. Furthermore, when non-trivial self-adjoint extensions exist for $H_\alpha^\pm$ (resp., $H_\alpha$), they can be equivalently (and in practice more conveniently) identified as self-adjoint extensions of $\mathscr{H}_\alpha^\pm$ (resp., $\mathscr{H}_\alpha$). 


In fact, such an analysis for $\mathscr{H}_\alpha^\pm$ (resp., $\mathscr{H}_\alpha$) is naturally boiled down to the analysis of such operators \emph{on each fibre} and a subsequent recombination of the information over the whole \emph{constant-fibre orthogonal sum}.

To develop this approach, it is convenient to introduce on each fibre $\mathfrak{h}_\pm$, thus for each $k\in\mathbb{Z}$, the operators
\begin{equation}\label{eq:Axi}
 A_\alpha^\pm(k)\;:=\;-\frac{\ud^2}{\ud x^2}+k^2 |x|^{2\alpha}+\frac{\,\alpha(2+\alpha)\,}{4x^2}\,,\quad\,\mathcal{D}(A_\alpha^\pm(k))\;:=\;C^\infty_c(\mathbb{R}^\pm)\,,
\end{equation}
and similarly on $\mathfrak{h}$ we define
\begin{equation}\label{eq:Axibilateral}
 \begin{split}
  \mathcal{D}(A_\alpha(k))\;&:=\;C^\infty_c(\mathbb{R}^-)\boxplus C^\infty_c(\mathbb{R}^+) \\
  A_\alpha(k)\;&:=\;A_\alpha^-(k)\oplus A_\alpha^+(k)\,,
 \end{split}
\end{equation}
where the notation `$\boxplus$' simply indicates the direct sum of two (non-complete) subspaces of each summand of the orthogonal sum of two Hilbert spaces.

By construction the map $\mathbb{Z}\ni k\mapsto  A_{\alpha}(k)$ has values in the space of densely defined, symmetric, non-negative operators on $\mathfrak{h}$, \emph{all} with the \emph{same} domain irrespectively of $k$. In each $A_{\alpha}(k)$ the integer $k$ plays the role of a fixed parameter. Moreover, all the $A_{\alpha}(k)$'s are closable and each $\overline{A_{\alpha}(k)}$ is non-negative and with the same dense domain in $\mathfrak{h}$.

%

As non-trivial self-adjoint extensions are suitable restrictions of the adjoints, let us characterise the latter operators. As we argued already in \cite[Lemma 3.2]{GMP-Grushin-2018}, the adjoint of $\mathsf{H}_\alpha$ is the maximal realisation of the same differential operator, that is,
\begin{equation}\label{eq:HHalphaadjoint}
 \begin{split}
  \mathcal{D}((\mathsf{H}_\alpha^\pm)^*)\;&=\;\left\{
  \begin{array}{c}
   \phi\in L^2(\mathbb{R}^\pm\times\mathbb{S}^1,\ud x \ud y)\textrm{ such that} \\
   \Big(-\frac{\partial^2}{\partial x^2}- |x|^{2\alpha}\frac{\partial^2}{\partial y^2}+\frac{\,\alpha(2+\alpha)\,}{4x^2}\Big)\phi\in L^2(\mathbb{R}^\pm\times\mathbb{S}^1,\ud x \ud y)
  \end{array}
  \right\}, \\
  (\mathsf{H}_\alpha^\pm)\phi\;&=\;\Big(-\frac{\partial^2}{\partial x^2}- |x|^{2\alpha}\frac{\partial^2}{\partial y^2}+\frac{\,\alpha(2+\alpha)\,}{4x^2}\Big)\phi\,.
 \end{split}
\end{equation}
This, and the unitary equivalence \eqref{eq:unitary_transf_pm}, yields at once
\begin{equation}\label{eq:Hfstar}
  \begin{split}
  \mathcal{D}((\mathscr{H}_\alpha^\pm)^*)\;&=\;
  \left\{\!\!
  \begin{array}{c}
   \psi\equiv(\psi_k)_{k\in\mathbb{Z}}\in \bigoplus_{k\in\mathbb{Z}} L^2(\mathbb{R}^\pm,\ud x)\;\;\textrm{such that} \\ \\
   \;\displaystyle\sum_{k\in\mathbb{Z}}\Big\|\Big(-\frac{\ud^2}{\ud x^2}+k^2 |x|^{2\alpha}+\frac{\,\alpha(2+\alpha)\,}{4x^2}\Big)\psi_k \Big\|_{L^2(\mathbb{R}^\pm,\ud x)}^2\;<\;+\infty
  \end{array}
  \!\!\right\}, \\
   (\mathscr{H}_\alpha^\pm)^*\psi\;&=\;\Big(\Big(-\frac{\ud^2}{\ud x^2}+k^2 |x|^{2\alpha}+\frac{\,\alpha(2+\alpha)\,}{4x^2}\Big)\psi_k\Big)_{k\in\mathbb{Z}} \,.
 \end{split}
\end{equation}
Clearly, $\frac{\ud^2}{\ud x^2}$ is a weak derivative in \eqref{eq:Hfstar} and a classical derivative in \eqref{eq:actiondomainHalpha}.
Furthermore, with respect to the decomposition \eqref{eq:Hxispace},
\begin{equation}\label{eq:Hfstar_sum}
 (\mathscr{H}_\alpha)^*\;=\;(\mathscr{H}_\alpha^-)^*\oplus(\mathscr{H}_\alpha^+)^*\,.
\end{equation}

Analogously to \eqref{eq:Hfstar}, as we argued already in \cite[Eq.~(3.12)]{GMP-Grushin-2018}, one has
\begin{equation}\label{eq:Afstar}
 \begin{split}
  \mathcal{D}(A_{\alpha}^\pm(k)^*)\;&=\;
  \left\{\!\!
  \begin{array}{c}
   g^\pm\in L^2(\mathbb{R}^\pm,\ud x)\;\;\textrm{such that} \\
   \big(-\frac{\ud^2}{\ud x^2}+k^2 |x|^{2\alpha}+\frac{\,\alpha(2+\alpha)\,}{4x^2}\big)g^\pm\in L^2(\mathbb{R}^\pm,\ud x)
  \end{array}
  \!\!\right\}, \\
   A_{\alpha}^\pm(k)^*g^\pm\;&=\;\Big(-\frac{\ud^2}{\ud x^2}+k^2 |x|^{2\alpha}+\frac{\,\alpha(2+\alpha)\,}{4x^2}\Big) g^\pm\,,
 \end{split}
\end{equation}
and
\begin{equation}\label{eq:Afstar_sum}
 A_{\alpha}(k)^*\;=\;A_{\alpha}^-(k)^*\oplus A_{\alpha}^+(k)^*\,.
\end{equation}

\subsection{Orthogonal sum operators}~

Next, it is convenient to recall the structure of operators acting on $\cH$ (resp., on $\cH^\pm$) in the form of infinite orthogonal sum, that is, operators that are reduced by the orthogonal decomposition \eqref{eq:Hxispace} (resp., \eqref{L^2directDecomp}). By this we mean an operator $T$ for which there is a collection $(T(k))_{k\in\mathbb{Z}}$ of operators on $\mathfrak{h}$ (resp., on $\mathfrak{h}^\pm$) such that
\begin{equation}\label{eq:Tdirectint}
  \begin{split}
  \mathcal{D}(T)\;&:=\;\left\{\psi\equiv(\psi_k)_{k\in\mathbb{Z}}\in\cH\,\left|\! 
  \begin{array}{l}
   \mathrm{(i)}\quad\psi_k\in\mathcal{D}(T(k)) \;\;\forall k\in\mathbb{Z} \\
   \mathrm{(ii)}\,\displaystyle\sum_{k\in\mathbb{Z}}\big\|T(k)\psi_k\big\|_{\mathfrak{h}}^2<+\infty
  \end{array}
  \!\!\right.\right\}, \\
  T\psi\;&:=\;\big(T(k)\,\psi_k\big)_{k\in\mathbb{Z}}
 \end{split}
\end{equation}
(and analogous formulas on each half-fibre), 
the shorthand for which is
\begin{equation}\label{eq:T_direct_integral}
 T\;=\;\bigoplus_{k\in\mathbb{Z}} \,T(k)\,.
\end{equation}
Thus, $T(k)=T\upharpoonright (\mathcal{D}(T)\cap \mathfrak{h}_k)$, where $\mathfrak{h}_k$ is the fibre $\mathfrak{h}$ counted in the $k$-th position with respect to the sum  \eqref{eq:Hxispace}, and each $\mathfrak{h}_k$ is a reducing subspace for $T$.
A convenient shorthand for the above expression for $\mathcal{D}(T)$ is
\begin{equation}\label{eq:shorthandDTDTk}
 \mathcal{D}(T)\;=\;\op_{k\in\mathbb{Z}}\mathcal{D}(T(k))\,.
\end{equation}
As commented already, we write `$\boxplus$' instead of `$\oplus$' to denote that the infinite orthogonal sum involves now non-closed subspaces of $\cH$.

\begin{remark}\label{rem:Halphanotsum}
It is crucial to observe that $\mathscr{H}_\alpha$ is \emph{not} decomposable as $\bigoplus_{k\in\mathbb{Z}} A_\alpha(k)\,$ in the sense of formula \eqref{eq:Tdirectint}, and in fact
\begin{equation}\label{eq:Halphanotsum}
 \mathscr{H}_\alpha\;\varsubsetneq\;\bigoplus_{k\in\mathbb{Z}} A_\alpha(k)\,.
\end{equation}
Indeed, 
%
%
%
%
%
%
as seen in \eqref{eq:forcondii},
\[
 \sum_{k}\|A_\alpha(k)\psi_k\|_{\mathfrak{h}}^2\;=\; \textstyle\Big\| \Big(-\frac{\partial^2}{\partial x^2}- |x|^{2\alpha}\frac{\partial^2}{\partial y^2}+\frac{\,\alpha(2+\alpha)\,}{4x^2}\Big)\phi  \Big\|_{L^2(\mathbb{R}^\pm_x\times\mathbb{S}^1_y)}^2\,,
\]
where $\psi=\mathcal{F}_2\phi$,
the finiteness of which is guaranteed by $\phi\in C^\infty_c(\mathbb{R}^\pm_x\times\mathbb{S}^1_y)$ in the case when $\psi\in\mathcal{D}(\mathscr{H}_\alpha)$, but of course is also guaranteed by a much larger class of $\phi$'s that are still smooth and compactly supported in $x$, without being smooth in $y$ -- thus corresponding to $\psi$'s that do not belong to $\mathcal{D}(\mathscr{H}_\alpha)$. This is completely analogous to what we observed in \cite[Remark 2.2]{GMP-Grushin-2018}.
\end{remark}

Most relevantly for our purposes, the closure and the adjoint pass through the orthogonal sum of operators.

\begin{lemma}\label{lem:sumstar-sumclosure}
 If $T=\bigoplus_{k\in\mathbb{Z}} \,T(k)$, then
 \begin{eqnarray}
   T^*\!\!&=&\!\!\bigoplus_{k\in\mathbb{Z}} \,T(k)^* \label{eq:sumstar}\\
   \overline{T}\!\!&=&\!\!\bigoplus_{k\in\mathbb{Z}} \,\overline{T(k)}\,, \label{eq:sumclosure}
 \end{eqnarray}
 where the symbol of operator closure and adjoint clearly refers to the corresponding Hilbert spaces where the considered operators act on.
 Moreover,
 \begin{equation}\label{eq:sumkernel}
  \ker T^*\;=\;\bigoplus_{k\in\mathbb{Z}}\,\ker T(k)^*\,.
 \end{equation}
\end{lemma}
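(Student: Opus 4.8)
The plan is to prove the three identities by directly using the definition \eqref{eq:Tdirectint} of an orthogonal sum operator, exploiting the fact that the reducing subspaces $\mathfrak{h}_k$ are mutually orthogonal and jointly span $\cH$. Throughout I would write a generic $\psi\in\cH$ as $\psi=(\psi_k)_{k\in\mathbb{Z}}$ with $\sum_k\|\psi_k\|_\mathfrak{h}^2<+\infty$, and I would repeatedly use that a vector $\eta=(\eta_k)_k$ is orthogonal to \emph{all} of $\mathcal{D}(T)$ if and only if, fibre by fibre, $\langle\eta_k,T(k)\psi_k\rangle_\mathfrak{h}$ summed against the relevant fibre contributions vanishes. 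The key structural feature that makes everything work is that $\mathcal{D}(T)$ contains, for each fixed $k$, the whole of $\mathcal{D}(T(k))$ embedded in the $k$-th slot (padding by zeros elsewhere), because such a vector trivially satisfies conditions (i) and (ii) in \eqref{eq:Tdirectint}; this lets me probe one fibre at a time.

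First I would prove \eqref{eq:sumstar}. For the inclusion $\bigoplus_k T(k)^*\subseteq T^*$, take $\eta\in\mathcal{D}(\bigoplus_k T(k)^*)$, so $\eta_k\in\mathcal{D}(T(k)^*)$ for every $k$ and $\sum_k\|T(k)^*\eta_k\|_\mathfrak{h}^2<+\infty$; then for any $\psi\in\mathcal{D}(T)$ one computes $\langle\eta,T\psi\rangle_\cH=\sum_k\langle\eta_k,T(k)\psi_k\rangle_\mathfrak{h}=\sum_k\langle T(k)^*\eta_k,\psi_k\rangle_\mathfrak{h}=\langle(T(k)^*\eta_k)_k,\psi\rangle_\cH$, where interchanging the adjoint inside each summand is legitimate since $\psi_k\in\mathcal{D}(T(k))$. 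This shows $\eta\in\mathcal{D}(T^*)$ with $T^*\eta=(T(k)^*\eta_k)_k$. For the reverse inclusion, take $\eta\in\mathcal{D}(T^*)$; testing against the padded vectors $\psi=(0,\dots,0,\psi_j,0,\dots)$ with $\psi_j\in\mathcal{D}(T(j))$ arbitrary forces $\eta_j\in\mathcal{D}(T(j)^*)$ and $(T^*\eta)_j=T(j)^*\eta_j$ for each $j$, and since $T^*\eta\in\cH$ the square-summability $\sum_j\|T(j)^*\eta_j\|_\mathfrak{h}^2<+\infty$ is automatic. Hence $\eta\in\mathcal{D}(\bigoplus_k T(k)^*)$ with matching action, which completes \eqref{eq:sumstar}.

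Next I would deduce \eqref{eq:sumclosure} and \eqref{eq:sumkernel}. For the closure, the cleanest route is to apply \eqref{eq:sumstar} twice: since $\overline{T}=T^{**}$ and $\overline{T(k)}=T(k)^{**}$ for each $k$ (the relevant operators being densely defined and closable, as noted before Remark \ref{rem:Halphanotsum}), one gets $\overline{T}=(T^*)^*=\big(\bigoplus_k T(k)^*\big)^*=\bigoplus_k T(k)^{**}=\bigoplus_k\overline{T(k)}$, where the third equality is again an instance of \eqref{eq:sumstar} applied to the orthogonal sum $\bigoplus_k T(k)^*$. Finally, \eqref{eq:sumkernel} is immediate from \eqref{eq:sumstar}: a vector $\eta=(\eta_k)_k$ lies in $\ker T^*$ precisely when $(T(k)^*\eta_k)_k=0$ in $\cH$, i.e.\ when $T(k)^*\eta_k=0$ in $\mathfrak{h}$ for every $k$, which says exactly $\eta_k\in\ker T(k)^*$ for all $k$ with $\sum_k\|\eta_k\|_\mathfrak{h}^2<+\infty$; this is the orthogonal sum $\bigoplus_k\ker T(k)^*$.

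The only point requiring genuine care — and the step I would flag as the main obstacle — is the domain bookkeeping in \eqref{eq:sumstar}, specifically verifying that the square-summability condition (ii) of \eqref{eq:Tdirectint} is correctly handled on both sides rather than silently assumed. The subtlety is that $\mathcal{D}(T)$ is \emph{not} all sequences with $\psi_k\in\mathcal{D}(T(k))$ but only those additionally satisfying $\sum_k\|T(k)\psi_k\|_\mathfrak{h}^2<+\infty$; one must check that this constraint does not shrink $\mathcal{D}(T)$ so much that it fails to be dense or fails to separate fibres. Density follows because finitely-supported sequences of fibre-domain elements are already dense, and these automatically satisfy (ii); fibre-separation is exactly the padded-vector argument above. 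Once one confirms that the adjoint computation only ever pairs a fixed $\eta$ against these admissible $\psi$, the interchange of summation and inner product is justified termwise and no convergence issue survives, so the identities follow as stated.
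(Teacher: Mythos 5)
Your proof is correct and takes essentially the same approach as the paper's: both inclusions in \eqref{eq:sumstar} are obtained by the identical fibre-localisation (padded-vector) argument plus the direct termwise computation, and \eqref{eq:sumclosure} and \eqref{eq:sumkernel} are then deduced from \eqref{eq:sumstar} exactly as in the paper (via $\overline{T}=T^{**}$, i.e.\ applying \eqref{eq:sumstar} to $T^*$, and by inspecting kernels, respectively). Your additional care about the square-summability condition (ii) and the density of $\mathcal{D}(T)$ is sound and matches the paper's standing assumptions that each $T(k)$ is densely defined and closable.
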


\begin{proof}
 Let $\psi\in\mathcal{D}(T^*)$: then there exists $\eta\in\cH$ such that
  \[
  \sum_{k\in\mathbb{Z}}\langle\eta_k,\xi_k\rangle_{\mathfrak{h}}\;=\;\langle \eta,\xi\rangle_{\cH}\;=\;\langle\psi,T\,\xi\rangle_{\cH}\;=\;\sum_{k\in\mathbb{Z}}\langle\psi_k,T(k)\,\xi_k\rangle_{\mathfrak{h}}\qquad\forall \xi\in\mathcal{D}(T)\,.
 \]
  By localising $\xi$ separately in each fibre $\mathfrak{h}_k$ one then deduces that for each $k\in\mathbb{Z}$ $\psi_k\in\mathcal{D}(T(k)^*)$ and $\eta_k=T(k)^*\psi_k$, whence also $\sum_{k\in\mathbb{Z}}\big\|T(k)^*\psi_k\big\|_{\mathfrak{h}}^2=\|\eta\|^2_\cH<+\infty$. This means precisely that $\psi\in\mathcal{D}(\bigoplus_{k\in\mathbb{Z}}T(k)^*)$ and $T^*\psi=(T(k)^*\psi_k)_{k\in\mathbb{Z}}=(\bigoplus_{k\in\mathbb{Z}} T(k)^*)\psi$, i.e.,  $T^*\subset\bigoplus_{k\in\mathbb{Z}} \,T(k)^*$.

  Conversely, if $\psi\in\mathcal{D}(\bigoplus_{k\in\mathbb{Z}}T(k)^*)$, then for each $k\in\mathbb{Z}$ one has $\langle T(k)^*\psi_k,\xi_k\rangle_{\mathfrak{h}}=\langle\psi_k,T(k)\,\xi_k\rangle_{\mathfrak{h}}$ $\forall\xi_k\in\mathcal{D}(T(k))$ and $\sum_{k\in\mathbb{Z}}\|T^*(k)\psi_k\|_{\mathfrak{h}}^2<+\infty$. Setting $\eta_k:=T^*(k)\psi_k$ and $\eta:=(\eta_k)_{k\in\mathbb{Z}}$ one then has that $\eta\in\cH$ and
  \[
  \langle \eta,\xi\rangle_{\cH}\;=\;\sum_{k\in\mathbb{Z}}\langle\eta_k,\xi_k\rangle_{\mathfrak{h}}\;=\;\sum_{k\in\mathbb{Z}}\langle\psi_k,T(k)\xi_k\rangle_{\mathfrak{h}}\;=\;\langle\psi,T\,\xi\rangle_{\cH}\,\qquad\forall \xi\in\mathcal{D}(T)\,.
 \]
  This means that $\psi\in\mathcal{D}(T^*)$ and $T^*\psi=\eta=(T(k)^*\psi_k)_{k\in\mathbb{Z}}=(\bigoplus_{k\in\mathbb{Z}} \,T(k)^*)\psi$, i.e.,  $T^*\supset\bigoplus_{k\in\mathbb{Z}} \,T(k)^*$.
 
  Identity \eqref{eq:sumstar} is thus established, and \eqref{eq:sumclosure} follows from applying  \eqref{eq:sumstar} to the operator $T^*$ instead of $T$. Identity \eqref{eq:sumkernel} is another straightforward consequence of \eqref{eq:sumstar}.
  \end{proof}

 Now, although $\mathscr{H}_\alpha\varsubsetneq\bigoplus_{k\in\mathbb{Z}} A_\alpha(k)$ (Remark \ref{rem:Halphanotsum}), the two operators have actually the same adjoint and the same closure.

\begin{lemma}\label{lem:Halphaadj-decomposable}
 One has
\begin{equation}\label{eq:Halphaadj-decomposable}
 \mathscr{H}_\alpha^*\;=\;\bigoplus_{k\in\mathbb{Z}} \,A_\alpha(k)^*
\end{equation}
and 
\begin{equation}\label{eq:Halphaclosure-decomposable}
 \overline{\mathscr{H}_\alpha}\;=\;\bigoplus_{k\in\mathbb{Z}} \,\overline{A_\alpha(k)}\,,
\end{equation}
i.e.,
\begin{equation}
   \begin{split}
  \mathcal{D}(\mathscr{H}_\alpha^*)\;&:=\;\left\{\psi\equiv(\psi_k)_{k\in\mathbb{Z}}\in\cH\,\left|\! 
  \begin{array}{l}
   \mathrm{(i)}\quad\psi_k\in\mathcal{D}(A_\alpha(k)^*) \;\;\forall k\in\mathbb{Z} \\
   \mathrm{(ii)}\,\displaystyle\sum_{k\in\mathbb{Z}}\big\|A_\alpha(k)^*\psi_k\big\|_{\mathfrak{h}}^2<+\infty
  \end{array}
  \!\!\right.\right\},  \\
  \mathscr{H}_\alpha^*\psi\;&:=\;\big(A_\alpha(k)^*\,\psi_k\big)_{k\in\mathbb{Z}}
 \end{split}
\end{equation}
and 
\begin{equation}
   \begin{split}
  \mathcal{D}(\overline{\mathscr{H}_\alpha})\;&:=\;\left\{\psi\equiv(\psi_k)_{k\in\mathbb{Z}}\in\cH\,\left|\! 
  \begin{array}{l}
   \mathrm{(i)}\quad\psi_k\in\mathcal{D}(\overline{A_\alpha(k)}) \;\;\forall k\in\mathbb{Z} \\
   \mathrm{(ii)}\,\displaystyle\sum_{k\in\mathbb{Z}}\big\|\overline{A_\alpha(k)}\psi_k\big\|_{\mathfrak{h}}^2<+\infty
  \end{array}
  \!\!\right.\right\},  \\
  \overline{\mathscr{H}_\alpha}\psi\;&:=\;\big(\overline{A_\alpha(k)}\,\psi_k\big)_{k\in\mathbb{Z}}\,.
 \end{split}
\end{equation}
Analogously,
 \begin{equation}
 (\mathscr{H}_\alpha^\pm)^*\;=\;\bigoplus_{k\in\mathbb{Z}} \,A_\alpha^\pm(k)^*\,,\qquad \overline{\mathscr{H}_\alpha^\pm}\;=\;\bigoplus_{k\in\mathbb{Z}} \,\overline{A_\alpha^\pm(k)}\,.
\end{equation}
Moreover,
 \begin{equation}\label{eq:Halphaadj-sumkernel}
  \ker \mathscr{H}_\alpha^*\;=\;\bigoplus_{k\in\mathbb{Z}}\,\ker A_\alpha(k)^*\,.
 \end{equation}
\end{lemma}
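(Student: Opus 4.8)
The plan is to establish the adjoint identity \eqref{eq:Halphaadj-decomposable} first, and then to deduce the closure identity \eqref{eq:Halphaclosure-decomposable} and the kernel identity \eqref{eq:Halphaadj-sumkernel} from it by purely abstract manipulations, so that the only genuine computation is a comparison of two explicit domain descriptions. Write $B:=\bigoplus_{k\in\mathbb{Z}}A_\alpha(k)$ for the decomposable operator of Remark \ref{rem:Halphanotsum}. Since $\mathscr{H}_\alpha\varsubsetneq B$, passing to adjoints reverses the inclusion, giving $B^*\subseteq\mathscr{H}_\alpha^*$; and Lemma \ref{lem:sumstar-sumclosure} applied to $B$ identifies $B^*=\bigoplus_{k}A_\alpha(k)^*$. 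Thus the inclusion $\bigoplus_{k}A_\alpha(k)^*\subseteq\mathscr{H}_\alpha^*$ comes for free from general operator theory, and only the reverse inclusion remains.

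For that reverse inclusion I would match explicit domains. The membership condition in \eqref{eq:Hfstar} asserts that $\psi\equiv(\psi_k)_{k\in\mathbb{Z}}$ belongs to $\mathcal{D}(\mathscr{H}_\alpha^*)$ precisely when the sum $\sum_{k}\|(\,\cdots)\psi_k\|_{L^2(\mathbb{R}^\pm)}^2$ is finite, where $(\,\cdots)$ denotes the per-fibre differential expression acting in the weak sense. This finiteness forces each $(\,\cdots)\psi_k$ to lie in $L^2(\mathbb{R}^\pm)$, which by \eqref{eq:Afstar} is exactly the statement $\psi_k\in\mathcal{D}(A_\alpha(k)^*)$ together with $A_\alpha(k)^*\psi_k=(\,\cdots)\psi_k$; the finiteness of the sum is then precisely condition (ii) in the direct-sum domain formula \eqref{eq:Tdirectint} read with $T(k)=A_\alpha(k)^*$. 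Hence $\mathcal{D}(\mathscr{H}_\alpha^*)$ and $\mathcal{D}(\bigoplus_{k}A_\alpha(k)^*)$ coincide as sets and the two operators act identically fibre by fibre, which establishes \eqref{eq:Halphaadj-decomposable}.

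For the closure I would take the adjoint once more. As $\mathscr{H}_\alpha$ is densely defined and symmetric, it is closable, so $(\mathscr{H}_\alpha^*)^*=\overline{\mathscr{H}_\alpha}$. Applying Lemma \ref{lem:sumstar-sumclosure} now to the decomposable operator $\bigoplus_{k}A_\alpha(k)^*$ (legitimate, since each $A_\alpha(k)^*$ is densely defined, containing $\overline{A_\alpha(k)}$) yields $\bigl(\bigoplus_{k}A_\alpha(k)^*\bigr)^*=\bigoplus_{k}(A_\alpha(k)^*)^*=\bigoplus_{k}\overline{A_\alpha(k)}$, the last equality holding because each $A_\alpha(k)$ is closable. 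Combined with \eqref{eq:Halphaadj-decomposable}, this gives $\overline{\mathscr{H}_\alpha}=\bigoplus_{k}\overline{A_\alpha(k)}$, i.e.\ \eqref{eq:Halphaclosure-decomposable}. The kernel identity \eqref{eq:Halphaadj-sumkernel} is then immediate: it is exactly \eqref{eq:sumkernel} of Lemma \ref{lem:sumstar-sumclosure} applied to $B$ (the summability being automatic for the zero sequence), combined with $B^*=\mathscr{H}_\alpha^*$. The half-line versions for $\mathscr{H}_\alpha^\pm$ follow verbatim, using \eqref{eq:Hfstar} and \eqref{eq:Afstar} in their $\pm$ form.

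The conceptually delicate point, which this route exploits rather than confronts, is that the \emph{strict} inclusion $\mathscr{H}_\alpha\varsubsetneq B$ nevertheless produces equal adjoints and equal closures. The reason is that the adjoint is insensitive to the gap between an operator and its closure, and the maximal-realisation formula \eqref{eq:Hfstar} already absorbs that gap: the surplus elements of $\mathcal{D}(B)$ — those $\psi=\mathcal{F}_2\phi$ with $\phi$ smooth and compactly supported in $x$ but not smooth in $y$, as in Remark \ref{rem:Halphanotsum} — sit in $\mathcal{D}(\overline{\mathscr{H}_\alpha})$ and contribute nothing new to $\mathscr{H}_\alpha^*$. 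Consequently, I expect the only step requiring genuine care to be the term-by-term agreement of the two explicit domain characterisations, and in particular the reading of the single finiteness condition in \eqref{eq:Hfstar} as simultaneously enforcing per-fibre maximality (each $A_\alpha(k)^*$) and $\ell^2$-summability across the fibres; once that identification is made, the rest is formal.
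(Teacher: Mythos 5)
Your proposal is correct, but on the decisive step it takes a genuinely different route from the paper. You agree with the paper on the easy inclusion ($\mathscr{H}_\alpha\varsubsetneq\bigoplus_k A_\alpha(k)$ gives $\bigoplus_k A_\alpha(k)^*=(\bigoplus_k A_\alpha(k))^*\subseteq\mathscr{H}_\alpha^*$ via \eqref{eq:sumstar}), on deducing \eqref{eq:Halphaclosure-decomposable} by taking the adjoint of \eqref{eq:Halphaadj-decomposable}, and on reading off the kernel identity from \eqref{eq:sumkernel}. Where you diverge is the reverse inclusion $\mathscr{H}_\alpha^*\subseteq\bigoplus_k A_\alpha(k)^*$: the paper re-runs, verbatim, the fibre-localisation duality argument from the proof of Lemma \ref{lem:sumstar-sumclosure}, observing that this survives the strict inclusion \eqref{eq:Halphanotsum} because single-mode test functions $\xi_k e_k$ with $\xi_k\in C^\infty_c(\mathbb{R}\setminus\{0\})$ still belong to $\mathcal{D}(\mathscr{H}_\alpha)$ — that observation is the whole point of the paper's proof. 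You instead match the explicit maximal-realisation descriptions: \eqref{eq:Hfstar} for $\mathcal{D}((\mathscr{H}_\alpha^\pm)^*)$ against \eqref{eq:Afstar} for $\mathcal{D}(A_\alpha^\pm(k)^*)$, reading the single finiteness condition in \eqref{eq:Hfstar} as per-fibre membership plus the $\ell^2$-summability (ii) of \eqref{eq:Tdirectint}. This is legitimate within the paper's logical order, since \eqref{eq:Hfstar} is displayed before the lemma (it is obtained from \eqref{eq:HHalphaadjoint}, imported from \cite[Lemma 3.2]{GMP-Grushin-2018}, via the unitary equivalence); note, though, that your domain matching actually establishes both inclusions simultaneously, so your first paragraph is redundant. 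The trade-off is clear: your route is shorter but shifts the analytic content onto the cited maximal-realisation fact, making the lemma a bookkeeping exercise whose validity is hostage to \eqref{eq:Hfstar}; the paper's route is self-contained modulo Lemma \ref{lem:sumstar-sumclosure} and, combined with \eqref{eq:Afstar}, would in fact re-derive \eqref{eq:Hfstar} independently, which is why the authors prefer it. Your closing remark about the surplus elements of $\mathcal{D}(\bigoplus_k A_\alpha(k))$ sitting inside $\mathcal{D}(\overline{\mathscr{H}_\alpha})$ is a correct (post-hoc, non-load-bearing) consistency check.
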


\begin{proof}
 On the one hand, $\mathscr{H}_\alpha^*\supset (\bigoplus_{k\in\mathbb{Z}} A_\alpha(k))^*=\bigoplus_{k\in\mathbb{Z}} A_\alpha(k)^*$ (owing to \eqref{eq:Halphanotsum} and \eqref{eq:sumstar} above).

 On the other hand, one proves the opposite inclusion, namely $\mathscr{H}_\alpha^*\subset \bigoplus_{k\in\mathbb{Z}} A_\alpha(k)^*$, following the very same argument used for the proof of $T^*\subset\bigoplus_{k\in\mathbb{Z}} \,T(k)^*$ in Lemma \ref{lem:sumstar-sumclosure}. 
 This is possible because for $\xi\in\mathcal{D}(\mathscr{H}_\alpha)$, one has $\xi_k\in C^\infty_c(\mathbb{R}\setminus\{0\})=\mathcal{D}(A_\alpha(k))$.

 Thus, explicitly, if $\psi\in\mathcal{D}(\mathscr{H}_\alpha^*)$, then there exists $\eta\in\cH$ such that
 \[
  \sum_{k\in\mathbb{Z}}\langle\eta_k,\xi_k\rangle_{\mathfrak{h}}\;=\;\langle \eta,\xi\rangle_{\cH}\;=\;\langle\psi,\mathscr{H}_\alpha\,\xi\rangle_{\cH}\;=\;\sum_{k\in\mathbb{Z}}\langle\psi_k,A_\alpha(k)\,\xi_k\rangle_{\mathfrak{h}}\qquad\forall \xi\in\mathcal{D}(\mathscr{H}_\alpha)\,.
 \]
 By localising $\xi$ separately in each fibre $\mathfrak{h}_k$ one then deduces that for each $k\in\mathbb{Z}$ $\psi_k\in\mathcal{D}(A_\alpha(k)^*)$ and $\eta_k=A_\alpha(k)^*\psi_k$, whence also $\sum_{k\in\mathbb{Z}}\big\|A_\alpha(k)^*\psi_k\big\|_{\mathfrak{h}}^2=\|\eta\|^2_\cH<+\infty$. This means that $\psi\in\mathcal{D}(\bigoplus_{k\in\mathbb{Z}}A_\alpha(k)^*)$ and $\mathscr{H}_\alpha^*\psi=(A_\alpha(k)^*\psi_k)_{k\in\mathbb{Z}}=(\bigoplus_{k\in\mathbb{Z}} A_\alpha(k)^*)\psi$.

 Thus, \eqref{eq:Halphaadj-decomposable} is proved. Applying \eqref{eq:sumstar} to \eqref{eq:Halphaadj-decomposable} then yields \eqref{eq:Halphaclosure-decomposable}. 
\end{proof}

\subsection{Momentum-fibred extensions. Local and non-local extensions.}\label{sec:momentum-fibred-ext}~

The technical point that is going to be crucial for us in studying the self-adjoint extensions of $\mathscr{H}_\alpha^\pm$ and $\mathscr{H}_\alpha$ is the following.

\begin{proposition}\label{prop:BextendsHalpha}
 Let $\{B(k)\,|\,k\in\mathbb{Z}\}$ be a collection of operators on the fibre space $\mathfrak{h}$ (resp., $\mathfrak{h}^\pm$) such that, for each $k$, $B(k)$ is a self-adjoint extension of $A_\alpha(k)$ (resp., $A_\alpha^\pm(k)$), and let
 \begin{equation}\label{eq:B_direct_integral}
 B\;=\;\bigoplus_{k\in\mathbb{Z}} \,B(k)\,.
\end{equation}
 Then $B$ is a self-adjoint extension of $\mathscr{H}_\alpha$ (resp., $\mathscr{H}_\alpha^\pm$).
\end{proposition}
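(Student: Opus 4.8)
The plan is to reduce everything to the two structural lemmas already established, namely Lemma~\ref{lem:sumstar-sumclosure} (closure and adjoint commute with the orthogonal sum) and Lemma~\ref{lem:Halphaadj-decomposable} (the adjoint $\mathscr{H}_\alpha^*$ decomposes as $\bigoplus_{k\in\mathbb{Z}}A_\alpha(k)^*$ even though $\mathscr{H}_\alpha$ itself is only a proper restriction of $\bigoplus_{k\in\mathbb{Z}}A_\alpha(k)$). The verification splits into two independent claims: that $B$ is self-adjoint, and that $B$ extends $\mathscr{H}_\alpha$. The self-adjointness is immediate: since each $B(k)$ is self-adjoint, $B(k)^*=B(k)$, and applying \eqref{eq:sumstar} to $B=\bigoplus_{k\in\mathbb{Z}}B(k)$ gives $B^*=\bigoplus_{k\in\mathbb{Z}}B(k)^*=\bigoplus_{k\in\mathbb{Z}}B(k)=B$, so $B$ is self-adjoint with no further work.

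Next I would show $B\subset\mathscr{H}_\alpha^*$. Because each $B(k)$ is by hypothesis a self-adjoint extension of $A_\alpha(k)$, one has $A_\alpha(k)\subset B(k)$ and hence, taking adjoints fibrewise, $B(k)=B(k)^*\subset A_\alpha(k)^*$ for every $k$. Feeding this into the definition \eqref{eq:Tdirectint} of the orthogonal sum settles the inclusion: any $\psi\in\mathcal{D}(B)$ has $\psi_k\in\mathcal{D}(B(k))\subset\mathcal{D}(A_\alpha(k)^*)$ with $A_\alpha(k)^*\psi_k=B(k)\psi_k$, whence $\sum_{k\in\mathbb{Z}}\|A_\alpha(k)^*\psi_k\|_{\mathfrak{h}}^2=\sum_{k\in\mathbb{Z}}\|B(k)\psi_k\|_{\mathfrak{h}}^2<+\infty$. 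Thus $\psi\in\mathcal{D}(\bigoplus_{k\in\mathbb{Z}}A_\alpha(k)^*)$ and the two operators agree on $\psi$, so $\bigoplus_{k\in\mathbb{Z}}B(k)\subset\bigoplus_{k\in\mathbb{Z}}A_\alpha(k)^*$, which by \eqref{eq:Halphaadj-decomposable} is precisely $B\subset\mathscr{H}_\alpha^*$.

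Finally I would convert this into the desired extension property by passing to adjoints once more. Since $\mathscr{H}_\alpha$ is densely defined and symmetric it is closable, and $\mathscr{H}_\alpha^*$ is densely defined, so reversing the inclusion $B\subset\mathscr{H}_\alpha^*$ yields $\overline{\mathscr{H}_\alpha}=(\mathscr{H}_\alpha^*)^*\subset B^*=B$, and in particular $\mathscr{H}_\alpha\subset B$. Combined with the self-adjointness already proved, this shows that $B$ is a self-adjoint extension of $\mathscr{H}_\alpha$; the statement for the half-line operators $\mathscr{H}_\alpha^\pm$ follows verbatim with $A_\alpha^\pm(k)$ in place of $A_\alpha(k)$. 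The point worth flagging is that there is no genuine analytic obstacle left, as all the real work has been front-loaded into Lemma~\ref{lem:Halphaadj-decomposable}; the only conceptual subtlety is that, because $\mathscr{H}_\alpha\varsubsetneq\bigoplus_{k\in\mathbb{Z}}A_\alpha(k)$ (Remark~\ref{rem:Halphanotsum}), one \emph{cannot} simply argue that $B$ extends the full direct sum and stop. The extension property must instead be read off at the level of the adjoints, where the strict inclusion disappears and the clean identity \eqref{eq:Halphaadj-decomposable} applies.
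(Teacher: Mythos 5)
Your proof is correct, but it runs along a genuinely different track than the paper's. For self-adjointness, the paper does not invoke Lemma~\ref{lem:sumstar-sumclosure} at all: it checks symmetry and then proves $\mathrm{Ran}(B\pm\ii\mathbbm{1})=\cH$ by hand, solving $(B(k)+\ii\mathbbm{1})\psi_k=\eta_k$ fibrewise and using the uniform resolvent bounds $\|\psi_k\|_{\mathfrak{h}}\leqslant\|\eta_k\|_{\mathfrak{h}}$, $\|B(k)\psi_k\|_{\mathfrak{h}}\leqslant\|\eta_k\|_{\mathfrak{h}}$ to verify that $\psi=(\psi_k)_{k\in\mathbb{Z}}$ lies in $\mathcal{D}(B)$. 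Your alternative, $B^*=\bigoplus_k B(k)^*=\bigoplus_k B(k)=B$ via \eqref{eq:sumstar}, is legitimate (each $B(k)$ is densely defined, so $B$ is, and Lemma~\ref{lem:sumstar-sumclosure} applies) and shorter, at the price of leaning entirely on that lemma; the paper's range argument is self-contained and makes the fibrewise uniformity mechanism explicit. For the extension property, the paper uses the one-line chain $\mathscr{H}_\alpha\subset\bigoplus_k A_\alpha(k)\subset\bigoplus_k B(k)=B$, whereas you take the detour $B\subset\mathscr{H}_\alpha^*$ (via $B(k)\subset A_\alpha(k)^*$ and \eqref{eq:Halphaadj-decomposable}) followed by $\overline{\mathscr{H}_\alpha}=(\mathscr{H}_\alpha^*)^*\subset B^*=B$. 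This duality argument is sound, but note that your closing remark is mistaken: the strict inclusion \eqref{eq:Halphanotsum} of Remark~\ref{rem:Halphanotsum} does \emph{not} obstruct the direct argument, because it points in the favourable direction --- any extension of the \emph{larger} operator $\bigoplus_k A_\alpha(k)$ is a fortiori an extension of the smaller $\mathscr{H}_\alpha$, and since $A_\alpha(k)\subset B(k)$ implies $\bigoplus_k A_\alpha(k)\subset\bigoplus_k B(k)$ directly from the definition \eqref{eq:Tdirectint}, one can indeed ``argue that $B$ extends the full direct sum and stop''; this is exactly what the paper does. Remark~\ref{rem:Halphanotsum} would only be an obstacle if one tried to identify $\mathscr{H}_\alpha$ with the direct sum, e.g.\ to claim that \emph{every} self-adjoint extension of $\mathscr{H}_\alpha$ arises this way --- which neither you nor the paper needs here. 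So all three ingredients of your write-up are valid, but the adjoint detour in the extension step is unnecessary, and the ``conceptual subtlety'' you flag is not one.
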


The proof goes through reasonings that are somewhat standard, but for completeness and later discussion we sketch it here.

\begin{proof}[Proof of Proposition \ref{prop:BextendsHalpha}]
$B$ is an actual extension of $\mathscr{H}_\alpha$, because 
\[
 \mathscr{H}_\alpha\;\subset\;\bigoplus_{k\in\mathbb{Z}} \,A_\alpha(k)\;\subset\;\bigoplus_{k\in\mathbb{Z}} \,B(k)\,.
\]

It is straightforward to see that $B$ is symmetric, so in order to establish the self-adjointness of $B$ one only needs to prove that $\mathrm{ran}(B\pm\ii\mathbbm{1})=\cH$.

For generic $\eta\equiv(\eta_k)_{k\in\mathbb{Z}}\in\cH$ let us then set $\psi_k:=(B(k)+\ii\mathbbm{1})^{-1}\eta_k$ $\forall k\in\mathbb{Z}$. By construction $\psi_k\in\mathcal{D}(B(k))$,  $\|\psi_k\|_{\mathfrak{h}}\leqslant\|\eta_k\|_{\mathfrak{h}}$, and $\|B(k)\psi_k\|_{\mathfrak{h}}\leqslant\|\eta_k\|_{\mathfrak{h}}$, whence also $\sum_{k\in\mathbb{Z}}\|\psi_k\|_{\mathfrak{h}}^2<+\infty$ and  $\sum_{k\in\mathbb{Z}}\|B(k)\psi_k\|_{\mathfrak{h}}^2<+\infty$. Therefore, $\psi\equiv(\psi_k)_{k\in\mathbb{Z}}\in\mathcal{D}(B)$. Moreover, $(B+\ii\mathbbm{1})\psi=((B(k)+\ii\mathbbm{1})\psi_k)_{k\in\mathbb{Z}}=(\eta_k)_{k\in\mathbb{Z}}=\eta$. This proves that $\mathrm{ran}(B +\ii\mathbbm{1})=\cH$. Analogously, $\mathrm{ran}(B -\ii\mathbbm{1})=\cH$.
\end{proof}

Proposition \ref{prop:BextendsHalpha} provides a mechanism for constructing self-adjoint operators $B$ of the form \eqref{eq:B_direct_integral} by re-assembling, fibre by fibre in the momentum number $k$ conjugate to $y$, self-adjoint extensions of the fibre operators $A_\alpha(k)$;  by further exploiting the canonical unitary equivalence
\begin{equation}
 B\;\stackrel{\cong}{\longmapsto}\; \mathcal{F}_2^{-1} U_\alpha^{-1}\;B\:\mathcal{F}_2 U_\alpha
\end{equation}
this yields actual self-adjoint extensions of $H_\alpha$. With self-explanatory meaning, we shall refer to such extensions as `\emph{momentum-fibred extensions}', or simply `\emph{fibred extensions}'.

Thus, fibred extensions have the distinctive feature of being characterised, in position-momentum coordinates $(x,k)$, by boundary conditions on the elements $\psi$ of their domain which connect the behaviour of \emph{each} mode $\psi_k(x)$ as $x\to 0^+$ and $x\to 0^-$, with no crossing conditions between different modes. In other words, such extensions are \emph{local} in momentum -- which is another way we shall refer to them in the following -- whence their primary physical and conceptual relevance.

Evidently, $\mathscr{H}_\alpha$ (and hence $H_\alpha$) admits plenty of extensions that are \emph{non-local} in momentum, namely with boundary condition as $x\to 0^\pm$ that mixes different $k$-modes. 


It is also clear that a generic fibred extension of $\mathscr{H}_\alpha$ may or may not be reduced into a `left' and `right' component by the Hilbert space direct sum \eqref{eq:Hxispace}, whereas $\mathscr{H}_\alpha$ itself certainly is. Indeed, at the level of each fibre, the extension $B(k)$ may or may not be reduced by the sum $\mathfrak{h}=\mathfrak{h}^-\oplus\mathfrak{h}^+$ as is instead $A_\alpha(k)$ by construction (see \eqref{eq:Axibilateral} above).

In fact, the decoupling between left and right half-cylinder may hold for \emph{all} modes $k\in\mathbb{Z}$ or only for some sub-domains of $k$. In the former case, the resulting extension of $\mathscr{H}_\alpha$ is in fact a mere `juxtaposition' of two separate extensions for $\mathscr{H}_\alpha^{\pm}$ in the left/right half-cylinder.

We shall apply the above formalism and the latter considerations in Section \ref{sec:genextscrHa}, where the actual classification of the self-adjoint extensions of $\mathscr{H}_\alpha$ is discussed.

\section{Extensions of the differential operator on each half-fibre}\label{sec:fibre-extensions}

In this Section and in the next one we classify the self-adjoint extensions of the right-fibre operators $A^+_\alpha(k)$ defined in \eqref{eq:Axi} for $\alpha\in[0,1)$ and $k\in\mathbb{Z}$, with respect to the fibre Hilbert space $L^2(\mathbb{R}^+,\ud x)$.

For simplicity of notation, we shall temporarily drop the superscript `$+$' and simply write $A_\alpha(k)$ for $A^+_\alpha(k)$, and $\langle \cdot,\cdot\rangle_{L^2}$ and $\|\cdot\|_{L^2}$ for scalar products and norms taken in $L^2(\mathbb{R}^+)$, with analogous notation for the Sobolev norms. Obviously, the whole discussion can be repeated verbatim for $A^-_\alpha(k)$ in $L^2(\mathbb{R}^-)$ instead of $A^+_\alpha(k)$, with completely analogous conclusions.

As already recalled from \cite[Corollary 3.8]{GMP-Grushin-2018}, for each fixed $\alpha\in[0,1)$ and $ k\in\mathbb{Z}$ $A_\alpha(k)$ has deficiency index 1, hence admits a one-(real-)parameter family of self-adjoint extensions. We reconstruct and classify this family by means of the Kre\u{\i}n-Vi\v{s}ik-Birman extension theory \cite{GMO-KVB2017}.

When $\alpha=0$ the operator $A_\alpha(k)$ is the minimally defined, shifted Laplacian $-\frac{\ud^2}{\ud x^2}+ k^2$ on $L^2(\mathbb{R}^+)$: the family of its self-adjoint realisations is well-known (see, e.g., \cite{GTV-2012,DM-2015-halfline}) and the extension formulas that we find for $\alpha\in(0,1)$ take indeed the usual form for the extensions of the Laplacian in the limit $\alpha\downarrow 0$.

Let us observe preliminarily that not only is $A_\alpha(k)$ non-negative, but also in particular it has strictly positive lower bound for every non-zero $k$. Indeed,
\[
 \min_{x\in\mathbb{R}^+}\Big( k^2 x^{2\alpha}+\frac{\,\alpha(2+\alpha)\,}{4x^2}\Big)\;=\;(1+\alpha)\big(\textstyle{\frac{2+\alpha}{4}}\big)^{\frac{\alpha}{1+\alpha}}|k|^{\frac{2}{1+\alpha}}\;=:\;M_{\alpha,k}\,,
\]
whence
\begin{equation}\label{eq:Axibottom}
 \langle h,A_\alpha(k)h\rangle_{L^2}\;\geqslant\;M_{\alpha,k}\|h\|_{L^2}^2\qquad\forall h\in \mathcal{D}(A_\alpha(k))\,.
\end{equation}
Instead, when $k=0$ it is straightforward to see that
\begin{equation}\label{eq:Axibottom-zero}
 \inf_{h\in\mathcal{D}(A_\alpha(0))\setminus\{0\}}\frac{\langle h,A_\alpha(0)h\rangle_{L^2}}{\|h\|_{L^2}^2}\;=\;0\,.
\end{equation}

Therefore, as long as $k\neq 0$, owing to \eqref{eq:Axibottom} we can apply the Kre\u{\i}n-Vi\v{s}ik-Birman extension theory directly in the setting of a \emph{strictly positive operator}. This programme will be completed in the present Section. The special case $k=0$ is deferred to the next Section, where we highlight the main steps that need be modified -- starting from the auxiliary shifted operator $A_\alpha(0)+\mathbbm{1}$, which has again strictly positive bottom.
%
%

For convenience of notation let us set
\begin{equation}
 C_\alpha\;:=\;\frac{\,\alpha(2+\alpha)}{4}\,.
\end{equation}
Then $C_\alpha\in[0,\frac{3}{4})$. Let us also refer to
\begin{equation}\label{eq:Saxi}
 S_{\alpha,k}\;:=\; -\frac{\ud^2}{\ud x^2}+ k^2 x^{2\alpha}+\frac{C_\alpha}{x^2}
\end{equation}
as the differential operator (with no domain specification) representing the action of both $A_\alpha(k)$ and $A_\alpha(k)^*$, where the derivative is classical or weak depending on the context.

Clearly, in order to characterise the operator closure $\overline{A_\alpha(k)}$ of $A_\alpha(k)$, its Friedrichs extension $A_{\alpha,F}(k)$, as well as any other self-adjoint extension, it suffices to indicate the corresponding domains, for all such operators are restrictions of the adjoint $A_\alpha(k)^*$ and as such they all act with the action of the differential operator $S_{\alpha,k}$.

Here is the main result of this Section.

\begin{theorem}\label{thm:fibre-thm}
 Let $\alpha\in[0,1)$ and $ k\in\mathbb{Z}\!\setminus\!\{0\}$.
 \begin{itemize}
  \item[(i)] The operator closure of $A_\alpha(k)$ has domain
  \begin{equation}\label{eq:thm_Aclosure}
   \mathcal{D}(\overline{A_\alpha(k)})\;=\; H^2_0(\mathbb{R}^+)\cap L^2(\mathbb{R}^+,\langle x\rangle^{4\alpha}\,\ud x)\,.
  \end{equation}
  \item[(ii)] The adjoint of $A_\alpha(k)$ has domain
  \begin{equation}
   \begin{split}
    \mathcal{D}(A_\alpha(k)^*)\;&=\;\left\{\!\!
  \begin{array}{c}
   g\in L^2(\mathbb{R}^+)\;\;\textrm{such that} \\
   \big(-\frac{\ud^2}{\ud x^2}+ k^2 x^{2\alpha}+\frac{\,\alpha(2+\alpha)\,}{4x^2}\big)g\in L^2(\mathbb{R}^+)
  \end{array}
  \!\!\right\} \\
   &=\;\mathcal{D}(\overline{A_\alpha(k)})\dotplus\mathrm{span}\{\Psi_{\alpha,k}\}\dotplus\mathrm{span}\{\Phi_{\alpha,k}\}\,,
   \end{split} 
  \end{equation}
   where $\Phi_{\alpha,k}$ and $\Psi_{\alpha,k}$ are two smooth functions on $\mathbb{R}^+$ explicitly defined, in terms of modified Bessel functions, respectively by formula \eqref{eq:Phi_and_F} and by formulas \eqref{eq:value_of_W}, \eqref{eq:Green}, \eqref{eq:newRGalphaforus}, and \eqref{eq:defPsi} below. Moreover,
   \begin{equation}\label{eq:kerAxistar-in-thm}
  \ker A_\alpha(k)^*\;=\;\mathrm{span}\{\Phi_{\alpha,k}\}\,.
 \end{equation}
   \item[(iii)] The Friedrichs extension of $A_\alpha(k)$ has operator domain
   \begin{equation}\label{eq:thmAFoperator}
    \begin{split}
     \mathcal{D}(A_{\alpha,F}(k))\;&=\;\big\{g\in\mathcal{D}(A_\alpha(k)^*)\,\big|\,g(x)\,\stackrel{x\downarrow 0}{=}\,g_1x^{1+\frac{\alpha}{2}}+o(x^{\frac{3}{2}})\,,\; g_1\in\mathbb{C}\big\} \\
     &=\;\mathcal{D}(\overline{A_\alpha(k)})\dotplus\mathrm{span}\{\Psi_{\alpha,k}\}
    \end{split}
   \end{equation}
    and form domain
   \begin{equation}\label{eq:thmAFform}
    \mathcal{D}[A_{\alpha,F}(k)]\;=\;H^1_0(\mathbb{R}^+)\cap L^2(\mathbb{R}^+,\langle x\rangle^{2\alpha}\,\ud x)\,.
   \end{equation}
    Moreover, $A_{\alpha,F}(k)$ is the only self-adjoint extension of $A_\alpha(k)$ whose operator domain is entirely contained in $\mathcal{D}(x^{-1})$, namely the self-adjointness domain of the operator of multiplication by $x^{-1}$. 
   \item[(iv)]  The self-adjoint extensions of $A_\alpha(k)$ in $L^2(\mathbb{R}^+)$ form the family
   \[
\{ A_\alpha^{[\gamma]}(k)\,|\,\gamma\in\mathbb{R}\cup\{\infty\}\}\,.    
   \]
 The extension with $\gamma=\infty$ is the Friedrichs extension, and for generic $\gamma\in\mathbb{R}$ one has
 \begin{equation}
  \mathcal{D}(A_\alpha^{[\gamma]}(k))\,=\,\big\{g\in\mathcal{D}(A_\alpha(k)^*)\,\big|\,g(x)\,\stackrel{x\downarrow 0}{=}\,g_0 x^{-\frac{\alpha}{2}}+\gamma g_0x^{1+\frac{\alpha}{2}}+o(x^{\frac{3}{2}})\,,\; g_0\in\mathbb{C}\big\}\,.\!\!\!\!\!\!\!\!\!\!
  \end{equation}
 \end{itemize}
\end{theorem}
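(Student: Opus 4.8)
The plan is to reduce every claim to the analysis of the ordinary differential equation $S_{\alpha,k}u=0$ associated with \eqref{eq:Saxi}, and then to feed the resulting information into the Kre\u{\i}n-Vi\v{s}ik-Birman machinery, which is available here because \eqref{eq:Axibottom} guarantees that $A_\alpha(k)$ is strictly positive for $k\neq 0$. Since the deficiency index is $1$, the deficiency space is one-dimensional, and the whole extension family will be a one-real-parameter family together with the Friedrichs extension; thus the bulk of the work is the explicit description of the three building blocks of the theory: $\ker A_\alpha(k)^*$, the closure $\overline{A_\alpha(k)}$, and the Friedrichs extension $A_{\alpha,F}(k)$.

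I would begin with part (ii). A Frobenius analysis of $S_{\alpha,k}u=0$ at $x=0$ gives indicial exponents solving $r(r-1)=C_\alpha$; since $1+4C_\alpha=(1+\alpha)^2$, these are $r_-=-\tfrac{\alpha}{2}$ and $r_+=1+\tfrac{\alpha}{2}$, and because $\alpha\in[0,1)$ both associated solutions are square-integrable near $0$ (\emph{limit-circle} endpoint). At $x=+\infty$ the growth of the potential $k^2x^{2\alpha}$ (with $k\neq0$) puts the endpoint in the \emph{limit-point} case, so there is a unique, up to a scalar, $L^2$ solution. Next I would solve the equation in closed form: the substitution reducing $S_{\alpha,k}u=0$ to a modified Bessel equation of order $\tfrac12$ in the variable $\tfrac{|k|}{1+\alpha}x^{1+\alpha}$ identifies the regular solution $F_{\alpha,k}$ (the $I$-type, $\sim x^{1+\alpha/2}$) and the decaying solution $\Phi_{\alpha,k}$ (the $K$-type, $\sim x^{-\alpha/2}$ near $0$ and super-exponentially small at infinity, with a subdominant $x^{1+\alpha/2}$ contribution coming from the exponential factor). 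Verifying $\Phi_{\alpha,k}\in L^2(\mathbb{R}^+)$ then yields \eqref{eq:kerAxistar-in-thm}.

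For parts (i) and (iii) I would work with weighted a priori estimates. For the closure, starting from $u\in C^\infty_c(\mathbb{R}^+)$ I would expand $\|A_\alpha(k)u\|_{L^2}^2$, integrate the cross terms by parts (no boundary contributions, by compact support), and combine the outcome with Hardy and Hardy–Rellich inequalities to bound $\|u''\|_{L^2}^2+\|x^{2\alpha}u\|_{L^2}^2+\|x^{-2}u\|_{L^2}^2$ by the graph norm; density and the continuity of the $H^2$ traces (the approximants vanishing near $0$) then force $u(0)=u'(0)=0$ and give \eqref{eq:thm_Aclosure}, the $x^{-2}$-integrability at the origin being ensured precisely by the Hardy–Rellich bound. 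For the Friedrichs extension I would close the quadratic form $q[u]=\|u'\|^2+k^2\|x^\alpha u\|^2+C_\alpha\|x^{-1}u\|^2$; Hardy's inequality makes the singular term equivalent to $\|u'\|^2$ on $H^1_0$, so the form domain collapses to \eqref{eq:thmAFform}. The operator domain then comes from the Kre\u{\i}n-Vi\v{s}ik-Birman formula $\mathcal{D}(A_{\alpha,F}(k))=\mathcal{D}(\overline{A_\alpha(k)})\dotplus\mathrm{span}\{\Psi_{\alpha,k}\}$, with $\Psi_{\alpha,k}$ the element of the Friedrichs domain built from $F_{\alpha,k}$, $\Phi_{\alpha,k}$, their Wronskian, and the associated Green kernel, normalised so as to carry exactly the regular asymptotics $x^{1+\alpha/2}$. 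The distinguished role of $A_{\alpha,F}(k)$ then follows by observing that $x^{-\alpha/2}\notin\mathcal{D}(x^{-1})$ near $0$ whereas $x^{1+\alpha/2}\in\mathcal{D}(x^{-1})$, so only the extension with no $x^{-\alpha/2}$ component in its boundary behaviour lies inside $\mathcal{D}(x^{-1})$.

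Finally, part (iv) follows by translating the abstract Kre\u{\i}n-Vi\v{s}ik-Birman parametrisation into boundary data. Writing a generic $g\in\mathcal{D}(A_\alpha(k)^*)=\mathcal{D}(\overline{A_\alpha(k)})\dotplus\mathrm{span}\{\Psi_{\alpha,k}\}\dotplus\mathrm{span}\{\Phi_{\alpha,k}\}$ and reading off the coefficients of $x^{-\alpha/2}$ and $x^{1+\alpha/2}$ in its $x\downarrow0$ expansion gives the two boundary functionals $g_0,g_1$; the self-adjoint restrictions of $A_\alpha(k)^*$ correspond to self-adjoint maps on the one-dimensional $\ker A_\alpha(k)^*$, i.e. to a single real number $\gamma$ (with $\gamma=\infty$ recovering $g_0=0$, the Friedrichs case), yielding the stated condition $g\sim g_0x^{-\alpha/2}+\gamma g_0x^{1+\alpha/2}$. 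I expect the main obstacle to be the rigorous control of these $x\downarrow0$ asymptotics with the claimed $o(x^{3/2})$ remainder — in particular showing that the remainder genuinely belongs to $\mathcal{D}(\overline{A_\alpha(k)})$ — together with the weighted estimates underlying \eqref{eq:thm_Aclosure}, since this is where the interplay between the inverse-square singularity, the $H^2$ regularity, and the Hardy–Rellich inequalities is most delicate.
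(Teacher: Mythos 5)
Your proposal is correct in outline and shares the paper's Kre\u{\i}n--Vi\v{s}ik--Birman skeleton: Bessel reduction for $\ker A_\alpha(k)^*$, the decomposition $\mathcal{D}(A_\alpha(k)^*)=\mathcal{D}(\overline{A_\alpha(k)})\dotplus\mathrm{span}\{\Psi_{\alpha,k}\}\dotplus\mathrm{span}\{\Phi_{\alpha,k}\}$, and the conversion of the one-parameter KVB family into the boundary condition $g_1=\gamma g_0$. For part (i), however, you take a genuinely different route. The paper does \emph{not} prove \eqref{eq:thm_Aclosure} by a priori estimates on $C^\infty_c(\mathbb{R}^+)$: it obtains $\mathcal{D}(\overline{A_\alpha(k)})\subset L^2(\mathbb{R}^+,\langle x\rangle^{4\alpha}\,\ud x)$ from the explicit Green kernel \eqref{eq:Green}, via Schur and Hilbert--Schmidt bounds showing that $x^{2\alpha}k^2R_{G_{\alpha,k}}$ is bounded uniformly in $k$ (Lemma \ref{lem:RGbddsa}, Corollary \ref{cor:RGtoWeightedL2}, Lemma \ref{eq:RGinvertsExtS}), and it obtains the $H^2_0$ part together with the $o(x^{3/2})$, $o(x^{1/2})$ asymptotics from an ODE decomposition controlled by Wronskian functionals (Lemmas \ref{lem:odedecomp}, \ref{prop:EquivalentClosure}, \ref{lem:BehaviourZeroClosure}). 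Your Hardy/Hardy--Rellich expansion does close, but only barely: after integration by parts the net coefficient of $\int x^{-4}|u|^2$ is $C_\alpha^2-\frac{3}{2}C_\alpha$, absorbed by the double-Hardy bound $\int x^{-4}|u|^2\leqslant\frac{16}{9}\|u''\|^2$ with surviving margin $\frac{1}{9}(3-4C_\alpha)^2\|u''\|^2=\frac{1}{9}(1-\alpha)^2(3+\alpha)^2\|u''\|^2$, and for $\alpha>\frac{1}{2}$ the term $-2\alpha(2\alpha-1)k^2\int x^{2\alpha-2}|u|^2$ is covered by $2k^2\int x^{2\alpha}|u'|^2$ plus the cross term $2k^2C_\alpha\int x^{2\alpha-2}|u|^2$ with margin $\frac{1}{2}(1-\alpha)(3\alpha+1)$; both margins vanish as $\alpha\uparrow 1$, consistently with the essential self-adjointness threshold. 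This is in fact the very double-Hardy strategy the paper deploys later, in Lemma \ref{lem:doubleHardy} and Proposition \ref{prop:Hclosurecontrol}, for the two-dimensional closure. What the paper's Green-kernel route buys, and yours does not, is the $k$-uniform bound of Lemma \ref{lem:RGbddsa}(ii), which is indispensable in Lemma \ref{lem:boundedness_x2alphad2y}; so in the paper's economy the kernel estimates cannot be skipped anyway, while your route is the more elementary one if Theorem \ref{thm:fibre-thm} were the sole goal.

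Two steps in your sketch need to be made explicit. First, your estimates yield only $\mathcal{D}(\overline{A_\alpha(k)})\subset H^2_0(\mathbb{R}^+)\cap L^2(\mathbb{R}^+,\langle x\rangle^{4\alpha}\,\ud x)$; the reverse inclusion requires an approximation argument --- cut-offs $\chi(x/\varepsilon)$ near the origin do work, since the commutator errors of size $\varepsilon^{-2}u$ and $\varepsilon^{-1}u'$ on $[\varepsilon,2\varepsilon]$ vanish in $L^2$ thanks to $u=o(x^{3/2})$ and $u'=o(x^{1/2})$ --- or, alternatively, the paper's Wronskian criterion (Lemma \ref{prop:EquivalentClosure}). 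Second, writing $\mathcal{D}(A_{\alpha,F}(k))=\mathcal{D}(\overline{A_\alpha(k)})\dotplus\mathrm{span}\{\Psi_{\alpha,k}\}$ presupposes that the self-adjoint extension inverted by the Green kernel \emph{is} the Friedrichs one; your $\mathcal{D}(x^{-1})$ observation is precisely the paper's argument for this identification (Proposition \ref{eq:RGisSFinv}: a nonzero $x^{-\alpha/2}$ component forces $x^{-1}g\notin L^2$ near the origin, incompatible with the form domain \eqref{eq:thmAFform}), but it must be run \emph{before} you may invoke that formula, not after.
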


Concerning the spaces indicated in \eqref{eq:thm_Aclosure} and \eqref{eq:thmAFform}, let us recall that by definition and by a standard Sobolev embedding
\begin{equation}\label{eq:H10}
 \begin{split}
  H^1_0(\mathbb{R}^+)\;&=\;\overline{C^\infty_c(\mathbb{R}^+)}^{\|\,\|_{H^1}} \\
  &=\;\{\varphi\in L^2(\mathbb{R}^+)\,|\,\varphi'\in L^2(\mathbb{R}^+)\textrm{ and }\varphi(0)=0\}\,,
 \end{split}
\end{equation}
and
\begin{equation}\label{eq:H20}
 \begin{split}
  H^2_0(\mathbb{R}^+)\;&=\;\overline{C^\infty_c(\mathbb{R}^+)}^{\|\,\|_{H^2}} \\
  &=\;\{\varphi\in L^2(\mathbb{R}^+)\,|\,\varphi',\varphi''\in L^2(\mathbb{R}^+)\textrm{ and }\varphi(0)=\varphi'(0)=0\}\,.
 \end{split}
\end{equation}

The proof of Theorem \ref{thm:fibre-thm} requires an amount of preparatory material that is presented in Sections \ref{subsec:homog_problem}-\ref{subsec:distinguished} and will be finally completed in Section \ref{subsec:proof_of_fibrethm}.

\subsection{Homogeneous differential problem: kernel of $A_\alpha(k)^*$}\label{subsec:homog_problem}~

Let us characterise the kernel of the adjoint $A_\alpha(k)^*$.

To this aim, we make use of the modified Bessel functions $K_\nu$ and $I_\nu$ \cite[Sect.~9.6]{Abramowitz-Stegun-1964}, that are two explicit, linearly independent, smooth solutions to the modified Bessel equation
\begin{equation}\label{eq:modifiedBessEq}
z^2 w''+z w'-(z^2+\nu^2) w \;=\; 0\,,\qquad z\in\mathbb{R}^+
\end{equation}
with parameter $\nu\in\mathbb{C}$. In particular, in terms of $K_{\frac{1}{2}}$ and $I_{\frac{1}{2}}$ we define the functions
\begin{equation}\label{eq:Phi_and_F}
\begin{split}
\Phi_{\alpha,k}(x)\;&:=\; \sqrt{x}\,K_{\frac{1}{2}}\big({\textstyle\frac{|k|}{1+\alpha}}\,x^{1+\alpha}\big)\,,\\
F_{\alpha,k}(x)\;&:=\; \sqrt{x}\,I_{\frac{1}{2}}\big({\textstyle\frac{|k|}{1+\alpha}}\,x^{1+\alpha}\big)\,.
\end{split}
\end{equation}
Explicitly, as can be deduced from \cite[Eq.~(10.2.4), (10.2.13), and (10.2.14)]{Abramowitz-Stegun-1964},
\begin{equation}\label{eq:Phi_and_F_explicit}
\begin{split}
\Phi_{\alpha,k}(x)\;&:=\; {\textstyle\sqrt{\frac{\pi(1+\alpha)}{2|k|}}}\,x^{-\frac{\alpha}{2}}\,e^{-\frac{|k|}{1+\alpha}x^{1+\alpha}}\,, \\
F_{\alpha,k}(x)\;&:=\; {\textstyle\sqrt{\frac{2(1+\alpha)}{\pi|k|}}}\,x^{-\frac{\alpha}{2}}\,\sinh\big({\textstyle\frac{|k|}{1+\alpha}}\,x^{1+\alpha}\big)\,.
\end{split}
\end{equation}
From \eqref{eq:Phi_and_F_explicit} we obtain the short-distance asymptotics
\begin{equation}\label{eq:Asymtotics_0}
 \begin{split}
 \Phi_{\alpha,k}(x)\;&\stackrel{x\downarrow 0}{=}\; {\textstyle\sqrt{\frac{\pi (1+\alpha)}{2 |k|}}}\, x^{-\frac{\alpha}{2}} -{\textstyle\sqrt{\frac{\pi \, |k|}{2(1+\alpha)}}}\, x^{1+\frac{\alpha}{2}}+{\textstyle\sqrt{\frac{\pi |k|^3}{8(1+\alpha)^3}}}\, x^{2+\frac{3}{2}\alpha}+O(x^{3+\frac{5}{2}\alpha}) \\
 F_{\alpha,k}(x)\;&\stackrel{x\downarrow 0}{=}\;{\textstyle\sqrt{\frac{2 |k|}{(1+\alpha) \pi}}}\, x^{1+\frac{\alpha}{2} }+O(x^{3+\frac{5}{2}\alpha})\,,
 \end{split}
\end{equation}
and the large-distance asymptotics
\begin{equation}\label{eq:Asymtotics_Inf}
\begin{split}
 \Phi_{\alpha,k}(x)\;&\stackrel{x\to +\infty}{=}\;{\textstyle\sqrt{\frac{\pi (1+\alpha)}{2 |k|}}}\, e^{-\frac{|k| x^{1+\alpha}}{1+\alpha}} x^{-\frac{\alpha}{2}}(1+O(x^{-(1+\alpha)}))\,, \\
 F_{\alpha,k}(x)\;&\stackrel{x\to +\infty}{=}\;{\textstyle\sqrt{\frac{ 1+\alpha}{2 \pi |k|}}}\, e^{\frac{|k| x^{1+\alpha}}{1+\alpha}} x^{-\frac{\alpha}{2}}(1+O(x^{-(1+\alpha)}))\,,
\end{split}
\end{equation}
as well as the norm
\begin{equation}\label{eq:Phinorm}
\| \Phi_{\alpha,k} \|_{L^2}^2\;=\;\pi\,(1+\alpha)^{\frac{1-\alpha}{1+\alpha}}\,\Gamma\big({\textstyle\frac{1-\alpha}{1+\alpha}}\big)\, (2|k|)^{-\frac{2}{1+\alpha}}\,.
\end{equation}

\begin{lemma}\label{lem:kerAxistar}
 Let $\alpha\in(0,1)$ and $ k\in\mathbb{Z} \!\setminus\! \{0\}$. One has
 \begin{equation}\label{eq:kerAxistar}
  \ker A_\alpha(k)^*\;=\;\mathrm{span}\{\Phi_{\alpha,k}\}\,.
 \end{equation}
\end{lemma}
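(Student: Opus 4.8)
The plan is to characterise $\ker A_\alpha(k)^*$ as the $L^2(\mathbb{R}^+)$-solution space of the homogeneous equation $S_{\alpha,k}u=0$, where $S_{\alpha,k}$ is the differential operator in \eqref{eq:Saxi}. By the description of the adjoint in \eqref{eq:Afstar}, an element $g\in L^2(\mathbb{R}^+)$ lies in $\ker A_\alpha(k)^*$ precisely when $S_{\alpha,k}g=0$ in the weak (distributional) sense and $g\in L^2(\mathbb{R}^+)$. Since the coefficients of $S_{\alpha,k}$ are smooth on $\mathbb{R}^+=(0,+\infty)$, elliptic regularity upgrades any weak solution to a classical, smooth solution on $\mathbb{R}^+$; hence the task reduces to solving the ODE $-u''+\bigl(k^2x^{2\alpha}+C_\alpha x^{-2}\bigr)u=0$ and selecting those solutions that are square-integrable.

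First I would solve this ODE explicitly. The substitution $u(x)=\sqrt{x}\,w(z)$ with $z=\frac{|k|}{1+\alpha}x^{1+\alpha}$ transforms the equation into the modified Bessel equation \eqref{eq:modifiedBessEq} with index $\nu=\frac12$; this is a direct computation matching the $x^{-2}$-coefficient $C_\alpha=\frac{\alpha(2+\alpha)}{4}$ against the value $\frac{(1+\alpha)^2}{4}\bigl(\nu^2-\frac14\bigr)+\frac14$ forced by the change of variables, which indeed yields $\nu=\frac12$. Consequently the two-dimensional solution space is spanned by $\Phi_{\alpha,k}$ and $F_{\alpha,k}$ as defined in \eqref{eq:Phi_and_F}, whose explicit forms are recorded in \eqref{eq:Phi_and_F_explicit}.

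Next I would determine which combinations are in $L^2(\mathbb{R}^+)$ by testing the two independent behaviours separately at the two ends of the half-line. At infinity, the large-distance asymptotics \eqref{eq:Asymtotics_Inf} show that $F_{\alpha,k}$ grows like $e^{+\frac{|k|}{1+\alpha}x^{1+\alpha}}$ and is therefore \emph{not} square-integrable near $+\infty$, whereas $\Phi_{\alpha,k}$ decays exponentially and its $L^2$-norm is finite, as quantified by \eqref{eq:Phinorm}. At the origin, the short-distance asymptotics \eqref{eq:Asymtotics_0} give $\Phi_{\alpha,k}(x)\sim x^{-\alpha/2}$ and $F_{\alpha,k}(x)\sim x^{1+\alpha/2}$; since $\alpha\in(0,1)$ one has $-\alpha/2>-\tfrac12$, so both leading powers are square-integrable near $0$. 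Thus integrability at $0$ imposes no constraint, while integrability at $+\infty$ forces the $F_{\alpha,k}$-component to vanish. Any $L^2$-solution is therefore a scalar multiple of $\Phi_{\alpha,k}$, which establishes \eqref{eq:kerAxistar}.

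The only genuinely delicate point is the rigorous passage from the weak formulation to the classical ODE and the correct bookkeeping of the endpoint integrability: one must confirm that no nontrivial multiple of $F_{\alpha,k}$ can be added without destroying square-integrability at infinity, which is immediate from the exponential growth in \eqref{eq:Asymtotics_Inf}, and that the mild singularity $x^{-\alpha/2}$ of $\Phi_{\alpha,k}$ at the origin is harmless, which follows from $\alpha<1$. Because the two obstructions act at opposite ends and pick out complementary conditions, the solution space surviving both is exactly one-dimensional, confirming that $\ker A_\alpha(k)^*=\mathrm{span}\{\Phi_{\alpha,k}\}$ and, incidentally, that the deficiency index equals $1$ in agreement with the cited result of \cite{GMP-Grushin-2018}.
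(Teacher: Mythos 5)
Your proof follows essentially the same route as the paper's: membership in $\ker A_\alpha(k)^*$ is reduced via \eqref{eq:Afstar} to the homogeneous ODE $S_{\alpha,k}u=0$, the substitution $u=\sqrt{x}\,w(z)$ with $z=\frac{|k|}{1+\alpha}x^{1+\alpha}$ turns this into the modified Bessel equation \eqref{eq:modifiedBessEq} with index $\nu=\frac{1}{2}$, and the fundamental system $\{\Phi_{\alpha,k},F_{\alpha,k}\}$ of \eqref{eq:Phi_and_F} is then filtered by square-integrability: $F_{\alpha,k}$ is excluded by its exponential growth at infinity \eqref{eq:Asymtotics_Inf}, while $\Phi_{\alpha,k}\in L^2$ by \eqref{eq:Phinorm}. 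Your two added remarks --- that elliptic regularity upgrades weak kernel elements to classical solutions on $\mathbb{R}^+$, and that both short-distance behaviours $x^{-\alpha/2}$ and $x^{1+\alpha/2}$ from \eqref{eq:Asymtotics_0} are square-integrable near $0$ because $\alpha<1$, so the origin imposes no constraint --- are correct and in fact slightly more explicit than the paper, which leaves both points implicit.

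One bookkeeping slip is worth flagging: the matching relation you display, $C_\alpha=\frac{(1+\alpha)^2}{4}\bigl(\nu^2-\frac{1}{4}\bigr)+\frac{1}{4}$, is wrong --- at $\nu=\frac{1}{2}$ it gives $\frac{1}{4}\neq\frac{\alpha(2+\alpha)}{4}$, and solving it for $\nu$ would \emph{not} return $\frac{1}{2}$. The change of variables actually forces $\nu^2=\frac{C_\alpha+\frac{1}{4}}{(1+\alpha)^2}$, i.e., $C_\alpha=(1+\alpha)^2\nu^2-\frac{1}{4}$, equivalently $\nu=\frac{\sqrt{1+4C_\alpha}}{2(1+\alpha)}$ as in the paper's proof; since $1+4C_\alpha=(1+\alpha)^2$, this correctly yields $\nu=\frac{1}{2}$. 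The slip does not damage the argument, because the stated conclusion $\nu=\frac{1}{2}$ and the resulting fundamental system are the right ones, but the verification formula should be corrected as above.
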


\begin{proof}
 Owing to \eqref{eq:Afstar}, a generic $h\in\ker A_\alpha(k)^*$ belongs to $L^2(\mathbb{R}^+)$ and satisfies
 \[\tag{i}
  S_{\alpha,k}\,h\;=\;-h''+ k^2 x^{2\alpha}h+C_\alpha\,x^{-2}h\;=\;0\,.
 \]
 Setting
 \[\tag{ii}
  z\;:=\;\frac{|k|}{1+\alpha}\,x^{1+\alpha}\,,\qquad w(z)\;:=\;\frac{h(x)}{\sqrt{x}}\,,\qquad\nu\;:=\;\frac{\sqrt{1+4C_\alpha}}{2(1+\alpha)}\;=\;\frac{1}{2}\,,
 \]
 the ordinary differential equation (i) takes precisely the form \eqref{eq:modifiedBessEq} with the considered $\nu$. The two linearly independent solutions $K_{\frac{1}{2}}$ and $I_{\frac{1}{2}}$ to \eqref{eq:modifiedBessEq} yield, through the transformation (ii) above, the two linearly independent solutions \eqref{eq:Phi_and_F} to (i). In fact, only $\Phi_{\alpha,k}$ is square-integrable, whereas $F_{\alpha,k}$ fails to be so at infinity (as is seen from \eqref{eq:Phinorm}-\eqref{eq:Asymtotics_Inf}). Formula \eqref{eq:kerAxistar} is thus proved.
\end{proof}

\subsection{Non-homogeneous inverse differential problem}\label{sec:non-homogeneous_problem}~

Let us now focus on the non-homogeneous problem
\begin{equation}
 S_{\alpha,k}\,u\;=\;g
\end{equation}
in the unknown $u$ for given $g$. With respect to the fundamental system $\{F_{\alpha,k},\Phi_{\alpha,k}\}$ given by \eqref{eq:Phi_and_F}, of solutions for the problem $S_{\alpha,k}\,u=0$, the general solution is given by
\begin{equation}\label{eq:ODE_general_sol}
 u\;=\;c_1 F_{\alpha,k} + c_2 \Phi_{\alpha,k} + u_{\mathrm{part}}
\end{equation}
for $c_1,c_2\in\mathbb{C}$ and some particular solution $u_{\mathrm{part}}$, i.e., $S_{\alpha,k}\,u_{\mathrm{part}}=g$.

The Wronskian
\begin{equation}
W(\Phi_{\alpha,k},F_{\alpha,k})(r)\;:=\;\det \begin{pmatrix}
\Phi_{\alpha,k}(r) & F_{\alpha,k}(r) \\
\Phi_{\alpha,k}'(r) & F_{\alpha,k}'(r)
\end{pmatrix}
\end{equation}
relative to the fundamental system $\{F_{\alpha,k},\Phi_{\alpha,k}\}$ is clearly constant in $r$, since it is evaluated on solutions to the homogeneous differential problem, with a value that can be computed by means of the asymptotics \eqref{eq:Asymtotics_0} or \eqref{eq:Asymtotics_Inf} and amounts to
\begin{equation}\label{eq:value_of_W}
W(\Phi_{\alpha,k},F_{\alpha,k})\;=\;1+\alpha\;=:\; W\,.
\end{equation}

A standard application of the method of variation of constants \cite[Section 2.4]{Wasow_asympt_expansions} shows that we can take $u_{\mathrm{part}}$ to be
\begin{equation}\label{eq:upart}
u_{\text{part}}(r) \;=\; \int_0^{+\infty} G_{\alpha,k}(r,\rho) g(\rho) \, \ud \rho\,,
\end{equation}
where 
\begin{equation}\label{eq:Green}
G_{\alpha,k}(r,\rho) \, := \,\frac{1}{W} \begin{cases}
\Phi_{\alpha,k}(r) F_{\alpha,k}(\rho)\,, \qquad \text{if }0 < \rho < r\,,\\
F_{\alpha,k}(r) \Phi_{\alpha,k}(\rho)\,, \qquad \text{if } 0 < r < \rho \,.
\end{cases}
\end{equation}

For $a\in\mathbb{R}$ and $ k\in\mathbb{Z} \!\setminus\! \{0\}$, let $R_{G_{\alpha,k}}^{(a)}$ be the integral operator acting on functions $g$ on $\mathbb{R}^+$ as
\begin{equation}
 \begin{split}
  \big(R_{G_{\alpha,k}}^{(a)}g\big)(x)\;&:=\;\int_0^{+\infty} \mathscr{G}_{\alpha,k}^{(a)}(x,\rho)\,g(\rho)\,\ud \rho\,, \\
  \mathscr{G}_{\alpha,k}^{(a)}(x,\rho)\;&:=\;x^a\,k^2\, G_{\alpha,k}(x,\rho)\,,
 \end{split}
\end{equation}
and let
\begin{equation}\label{eq:RGalphaforus}
 R_{G_{\alpha,k}}\;:=\;|k|^{-2}\,R_{G_{\alpha,k}}^{(0)}\,,
\end{equation}
whence
\begin{equation}\label{eq:newRGalphaforus}
 ( R_{G_{\alpha,k}}g)(x)\;=\;\;\int_0^{+\infty}G_{\alpha,k}(r,\rho)\,g(\rho)\,\ud \rho\,.
\end{equation}

The following property holds.

\begin{lemma}\label{lem:RGbddsa}
Let $\alpha\in(0,1)$ and $ k\in\mathbb{Z} \!\setminus\! \{0\}$.
\begin{itemize}
 \item[(i)] For each $a\in(-\frac{1-\alpha}{2},2\alpha]$, $R_{G_{\alpha,k}}^{(a)}$ can be realised as an everywhere defined, bounded operator on $L^2(\mathbb{R}^+, \ud x)$, which is also self-adjoint if $a=0$.
 \item[(ii)] When $a=2\alpha$, the operator $R_{G_{\alpha,k}}^{(2\alpha)}$ is bounded uniformly in $k$.
\end{itemize}
\end{lemma}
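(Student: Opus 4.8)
The plan is to reduce both statements to a single Schur-test estimate on a $k$-independent integral kernel, isolating the entire $k$-dependence into an explicit power of $|k|$.

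First I would write the kernel explicitly. Inserting the closed forms \eqref{eq:Phi_and_F_explicit} into \eqref{eq:Green} and using \eqref{eq:value_of_W}, the prefactors combine so that, for $0<\rho<x$,
\[
 G_{\alpha,k}(x,\rho)\;=\;\frac{1}{|k|}\,(x\rho)^{-\alpha/2}\,e^{-\frac{|k|}{1+\alpha}x^{1+\alpha}}\,\sinh\big(\tfrac{|k|}{1+\alpha}\rho^{1+\alpha}\big)\,,
\]
and symmetrically (with $x\leftrightarrow\rho$) for $0<x<\rho$. Hence, writing $x_\vee=\max\{x,\rho\}$ and $x_\wedge=\min\{x,\rho\}$,
\[
 \mathscr{G}_{\alpha,k}^{(a)}(x,\rho)\;=\;|k|\,x^a\,(x\rho)^{-\alpha/2}\,e^{-\frac{|k|}{1+\alpha}x_\vee^{1+\alpha}}\,\sinh\big(\tfrac{|k|}{1+\alpha}x_\wedge^{1+\alpha}\big)\,.
\]
Next I would remove $k$ by the unitary dilation $(U_\lambda f)(x)=\lambda^{-1/2}f(x/\lambda)$ with $\lambda=|k|^{-1/(1+\alpha)}$: conjugation replaces the kernel by $\lambda\,\mathscr{G}_{\alpha,k}^{(a)}(\lambda x,\lambda\rho)$, and since $|k|\lambda^{1+\alpha}=1$ the arguments of the exponential and of the $\sinh$ lose their $|k|$, leaving
\[
 U_\lambda^{-1}\,R_{G_{\alpha,k}}^{(a)}\,U_\lambda\;=\;|k|^{\frac{2\alpha-a}{1+\alpha}}\,R^{(a)}\,,
\]
where $R^{(a)}$ is the integral operator with the $k$-free kernel $\mathscr{K}^{(a)}(x,\rho)=x^a(x\rho)^{-\alpha/2}e^{-\frac{1}{1+\alpha}x_\vee^{1+\alpha}}\sinh(\frac{1}{1+\alpha}x_\wedge^{1+\alpha})$. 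Thus $\|R_{G_{\alpha,k}}^{(a)}\|=|k|^{(2\alpha-a)/(1+\alpha)}\,\|R^{(a)}\|$, and everything reduces to proving $\|R^{(a)}\|<\infty$. Part (ii) is then immediate: at $a=2\alpha$ the exponent $(2\alpha-a)/(1+\alpha)$ vanishes, so the bound is exactly $k$-independent.

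The core step is a Schur test for $R^{(a)}$ with the power weight $h(x)=x^\beta$, splitting each integral at the diagonal $\rho=x$. In the region where the fixed variable is the larger one I would use $\sinh t\le t$ near the origin and the Watson-type tail estimate $\int^{X}\rho^p e^{\rho^{1+\alpha}/(1+\alpha)}\,\ud\rho\sim X^{p-\alpha}e^{X^{1+\alpha}/(1+\alpha)}$; in the complementary region the decaying exponential controls the integral. The crucial cancellation is that near the diagonal at large argument $e^{-x_\vee^{1+\alpha}/(1+\alpha)}\sinh(x_\wedge^{1+\alpha}/(1+\alpha))$ is $O(1)$, so the kernel there behaves like $x^{a-\alpha}$ and, integrated against $x^\beta$ over a band of width $\sim x^{-\alpha}$, contributes $x^{a+\beta-2\alpha}$; matching this against $h(x)=x^\beta$ forces precisely the endpoint condition $a\le 2\alpha$. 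The small-argument analysis, together with integrability of the Gaussian-type tails, yields the complementary constraints $\tfrac{\alpha}{2}-1-a<\beta\le a+1+\tfrac{\alpha}{2}$ and $\beta\le 1+\tfrac{\alpha}{2}$, whose compatibility window is non-empty throughout the stated interval $a\in(-\tfrac{1-\alpha}{2},2\alpha]$; choosing any admissible $\beta$ closes both the row and column Schur bounds, giving $\|R^{(a)}\|<\infty$ and hence an everywhere-defined bounded extension by density. Self-adjointness at $a=0$ is separate from the size estimates: there $\mathscr{G}_{\alpha,k}^{(0)}=k^2G_{\alpha,k}$ is the real, symmetric Sturm–Liouville Green kernel, $G_{\alpha,k}(x,\rho)=G_{\alpha,k}(\rho,x)$ by \eqref{eq:Green}, so $R_{G_{\alpha,k}}^{(0)}$ is symmetric, and being bounded and everywhere defined it is self-adjoint.

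I expect the main obstacle to be the near-diagonal estimate at large argument: one must track the partial cancellation in $e^{-x_\vee^{1+\alpha}/(1+\alpha)}\sinh(x_\wedge^{1+\alpha}/(1+\alpha))$ carefully enough to extract the sharp power $x^{a+\beta-2\alpha}$ with a constant uniform in $x$, and to verify that a \emph{single} exponent $\beta$ simultaneously closes both Schur inequalities across the whole admissible range of $a$, rather than only for $a\ge 0$.
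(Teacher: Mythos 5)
Your proof is correct, and it takes a genuinely different route from the paper's. The paper splits $R_{G_{\alpha,k}}^{(a)}$ by a cut-off $M$ into four pieces, shows the $(-,-)$, $(-,+)$, $(+,-)$ pieces are Hilbert--Schmidt, and runs a Schur test only on the $(+,+)$ region; the uniformity in $k$ of part (ii) then hinges on the careful $k$-dependent choice $M=M_\circ\sim|k|^{-1/(1+\alpha)}$, with the $k$- and $M$-dependences entangled through all the estimates. You instead scale $k$ out at the start: your dilation identity checks out --- conjugation replaces the kernel by $\lambda\,\mathscr{G}_{\alpha,k}^{(a)}(\lambda x,\lambda\rho)$, and with $|k|\lambda^{1+\alpha}=1$ the prefactor is $|k|\lambda^{1+a-\alpha}=|k|^{(2\alpha-a)/(1+\alpha)}$, so indeed $\|R_{G_{\alpha,k}}^{(a)}\|=|k|^{(2\alpha-a)/(1+\alpha)}\|R^{(a)}\|$ exactly, by unitarity. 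Your Schur bookkeeping for the $k$-free kernel is also right: I verified that the row ratio behaves like $x^{a+1+\frac{\alpha}{2}-\beta}$ as $x\downarrow 0$ and the column ratio like $\rho^{1+\frac{\alpha}{2}-\beta}$ as $\rho\downarrow 0$ (with integrability forcing $\beta>\frac{\alpha}{2}-1-a$), while at large argument the Laplace-type estimate $\int^{X}\rho^{p}e^{\rho^{1+\alpha}/(1+\alpha)}\,\ud\rho\sim X^{p-\alpha}e^{X^{1+\alpha}/(1+\alpha)}$ makes both ratios $\sim x^{a-2\alpha}$, giving exactly the endpoint $a\leqslant 2\alpha$; the window $\frac{\alpha}{2}-1-a<\beta\leqslant\min\{a+1+\frac{\alpha}{2},\,1+\frac{\alpha}{2}\}$ is non-empty for all $a>-1$, which covers (indeed exceeds) the stated range. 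What your approach buys: part (ii) becomes an exact identity rather than a consequence of a tuned cut-off, and the explicit power law $|k|^{(2\alpha-a)/(1+\alpha)}$ is sharper quantitative information than the paper's final bound $Z_{a,\alpha}\big(k^2M^{2(a+1-\alpha)}+|k|M^{2a+1-3\alpha}+\dots\big)$. What the paper's approach buys: the three Hilbert--Schmidt pieces are controlled by crude one-line bounds, so the delicate matched asymptotics near the diagonal --- the part you correctly flag as the main obstacle --- are confined to a single region and never need to be sharp. Your self-adjointness argument at $a=0$ (real symmetric kernel) coincides with the paper's.
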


\begin{remark}
 For the purposes of the present Section, the thesis of Lemma \ref{lem:RGbddsa} (and therefore its proof) is overabundant, in that we do not need here the \emph{uniformity} in $k$ of the norm of $R_{G_{\alpha,k}}^{(2\alpha)}$. Instead, this information will be crucial in Subsect.~\ref{sec:control-of-tildephi}. In fact, it is to prove the boundedness claim (i) in a form that implies the $k$-uniformity of claim (ii) that we have to go through a somewhat lengthy proof.
\end{remark}

For the proof of Lemma \ref{lem:RGbddsa} it is convenient to re-write, by means of \eqref{eq:Phi_and_F_explicit} and \eqref{eq:value_of_W}, for any $ k\in\mathbb{Z} \!\setminus\! \{0\}$,
\begin{equation}
 \mathscr{G}_{\alpha,k}^{(a)}(x,\rho)\;=\;
 \begin{cases}
  \;|k|\,x^{a-\frac{\alpha}{2}}\,\rho^{-\frac{\alpha}{2}}\,e^{-\frac{|k|}{1+\alpha}x^{1+\alpha}}\,\sinh\big(\frac{|k|}{1+\alpha}\rho^{1+\alpha}\big)\,, & \textrm{ if }0<\rho<x\,, \\
  \;|k|\,x^{a-\frac{\alpha}{2}}\,\rho^{-\frac{\alpha}{2}}\,e^{-\frac{|k|}{1+\alpha}\rho^{1+\alpha}}\,\sinh\big(\frac{|k|}{1+\alpha}x^{1+\alpha}\big)\,, & \textrm{ if }0<x<\rho\,.
 \end{cases}
\end{equation}
It is also convenient to use the bound
\begin{equation}\label{eq:GleqGtilde}
 \mathscr{G}_{\alpha,k}^{(a)}(x,\rho)\;\leqslant\;\widetilde{\mathscr{G}_{\alpha,k}^{(a)}}(x,\rho)\;
\end{equation}
with
\begin{equation}\label{eq:defGtilde}
 \widetilde{\mathscr{G}_{\alpha,k}^{(a)}}(x,\rho)\;:=\;\begin{cases}
  \;|k|\,x^{a-\frac{\alpha}{2}}\,\rho^{-\frac{\alpha}{2}}\,e^{-\frac{|k|}{1+\alpha}x^{1+\alpha}}\,e^{\frac{|k|}{1+\alpha}\rho^{1+\alpha}}\,, & \textrm{ if }0<\rho<x\,, \\
  \;|k|\,x^{a-\frac{\alpha}{2}}\,\rho^{-\frac{\alpha}{2}}\,e^{-\frac{|k|}{1+\alpha}\rho^{1+\alpha}}\,e^{\frac{|k|}{1+\alpha}x^{1+\alpha}}\,, & \textrm{ if }0<x<\rho\,.
 \end{cases}
\end{equation}

\begin{proof}[Proof of Lemma \ref{lem:RGbddsa}]

$R_{G_{\alpha,k}}^{(a)}$ splits into the sum of four integral operators with non-negative kernels given by
\[
\begin{split}
	  \mathscr{G}^{++}_{\alpha,k,a}(x,\rho)\;&:=\;\mathscr{G}_{\alpha,k}^{(a)}(x,\rho)\,\mathbf{1}_{(M,+\infty)}(x)\,\mathbf{1}_{(M,+\infty)}(\rho)\,, \\
	  \mathscr{G}^{+-}_{\alpha,k,a}(x,\rho)\;&:=\;\mathscr{G}_{\alpha,k}^{(a)}(x,\rho)\,\mathbf{1}_{(M,+\infty)}(x)\,\mathbf{1}_{(0,M)}(\rho)\,, \\
  \mathscr{G}^{-+}_{\alpha,k,a}(x,\rho)\;&:=\;\mathscr{G}_{\alpha,k}^{(a)}(x,\rho)\,\mathbf{1}_{(0,M)}(x)\,\mathbf{1}_{(M,+\infty)}(\rho)\,, \\
  \mathscr{G}^{--}_{\alpha,k,a}(x,\rho)\;&:=\;\mathscr{G}_{\alpha,k}^{(a)}(x,\rho)\,\mathbf{1}_{(0,M)}(x)\,\mathbf{1}_{(0,M)}(\rho)
 \end{split}
\]
for some cut-off $M>0$.

The $(-,-)$ operator is a Hilbert-Schmidt operator on $L^2(\mathbb{R}^+)$. Indeed, owing to \eqref{eq:GleqGtilde}-\eqref{eq:defGtilde},
\[
 \begin{split}
  \mathscr{G}^{--}_{\alpha,k,a}(x,\rho)\;&\leqslant\;|k| x^{a-\frac{\alpha}{2}}\,\rho^{-\frac{\alpha}{2}}\,e^{-\frac{|k|}{1+\alpha}|x^{1+\alpha}-\rho^{1+\alpha}|}\,\mathbf{1}_{(0,M)}(x)\,\mathbf{1}_{(0,M)}(\rho) \\
  &\leqslant\;|k| x^{a-\frac{\alpha}{2}}\,\rho^{-\frac{\alpha}{2}}\,\mathbf{1}_{(0,M)}(x)\,\mathbf{1}_{(0,M)}(\rho)\,,
 \end{split}
\]
whence, for $a>-\frac{1}{2}(1-\alpha)$,
\[
 \begin{split}
  \iint_{\mathbb{R}^+\times\mathbb{R}^+}\,\ud x\,\ud\rho\,\big|\mathscr{G}^{--}_{\alpha,k,a}(x,\rho) \big|^2\;&\leqslant\;k^2\!\int_0^M\ud x\,x^{2a-\alpha}\!\int_0^M\ud\rho\,\rho^{-\alpha} \\
  &=\;\frac{k^2\,M^{2(a+1-\alpha)}}{(2a+1-\alpha)(1-\alpha)}\,.
 \end{split}
\]

Also the $(-,+)$ operator is a Hilbert-Schmidt operator on $L^2(\mathbb{R}^+)$. Indeed,
\[
  \mathscr{G}^{-+}_{\alpha,k,a}(x,\rho)\;\leqslant\;|k|\,e^{\frac{|k|}{1+\alpha}M^{1+\alpha}} x^{a-\frac{\alpha}{2}}\,\rho^{-\frac{\alpha}{2}}\,e^{-\frac{|k|}{1+\alpha}\rho^{1+\alpha}}\,\mathbf{1}_{(0,M)}(x)\,\mathbf{1}_{(M,+\infty)}(\rho)\,,
\]
whence, for $a>-\frac{1}{2}(1-\alpha)$,
\[
 \begin{split}
  \iint_{\mathbb{R}^+\times\mathbb{R}^+}&\,\ud x\,\ud\rho\,\big|\mathscr{G}^{-+}_{\alpha,k,a}(x,\rho) \big|^2 \\
  &\leqslant\;k^2\,e^{\frac{2|k|}{1+\alpha}M^{1+\alpha}}\!\int_0^M\ud x\,x^{2a-\alpha}\!\int_M^{+\infty}\!\ud\rho\,\rho^{-\alpha}\,e^{-\frac{2|k|}{1+\alpha}\rho^{1+\alpha}} \\
  &\leqslant\;k^2\,M^{-2\alpha}\,e^{\frac{2|k|}{1+\alpha}M^{1+\alpha}}\!\int_0^M\ud x\,x^{2a-\alpha}\!\int_M^{+\infty}\!\ud\rho\,\rho^{\alpha}\,e^{-\frac{2|k|}{1+\alpha}\rho^{1+\alpha}} \\
  &=\;\frac{|k|}{2}\,M^{-2\alpha}\,e^{\frac{2|k|}{1+\alpha}M^{1+\alpha}}\!\int_0^M\ud x\,x^{2a-\alpha}\!\int_{\frac{2|k|}{1+\alpha}M^{1+\alpha}}^{+\infty}\ud y\,e^{-y} \\
  &=\;\frac{|k|\,M^{2a+1-3\alpha}}{2(2a+1-\alpha)}\,.
 \end{split}
\]

With analogous reasoning,
\[
  \mathscr{G}^{+-}_{\alpha,k,a}(x,\rho)\;\leqslant\;|k|\,e^{\frac{|k|}{1+\alpha}M^{1+\alpha}}\rho^{-\frac{\alpha}{2}} x^{a-\frac{\alpha}{2}}\,e^{-\frac{|k|}{1+\alpha}x^{1+\alpha}}\,\mathbf{1}_{(M,+\infty)}(x)\,\mathbf{1}_{(0,M)}(\rho)\,,
\]
whence
\[
 \begin{split}
   \iint_{\mathbb{R}^+\times\mathbb{R}^+}&\,\ud x\,\ud\rho\,\big|\mathscr{G}^{+-}_{\alpha,k,a}(x,\rho) \big|^2 \;\leqslant\;k^2\,e^{\frac{2|k|}{1+\alpha}M^{1+\alpha}}\!\int_0^M\ud\rho\,\rho^{-\alpha}\!\int_M^{+\infty}\!\ud x\,x^{2a-\alpha}\,e^{-\frac{2|k|}{1+\alpha}x^{1+\alpha}} \\
   &=\;\frac{\,k^2M^{1-\alpha}}{1-\alpha}\,e^{\frac{2|k|}{1+\alpha}M^{1+\alpha}}\!\int_M^{+\infty}\!\ud x\,x^{2a-\alpha}\,e^{-\frac{2|k|}{1+\alpha}x^{1+\alpha}}\,.
 \end{split} 
\]
In turn, integrating by parts, and for $a\leqslant\frac{1}{2}+\frac{3}{2}\alpha$,
\[
 \begin{split}
  \int_M^{+\infty}\!\ud x&\,x^{2a-\alpha}\,e^{-\frac{2|k|}{1+\alpha}x^{1+\alpha}} \\
  &=\;\frac{M^{2a-2\alpha}}{2|k|}\,e^{-\frac{2|k|}{1+\alpha}M^{1+\alpha}}+\frac{a-\alpha}{|k|}\int_{M}^{+\infty}\!\ud x\,x^{2a-1-3\alpha}\,x^{\alpha}\,e^{-\frac{2|k|}{1+\alpha}x^{1+\alpha}} \\
  &\leqslant\;\frac{M^{2a-2\alpha}}{2|k|}\,e^{-\frac{2|k|}{1+\alpha}M^{1+\alpha}}+\frac{\,(a-\alpha)M^{2a-1-3\alpha}}{2k^2}\!\int_{\frac{2|k|}{1+\alpha}M^{1+\alpha}}^{+\infty}\ud y\,e^{-y} \\
  &=\;e^{-\frac{2|k|}{1+\alpha}M^{1+\alpha}}\Big(\frac{M^{2a-2\alpha}}{2|k|}+\frac{\,(a-\alpha)M^{2a-1-3\alpha}}{2k^2}\Big)\,.
 \end{split}
\]
Thus,
\[
  \iint_{\mathbb{R}^+\times\mathbb{R}^+}\,\ud x\,\ud\rho\,\big|\mathscr{G}^{+-}_{\alpha,k,a}(x,\rho) \big|^2 \;\leqslant\;\frac{1}{\,2(1-\alpha)}\,\big(2|k| M^{2a+1-3\alpha}+(a-\alpha)M^{2(a-2\alpha)}\big)\,,
\]
which shows that the $(+,-)$ operator is a Hilbert-Schmidt operator on $L^2(\mathbb{R}^+)$.

Last, it will be now shown, by means of a standard Schur test,\index{Schur test} that the norm of the $(+,+)$ operator is bounded by $\sqrt{AB}$, where
\[
 \begin{split}
  A\;&:=\;\sup_{x\in(M,+\infty)}\int_M^{+\infty}\!\ud\rho\,\,\mathscr{G}_{\alpha,k}^{(a)}(x,\rho)\,, \\
  B\;&:=\;\sup_{\rho\in(M,+\infty)}\int_M^{+\infty}\!\ud x\,\,\mathscr{G}_{\alpha,k}^{(a)}(x,\rho)\,.
 \end{split}
\]

Owing to \eqref{eq:GleqGtilde}-\eqref{eq:defGtilde},
\[
 \begin{split}
  A\;&\leqslant\;A_1+A_2\,, \\
  B\;&\leqslant\;B_1+B_2\,,
 \end{split}
\]
with
\[
 \begin{split}
  A_1\;&:=\;\sup_{x\in(M,+\infty)}|k|\,x^{a-\frac{\alpha}{2}}\,e^{-\frac{|k|}{1+\alpha}x^{1+\alpha}}\!\int_M^{x}\ud\rho\,\rho^{-\frac{\alpha}{2}}\,e^{\frac{|k|}{1+\alpha}\rho^{1+\alpha}}\,, \\
  A_2\;&:=\;\sup_{x\in(M,+\infty)}|k|\,x^{a-\frac{\alpha}{2}}\,e^{\frac{|k|}{1+\alpha}x^{1+\alpha}}\!\int_x^{+\infty}\!\ud\rho\,\rho^{-\frac{\alpha}{2}}\,e^{-\frac{|k|}{1+\alpha}\rho^{1+\alpha}}\,, \\
  B_1\;&:=\;\sup_{\rho\in(M,+\infty)}|k|\,\rho^{-\frac{\alpha}{2}}\,e^{-\frac{|k|}{1+\alpha}\rho^{1+\alpha}}\!\int_M^{\rho}\ud x\,x^{a-\frac{\alpha}{2}}\,e^{\frac{|k|}{1+\alpha}x^{1+\alpha}}\,, \\
  B_2\;&:=\;\sup_{\rho\in(M,+\infty)}|k|\,\rho^{-\frac{\alpha}{2}}\,e^{\frac{|k|}{1+\alpha}\rho^{1+\alpha}}\!\int_{\rho}^{+\infty}\ud x\,x^{a-\frac{\alpha}{2}}\,e^{-\frac{|k|}{1+\alpha}x^{1+\alpha}}\,.
 \end{split}
\]

Concerning $A_1$, integration by parts yields
\[
 \begin{split}
  |k|\!\int_M^{x}\ud\rho\,\rho^{-\frac{\alpha}{2}}\,e^{\frac{|k|}{1+\alpha}\rho^{1+\alpha}}\;&=\;x^{-\frac{3}{2}\alpha}\,e^{\frac{|k|}{1+\alpha}x^{1+\alpha}}-M^{-\frac{3}{2}\alpha}\,e^{\frac{|k|}{1+\alpha}M^{1+\alpha}} \\
  &\qquad\qquad\quad+\frac{3\alpha}{2}\!\int_M^x\ud\rho\,\rho^{-(1+\frac{3}{2}\alpha)}\,e^{\frac{|k|}{1+\alpha}\rho^{1+\alpha}}
 \end{split}
\]
and choosing $M\geqslant M_\circ$, where
\[
 M_\circ\;:=\;\Big(\frac{2+3\alpha}{2|k|}\Big)^{\frac{1}{1+\alpha}}
\]
is the point of absolute minimum of the function $\rho\mapsto\rho^{-(1+\frac{3}{2}\alpha)}\,e^{\frac{|k|}{1+\alpha}\rho^{1+\alpha}}$, yields
\[
 \begin{split}
  |k|\!\int_M^{x}\ud\rho\,\rho^{-\frac{\alpha}{2}}\,e^{\frac{|k|}{1+\alpha}\rho^{1+\alpha}}\;&\leqslant\;x^{-\frac{3}{2}\alpha}\,e^{\frac{|k|}{1+\alpha}x^{1+\alpha}}+\frac{3\alpha}{2}\,x^{-(1+\frac{3}{2}\alpha)}\,e^{\frac{|k|}{1+\alpha}x^{1+\alpha}}\!\int_0^x\ud\rho \\
 &=\;{\textstyle\big(1+\frac{3}{2}\alpha\big)}x^{-\frac{3}{2}\alpha}\,e^{\frac{|k|}{1+\alpha}x^{1+\alpha}}\,.
 \end{split}
\]
Therefore,
\[
 A_1\;\leqslant\;\sup_{x\in(M,+\infty)}{\textstyle\big(1+\frac{3}{2}\alpha\big)}x^{a-2\alpha}\;=\;{\textstyle\big(1+\frac{3}{2}\alpha\big)}M^{a-2\alpha}\,,
\]
the last identity being valid for $a\leqslant 2\alpha$.

Concerning $A_2$,
\[
\begin{split}
 |k|\!\int_x^{+\infty}\!\ud\rho&\,\rho^{-\frac{\alpha}{2}}\,e^{-\frac{|k|}{1+\alpha}\rho^{1+\alpha}}\;\leqslant\;|k|\,x^{-\frac{3}{2}\alpha}\!\int_x^{+\infty}\!\ud\rho\,\rho^{\alpha}\,e^{-\frac{|k|}{1+\alpha}\rho^{1+\alpha}} \\
 &=\;x^{-\frac{3}{2}\alpha}\!\int_{\frac{|k|}{1+\alpha}x^{1+\alpha}}^{+\infty}\ud y\,e^{-y}\;=\;x^{-\frac{3}{2}\alpha}\,e^{-\frac{|k|}{1+\alpha}x^{1+\alpha}}\,,
\end{split}
\]
whence, when  $a\leqslant 2\alpha$,
\[
 A_2\;\leqslant\;\sup_{x\in(M,+\infty)}x^{a-2\alpha}\;=\;M^{a-2\alpha}\,.
\]

Concerning $B_1$,
\[
 \begin{split}
  |k|\!\int_M^{\rho}\ud x&\,x^{a-\frac{\alpha}{2}}\,e^{\frac{|k|}{1+\alpha}x^{1+\alpha}}\;=\;|k|\!\int_M^{\rho}\ud x\,x^{a-\frac{3}{2}\alpha}\,x^{\alpha}\,e^{\frac{|k|}{1+\alpha}x^{1+\alpha}} \\
  &\leqslant\;|k|\int_M^{\rho}\ud x\,x^{\alpha}\,e^{\frac{|k|}{1+\alpha}x^{1+\alpha}} \times
  \begin{cases}
   \;\rho^{a-\frac{3}{2}\alpha}\,, & \textrm{ if } a\geqslant\frac{3}{2}\alpha \\
   \;M^{a-\frac{3}{2}\alpha}\,, & \textrm{ if } a<\frac{3}{2}\alpha
  \end{cases} \\
  &\leqslant\;\int_0^{\frac{|k|}{1+\alpha}\rho^{1+\alpha}}\!\ud y\;e^y\times
  \begin{cases}
   \;\rho^{a-\frac{3}{2}\alpha}\,, & \textrm{ if } a\geqslant\frac{3}{2}\alpha \\
   \;M^{a-\frac{3}{2}\alpha}\,, & \textrm{ if } a<\frac{3}{2}\alpha
  \end{cases} \\
  &\leqslant\;e^{\frac{|k|}{1+\alpha}\rho^{1+\alpha}}\times
  \begin{cases}
   \;\rho^{a-\frac{3}{2}\alpha}\,, & \textrm{ if } a\geqslant\frac{3}{2}\alpha \\
   \;M^{a-\frac{3}{2}\alpha}\,, & \textrm{ if } a<\frac{3}{2}\alpha\,,
  \end{cases} 
  \end{split}
\]
whence
\[
 \begin{split}
  B_1\;&\leqslant\;\sup_{\rho\in(M,+\infty)}\begin{cases}
   \;\rho^{a-2\alpha}\,, & \textrm{ if } a\geqslant\frac{3}{2}\alpha\,, \\
   \;\rho^{-\frac{\alpha}{2}}\,M^{a-\frac{3}{2}\alpha}\,, & \textrm{ if } a<\frac{3}{2}\alpha\,.
  \end{cases} 
 \end{split}
\]
In either case, as long as  $a\leqslant 2\alpha$,
\[
 B_1\;\leqslant\;M^{a-2\alpha}\,.
\]

Concerning $B_2$, one splits the analysis between $a\leqslant\frac{3}{2}\alpha$ and $a>\frac{3}{2}\alpha$. In the former case,
\[
 \begin{split}
  |k|\!\int_{\rho}^{+\infty}\ud x&\,x^{a-\frac{\alpha}{2}}\,e^{-\frac{|k|}{1+\alpha}x^{1+\alpha}}\;\leqslant\;\rho^{a-\frac{3}{2}\alpha}|k|\!\int_{\rho}^{+\infty}\ud x\,x^{\alpha}\,e^{-\frac{|k|}{1+\alpha}x^{1+\alpha}} \\
  &=\;\rho^{a-\frac{3}{2}\alpha}\!\int_{\frac{|k|}{1+\alpha}}^{+\infty}\ud y\,e^{-y}\;=\;\rho^{a-\frac{3}{2}\alpha}\,e^{-\frac{|k|}{1+\alpha}\rho^{1+\alpha}}\,,
 \end{split}
\]
whence, as long as $a\leqslant 2\alpha$,
\[
 B_2\;\leqslant\sup_{\rho\in(M,+\infty)}\rho^{a-2\alpha}\;\leqslant\;M^{a-2\alpha}\,.
\]

When instead $a>\frac{3}{2}\alpha$, then, integrating by parts and using $a\leqslant1+\frac{5}{2}\alpha$,
\[
 \begin{split}
  |k|\!\int_{\rho}^{+\infty}\ud x&\,x^{a-\frac{\alpha}{2}}\,e^{-\frac{|k|}{1+\alpha}x^{1+\alpha}} \\
  &=\;\rho^{a-\frac{3}{2}\alpha}\,e^{-\frac{|k|}{1+\alpha}\rho^{1+\alpha}}+\big(a-{\textstyle\frac{3\alpha}{2}}\big)\!\int_{\rho}^{+\infty}\!\ud x\,x^{a-\frac{3}{2}\alpha-1}\,e^{-\frac{|k|}{1+\alpha}x^{1+\alpha}} \\
  &\leqslant\;\rho^{a-\frac{3}{2}\alpha}\,e^{-\frac{|k|}{1+\alpha}\rho^{1+\alpha}}+\big(a-{\textstyle\frac{3\alpha}{2}}\big)\rho^{a-\frac{5}{2}\alpha-1}\!\int_{\rho}^{+\infty}\!\ud x\,x^{\alpha}\,e^{-\frac{|k|}{1+\alpha}x^{1+\alpha}} \\
  &=\;\rho^{a-\frac{3}{2}\alpha}\,e^{-\frac{|k|}{1+\alpha}\rho^{1+\alpha}}+\big(a-{\textstyle\frac{3\alpha}{2}}\big)\rho^{a-\frac{5}{2}\alpha-1}|k|^{-1}\!\int_{\frac{|k|}{1+\alpha}\rho^{1+\alpha}}^{+\infty}\ud y\,e^{-y} \\
  &=\;e^{-\frac{|k|}{1+\alpha}\rho^{1+\alpha}}\big(\rho^{a-\frac{3}{2}\alpha}+(a-{\textstyle\frac{3\alpha}{2}})\,|k|^{-1}\rho^{a-\frac{5}{2}\alpha-1}\big)\,,
 \end{split}
\]
whence
\[
 \begin{split}
  B_2\;&\leqslant\sup_{\rho\in(M,+\infty)}\big(\rho^{a-2\alpha}+(a-{\textstyle\frac{3\alpha}{2}})\,|k|^{-1}\rho^{a-3\alpha-1}\big) \\
  &\leqslant\;M^{a-2\alpha}\big(1+(a-{\textstyle\frac{3\alpha}{2}})\,(|k|M^{1+\alpha})^{-1}\big)\,.
 \end{split}
\]

This completes the proof of the boundedness, via a Schur test,\index{Schur test} of the $(+,+)$ operator.

Summarising, with the above choice of the cut-off $M\geqslant M_\circ$, and intersecting all the above restrictions of $a$ in terms of $\alpha$, that is, $-\frac{1}{2}(1-\alpha)\leqslant a\leqslant 2\alpha$, it is established that there is an overall constant $Z_{a,\alpha}>0$ such that
\[
 \begin{split}
  \big\|R_{G_{\alpha,k}}^{(a)}\big\|^2_{\mathrm{op}}\;&\leqslant\;Z_{a,\alpha}\Big(k^2 M^{2(a+1-\alpha)}+|k|\,M^{2a+1-3\alpha}+M^{2a-4\alpha} \\
  &\qquad\qquad\quad +M^{2a-4\alpha}(|k|\,M^{1+\alpha})^{-1}\Big)\,.
 \end{split}
\]

This yields the statement of boundedness of part (i). The self-adjointness of $R_{G_{\alpha,k}}=|k|^{-2}\,R_{G_{\alpha,k}}^{(0)}$ is clear from \eqref{eq:Green}: the adjoint $R_{G_{\alpha,k}}^*$ has kernel $\overline{G_{\alpha,k}(\rho,r)}$, but $G$ is real-valued and $G_{\alpha,k}(\rho,r)=G_{\alpha,k}(r,\rho)$, whence indeed $R_{G_{\alpha,k}}^*=R_{G_{\alpha,k}}$. Thus, part (i) is proved.

As for part (ii), for the cut-off one chooses $M= M_\circ$ when $a=2\alpha$. In this case,
\[
 \begin{split}
  |k|\,M^{1+\alpha}\;&=\;{\textstyle 1+\frac{3}{2}\alpha}\,, \\
  |k|\,M^{2a+1-3\alpha}\;&=\;|k|\,M^{1+\alpha}\;=\;{\textstyle 1+\frac{3}{2}\alpha}\,,\\
  k^2 M^{2(a+1-\alpha)}\;&=\;\big(|k|\,M^{1+\alpha}\big)^2\;=\;\big( {\textstyle 1+\frac{3}{2}\alpha}\big)^2\,,
 \end{split}
\]
implying that there is an updated constant $\widetilde{Z}_{a,\alpha}>0$ such that 
\[
 \big\|R_{G_{\alpha,k}}^{(2\alpha)}\big\|_{\mathrm{op}}\;\leqslant\;\widetilde{Z}_{a,\alpha}
\]
uniformly in $k$. Thus, also part (ii) is proved.
\end{proof}

A relevant consequence of Lemma \ref{lem:RGbddsa} is the following large-distance decaying behaviour of a generic function of the form $R_{G_{\alpha,k}}u$.

\begin{corollary}\label{cor:RGtoWeightedL2} Let $\alpha\in(0,1)$ and $ k\in\mathbb{Z}\!\setminus\!\{0\}$. Then
\begin{equation}
	\mathrm{ran}\,R_{G_{\alpha,k}} \;\subset\; L^2(\mathbb{R}^+,\langle x\rangle^{4\alpha} \ud x)\,.
\end{equation}
\end{corollary}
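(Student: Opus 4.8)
The plan is to observe that the weighted function $x^{2\alpha}R_{G_{\alpha,k}}u$ is again the image of $u$ under a bounded operator on $L^2(\mathbb{R}^+)$, so that the decay encoded by the weight $\langle x\rangle^{4\alpha}$ follows directly from the boundedness statement of Lemma \ref{lem:RGbddsa} at the endpoint exponent.

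First I would note that $R_{G_{\alpha,k}}=|k|^{-2}R_{G_{\alpha,k}}^{(0)}$ has integral kernel $G_{\alpha,k}(x,\rho)$, since $\mathscr{G}_{\alpha,k}^{(0)}(x,\rho)=k^2\,G_{\alpha,k}(x,\rho)$ and the prefactor $|k|^{-2}$ cancels the $k^2$. Multiplying the output by $x^{2\alpha}$ produces the operator with kernel $x^{2\alpha}G_{\alpha,k}(x,\rho)=|k|^{-2}\mathscr{G}_{\alpha,k}^{(2\alpha)}(x,\rho)$; in other words, $M_{x^{2\alpha}}R_{G_{\alpha,k}}=|k|^{-2}R_{G_{\alpha,k}}^{(2\alpha)}$, where $M_{x^{2\alpha}}$ denotes multiplication by $x^{2\alpha}$.

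Next I would invoke Lemma \ref{lem:RGbddsa}(i) with the endpoint choice $a=2\alpha$, which is admissible since $2\alpha\in(-\frac{1-\alpha}{2},2\alpha]$ for every $\alpha\in(0,1)$. This gives that $R_{G_{\alpha,k}}^{(2\alpha)}$, and hence $M_{x^{2\alpha}}R_{G_{\alpha,k}}$, is a bounded operator on $L^2(\mathbb{R}^+)$. Consequently, for every $u\in L^2(\mathbb{R}^+)$ one has $x^{2\alpha}R_{G_{\alpha,k}}u\in L^2(\mathbb{R}^+)$, that is, $\int_0^{+\infty}|(R_{G_{\alpha,k}}u)(x)|^2\,x^{4\alpha}\,\ud x<+\infty$. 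Simultaneously, the case $a=0$ of the same lemma yields $R_{G_{\alpha,k}}u\in L^2(\mathbb{R}^+)$.

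To conclude, I would combine the two bounds by means of the elementary inequality $\langle x\rangle^{4\alpha}=(1+x^2)^{2\alpha}\leqslant 2^{2\alpha}(1+x^{4\alpha})$, valid for all $x>0$, which gives $\int_0^{+\infty}|(R_{G_{\alpha,k}}u)(x)|^2\,\langle x\rangle^{4\alpha}\,\ud x\leqslant 2^{2\alpha}\big(\|R_{G_{\alpha,k}}u\|_{L^2}^2+\|x^{2\alpha}R_{G_{\alpha,k}}u\|_{L^2}^2\big)<+\infty$. Since $u\in L^2(\mathbb{R}^+)$ was arbitrary, this is exactly the claimed inclusion $\mathrm{ran}\,R_{G_{\alpha,k}}\subset L^2(\mathbb{R}^+,\langle x\rangle^{4\alpha}\,\ud x)$. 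There is essentially no residual obstacle in the corollary itself: all the analytic difficulty has already been absorbed into Lemma \ref{lem:RGbddsa}, and in particular into the endpoint case $a=2\alpha$ whose Schur-test verification is the genuinely technical ingredient; the present statement is a clean repackaging of that boundedness through the weight comparison.
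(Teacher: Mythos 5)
Your proof is correct and follows essentially the same route as the paper: boundedness of both $R_{G_{\alpha,k}}$ and $x^{2\alpha}R_{G_{\alpha,k}}=|k|^{-2}R_{G_{\alpha,k}}^{(2\alpha)}$ from Lemma \ref{lem:RGbddsa} (cases $a=0$ and the endpoint $a=2\alpha$), combined via the comparison $\langle x\rangle^{4\alpha}\lesssim 1+x^{4\alpha}$. Your version merely makes explicit the kernel identification and the weight inequality that the paper leaves implicit.
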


\begin{proof}
By Lemma \ref{lem:RGbddsa} we know that both $x^{2\alpha} R_{G_{\alpha,k}}$ and $R_{G_{\alpha,k}}$ are bounded in $L^2(\mathbb{R}^+,\ud x)$. Therefore, for any $u\in L^2(\mathbb{R}^+,\ud x)$ one has that both $R_{G_{\alpha,k}} u$ and $x^{2\alpha}R_{G_{\alpha,k}} u$ must belong to $L^2(\mathbb{R}^+,\ud x)$, whence indeed $R_{G_{\alpha,k}} u\in L^2(\mathbb{R}^+,(1+x^{4\alpha}) \ud x)$.
\end{proof}

Moreover, we recognise that $R_{G_{\alpha,k}}$ inverts a self-adjoint extension of $A_\alpha(k)$.

\begin{lemma}\label{eq:RGinvertsExtS}
 Let $\alpha\in(0,1)$ and $ k\in\mathbb{Z}\!\setminus\! \{0\}$. There exists a self-adjoint extension $\mathscr{A}_\alpha(k)$ of $A_\alpha(k)$ in $L^2(\mathbb{R}^+)$ which has everywhere defined and bounded inverse and such that $\mathscr{A}_{\alpha}(k)^{-1}=R_{G_{\alpha,k}}$.
\end{lemma}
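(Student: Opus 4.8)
The plan is to exhibit $\mathscr{A}_\alpha(k)$ as the inverse of the bounded operator $R_{G_{\alpha,k}}$ itself. First I would record that, by \eqref{eq:RGalphaforus}, the prefactor $|k|^{-2}$ exactly cancels the $k^2$ sitting inside $\mathscr{G}_{\alpha,k}^{(0)}$, so that $R_{G_{\alpha,k}}$ is precisely the integral operator with the Green kernel $G_{\alpha,k}(x,\rho)$ of \eqref{eq:Green}. Lemma \ref{lem:RGbddsa}(i) (with $a=0$) already gives that $R_{G_{\alpha,k}}$ is everywhere defined, bounded, and self-adjoint on $L^2(\mathbb{R}^+)$. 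The standard functional-analytic fact that the inverse of a bounded, injective, self-adjoint operator is again self-adjoint then reduces the whole statement to two points: that $R_{G_{\alpha,k}}$ is injective, and that its inverse extends $A_\alpha(k)$.

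The core step is the identity
\begin{equation*}
 R_{G_{\alpha,k}}\,A_\alpha(k)\phi\;=\;\phi\qquad\forall\phi\in C^\infty_c(\mathbb{R}^+)\,.
\end{equation*}
To prove it I would fix $\phi\in C^\infty_c(\mathbb{R}^+)$ and set $g:=A_\alpha(k)\phi=S_{\alpha,k}\phi$, which is again smooth and compactly supported in $\mathbb{R}^+$. By the variation-of-constants construction \eqref{eq:upart}--\eqref{eq:Green} the function $u:=R_{G_{\alpha,k}}g$ solves $S_{\alpha,k}u=g$, and $u\in L^2(\mathbb{R}^+)$ by boundedness; hence both $u$ and $\phi$ lie in $\mathcal{D}(A_\alpha(k)^*)$ (cf.\ \eqref{eq:Afstar}) with $A_\alpha(k)^*u=A_\alpha(k)^*\phi=g$, so $A_\alpha(k)^*(u-\phi)=0$. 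Thus $u-\phi\in\ker A_\alpha(k)^*=\mathrm{span}\{\Phi_{\alpha,k}\}$ by Lemma \ref{lem:kerAxistar}, i.e.\ $u-\phi=c\,\Phi_{\alpha,k}$ for some $c\in\mathbb{C}$.

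To pin down $c=0$ I would examine the behaviour as $x\downarrow 0$. For $x$ below the support of $g$ the defining formula \eqref{eq:Green} gives $u(x)=W^{-1}F_{\alpha,k}(x)\int_0^{+\infty}\Phi_{\alpha,k}(\rho)\,g(\rho)\,\ud\rho$, so $u$ is a multiple of $F_{\alpha,k}$ near the origin and, by \eqref{eq:Asymtotics_0}, behaves like $x^{1+\frac{\alpha}{2}}$; since $\phi$ vanishes near $0$, the same holds for $u-\phi$. On the other hand $c\,\Phi_{\alpha,k}(x)\sim c\,\sqrt{\tfrac{\pi(1+\alpha)}{2|k|}}\,x^{-\frac{\alpha}{2}}$ by \eqref{eq:Asymtotics_0}. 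For $\alpha\in(0,1)$ the exponents $-\frac{\alpha}{2}$ and $1+\frac{\alpha}{2}$ differ and $x^{-\frac{\alpha}{2}}$ is the genuinely more singular one, so the two expressions can agree only if $c=0$, i.e.\ $u=\phi$. This is precisely the step where the strict positivity $\alpha>0$ enters, and I expect it to be the main obstacle: one must rule out the spurious kernel component, namely the $x^{-\frac{\alpha}{2}}$ boundary mode that the chosen (Friedrichs-type) Green kernel is designed to suppress.

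With the identity in hand the rest is formal. It shows $\mathrm{ran}\,R_{G_{\alpha,k}}\supseteq R_{G_{\alpha,k}}\,A_\alpha(k)\,C^\infty_c(\mathbb{R}^+)=C^\infty_c(\mathbb{R}^+)$, a dense subspace of $L^2(\mathbb{R}^+)$; since $R_{G_{\alpha,k}}$ is self-adjoint, $\ker R_{G_{\alpha,k}}=(\mathrm{ran}\,R_{G_{\alpha,k}})^{\perp}=\{0\}$, so $R_{G_{\alpha,k}}$ is injective. I would then define $\mathscr{A}_\alpha(k):=(R_{G_{\alpha,k}})^{-1}$ on $\mathrm{ran}\,R_{G_{\alpha,k}}$, which is self-adjoint by the cited fact and satisfies $\mathscr{A}_\alpha(k)^{-1}=R_{G_{\alpha,k}}$, everywhere defined and bounded. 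Finally the identity rewrites as $\mathscr{A}_\alpha(k)\phi=A_\alpha(k)\phi$ for every $\phi\in C^\infty_c(\mathbb{R}^+)=\mathcal{D}(A_\alpha(k))$, that is $A_\alpha(k)\subset\mathscr{A}_\alpha(k)$, so $\mathscr{A}_\alpha(k)$ is the desired self-adjoint extension.
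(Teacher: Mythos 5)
Your proposal is correct, and it reaches the same endpoint as the paper's proof --- $R_{G_{\alpha,k}}$ bounded, self-adjoint, injective with dense range, and $\mathscr{A}_\alpha(k):=R_{G_{\alpha,k}}^{-1}$ self-adjoint by the standard inversion fact --- but the key identity you verify is the opposite composition, and this changes the mechanics in a genuine way. The paper asserts, directly from the variation-of-constants construction, that $S_{\alpha,k}\,R_{G_{\alpha,k}}\,g=g$ for \emph{every} $g\in L^2(\mathbb{R}^+)$, i.e.\ $A_\alpha(k)^*R_{G_{\alpha,k}}=\mathbbm{1}$; injectivity then drops out in one line ($R_{G_{\alpha,k}}g=0\Rightarrow g=S_{\alpha,k}R_{G_{\alpha,k}}g=0$), and the extension property is obtained by duality: $\mathscr{A}_\alpha(k)\subset A_\alpha(k)^*$, whence $\overline{A_\alpha(k)}=A_\alpha(k)^{**}\subset\mathscr{A}_\alpha(k)$. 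You instead prove $R_{G_{\alpha,k}}A_\alpha(k)\phi=\phi$ on $C^\infty_c(\mathbb{R}^+)$, which costs an extra step --- eliminating the possible kernel component $c\,\Phi_{\alpha,k}$ via Lemma \ref{lem:kerAxistar} and the mismatch between the $x^{-\alpha/2}$ and $x^{1+\alpha/2}$ behaviours in \eqref{eq:Asymtotics_0} --- but buys two things: the ODE identity only has to be checked for smooth compactly supported data, where it is classical, rather than asserted ``by construction'' at the $L^2$ level; and the inclusion $A_\alpha(k)\subset\mathscr{A}_\alpha(k)$ is read off directly, with no detour through double adjoints. Your injectivity route (range contains the dense set $C^\infty_c(\mathbb{R}^+)$, and $\ker R_{G_{\alpha,k}}=(\mathrm{ran}\,R_{G_{\alpha,k}})^\perp$ by self-adjointness) is likewise sound and not circular. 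One cosmetic remark: near the origin $u=W^{-1}F_{\alpha,k}\int_0^{+\infty}\Phi_{\alpha,k}\,g\,\ud\rho$, and once $c=0$ this prefactor must itself vanish (as indeed it does, since $\int_0^{+\infty}\Phi_{\alpha,k}\,S_{\alpha,k}\phi\,\ud\rho=0$ by Green's identity with $\phi$ compactly supported), so your phrase ``behaves like $x^{1+\alpha/2}$'' should be read as $O(x^{1+\alpha/2})$; the argument only uses the $O$-bound, so nothing breaks.
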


\begin{proof}
 $R_{G_{\alpha,k}}$ is bounded and self-adjoint (Lemma \ref{lem:RGbddsa}), and by construction satisfies $S_{\alpha,k}\,R_{G_{\alpha,k}}\,g=g$ $\forall g\in L^2(\mathbb{R}^+)$. Therefore, $R_{G_{\alpha,k}} g=0$ for some $g\in L^2(\mathbb{R}^+)$ implies $g=0$, i.e., $R_{G_{\alpha,k}}$ is injective. Then $R_{G_{\alpha,k}}$ has dense range ($(\mathrm{ran}\,R_{G_{\alpha,k}})^\perp=\ker R_{G_{\alpha,k}}$). As a consequence (see, e.g., \cite[Theorem 1.8(iv)]{schmu_unbdd_sa}), $\mathscr{A}_\alpha(k):=R_{G_{\alpha,k}}^{-1}$ is self-adjoint. One thus has $R_{G_{\alpha,k}}=\mathscr{A}_{\alpha}(k)^{-1}$ and from the identity $A_\alpha(k)^*R_{G_{\alpha,k}}=\mathbbm{1}$ on $L^2(\mathbb{R}^+)$ one deduces that for any $h\in\mathcal{D}(\mathscr{A}_\alpha(k))$, say, $h=R_{G_{\alpha,k}} g=\mathscr{A}_\alpha(k)^{-1} g$ for some $g\in L^2(\mathbb{R}^+)$, the identity $A_\alpha(k)^*h=\mathscr{A}_\alpha(k)h$ holds. This means that $A_\alpha(k)^*\supset\mathscr{A}_\alpha(k)$, whence also $\overline{A_\alpha(k)}=A_\alpha(k)^{**}\subset\mathscr{A}_{\alpha}(k)$, i.e., $\mathscr{A}_{\alpha}(k)$ is a self-adjoint extension of $A_\alpha(k)$.
\end{proof}

We conclude this Subsection by examining the function
\begin{equation}\label{eq:defPsi}
 \Psi_{\alpha,k}\::=\;R_{G_{\alpha,k}}\Phi_{\alpha,k}\,.
\end{equation}

We prove the following useful asymptotics.

\begin{lemma}\label{lem:Psi_asymptotics}
 Let $\alpha\in(0,1)$ and $ k\in\mathbb{Z}\!\setminus\! \{0\}$. Then
 \begin{equation}\label{eq:Psi_asymptotics}
  \Psi_{\alpha,k}(x)\;\stackrel{x\downarrow 0}{=}\;{\textstyle\sqrt{\frac{2|k|}{\,\pi(1+\alpha)^3}}}\;\|\Phi_{\alpha,k}\|_{L^2}^2\:x^{1+\frac{\alpha}{2}}+o(x^{\frac{3}{2}})\,.
 \end{equation}
\end{lemma}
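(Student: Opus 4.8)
The plan is to compute the short-distance asymptotics of $\Psi_{\alpha,k}=R_{G_{\alpha,k}}\Phi_{\alpha,k}$ directly from its integral representation. Recalling \eqref{eq:RGalphaforus}, \eqref{eq:upart}, and \eqref{eq:Green}, for $x>0$ one has
\begin{equation*}
 \Psi_{\alpha,k}(x)\;=\;\frac{1}{|k|^2 W}\Big(\Phi_{\alpha,k}(x)\!\int_0^x\! F_{\alpha,k}(\rho)\Phi_{\alpha,k}(\rho)\,\ud\rho+F_{\alpha,k}(x)\!\int_x^{+\infty}\!\Phi_{\alpha,k}(\rho)^2\,\ud\rho\Big).
\end{equation*}
First I would analyse each of the two summands as $x\downarrow 0$. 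In the second summand, since $\int_x^{+\infty}\Phi_{\alpha,k}^2\,\ud\rho\to\|\Phi_{\alpha,k}\|_{L^2}^2$ as $x\downarrow 0$, and using the leading behaviour $F_{\alpha,k}(x)=\sqrt{\tfrac{2|k|}{(1+\alpha)\pi}}\,x^{1+\alpha/2}+O(x^{3+\frac52\alpha})$ from \eqref{eq:Asymtotics_0}, this term contributes exactly
\begin{equation*}
 \frac{1}{|k|^2(1+\alpha)}\sqrt{\tfrac{2|k|}{(1+\alpha)\pi}}\,\|\Phi_{\alpha,k}\|_{L^2}^2\,x^{1+\alpha/2}+o(x^{3/2})\;=\;\sqrt{\tfrac{2|k|}{\pi(1+\alpha)^3}}\,\frac{\|\Phi_{\alpha,k}\|_{L^2}^2}{|k|^2}\,x^{1+\alpha/2}+o(x^{3/2}),
\end{equation*}
which (up to the precise constant bookkeeping) is the claimed leading coefficient. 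Here I would need to be careful that the correction $\|\Phi_{\alpha,k}\|_{L^2}^2-\int_x^{+\infty}\Phi_{\alpha,k}^2\,\ud\rho=\int_0^x\Phi_{\alpha,k}^2\,\ud\rho$ times $F_{\alpha,k}(x)$ is genuinely $o(x^{3/2})$: since $\Phi_{\alpha,k}(\rho)^2\sim\frac{\pi(1+\alpha)}{2|k|}\rho^{-\alpha}$ near $0$, the integral $\int_0^x\Phi_{\alpha,k}^2$ is $O(x^{1-\alpha})$, so the product is $O(x^{1-\alpha}\cdot x^{1+\alpha/2})=O(x^{2-\alpha/2})=o(x^{3/2})$ since $\alpha<1$.

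Next I would verify that the \emph{first} summand is also $o(x^{3/2})$ and therefore does not contribute to the stated leading order. Using $\Phi_{\alpha,k}(x)=O(x^{-\alpha/2})$ from \eqref{eq:Asymtotics_0}, I need the growth of $\int_0^x F_{\alpha,k}(\rho)\Phi_{\alpha,k}(\rho)\,\ud\rho$. Since the integrand behaves like $\rho^{1+\alpha/2}\cdot\rho^{-\alpha/2}=\rho$ near the origin (the product $F_{\alpha,k}\Phi_{\alpha,k}$ has leading term proportional to $\rho^{1}$ by \eqref{eq:Asymtotics_0}), the integral is $O(x^2)$, so the first summand is $O(x^{-\alpha/2}\cdot x^2)=O(x^{2-\alpha/2})=o(x^{3/2})$, again using $\alpha<1$. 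This confirms that only the second summand survives at order $x^{1+\alpha/2}$.

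The main obstacle I anticipate is the careful tracking of the constant and of the remainder exponents, rather than any conceptual difficulty: the statement as written in \eqref{eq:Psi_asymptotics} omits the factor $|k|^{-2}$ that appears in the definition \eqref{eq:RGalphaforus} of $R_{G_{\alpha,k}}$, so I would double-check whether the intended normalisation of $R_{G_{\alpha,k}}$ already absorbs it or whether the coefficient in \eqref{eq:Psi_asymptotics} should read $\sqrt{\tfrac{2|k|}{\pi(1+\alpha)^3}}\,|k|^{-2}\|\Phi_{\alpha,k}\|_{L^2}^2$; in any case the derivation pins down the exact constant unambiguously. The only genuinely delicate point is justifying that the error terms combine to $o(x^{3/2})$ uniformly, which reduces to the elementary inequalities $2-\tfrac{\alpha}{2}>\tfrac32$ and $1-\alpha>0$ valid precisely because $\alpha\in(0,1)$; this is where the hypothesis on $\alpha$ is used, and I would state it explicitly.
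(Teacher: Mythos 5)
Your proof follows exactly the paper's own argument (the paper's proof of Lemma \ref{lem:Psi_asymptotics} uses the same Green-function representation, the same rewriting $\int_x^{+\infty}\Phi_{\alpha,k}^2 = \|\Phi_{\alpha,k}\|_{L^2}^2-\int_0^x\Phi_{\alpha,k}^2$, and the same error exponents $2-\frac{\alpha}{2}>\frac{3}{2}$), and all of your asymptotic estimates are correct. There is, however, one concrete slip you left unresolved: the prefactor $\frac{1}{|k|^2 W}$ in your opening formula is wrong, and consequently the hedge at the end of your proposal about whether the coefficient should carry an extra $|k|^{-2}$ is resolved in the opposite way from what your formula suggests. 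By definition the kernel of $R_{G_{\alpha,k}}^{(a)}$ is $\mathscr{G}_{\alpha,k}^{(a)}(x,\rho)=x^a\,k^2\,G_{\alpha,k}(x,\rho)$, i.e.\ it \emph{already contains} a factor $k^2$; hence $R_{G_{\alpha,k}}=|k|^{-2}R_{G_{\alpha,k}}^{(0)}$ is precisely the integral operator with kernel $G_{\alpha,k}(x,\rho)$, with no residual $k$-dependence. The correct representation is therefore
\begin{equation*}
 \Psi_{\alpha,k}(x)\;=\;\frac{1}{W}\Big(\Phi_{\alpha,k}(x)\!\int_0^x\! F_{\alpha,k}(\rho)\Phi_{\alpha,k}(\rho)\,\ud\rho+F_{\alpha,k}(x)\!\int_x^{+\infty}\!\Phi_{\alpha,k}(\rho)^2\,\ud\rho\Big)\,,
\end{equation*}
and with $W=1+\alpha$ your own computation of the second summand then yields exactly the coefficient $\sqrt{\frac{2|k|}{\pi(1+\alpha)^3}}\,\|\Phi_{\alpha,k}\|_{L^2}^2$ stated in \eqref{eq:Psi_asymptotics}, with no correction needed. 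Everything else — the $O(x^{1-\alpha})$ bound on $\int_0^x\Phi_{\alpha,k}^2$, the $O(x^2)$ bound on $\int_0^x F_{\alpha,k}\Phi_{\alpha,k}$ (the product indeed behaves like $\rho$ at the origin, the two square-root prefactors cancelling), and the resulting $O(x^{2-\alpha/2})=o(x^{3/2})$ remainders for $\alpha\in(0,1)$ — matches the paper's proof line by line.
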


\begin{proof}
Owing to \eqref{eq:Green} and \eqref{eq:newRGalphaforus},
\[
	\Psi_{\alpha,k}(x)\;=\;\frac{1}{W} \Big( \Phi_{\alpha,k}(x)\!\int_0^x \! F_{\alpha,k}(\rho) \Phi_{\alpha,k}(\rho) \ud \rho + F_{\alpha,k}(x)\!\int_x^{+\infty} \!\! \Phi_{\alpha,k}(\rho)^2 \ud \rho \Big)\,.
\]
By means of \eqref{eq:Asymtotics_0} one then finds
\[
	\Phi_{\alpha,k}(x)\!\int_0^x \! F_{\alpha,k}(\rho) \Phi_{\alpha,k}(\rho) \ud \rho \;\stackrel{x\downarrow 0}{=}\; {\textstyle \sqrt{\frac{\pi(1+\alpha)}{8 |k|}} }x^{-\frac{\alpha}{2} + 2}+o(x^3)\;\stackrel{x\downarrow 0}{=}\;o(x^{\frac{3}{2}})
\]
(having explicitly used that $\alpha \in (0,1)$), and
\[
 \begin{split}
  F_{\alpha,k}(x)\!\int_x^{+\infty} \!\! \Phi_{\alpha,k}(\rho)^2 \ud \rho\:&\stackrel{x\downarrow 0}{=}\: F_{\alpha,k}(x)\Big(\|\Phi_{\alpha,k}\|_{L^2}^2-\int_0^x\!\Phi_{\alpha,k}(\rho)^2 \ud \rho\Big) \\
  &\stackrel{x\downarrow 0}{=}\:\;{\textstyle\sqrt{\frac{2|k|}{\,\pi(1+\alpha)}}}\;\|\Phi_{\alpha,k}\|_{L^2}^2\:x^{1+\frac{\alpha}{2}}+O(x^{2-\frac{\alpha}{2}})\,.
 \end{split}
\]
The latter quantity is leading, and using the expression \eqref{eq:value_of_W} for $W$ yields \eqref{eq:Psi_asymptotics}.
\end{proof}

In fact, using \eqref{eq:value_of_W},  \eqref{eq:Green}, and \eqref{eq:newRGalphaforus} as in the proof above, and using the explicit expression \eqref{eq:Phi_and_F_explicit} for $\Phi_{\alpha,k}$ and $F_{\alpha,k}$, one finds
\begin{equation}
 \begin{split}
  \!\!\!\!\!\!\!\!\Psi_{\alpha,k}(x)\;=\;{\textstyle\sqrt{\frac{\pi(1+\alpha)}{2|k|^3}}}&\bigg(x^{-\frac{\alpha}{2}}e^{-\frac{|k|}{1+\alpha}x^{1+\alpha}}\!\int_0^x\ud\rho\,\rho^{-\alpha}\sinh{\textstyle(\frac{|k|}{1+\alpha}\rho^{1+\alpha})}\,e^{-\frac{|k|}{1+\alpha}\rho^{1+\alpha}} \\
  &\quad +x^{-\frac{\alpha}{2}}\sinh{\textstyle(\frac{|k|}{1+\alpha}x^{1+\alpha})}\!\int_x^{+\infty}\!\!\ud\rho\,\rho^{-\alpha}\,e^{-\frac{2|k|}{1+\alpha}\rho^{1+\alpha}}\bigg),
 \end{split}
\end{equation}
or also, with a change of variable $\rho\mapsto|k|^{\frac{1}{1+\alpha}}\rho$,
 \begin{equation}\label{eq:explicitPsika}
 \begin{split}
  \Psi_{\alpha,k}(x)\;&=\;{\textstyle\sqrt{\frac{\pi(1+\alpha)}{2}}}|k|^{-\frac{5+\alpha}{2(1+\alpha)}}\times \\
  &\qquad \times\bigg(x^{-\frac{\alpha}{2}}e^{-\frac{|k|}{1+\alpha}x^{1+\alpha}}\!\int_0^{x|k|^{\frac{1}{1+\alpha}}}\ud\rho\,\rho^{-\alpha}\sinh{\textstyle(\frac{\rho^{1+\alpha}}{1+\alpha})}\,e^{-\frac{\,\rho^{1+\alpha}}{1+\alpha}} \\
  &\qquad\qquad\quad +x^{-\frac{\alpha}{2}}\sinh{\textstyle(\frac{|k|}{1+\alpha}x^{1+\alpha})}\!\int_{x|k|^{\frac{1}{1+\alpha}}}^{+\infty}\!\!\ud\rho\,\rho^{-\alpha}\,e^{-\frac{2\rho^{1+\alpha}}{1+\alpha}}\bigg)\,.
 \end{split}
\end{equation}
However, we will not need such an explicit expression for $\Psi_{\alpha,k}$ until Subsect.~\ref{sec:q0q1}.

\subsection{Operator closure $\overline{A_\alpha(k)}$}~

The next fundamental ingredient for the Kre\u{\i}n-Vi\v{s}ik-Birman extension scheme is the characterisation of the operator closure $\overline{A_\alpha(k)}$ of $A_\alpha(k)$. 

In this Subsection we establish the following result.

\begin{proposition}\label{prop:domAclosure}
 Let $\alpha\in(0,1)$ and $ k\in\mathbb{Z}\!\setminus\! \{0\}$. Then
 \begin{equation}
  \mathcal{D}(\overline{A_\alpha(k)})\;=\;H^2_0(\mathbb{R}^+)\cap L^2(\mathbb{R}^+,\langle x\rangle^{4\alpha}\,\ud x)\,.
 \end{equation}
\end{proposition}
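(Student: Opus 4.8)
The plan is to bypass a direct a~priori $H^2$-bound on $C^\infty_c(\mathbb{R}^+)$, which is obstructed by the inverse-square term $C_\alpha x^{-2}$: integrating $\|A_\alpha(k)\varphi\|^2$ by parts produces a contribution $-6C_\alpha\|x^{-2}\varphi\|^2$ that, since $C_\alpha\in[0,\frac34)$, cannot be absorbed into $\|\varphi''\|^2$ via Hardy's inequality. Instead I would exploit the distinguished self-adjoint extension $\mathscr{A}_\alpha(k)$ of Lemma~\ref{eq:RGinvertsExtS}, whose inverse is the explicit integral operator $R_{G_{\alpha,k}}$. The first step is to identify $\mathcal{D}(\overline{A_\alpha(k)})$ with $R_{G_{\alpha,k}}(\mathrm{span}\{\Phi_{\alpha,k}\}^\perp)$. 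Indeed, by \eqref{eq:Axibottom} one has $\overline{A_\alpha(k)}\geqslant M_{\alpha,k}>0$, so $\overline{A_\alpha(k)}$ is injective with \emph{closed} range, and $\mathrm{ran}\,\overline{A_\alpha(k)}=(\ker A_\alpha(k)^*)^\perp=\mathrm{span}\{\Phi_{\alpha,k}\}^\perp$ by Lemma~\ref{lem:kerAxistar}. Since $\mathscr{A}_\alpha(k)\supseteq\overline{A_\alpha(k)}$ and $\mathscr{A}_\alpha(k)^{-1}=R_{G_{\alpha,k}}$, applying the inverse to this range identity yields $\mathcal{D}(\overline{A_\alpha(k)})=R_{G_{\alpha,k}}(\mathrm{span}\{\Phi_{\alpha,k}\}^\perp)$.

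Both inclusions would then follow from a near-origin analysis of $h=R_{G_{\alpha,k}}g$. Using the kernel \eqref{eq:Green} and the short-distance asymptotics \eqref{eq:Asymtotics_0} of $\Phi_{\alpha,k}$ and $F_{\alpha,k}$, together with Cauchy--Schwarz on the $\int_0^x$-integrals, I expect the expansion $h(x)=W^{-1}F_{\alpha,k}(x)\langle\Phi_{\alpha,k},g\rangle+o(x^{3/2})$ as $x\downarrow 0$, whose leading term is proportional to $\langle\Phi_{\alpha,k},g\rangle\,x^{1+\alpha/2}$ (and $1+\tfrac{\alpha}{2}<\tfrac32$ \emph{precisely} because $\alpha<1$). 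Hence, for the inclusion $\subseteq$: if $g\perp\Phi_{\alpha,k}$ then $h=o(x^{3/2})$, so $x^{-2}h\in L^2$ near $0$; combining this with $h\in L^2(\mathbb{R}^+,\langle x\rangle^{4\alpha}\,\ud x)$ from Corollary~\ref{cor:RGtoWeightedL2} and with $S_{\alpha,k}h=g\in L^2$ gives $h''\in L^2$, then $h'\in L^2$ by interpolation, and $h(0)=h'(0)=0$. By \eqref{eq:H20} this means $h\in H^2_0(\mathbb{R}^+)\cap L^2(\mathbb{R}^+,\langle x\rangle^{4\alpha}\,\ud x)$, establishing $\mathcal{D}(\overline{A_\alpha(k)})\subseteq H^2_0\cap L^2(\langle x\rangle^{4\alpha})$.

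Conversely, for $f\in H^2_0\cap L^2(\langle x\rangle^{4\alpha})$, the twofold estimate $|f(x)|\leqslant\frac23\|f''\|_{L^2(0,x)}x^{3/2}=o(x^{3/2})$ (from $f(0)=f'(0)=0$ and Cauchy--Schwarz) shows $x^{-2}f\in L^2$, whence $S_{\alpha,k}f=:g\in L^2$ and $f\in\mathcal{D}(A_\alpha(k)^*)$. Then $S_{\alpha,k}(f-R_{G_{\alpha,k}}g)=0$ forces $f=R_{G_{\alpha,k}}g+c\,\Phi_{\alpha,k}$ for some $c\in\mathbb{C}$ by Lemma~\ref{lem:kerAxistar}. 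Matching the near-$0$ expansions and using that $x^{-\alpha/2}$ and $x^{1+\alpha/2}$ both fail to be $o(x^{3/2})$ for $\alpha\in(0,1)$: vanishing of the $x^{-\alpha/2}$-coefficient of $f$ forces $c=0$, and then vanishing of the $x^{1+\alpha/2}$-coefficient forces $\langle\Phi_{\alpha,k},g\rangle=0$. Thus $f=R_{G_{\alpha,k}}g$ with $g\perp\Phi_{\alpha,k}$, i.e. $f\in\mathcal{D}(\overline{A_\alpha(k)})$, completing the reverse inclusion.

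The hard part will be the precise near-origin bookkeeping: extracting the exact leading coefficient $\propto\langle\Phi_{\alpha,k},g\rangle$ of $R_{G_{\alpha,k}}g$ and controlling the remainder at order $o(x^{3/2})$ through the Cauchy--Schwarz tails of both kernel branches. This is exactly what discriminates the boundary mode $x^{1+\alpha/2}$ — present in $\mathcal{D}(\mathscr{A}_\alpha(k))=\mathcal{D}(\overline{A_\alpha(k)})\dotplus\mathrm{span}\{\Psi_{\alpha,k}\}$ via $\Psi_{\alpha,k}=R_{G_{\alpha,k}}\Phi_{\alpha,k}$ (Lemma~\ref{lem:Psi_asymptotics}) — from the genuinely regular $H^2_0$ decay, and it is where the restriction $\alpha\in(0,1)$ is essential.
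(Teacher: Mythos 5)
Your proposal is correct in outline and reaches the result by a route genuinely different from the paper's. The paper identifies $\mathcal{D}(\overline{A_\alpha(k)})$ through the ODE decomposition of Lemma \ref{lem:odedecomp} and the Wronskian/boundary-form machinery of Lemmas \ref{lem:LV} and \ref{prop:EquivalentClosure}: the inclusion $H^2_0\cap L^2(\langle x\rangle^{4\alpha})\subseteq\mathcal{D}(\overline{A_\alpha(k)})$ is obtained by showing $a_0^{(\varphi)}=a_\infty^{(\varphi)}=0$ in \eqref{eq:G-ODE-Decomposition}, while the opposite inclusion combines Lemma \ref{eq:RGinvertsExtS} with Corollary \ref{cor:RGtoWeightedL2} and the short-distance asymptotics of Lemma \ref{lem:BehaviourZeroClosure}. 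You instead prove directly the operator-theoretic identity $\mathcal{D}(\overline{A_\alpha(k)})=R_{G_{\alpha,k}}\big(\mathrm{span}\{\Phi_{\alpha,k}\}^\perp\big)$, which is valid: by \eqref{eq:Axibottom} the bound $\langle\varphi,\overline{A_\alpha(k)}\varphi\rangle\geqslant M_{\alpha,k}\|\varphi\|^2$ survives the closure, so $\overline{A_\alpha(k)}$ is injective with closed range equal to $(\ker A_\alpha(k)^*)^\perp$, and applying $\mathscr{A}_\alpha(k)^{-1}=R_{G_{\alpha,k}}$ gives the identity. Your near-origin expansion $R_{G_{\alpha,k}}g=W^{-1}\langle\Phi_{\alpha,k},g\rangle F_{\alpha,k}+o(x^{3/2})$ checks out (it is exactly the computation of Lemma \ref{lem:Psi_asymptotics} run with general $g$ in place of $\Phi_{\alpha,k}$), and your matching argument in the reverse inclusion — killing first $c$ against the $x^{-\alpha/2}$ mode, then $\langle\Phi_{\alpha,k},g\rangle$ against $x^{1+\alpha/2}$, both possible precisely because $\alpha<1$ — is sound. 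What the two routes buy: yours is shorter and self-contained for this proposition, bypassing the boundary-form apparatus entirely; the paper's Wronskian lemmas, however, are reused downstream (they feed Lemma \ref{lem:BehaviourZeroClosure}, the asymptotics \eqref{eq:gatzero}, and ultimately the extension classification), so the paper amortises that investment. Note also that your argument leans on strict positivity, so for $k=0$ it would require the shift $A_\alpha(0)+\mathbbm{1}$ — which is indeed how Section \ref{sec:zero_mode} proceeds.

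One step deserves tightening: the inference ``$h=o(x^{3/2})$ as $x\downarrow 0$, hence $x^{-2}h\in L^2$ near $0$'' is not valid as a pointwise-to-$L^2$ implication ($h(x)=x^{3/2}(\log(1/x))^{-1/2}$ is $o(x^{3/2})$ while $|x^{-2}h|^2=x^{-1}/\log(1/x)$ fails to be integrable at $0$). In your reverse inclusion this is harmless, since for $f\in H^2_0(\mathbb{R}^+)$ one gets $x^{-2}f\in L^2$ directly from the double-Hardy inequality of Lemma \ref{lem:doubleHardy}, extended from $C^\infty_c(\mathbb{R}^+)$ to $H^2_0(\mathbb{R}^+)$ by density. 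In the forward inclusion, where $h=R_{G_{\alpha,k}}g$ with $g\perp\Phi_{\alpha,k}$, the conclusion is still true but requires the quantitative version of your ``bookkeeping'': using orthogonality to write
\begin{equation*}
 h(x)\;=\;W^{-1}\Big(\Phi_{\alpha,k}(x)\!\int_0^x\! F_{\alpha,k}\,g\,\ud\rho-F_{\alpha,k}(x)\!\int_0^x\!\Phi_{\alpha,k}\,g\,\ud\rho\Big)\,,
\end{equation*}
one reduces to the two triangle kernels $x^{-2-\frac{\alpha}{2}}\rho^{1+\frac{\alpha}{2}}\mathbf{1}_{\rho<x}$ and $x^{-1+\frac{\alpha}{2}}\rho^{-\frac{\alpha}{2}}\mathbf{1}_{\rho<x}$ on $(0,1)^2$, each of which passes a weighted Schur test (weight $x^s$ with $s\in(\frac{\alpha}{2}-1,-\frac{\alpha}{2})$, a non-empty window exactly because $\alpha<1$), yielding $\|x^{-2}h\|_{L^2((0,1))}\lesssim\|g\|_{L^2}$. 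You flagged this bookkeeping yourself as the hard part, and the published proof compresses the very same inference (from the $o(x^{3/2})$ asymptotics of Lemma \ref{lem:BehaviourZeroClosure} to $x^{-2}\varphi\in L^2$), so this is a presentational rather than structural defect — but it is the one place where your write-up, as it stands, asserts more than it proves.
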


Here $H^2_0(\mathbb{R}^+)$ is the space \eqref{eq:H20} and, by definition,
\begin{equation}
 \mathcal{D}(\overline{A_\alpha(k)})\;=\;\overline{C^\infty_c(\mathbb{R}^+)}^{\|\,\|_{A_\alpha(k)}}\,,
\end{equation}
where the norm $\|\cdot\|_{A_\alpha(k)}$ is defined by
\begin{equation}
\begin{split}
 \|\varphi\|_{A_\alpha(k)}^2\;&:=\;\|-\varphi''+ k^2 x^{2\alpha}\varphi+C_\alpha x^{-2}\varphi\|_{L^2(\mathbb{R}^+)}^2+\|\varphi\|_{L^2(\mathbb{R}^+)}^2 \\
 &\qquad\forall\varphi\in\mathcal{D}(A_\alpha(k))=C^\infty_c(\mathbb{R}^+)\,.
\end{split}
\end{equation}

We prove Proposition \ref{prop:domAclosure} in several steps.

First, in the same spirit of \cite[Lemma 5.1]{MG_DiracCoulomb2017}, we produce a useful representation of $\mathcal{D}(A_\alpha(k)^*)$ based on the differential nature \eqref{eq:Afstar}  of the adjoint $A_\alpha(k)^*$.

\begin{lemma}\label{lem:odedecomp}
 Let $\alpha\in(0,1)$ and $ k\in\mathbb{Z}\!\setminus\! \{0\}$.
 \begin{itemize}
  \item[(i)] For each $g \in \mathcal{D}(A_\alpha(k)^*)$ there exist uniquely determined constants $a_0^{(g)}, a_\infty^{(g)} \in \mathbb{C}$ and functions
\begin{equation}\label{eq:b0binfty}
\begin{split}
b_0^{(g)}(x) \;&:=\; \frac{1}{W} \int_0^x F_{\alpha,k}(\rho) (A_\alpha(k)^* g)(\rho) \, \ud \rho\,, \\
b_\infty^{(g)} (x) \;&:= \; - \frac{1}{W} \int_0^x \Phi_{\alpha,k}(\rho) (A_\alpha(k)^* g)(\rho) \, \ud \rho
\end{split}
\end{equation}
on $\mathbb{R}^+$ such that
\begin{equation}\label{eq:G-ODE-Decomposition}
g \;=\; a_0^{(g)} F_{\alpha,k} + a_\infty^{(g)} \Phi_{\alpha,k} + b_\infty^{(g)} F_{\alpha,k} + b_0^{(g)} \Phi_{\alpha,k}\,,
\end{equation}
with $\Phi_{\alpha,k}$ and $F_{\alpha,k}$ defined in \eqref{eq:Phi_and_F} and $W=-(1+\alpha)$ as in \eqref{eq:value_of_W}.
\item[(ii)] The functions $b_0^{(g)}$ and $b_\infty^{(g)}$ satisfy the properties
\begin{eqnarray}
 & & b_0^{(g)},b_\infty^{(g)}\in AC(\mathbb{R}^+)\,, \label{eq:b_areAC} \\
 & & b_0^{(g)}(x)\:\stackrel{x\downarrow 0}{=}\: o(1)\,,\quad b_\infty^{(g)}(x)\:\stackrel{x\downarrow 0}{=}\: o(1)\,, \label{eq:b_vanishes}\\
 & & b_\infty^{(g)}(x) F_{\alpha,k}(x) + b_0^{(g)} (x) \Phi_{\alpha,k}(x) \:\stackrel{x\downarrow 0}{=}\: o(x^{\frac{3}{2}})\,.\label{eq:sdasimpt}
\end{eqnarray}
 \end{itemize}
\end{lemma}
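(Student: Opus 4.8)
The plan is to regard any $g\in\mathcal{D}(A_\alpha(k)^*)$ as a solution of the inhomogeneous equation $S_{\alpha,k}g=h$ with right-hand side $h:=A_\alpha(k)^*g\in L^2(\mathbb{R}^+)$, and to reconstruct it by variation of constants relative to the fundamental system $\{F_{\alpha,k},\Phi_{\alpha,k}\}$ of $\ker S_{\alpha,k}$. First I would note that on every compact subset of $\mathbb{R}^+$ the coefficients of $S_{\alpha,k}$ are smooth and the operator is non-degenerate, so the elliptic-regularity argument already used in Lemma \ref{lem:InclusionH2loc} gives $g\in H^2_{\mathrm{loc}}(\mathbb{R}^+)\subset C^1(\mathbb{R}^+)$ and shows that $S_{\alpha,k}g=h$ holds pointwise a.e.

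Next I would introduce $b_0^{(g)}$ and $b_\infty^{(g)}$ through the integral formulas \eqref{eq:b0binfty}, checking their convergence at the origin by Cauchy--Schwarz: $\int_0^x|F_{\alpha,k}h|\le\|F_{\alpha,k}\|_{L^2(0,x)}\|h\|_{L^2(0,x)}$ and similarly for $\Phi_{\alpha,k}$, while the short-distance asymptotics \eqref{eq:Asymtotics_0} give $\Phi_{\alpha,k}^2\sim x^{-\alpha}$ and $F_{\alpha,k}^2\sim x^{2+\alpha}$, both integrable near $0$ precisely because $\alpha<1$. This at once yields $b_0^{(g)},b_\infty^{(g)}\in AC(\mathbb{R}^+)$ and, since $\|h\|_{L^2(0,x)}\to0$, the vanishing \eqref{eq:b_vanishes}. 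I would then verify by the standard variation-of-constants computation that $u_p:=b_0^{(g)}\Phi_{\alpha,k}+b_\infty^{(g)}F_{\alpha,k}$ solves $S_{\alpha,k}u_p=h$: in $u_p'$ the derivative cross terms cancel, in $u_p''$ the surviving cross term equals $\frac{h}{W}(F_{\alpha,k}\Phi_{\alpha,k}'-\Phi_{\alpha,k}F_{\alpha,k}')=-h$ by \eqref{eq:value_of_W}, and the rest vanishes because $\Phi_{\alpha,k},F_{\alpha,k}\in\ker S_{\alpha,k}$. Hence $g-u_p$ solves the homogeneous equation and, as a solution of an equation with smooth coefficients on $\mathbb{R}^+$, is a genuine combination $a_0^{(g)}F_{\alpha,k}+a_\infty^{(g)}\Phi_{\alpha,k}$ of the two-dimensional classical solution space; uniqueness of $a_0^{(g)},a_\infty^{(g)}$ follows from the linear independence of $\Phi_{\alpha,k}$ and $F_{\alpha,k}$. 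This proves (i) together with \eqref{eq:b_areAC}--\eqref{eq:b_vanishes}.

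The remaining estimate \eqref{eq:sdasimpt} is the delicate point, and I would handle the two summands separately via the sharp Cauchy--Schwarz bounds. From \eqref{eq:Asymtotics_0} one has $\|\Phi_{\alpha,k}\|_{L^2(0,x)}=O(x^{(1-\alpha)/2})$, hence $b_\infty^{(g)}(x)=o(x^{(1-\alpha)/2})$, and multiplying by $F_{\alpha,k}(x)=O(x^{1+\alpha/2})$ gives $b_\infty^{(g)}F_{\alpha,k}=o(x^{3/2})$ since $\frac{1-\alpha}{2}+1+\frac{\alpha}{2}=\frac{3}{2}$. Symmetrically $\|F_{\alpha,k}\|_{L^2(0,x)}=O(x^{(3+\alpha)/2})$ gives $b_0^{(g)}(x)=o(x^{(3+\alpha)/2})$, and multiplying by $\Phi_{\alpha,k}(x)=O(x^{-\alpha/2})$ gives $b_0^{(g)}\Phi_{\alpha,k}=o(x^{3/2})$ since $\frac{3+\alpha}{2}-\frac{\alpha}{2}=\frac{3}{2}$; adding the two yields \eqref{eq:sdasimpt}. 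The crux of the whole argument is exactly this matching of exponents: the decay extracted from the $L^2$-tail $\|h\|_{L^2(0,x)}=o(1)$ compensates precisely the singular growth of $\Phi_{\alpha,k}$ and the slow vanishing of $F_{\alpha,k}$, so that both products land at the common order $o(x^{3/2})$, and it is the restriction $\alpha\in(0,1)$ that simultaneously guarantees convergence of the defining integrals and these exponent cancellations.
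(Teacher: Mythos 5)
Your proof is correct and follows essentially the same route as the paper: variation of constants relative to the fundamental system $\{F_{\alpha,k},\Phi_{\alpha,k}\}$ for part (i), and Cauchy--Schwarz combined with the short-distance asymptotics \eqref{eq:Asymtotics_0} for part (ii), with the exponents summing to the same $o(x^{3/2})$ as in the paper's estimates. The only immaterial differences are that you verify directly that $b_0^{(g)}\Phi_{\alpha,k}+b_\infty^{(g)}F_{\alpha,k}$ solves the inhomogeneous equation, whereas the paper invokes the Green-kernel representation of Subsect.~\ref{sec:non-homogeneous_problem} (with $\Theta_\infty^{(h)}(x)=W^{-1}\int_x^{+\infty}\Phi_{\alpha,k}h\,\ud\rho$) and absorbs the constant $W^{-1}\langle \Phi_{\alpha,k},A_\alpha(k)^*g\rangle_{L^2}$ into $a_0^{(g)}$, and that for the term $b_0^{(g)}\Phi_{\alpha,k}$ you use the sharp bound $\|F_{\alpha,k}\|_{L^2((0,x))}=O(x^{(3+\alpha)/2})$ where the paper bounds $\rho^{1+\frac{\alpha}{2}}\leqslant x^{1+\frac{\alpha}{2}}$ pointwise before applying Cauchy--Schwarz.
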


\begin{proof}
(i) Let $h:=A_\alpha(k)^*g=S_{\alpha,k}\, g$. As already observed at the beginning of Section \ref{sec:non-homogeneous_problem}, $g$ can be expressed in terms of $h$ by the standard representation
\[
	g=A_0 F_{\alpha,k} + A_\infty \Phi_{\alpha,k}+ \Theta^{(h)}_\infty F_{\alpha,k} + \Theta^{(h)}_0 \Phi_{\alpha,k}
\]
for some constants $A_0, A_\infty \in \mathbb{C}$ determined by $h$ and some $h$-dependent functions explicitly given, as follows from \eqref{eq:ODE_general_sol}, \eqref{eq:upart}, and \eqref{eq:Green}, by
\[
	\begin{split}
		\Theta_0^{(h)}(x)\;&:=\; \frac{1}{W} \int_0^x F_{\alpha,k}(\rho) h(\rho)\, \ud \rho\,,\\
		\Theta_\infty^{(h)} (x) \;&:=\; \frac{1}{W} \int_x^{+\infty} \!\!\Phi_{\alpha,k}(\rho) h(\rho) \, \ud \rho\,.
	\end{split}
\]
Comparing the latter formulas with \eqref{eq:b0binfty}-\eqref{eq:G-ODE-Decomposition}, one deduces 
\[
	\begin{split}
		\Theta^{(h)}_0(x)\;&=\;b_0^{(g)}(x)\,,\\
		\Theta^{(h)}_\infty(x)\;&=\;\frac{1}{W} \int_x^{+\infty}\!\! \Phi_{\alpha,k}(\rho) (A_\alpha(k)^* g)(\rho) \, \ud \rho \\
		&=\;W^{-1} \,\langle \Phi_{\alpha,k}, A_\alpha(k)^* g \rangle_{L^2(\mathbb{R}^+)}+b_\infty^{(g)}(x)\,.
	\end{split}
\]
So \eqref{eq:G-ODE-Decomposition} is proved upon setting
\[
\begin{split}
	a_0^{(g)}\;&:=\;A_0+W^{-1} \,\langle \Phi_{\alpha,k}, A_\alpha(k)^* g \rangle_{L^2(\mathbb{R}^+)}\,, \\
	a_\infty^{(g)} \;&:=\; A_\infty\,.
	\end{split}
\]

(ii) Since $\Phi_{\alpha,k}$, $F_{\alpha,k}$ and $A_\alpha(k)^*g$ are all square-integrable on the interval $[0,x]$, the integrand functions in \eqref{eq:b0binfty} are $L^1$-functions on $[0,x]$: this proves \eqref{eq:b_areAC} and justifies the simple estimates
\[
 \begin{split}
  |b_0^{(g)}(x)| \;&\lesssim\; \Vert F_{\alpha,k} \Vert_{L^2((0,x))} \Vert A_\alpha^*(k) g \Vert_{L^2((0,x))} \:\stackrel{x\downarrow 0}{=}\: o(1)\,, \\
  |b_\infty^{(g)}(x)| \;&\lesssim\; \Vert \Phi_{\alpha,k} \Vert_{L^2((0,x))} \Vert A_\alpha^*(k) g \Vert_{L^2((0,x))}\:\stackrel{x\downarrow 0}{=}\: o(1)\,,
 \end{split}
\]
so \eqref{eq:b_vanishes} is proved too. Last, one finds
\[
 \begin{split}
  |b_\infty^{(g)}(x) F_{\alpha,k}(x) | \;&\lesssim\; x^{1+\frac{\alpha}{2}} \Big(\int_0^x \!\rho^{-\alpha} \, \ud \rho \Big)^{\frac{1}{2}} \Vert h \Vert_{L^2((0,x))} \;\lesssim\; x^{\frac{3}{2}} o(1) \;=\; o(x^{\frac{3}{2}})\,, \\
  |b_0^{(g)}(x) \Phi_{\alpha,k}(x)| \;&\lesssim\; x^{-\frac{\alpha}{2}}\! \int_0^x \rho^{1+\frac{\alpha}{2}} |h(\rho)| \ud \rho \;\leqslant\; x \Vert h \Vert_{L^2((0,x))} x^{\frac{1}{2}} \;=\; o(x^{\frac{3}{2}})\,,
 \end{split}
\]
and \eqref{eq:sdasimpt} follows.
\end{proof}

\begin{remark}
 As is evident from the proof of Lemma \ref{lem:odedecomp}, the decomposition \eqref{eq:G-ODE-Decomposition} is valid for a generic solution $g$ to $S_{\alpha,k}\, g=h$, irrespectively of whether $g$ belongs to $\mathcal{D}(A_\alpha(k)^*)$ (i.e., irrespectively of whether $h$ is square-integrable), it is just a consequence of general facts of the theory of linear ordinary differential equations. It is only in part (ii) of Lemma \ref{lem:odedecomp} that we explicitly used $g\in\mathcal{D}(A_\alpha(k)^*)$ (i.e., $h\in L^2(\mathbb{R}^+)$). 
\end{remark}

Next, proceeding towards the proof of Proposition \ref{prop:domAclosure}, we introduce, for any two functions in $\mathcal{D}(A_\alpha(k)^*)$, the `generalised Wronskian'
\begin{equation}
\mathbb{R}^+ \ni x \mapsto W_x(g,h) \;:=\; \det \begin{pmatrix}
g(x) & h(x) \\
g'(x) & h'(x)
\end{pmatrix}, \qquad g,h \in \mathcal{D}(A_\alpha(k)^*)
\end{equation}
and the `boundary form'
\begin{equation}\label{eq:boundaryform}
\omega(g,h)\;:=\; \langle A_\alpha(k)^* g, h \rangle_{L^2}-\langle g, A_\alpha(k)^* h \rangle_{L^2}, \qquad g,h \in \mathcal{D}(A_\alpha(k)^*).
\end{equation}
The boundary form is anti-symmetric, i.e.,
\begin{equation}
\omega(h,g)=-\overline{\omega(g,h)},
\end{equation}
and it is related to the Wronskian by
\begin{equation}\label{eq:omega-W}
\omega(g,h)\;=\;- \lim_{x \downarrow 0} W_x (\overline{g},h) \, .
\end{equation}
Indeed,
\[
	\begin{split}
		\omega(g,h)\;&=\; \int_0^{+\infty} (\overline{A_\alpha(k)^* g})(\rho) \,h(\rho) \, \ud \rho - \int_0^{+\infty} \overline{g(\rho)}\, (A_\alpha(k)^* h)(\rho) \ud \rho  \\
		&=\;\lim_{x \downarrow 0}\Big( \int_x^{+\infty} (\overline{- g''(\rho)}\, h(\rho) \,\ud \rho + \int_x^{+\infty} \overline{g(\rho)}\, h''(\rho)\, \ud \rho \Big) \\
		&=\; \lim_{x \downarrow 0} \Big(\overline{g'(x)} h(x) - \overline{g(x)} h'(x) \Big)\;=\;-\lim_{x \downarrow 0} W_x(\overline{g},h) \, .
	\end{split}
\]

It is also convenient to refer to the two-dimensional space of solutions to the differential problem $S_{\alpha,k}\,u = 0$ as the space
\begin{equation}
\mathcal{L}\;:=\; \{ u: \mathbb{R}^+ \to \mathbb{C} \,| \, S_{\alpha,k}\,u = 0 \} \;=\; \mathrm{span} \, \{\Phi_{\alpha,k},F_{\alpha,k}\}\,,
\end{equation}
where the second identity follows from what argued in the proof of Lemma \ref{lem:kerAxistar}.
As well known, $x \mapsto W_x(u,v)$ is constant whenever $u,v \in \mathcal{L}$, and this constant is zero if and only if $u$ and $v$ are linearly dependent. Clearly, any $u \in \mathcal{L}$ is square-integrable around $x=0$, as follows from the asymptotics \eqref{eq:Asymtotics_0}.

\begin{lemma}\label{lem:LV}
Let $\alpha\in(0,1)$ and $ k\in\mathbb{Z} \!\setminus\! \{0\}$.
For given $u \in \mathcal{L}$,
\begin{equation}\label{eq:LinearLV}
\begin{split}
L_u : \mathcal{D}(&A_\alpha(k)^*) \to \mathbb{C} \\
&g \mapsto L_u(g)\;:=\; \lim_{x \downarrow 0} W_x(\overline{u},g)
\end{split}
\end{equation}
defines a linear functional on $\mathcal{D}(A^*_\alpha(k))$ which vanishes on $\mathcal{D}(\overline{A_\alpha(k)})$.
\end{lemma}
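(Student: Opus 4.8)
The plan is to establish three things in turn: that the limit defining $L_u$ exists and is finite for every $g\in\mathcal{D}(A_\alpha(k)^*)$, that $L_u$ is linear, and that it annihilates $\mathcal{D}(\overline{A_\alpha(k)})$. The first and third points carry all the content; linearity is automatic.

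For the existence of the limit, I would differentiate the generalised Wronskian. For $u\in\mathcal{L}$ and $g\in\mathcal{D}(A_\alpha(k)^*)$ both functions are $C^1$ on $\mathbb{R}^+$ --- for $g$ this is the regularity provided, as in Lemma \ref{lem:InclusionH2loc}, by the ellipticity of $S_{\alpha,k}$ away from the origin, or directly by the decomposition \eqref{eq:G-ODE-Decomposition} --- so $x\mapsto W_x(\overline{u},g)$ is well defined and differentiable on $\mathbb{R}^+$. Using $S_{\alpha,k}u=0$ and $A_\alpha(k)^*g=S_{\alpha,k}g$, together with the fact that the potential $k^2x^{2\alpha}+C_\alpha x^{-2}$ is real, the second-order terms cancel and one gets
\[
\frac{\ud}{\ud x}W_x(\overline{u},g)\;=\;\overline{u(x)}\,g''(x)-\overline{u''(x)}\,g(x)\;=\;-\,\overline{u(x)}\,(A_\alpha(k)^*g)(x)\,.
\]
Since $u\in\mathcal{L}$ is square-integrable near $0$ (by \eqref{eq:Asymtotics_0}, using $\alpha<1$) and $A_\alpha(k)^*g\in L^2(\mathbb{R}^+)$, the right-hand side is in $L^1$ near $0$ by Cauchy--Schwarz. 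Hence $W_x(\overline{u},g)$ is absolutely continuous up to $0$, the limit $L_u(g)$ exists and is finite, and for any fixed $x_0>0$
\[
L_u(g)\;=\;W_{x_0}(\overline{u},g)+\int_0^{x_0}\overline{u(\rho)}\,(A_\alpha(k)^*g)(\rho)\,\ud\rho\,.
\]
Linearity of $L_u$ is then immediate, as $W_x(\overline{u},\cdot)$ is linear in its second entry and limits respect linear combinations.

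To show the vanishing on $\mathcal{D}(\overline{A_\alpha(k)})$, I would start from $g\in C^\infty_c(\mathbb{R}^+)=\mathcal{D}(A_\alpha(k))$: such $g$ vanishes identically near the origin, so $W_x(\overline{u},g)=0$ for all small $x$ and $L_u(g)=0$. Then I extend this by continuity along the graph norm. Given $g\in\mathcal{D}(\overline{A_\alpha(k)})$, pick $g_n\in C^\infty_c(\mathbb{R}^+)$ with $g_n\to g$ in $\|\cdot\|_{A_\alpha(k)}$, i.e.\ $g_n\to g$ and $A_\alpha(k)g_n\to\overline{A_\alpha(k)}g$ in $L^2(\mathbb{R}^+)$. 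In the representation above, the integral term is continuous in this topology, since $\overline{u}\in L^2((0,x_0))$ and $A_\alpha(k)^*g_n=A_\alpha(k)g_n\to\overline{A_\alpha(k)}g$ in $L^2$, so Cauchy--Schwarz gives convergence of $\int_0^{x_0}\overline{u}\,(A_\alpha(k)^*g_n)$.

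The only delicate step, and the one I expect to be the main obstacle, is the convergence of the boundary term $W_{x_0}(\overline{u},g_n)\to W_{x_0}(\overline{u},g)$, which demands $g_n(x_0)\to g(x_0)$ and $g_n'(x_0)\to g'(x_0)$, i.e.\ genuine $C^1$ convergence at the interior point $x_0$ rather than mere graph-norm convergence. This I would secure from the interior elliptic estimate already exploited in Lemma \ref{lem:InclusionH2loc}: on any compact $K\subset\mathbb{R}^+$ containing $x_0$ one has $\|g_n-g_m\|_{H^2(\mathring K)}\lesssim\|g_n-g_m\|_{A_\alpha(k)}$, so $(g_n)$ is Cauchy in $H^2_{\mathrm{loc}}(\mathbb{R}^+)$ and converges there to $g$; the Sobolev embedding $H^2\hookrightarrow C^1$ on a neighbourhood of $x_0$ then yields the required $C^1$ convergence at $x_0$. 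Combining the two limits gives $L_u(g)=\lim_n L_u(g_n)=0$, completing the proof.
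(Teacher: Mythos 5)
Your proof is correct, but it follows a genuinely different route from the paper's. For the finiteness of $L_u(g)$, the paper invokes the decomposition $g=a_0^{(g)}F_{\alpha,k}+a_\infty^{(g)}\Phi_{\alpha,k}+b_\infty^{(g)}F_{\alpha,k}+b_0^{(g)}\Phi_{\alpha,k}$ of Lemma \ref{lem:odedecomp} and evaluates Wronskians pairwise, exploiting the cancellation $F_{\alpha,k}^2(b_\infty^{(g)})'+F_{\alpha,k}(b_0^{(g)})'\Phi_{\alpha,k}=0$ and the short-distance asymptotics; this yields not just finiteness but the explicit values $L_{F_{\alpha,k}}(g)=-Wa_\infty^{(g)}$ and $L_{\Phi_{\alpha,k}}(g)=Wa_0^{(g)}$, which are precisely what is reused later to prove the equivalence (i) $\Leftrightarrow$ (iv) in Lemma \ref{prop:EquivalentClosure}. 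You instead differentiate the Wronskian (the Lagrange identity, using that the potential $k^2x^{2\alpha}+C_\alpha x^{-2}$ is real and $S_{\alpha,k}u=0$), obtain $\tfrac{\ud}{\ud x}W_x(\overline{u},g)=-\overline{u}\,(A_\alpha(k)^*g)$, and conclude by Cauchy--Schwarz (valid since $\alpha<1$ makes $u\in L^2$ near $0$) that $W_x(\overline{u},g)$ is absolutely continuous up to $x=0$, with the integral representation $L_u(g)=W_{x_0}(\overline{u},g)+\int_0^{x_0}\overline{u}\,(A_\alpha(k)^*g)\,\ud\rho$ --- more elementary and self-contained, though it does not deliver the explicit constants the paper harvests. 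For the vanishing on $\mathcal{D}(\overline{A_\alpha(k)})$, the paper uses a cutoff trick ($\chi u\in\mathcal{D}(A_\alpha(k)^*)$ with $L_{\chi u}=L_u$) and the boundary-form identity $L_u(\varphi)=-\omega(u\chi,\varphi)$, which vanishes by adjoint duality $\langle u\chi,A_\alpha(k)^*\varphi\rangle=\langle A_\alpha(k)^*(u\chi),\varphi\rangle$ for $\varphi$ in the closure's domain; you argue by graph-norm density of $C^\infty_c(\mathbb{R}^+)$, on which $L_u$ trivially vanishes, passing to the limit via Cauchy--Schwarz in the integral term and, correctly identifying the delicate point, via the interior elliptic estimate of Lemma \ref{lem:InclusionH2loc} together with $H^2\hookrightarrow C^1$ to get $C^1$ convergence at the interior point $x_0$ and hence convergence of the boundary term. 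Both arguments are sound; the paper's is shorter at the closure step and feeds its Wronskian computations forward, while yours is more robust (it would survive with minimal change for any real locally integrable potential making the deficiency solutions $L^2$ at the origin) and makes the graph-norm continuity of $L_u$ modulo the boundary term explicit.
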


\begin{proof}
The linearity of $L_u$ is obvious. Its finiteness of $L_u(g)$ is checked as follows. One decomposes (according to \eqref{eq:G-ODE-Decomposition} and using the basis of $\mathcal{L}$)
\[
 \begin{split}
  g\;&=\;a_0^{(g)} F_{\alpha,k}+a_\infty^{(g)} \Phi_{\alpha,k}+b_\infty^{(g)} F_{\alpha,k} + b_0^{(g)} \Phi_{\alpha,k}\,, \\
  u\;&=\;c_0 F_{\alpha,k}+c_\infty \Phi_{\alpha,k} \, .
 \end{split}
\]
Owing to \eqref{eq:LinearLV} it suffices to control the finiteness of $L_{F_{\alpha,k}}(g)$ and $L_{\Phi_{\alpha,k}}(g)$. By linearity
\[\tag{i}
\begin{split}
 	L_{F_{\alpha,k}}(g) \;&=\; a_0^{(g)} L_{F_{\alpha,k}}(F_{\alpha,k})+a_\infty^{(g)} L_{F_{\alpha,k}}(\Phi_{\alpha,k})+L_{F_{\alpha,k}}(b_\infty^{(g)} F_{\alpha,k}+b_0^{(g)} \Phi_{\alpha,k})\,, \\
 	L_{\Phi_{\alpha,k}}(g) \;&=\; a_0^{(g)} L_{\Phi_{\alpha,k}}(F_{\alpha,k})+a_\infty^{(g)} L_{\Phi_{\alpha,k}}(\Phi_{\alpha,k})+L_{\Phi_{\alpha,k}}(b_\infty^{(g)} F_{\alpha,k}+b_0^{(g)} \Phi_{\alpha,k}) \, .
\end{split}
\]
Moreover, obviously,
\[\tag{ii}
 \begin{split}
  L_{F_{\alpha,k}}(F_{\alpha,k})\;&=\;L_{\Phi_{\alpha,k}}(\Phi_{\alpha,k})\;=\;0\,, \\
     L_{F_{\alpha,k}}(\Phi_{\alpha,k})\;&=\;-W\;=\;-L_{\Phi_{\alpha,k}}(F_{\alpha,k})\,.
 \end{split}
\]
 The following properties are then claimed:
\[\tag{iii}
  L_{F_{\alpha,k}}\big(b_\infty^{(g)} F_{\alpha,k} + b_0^{(g)} \Phi_{\alpha,k}\big)\;=\;0\;=\;L_{\Phi_{\alpha,k}}\big(b_\infty^{(g)} F_{\alpha,k} + b_0^{(g)} \Phi_{\alpha,k}\big)\,.
\]
Plugging (ii) and (iii) into (i) the finiteness
\[
 L_{F_{\alpha,k}}(g)\;=\;-Wa_\infty^{(g)}\,,\qquad L_{\Phi_{\alpha,k}}(g)\;=\;Wa_0^{(g)}
\]
follows.

To prove (iii) one computes
\[
 \begin{split}
  &\det\begin{pmatrix}
       F_{\alpha,k} & b_\infty^{(g)} F_{\alpha,k} + b_0^{(g)} \Phi_{\alpha,k} \\
       F_{\alpha,k}' & (b_\infty^{(g)} F_{\alpha,k} + b_0^{(g)} \Phi_{\alpha,k})'
      \end{pmatrix}= \\
  &\qquad=\;F_{\alpha,k}^2(b_\infty^{(g)})'+F_{\alpha,k}(b_0^{(g)})'\Phi_{\alpha,k}+F_{\alpha,k}b_\infty^{(g)}F_{\alpha,k}'-F_{\alpha,k}'b_0^{(g)} \Phi_{\alpha,k} \\
  &\qquad=\;F_{\alpha,k}b_\infty^{(g)}F_{\alpha,k}'-F_{\alpha,k}'b_0^{(g)} \Phi_{\alpha,k}\,,
 \end{split}
\]
having used the cancellation 
\[
 F_{\alpha,k}^2(b_\infty^{(g)})'+F_{\alpha,k}(b_0^{(g)})'\Phi_{\alpha,k}\;=\;0\,,
\]
that follows from \eqref{eq:b0binfty}. Therefore, by means of the asymptotics \eqref{eq:Asymtotics_0} and \eqref{eq:b_vanishes} as $x\downarrow 0$, namely,
\[
 \begin{split}
  F_{\alpha,k}(x)\;&=\;O(x^{1+\frac{\alpha}{2}})\,,\qquad F_{\alpha,k}'(x)=O(x^{\frac{\alpha}{2}})\,,\qquad\Phi_{\alpha,k}(x)\;=\;O(x^{-\frac{\alpha}{2}})\,, \\
  b_0^{(g)}(x)\;&=\;o(1)\,,\qquad b_\infty^{(g)}(x)\;=\;o(1)\,,
 \end{split}
\]
 one concludes 
\[
  L_{F_{\alpha,k}}\big(b_\infty^{(g)} F_{\alpha,k} + b_0^{(g)} \Phi_{\alpha,k}\big)\;=\;\lim_{x\downarrow 0}\big(F_{\alpha,k}b_\infty^{(g)}F_{\alpha,k}'-F_{\alpha,k}'b_0^{(g)} \Phi_{\alpha,k} \big)\;=\;0\,.
\]
The proof of the second identity in (iii) is completely analogous.

  It remains to demonstrate that $L_u (\varphi) =0$ for $\varphi \in \mathcal{D}(\overline{A_\alpha(k)})$ and $u\in\mathcal{L}$. Although $u$ does not necessarily belong to $\mathcal{D}(A_\alpha(k)^*)$ (it might fail to be square-integrable at infinity), the function $\chi u$ surely does for $\chi \in C^\infty_0([0,+\infty))$ with $\chi(x)=1$ on $x \in[0,\frac{1}{2}]$ and $\chi(x)=0$ on $x \in[1,+\infty)$. This fact follows from \eqref{eq:Afstar} observing that $\chi u\in L^2(\mathbb{R}^+)$ and also
\[
	S_{\alpha,k}(u \chi)\;=\;\chi S_{\alpha,k}\,u- 2u'\chi'-u \chi''\;=\;-2 u'\chi' - u \chi'' \in L^2(\mathbb{R}^+)\,.
\]
The choice of $\chi$ guarantees that the Wronskians $W_x(\overline{u \chi}, g)$ and $W_x(\overline{u},g)$ coincide in a neighbourhood of $x=0$, that is, $L_{u \chi}=L_u$. Therefore, by means of \eqref{eq:boundaryform}, \eqref{eq:omega-W}, and \eqref{eq:LinearLV} one deduces
\[
	\begin{split}
		L_u(\varphi)\;&=\; L_{u \chi}(\varphi)\;=\;\lim_{x \downarrow 0} W_x(\overline{u \chi},\varphi)\;=\;-\omega(u \chi, \varphi)\\
		&=\;\langle u\chi, A_\alpha(k)^* \varphi \rangle - \langle A_\alpha(k)^*u \chi, \varphi \rangle \;=\; \langle u \chi, \overline{A_\alpha(k)} \varphi \rangle - \langle u \chi, \overline{A_\alpha(k)} \varphi \rangle \;=\;0\,,
	\end{split}
\]
which completes the proof.
\end{proof}

With this preparatory material at hand, we can characterise the space $\mathcal{D}(\overline{A_\alpha(k)})$ as follows.

\begin{lemma}\label{prop:EquivalentClosure}
Let $\alpha\in(0,1)$, $ k\in\mathbb{Z}\!\setminus\! \{0\}$, and $\varphi \in \mathcal{D}(A_\alpha(k)^*)$. The following conditions are equivalent:
\begin{itemize}
\item[(i)] $\varphi \in \mathcal{D}(\overline{A_\alpha(k)})$,
\item[(ii)] $\omega(\varphi,g)=0$ for all $g \in \mathcal{D}(A_\alpha(k)^*)$,
\item[(iii)] $L_u(\varphi)=0$ for all $u \in \mathcal{L}$,
\item[(iv)] in the decomposition \eqref{eq:G-ODE-Decomposition} of $\varphi$ one has $a_0^{(\varphi)}=a_\infty^{(\varphi)}=0$.
\end{itemize}
\end{lemma}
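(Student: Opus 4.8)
The plan is to establish the equivalences through the chain (iii) $\Leftrightarrow$ (iv), then (i) $\Leftrightarrow$ (iv), and finally (i) $\Rightarrow$ (ii) $\Rightarrow$ (iii), which together close all four conditions into one equivalence class. The only genuinely substantial ingredient is the step (i) $\Leftrightarrow$ (iv); the other arrows are re-readings of the material already assembled in Lemmas \ref{lem:odedecomp} and \ref{lem:LV}.

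First I would dispatch (iii) $\Leftrightarrow$ (iv), which is essentially the computation carried out inside the proof of Lemma \ref{lem:LV}: there one finds $L_{F_{\alpha,k}}(\varphi)=-W a_\infty^{(\varphi)}$ and $L_{\Phi_{\alpha,k}}(\varphi)=W a_0^{(\varphi)}$ for every $\varphi\in\mathcal{D}(A_\alpha(k)^*)$. Since $\mathcal{L}=\mathrm{span}\{\Phi_{\alpha,k},F_{\alpha,k}\}$ and $u\mapsto L_u(\varphi)$ is linear, the vanishing of $L_u(\varphi)$ for all $u\in\mathcal{L}$ is equivalent to $L_{F_{\alpha,k}}(\varphi)=L_{\Phi_{\alpha,k}}(\varphi)=0$, hence, because $W=-(1+\alpha)\neq0$, to $a_0^{(\varphi)}=a_\infty^{(\varphi)}=0$.

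The core step is (i) $\Leftrightarrow$ (iv), which I would settle by a codimension count. The map $T\colon\mathcal{D}(A_\alpha(k)^*)\to\mathbb{C}^2$, $T(g):=(a_0^{(g)},a_\infty^{(g)})$, is well defined and linear by the uniqueness in Lemma \ref{lem:odedecomp}(i). The final assertion of Lemma \ref{lem:LV} shows that $L_u$ vanishes on $\mathcal{D}(\overline{A_\alpha(k)})$ for every $u\in\mathcal{L}$, so by the previous paragraph $\mathcal{D}(\overline{A_\alpha(k)})\subseteq\ker T$. On the other hand, von Neumann's formula together with the deficiency index $1$ of $A_\alpha(k)$ \cite{GMP-Grushin-2018} gives $\dim\big(\mathcal{D}(A_\alpha(k)^*)/\mathcal{D}(\overline{A_\alpha(k)})\big)=2$. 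It therefore suffices to verify that $T$ is onto $\mathbb{C}^2$, for then $\ker T$ has codimension $2$ as well and the inclusion $\mathcal{D}(\overline{A_\alpha(k)})\subseteq\ker T$ of two subspaces of equal finite codimension forces $\ker T=\mathcal{D}(\overline{A_\alpha(k)})$, which is exactly (i) $\Leftrightarrow$ (iv). Surjectivity I would exhibit explicitly: fixing the cut-off $\chi\in C^\infty_0([0,+\infty))$ with $\chi\equiv1$ near $0$ as in Lemma \ref{lem:LV}, both $\chi F_{\alpha,k}$ and $\chi\Phi_{\alpha,k}$ lie in $\mathcal{D}(A_\alpha(k)^*)$ (they are square-integrable and $S_{\alpha,k}(\chi u)=-2u'\chi'-u\chi''\in L^2$ since $S_{\alpha,k}u=0$), and evaluating the constant Wronskian $W_x(\Phi_{\alpha,k},F_{\alpha,k})=W$ near $0$ yields $T(\chi F_{\alpha,k})=(1,0)$ and $T(\chi\Phi_{\alpha,k})=(0,1)$.

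It remains to weave in condition (ii). For (i) $\Rightarrow$ (ii), if $\varphi\in\mathcal{D}(\overline{A_\alpha(k)})$ then $A_\alpha(k)^*\varphi=\overline{A_\alpha(k)}\varphi$ and, since $A_\alpha(k)^*=(\overline{A_\alpha(k)})^*$, the defining property of the adjoint gives $\langle\overline{A_\alpha(k)}\varphi,g\rangle_{L^2}=\langle\varphi,A_\alpha(k)^*g\rangle_{L^2}$ for all $g\in\mathcal{D}(A_\alpha(k)^*)$, i.e. $\omega(\varphi,g)=0$ by \eqref{eq:boundaryform}. For the return arrow (ii) $\Rightarrow$ (iii) I would test the hypothesis against $g=\chi\Phi_{\alpha,k}$ and $g=\chi F_{\alpha,k}$: using \eqref{eq:omega-W}, the reality of $\Phi_{\alpha,k},F_{\alpha,k}$, and the elementary identity $\overline{W_x(\overline{u},\varphi)}=-W_x(\overline{\varphi},u)$ valid for real $u$, one obtains $\omega(\varphi,\chi u)=-\lim_{x\downarrow0}W_x(\overline{\varphi},u)=\overline{L_u(\varphi)}$ for $u\in\{\Phi_{\alpha,k},F_{\alpha,k}\}$, the cut-off being harmless since $\chi\equiv1$ near $0$. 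Hence $\omega(\varphi,\cdot)\equiv0$ forces $L_{\Phi_{\alpha,k}}(\varphi)=L_{F_{\alpha,k}}(\varphi)=0$, which is (iii). Combining (i) $\Leftrightarrow$ (iii) $\Leftrightarrow$ (iv) with (i) $\Rightarrow$ (ii) $\Rightarrow$ (iii) closes all equivalences. \textbf{The main obstacle} I anticipate is the surjectivity-plus-codimension argument inside (i) $\Leftrightarrow$ (iv): one must ensure that the two constants $a_0^{(\varphi)},a_\infty^{(\varphi)}$ genuinely exhaust the two-dimensional quotient, with no hidden linear constraint relating them, and this is precisely what the matching of the \emph{a priori} dimension count from the deficiency index against the explicit realizations $T(\chi F_{\alpha,k})=(1,0)$, $T(\chi\Phi_{\alpha,k})=(0,1)$ provides.
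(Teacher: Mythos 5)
Your proof is correct, but it closes the equivalence loop along a genuinely different route than the paper. The paper handles the hard directions abstractly and concretely in a different distribution: it gets (ii) $\Rightarrow$ (i) in one line from the double-adjoint identity $\mathcal{D}(A_\alpha(k)^{**})=\mathcal{D}(\overline{A_\alpha(k)})$, and it proves (iii) $\Rightarrow$ (ii) by decomposing an \emph{arbitrary} $g\in\mathcal{D}(A_\alpha(k)^*)$ via \eqref{eq:G-ODE-Decomposition} and checking that the $b_\infty^{(g)}F_{\alpha,k}+b_0^{(g)}\Phi_{\alpha,k}$ remainder contributes nothing to the boundary form, using the Wronskian cancellation $(b_\infty^{(g)})'F_{\alpha,k}+(b_0^{(g)})'\Phi_{\alpha,k}=0$; the equivalence with (iv) then rides on the identities $L_{F_{\alpha,k}}(\varphi)=-Wa_\infty^{(\varphi)}$, $L_{\Phi_{\alpha,k}}(\varphi)=Wa_0^{(\varphi)}$, exactly as in your first paragraph. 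You instead make (i) $\Leftrightarrow$ (iv) the load-bearing step via a codimension count: the deficiency index $1$ from \cite{GMP-Grushin-2018} and von Neumann's formula give $\dim(\mathcal{D}(A_\alpha(k)^*)/\mathcal{D}(\overline{A_\alpha(k)}))=2$, and your explicit witnesses $\chi F_{\alpha,k}$, $\chi\Phi_{\alpha,k}$ (whose membership in $\mathcal{D}(A_\alpha(k)^*)$ and Wronskian evaluations are correct) show the map $g\mapsto(a_0^{(g)},a_\infty^{(g)})$ is onto $\mathbb{C}^2$, forcing its kernel to coincide with $\mathcal{D}(\overline{A_\alpha(k)})$; you then need only the easy direction (ii) $\Rightarrow$ (iii), tested against the same two cut-off elements, and (iii) $\Rightarrow$ (ii) comes for free from the closed cycle. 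What each approach buys: the paper's argument is self-contained (it never invokes the deficiency index, and could in principle serve to re-derive it) and exhibits explicitly how the boundary form of a general $g$ is controlled by local data at $x=0$; yours trades the fiddly general-$g$ Wronskian computation for a clean dimension count, at the cost of importing the deficiency index as external input, and has the conceptual bonus of making transparent that $(a_0^{(\varphi)},a_\infty^{(\varphi)})$ are genuine coordinates on the quotient $\mathcal{D}(A_\alpha(k)^*)/\mathcal{D}(\overline{A_\alpha(k)})$ with no hidden linear relation — precisely the point your surjectivity witnesses settle.
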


\begin{proof}
The implication (i) $\Rightarrow$ (ii) follows at once from
\[
	\omega(\varphi,g)\;=\; \langle A_\alpha(k)^* \varphi, g \rangle - \langle \varphi, A_\alpha(k)^* g \rangle \;=\; \langle \overline{A_\alpha(k)} \varphi, g \rangle - \langle \overline{A_\alpha(k)} \varphi,g \rangle \;=\; 0\,.
\]
For the converse implication (i) $\Leftarrow$ (ii), we observe that the property
\[
	0\;=\;\omega(\varphi,g)\;=\; \langle A_\alpha(k)^* \varphi, g \rangle - \langle \varphi, A_\alpha(k)^* g \rangle \qquad \forall g \in \mathcal{D}(A_\alpha(k)^*)
\]
is equivalent to $\langle A_\alpha(k)^* \varphi,g \rangle = \langle \varphi, A_\alpha(k)^* g \rangle$ $ \forall g \in \mathcal{D}(S^*)$, which implies that $\varphi \in \mathcal{D}(A_\alpha(k)^{**})=\mathcal{D}(\overline{A_\alpha(k)})$.

The implication (i) $\Rightarrow$ (iii) is given by Lemma \ref{lem:LV}. Let us now prove that (iii) $\Rightarrow$ (ii): thus, now $L_u(\varphi)=0$ for all $u \in \mathcal{L}$ and we want to prove that for such $\varphi$ one has $\omega(\varphi,g)=0$ for all $g \in \mathcal{D}(A_\alpha(k)^*)$. Owing to the decomposition \eqref{eq:G-ODE-Decomposition} for $g$,
\[
	\omega(\varphi,g)\;=\; a_0^{(g)} \omega(\varphi,F_{\alpha,k})+a_\infty^{(g)} \omega(\varphi,\Phi_{\alpha,k})+\omega(\varphi, b_\infty^{(g)} F_{\alpha,k})+\omega(\varphi,b_0^{(g)} \Phi_{\alpha,k}) \, .
\]
The first two summands in the r.h.s.~above are zero: indeed,
\[
	\overline{\omega(\varphi,F_{\alpha,k})} \;=\; - \omega(F_{\alpha,k},\varphi)\;=\;\lim_{x \downarrow 0} W_x(\overline{F_{\alpha,k}},\varphi)\;=\;L_{F_{\alpha,k}}(\varphi)\;=\;0\,
\]
having used in the last step the assumption that $L_u(\varphi)=0$ for all $u \in \mathcal{L}$, and analogously, $\overline{\omega(\varphi,\Phi_{\alpha,k})}=L_{\Phi_{\alpha,k}}(\varphi)=0$.
Therefore, 
\[
	\begin{split}
		\overline{\omega(\varphi,g)} \;&=\; \overline{\omega(\varphi, b_\infty^{(g)} F_{\alpha,k})}+\overline{\omega(\varphi,b_0^{(g)} \Phi_{\alpha,k})} \\
		&=\;-\omega(b_\infty^{(g)} F_{\alpha,k},\varphi)-\omega(b_0^{(g)} \Phi_{\alpha,k}, \varphi )\\
		&=\;\lim_{x \downarrow 0} \big( W_x(b_\infty^{(g)} F_{\alpha,k},\varphi)+W_x(b_0^{(g)} \Phi_{\alpha,k}, \varphi )\big)\\
		&=\;\lim_{x \downarrow 0} \big( b_\infty^{(g)}\,W_x( F_{\alpha,k},\varphi)+b_0^{(g)} \,W_x(\Phi_{\alpha,k}, \varphi )\big)\\	
		&=\;b_\infty^{(g)} L_{F_{\alpha,k}}(\varphi)+b_0^{(g)} L_{\Phi_{\alpha,k}}(\varphi)\;=\;0\,,
	\end{split}
\]
having used again the assumption (ii) in the last step (observe also that helpful cancellation $(b_\infty^{(g)})'F_{\alpha,k}\varphi+(b_0^{(g)})'\Phi_{\alpha,k}\varphi=0$ occurred in computing the determinants in the fourth step).

Properties (i), (ii), and (iii) are thus equivalent.
Last, let us establish the equivalence (i) $\Leftrightarrow$ (iv). Representing $\varphi$ according to \eqref{eq:G-ODE-Decomposition} as 
\[
 \varphi\;=\;a_0^{(\varphi)} F_{\alpha,k} + a_\infty^{(\varphi)} \Phi_{\alpha,k} + b_\infty^{(\varphi)} F_{\alpha,k} + b_0^{(\varphi)} \Phi_{\alpha,k}\,,
\]
and using the identities $W_x(F_{\alpha,k},F_{\alpha,k})=0$ and $W_x(F_{\alpha,k},\Phi_{\alpha,k})=-W$,
one has
\[
  L_{F_{\alpha,k}}(\varphi)\;=\;\lim_{x\downarrow 0} W_x(F_{\alpha,k},\varphi)\;=\;-Wa_\infty^{(\varphi)}+\lim_{x\downarrow 0}W_x(F_{\alpha,k},b_\infty^{(\varphi)} F_{\alpha,k} + b_0^{(\varphi)} \Phi_{\alpha,k})\,.
\]
The determinant in the latter Wronskian has the very same form of the determinant computed in the proof of Lemma \ref{lem:LV}: using the same cancellation $F_{\alpha,k}^2(b_\infty^{(\varphi)})'+F_{\alpha,k}(b_0^{(\varphi)})'\Phi_{\alpha,k}=0$ and the usual short-distance asymptotics we find
\[
 L_{F_{\alpha,k}}(\varphi)\;=\;-Wa_\infty^{(\varphi)}\,.
\]
In a completely analogous fashion,
\[
 L_{\Phi_{\alpha,k}}(\varphi)\;=\;Wa_0^{(\varphi)}\,.
\]
Therefore, $\varphi \in \mathcal{D}(\overline{A_\alpha(k)})$ if and only if $L_u(\varphi)=0$ for all $u \in \mathcal{L}$ (because (i) $\Leftrightarrow$ (iii)), and the latter property is equivalent to $a_0^{(\varphi)}=a_\infty^{(\varphi)}=0$.
\end{proof}

We can now characterise the short-distance behaviour of the functions in $\mathcal{D}(\overline{A_\alpha(k)})$ and of their derivative.

\begin{lemma}\label{lem:BehaviourZeroClosure}
Let $\alpha\in(0,1)$ and $ k\in\mathbb{Z}\setminus \{0\}$. If 
$\varphi \in \mathcal{D}(\overline{A_\alpha(k)})$, then $\varphi(x)=o(x^{\frac{3}{2}})$ and $\varphi'(x)=o(x^{\frac{1}{2}})$ as $x \downarrow 0$.
\end{lemma}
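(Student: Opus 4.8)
The plan is to exploit the characterisation of $\mathcal{D}(\overline{A_\alpha(k)})$ already obtained in Lemma \ref{prop:EquivalentClosure}, combined with the decomposition \eqref{eq:G-ODE-Decomposition} and the coefficient estimates of Lemma \ref{lem:odedecomp}(ii). First I would invoke the equivalence (i) $\Leftrightarrow$ (iv) of Lemma \ref{prop:EquivalentClosure}: since $\varphi\in\mathcal{D}(\overline{A_\alpha(k)})$, its decomposition \eqref{eq:G-ODE-Decomposition} has $a_0^{(\varphi)}=a_\infty^{(\varphi)}=0$, whence
\[
\varphi\;=\;b_\infty^{(\varphi)} F_{\alpha,k}+b_0^{(\varphi)} \Phi_{\alpha,k}\,.
\]
For the claim on $\varphi$ itself this is now immediate: the right-hand side is exactly the combination appearing in \eqref{eq:sdasimpt}, so $\varphi(x)=o(x^{3/2})$ as $x\downarrow 0$ follows at once.

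The substantive part is the derivative estimate. Differentiating the displayed identity and using that, by \eqref{eq:b0binfty}, $(b_0^{(\varphi)})'=W^{-1}F_{\alpha,k}\,(A_\alpha(k)^*\varphi)$ and $(b_\infty^{(\varphi)})'=-W^{-1}\Phi_{\alpha,k}\,(A_\alpha(k)^*\varphi)$, I would exploit the same cancellation already used in the proof of Lemma \ref{lem:LV}, namely
\[
(b_\infty^{(\varphi)})'F_{\alpha,k}+(b_0^{(\varphi)})'\Phi_{\alpha,k}\;=\;W^{-1}\big(-\Phi_{\alpha,k}F_{\alpha,k}+F_{\alpha,k}\Phi_{\alpha,k}\big)(A_\alpha(k)^*\varphi)\;=\;0\,.
\]
This leaves the clean expression $\varphi'=b_\infty^{(\varphi)}\,F_{\alpha,k}'+b_0^{(\varphi)}\,\Phi_{\alpha,k}'$.

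To control each summand I would sharpen the order of vanishing of $b_0^{(\varphi)}$ and $b_\infty^{(\varphi)}$ beyond the mere $o(1)$ of \eqref{eq:b_vanishes}, using the $L^2$-estimates displayed in the proof of Lemma \ref{lem:odedecomp}(ii). From the short-distance asymptotics \eqref{eq:Asymtotics_0} one gets $\|F_{\alpha,k}\|_{L^2((0,x))}=O(x^{(3+\alpha)/2})$ and $\|\Phi_{\alpha,k}\|_{L^2((0,x))}=O(x^{(1-\alpha)/2})$ (the latter finite precisely because $\alpha<1$), while $\|A_\alpha(k)^*\varphi\|_{L^2((0,x))}=o(1)$; hence $b_0^{(\varphi)}(x)=o(x^{(3+\alpha)/2})$ and $b_\infty^{(\varphi)}(x)=o(x^{(1-\alpha)/2})$. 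Differentiating \eqref{eq:Asymtotics_0} gives $F_{\alpha,k}'(x)=O(x^{\alpha/2})$ and $\Phi_{\alpha,k}'(x)=O(x^{-1-\alpha/2})$, so that both products are $o(x^{1/2})$:
\[
b_\infty^{(\varphi)}F_{\alpha,k}'\;=\;o\big(x^{(1-\alpha)/2}\!\cdot x^{\alpha/2}\big)\;=\;o(x^{1/2})\,,\qquad b_0^{(\varphi)}\Phi_{\alpha,k}'\;=\;o\big(x^{(3+\alpha)/2}\!\cdot x^{-1-\alpha/2}\big)\;=\;o(x^{1/2})\,,
\]
and adding them yields $\varphi'(x)=o(x^{1/2})$, as required.

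I expect the main obstacle to be exactly this last point: the naive bound $b_0^{(\varphi)}=o(1)$ from \eqref{eq:b_vanishes} is too weak, since $\Phi_{\alpha,k}'$ is singular of order $x^{-1-\alpha/2}$ at the origin, and only the $L^2$-refined rate $o(x^{(3+\alpha)/2})$ absorbs that singularity with the correct margin. The cancellation of the $(b)'$-terms is the other essential ingredient; without it the surviving $(b_0^{(\varphi)})'\Phi_{\alpha,k}$ contribution would be uncontrollable pointwise.
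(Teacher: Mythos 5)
Your proposal is correct and follows essentially the same route as the paper's own proof: reduction via Lemma \ref{prop:EquivalentClosure} to $\varphi=b_\infty^{(\varphi)}F_{\alpha,k}+b_0^{(\varphi)}\Phi_{\alpha,k}$, invoking \eqref{eq:sdasimpt} for $\varphi=o(x^{3/2})$, the cancellation $(b_\infty^{(\varphi)})'F_{\alpha,k}+(b_0^{(\varphi)})'\Phi_{\alpha,k}=0$ from \eqref{eq:b0binfty}, and Cauchy--Schwarz refinements of the rates of $b_0^{(\varphi)}$, $b_\infty^{(\varphi)}$ against the asymptotics of $F_{\alpha,k}'$ and $\Phi_{\alpha,k}'$. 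Your explicit intermediate bounds $b_0^{(\varphi)}(x)=o(x^{(3+\alpha)/2})$ and $b_\infty^{(\varphi)}(x)=o(x^{(1-\alpha)/2})$ are exactly the estimates the paper carries out inline, so the two arguments coincide.
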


\begin{proof}
Owing to Lemma \ref{prop:EquivalentClosure},
\[
 \varphi\;=\;b_\infty^{(\varphi)} F_{\alpha,k} + b_0^{(\varphi)} \Phi_{\alpha,k}\,.
\]
Thus, $\varphi=o(x^{\frac{3}{2}})$ follows from  \eqref{eq:sdasimpt} of Lemma \ref{lem:odedecomp}. Moreover,
 \[
 \varphi'\;=\;\big(b_\infty^{(\varphi)} F_{\alpha,k} + b_0^{(\varphi)} \Phi_{\alpha,k}\big)'\;=\;b_\infty^{(\varphi)} F_{\alpha,k}' + b_0^{(\varphi)} \Phi_{\alpha,k}'\,,
\]
thanks to the cancellation $(b_\infty^{(\varphi)})'F_{\alpha,k}+(b_0^{(\varphi)})'\Phi_{\alpha,k}=0$ that follows from \eqref{eq:b0binfty}.
From the short-distance asymptotics \eqref{eq:Asymtotics_0} one has
\[
 \begin{split}
  F_{\alpha,k}(x)\;&=\;O(x^{1+\frac{\alpha}{2}})\,,\qquad F_{\alpha,k}'(x)=O(x^{\frac{\alpha}{2}})\,, \\
  \Phi_{\alpha,k}(x)\;&=\;O(x^{-\frac{\alpha}{2}})\,,\qquad\! \Phi_{\alpha,k}(x)'\;=\;O(x^{-(1+\frac{\alpha}{2})})\,,
 \end{split}
\]
whence
\[
 \begin{split}
  |b_\infty^{(\varphi)}(x) F_{\alpha,k}'(x)|\;&\lesssim\;x^{\frac{\alpha}{2}}\,\|A_\alpha(k)^*\varphi\|_{L^2((0,x))}\Big(\int_0^x |\rho^{-\frac{\alpha}{2}}|^2\ud \rho\Big)^{\!\frac{1}{2}} \\
  &\lesssim\;x^{\frac{1}{2}}\,\|\overline{A_\alpha(k)}\varphi\|_{L^2((0,x))}\;=\;o(x^{\frac{1}{2}})\,,
 \end{split}
\]
and also
\[
 \begin{split}
  |b_0^{(\varphi)}(x) \Phi_{\alpha,k}'(x)|\;&\lesssim\;\frac{1}{\:x^{1+\frac{\alpha}{2}}}\,\|A_\alpha(k)^*\varphi\|_{L^2((0,x))}\Big(\int_0^x |\rho^{1+\frac{\alpha}{2}}|^2\ud \rho\Big)^{\!\frac{1}{2}} \\
  &\lesssim\;x^{\frac{1}{2}}\,\|\overline{A_\alpha(k)}\varphi\|_{L^2((0,x))}\;=\;o(x^{\frac{1}{2}})\,.
 \end{split}
\]
The proof is thus completed.
\end{proof}

We are finally in the condition to prove Proposition \ref{prop:domAclosure}.

\begin{proof}[Proof of Proposition \ref{prop:domAclosure}]
 Let us first prove the inclusion
 \[\tag{*}
 H^2_0(\mathbb{R}^+)\cap L^2(\mathbb{R}^+,\langle x\rangle^{4\alpha}\,\ud x)\;\subset\;\mathcal{D}(\overline{A_\alpha(k)})\,.
 \]
 If $\varphi$ belongs to the space on the l.h.s.~of (*), then $\varphi''\in L^2(\mathbb{R})$, $x^{2\alpha}\varphi\in L^2(\mathbb{R})$, and $\varphi(x)=o({x^{\frac{3}{2}}})$ as $x\downarrow 0$, whence also $x^{-2}\varphi\in L^2(\mathbb{R})$. As a consequence, $-\varphi''+ k^2 x^{2\alpha}\varphi+C_\alpha x^{-2}\varphi\in L^2(\mathbb{R})$, i.e., owing to \eqref{eq:Afstar}, $\varphi\in\mathcal{D}(A_\alpha(k)^*)$. Representing now $\varphi$ according to \eqref{eq:G-ODE-Decomposition} as
 \[
  \varphi\;=\;a_0^{(\varphi)} F_{\alpha,k} + a_\infty^{(\varphi)} \Phi_{\alpha,k} + b_\infty^{(\varphi)} F_{\alpha,k} + b_0^{(\varphi)} \Phi_{\alpha,k}\,,
 \]
 we deduce that $a_0^{(\varphi)}=a_\infty^{(\varphi)}=0$, for otherwise the behaviour \eqref{eq:Asymtotics_0} of $\Phi_{\alpha,k}$ and $F_{\alpha,k}$ as $x\downarrow 0$ would be incompatible with $\varphi(x)=o({x^{\frac{3}{2}}})$. Instead, the component $b_\infty^{(\varphi)} F_{\alpha,k} + b_0^{(\varphi)} \Phi_{\alpha,k}$ displays the $o({x^{\frac{3}{2}}})$-behaviour, as we see from \eqref{eq:sdasimpt}. Lemma \ref{prop:EquivalentClosure} then implies $\varphi\in\mathcal{D}(\overline{A_\alpha(k)})$, which proves (*).

 Next, let us prove the opposite inclusion
 \[\tag{**}
 H^2_0(\mathbb{R}^+)\cap L^2(\mathbb{R}^+,\langle x\rangle^{4\alpha}\,\ud x)\;\supset\;\mathcal{D}(\overline{A_\alpha(k)})\,.
 \]
 Owing to Lemma \ref{eq:RGinvertsExtS} there exists a self-adjoint extension $\mathscr{A}_\alpha(k)$ of $\overline{A_\alpha(k)}$ with $\mathcal{D}(\mathscr{A}_\alpha(k))=\mathrm{ran}R_{G_{\alpha,k}}$, and owing to Corollary \ref{cor:RGtoWeightedL2} $\mathrm{ran}R_{G_{\alpha,k}}\subset L^2(\mathbb{R}^+,\langle x\rangle^{4\alpha}\,\ud x)$. Therefore, $\mathcal{D}(\overline{A_\alpha(k)})\subset L^2(\mathbb{R}^+,\langle x\rangle^{4\alpha}\,\ud x)$. It remains to prove that $\mathcal{D}(\overline{A_\alpha(k)})\subset H^2_0(\mathbb{R}^+)$. For $\varphi\in \mathcal{D}(\overline{A_\alpha(k)})\subset\mathcal{D}(A_\alpha(k)^*)$ formula \eqref{eq:Afstar} prescribes that $g:=-\varphi''+ k^2 x^{2\alpha}\varphi+C_\alpha x^{-2}\varphi\in L^2(\mathbb{R})$. As proved right above, $x^{2\alpha}\varphi\in L^2(\mathbb{R})$, whereas the property $x^{-2}\varphi\in L^2(\mathbb{R}^+)$ follows from Lemma \ref{lem:BehaviourZeroClosure}. Then by linearity $\varphi''\in L^2(\mathbb{R}^+)$, which also implies $\varphi\in H^2(\mathbb{R}^+)$ by standard arguments \cite[Remark 4.21]{Grubb-DistributionsAndOperators-2009}. Lemma \ref{lem:BehaviourZeroClosure} ensures that $\varphi(0)=\varphi'(0)=0$, and we conclude (see \eqref{eq:H20} above) that $\varphi\in H^2_0(\mathbb{R}^+)$. This completes the proof of (**).\end{proof}

\subsection{Distinguished extension and induced classification}\label{subsec:distinguished}~

In the Kre\u{\i}n-Vi\v{s}ik-Birman extension scheme one characterises all self-adjoint extensions of $A_\alpha(k)$ in terms of a \emph{reference} extension with everywhere defined bounded inverse: the Friedrichs extension $A_{\alpha,F}(k)$ is surely so, since the bottom of $A_\alpha(k)$ is strictly positive, as seen in \eqref{eq:Axibottom} above.

In fact, we have not characterised $A_{\alpha,F}(k)$ yet, which we will be able to do at a later stage, and we shall rather implement the classification scheme with respect to another distinguished extension, precisely the extension $\mathscr{A}_\alpha(k)$ determined in Lemma \ref{eq:RGinvertsExtS}. All this is only going to be temporary, and will allow us to recognise that $\mathscr{A}_\alpha(k)=A_{\alpha,F}(k)$.

When $\mathscr{A}_\alpha(k)$ is taken as a reference, the other self-adjoint extensions of $A_\alpha(k)$ constitute a one-real-parameter-family $\{ A_\alpha^{[\beta]}(k)\,|\,\beta\in\mathbb{R}\}$ (because, as recalled already from \cite[Corollary 3.8]{GMP-Grushin-2018}, the deficiency index of $A_\alpha(k)$ is 1), each element of which, according to the classification a la Kre\u{\i}n-Vi\v{s}ik-Birman \cite[Theorem 3.4]{GMO-KVB2017} and Grubb \cite[Corollary 13.12]{Grubb-DistributionsAndOperators-2009}, is given by
\begin{equation}\label{eq:temp_fibre_classif-prelim}
 \begin{split}
  \mathcal{D}(A_\alpha^{[\beta]}(k))\;&:=\;\big\{\,g=\varphi+c\beta\mathscr{A}_\alpha(k)^{-1}\Phi_{\alpha,k}+c\,\Phi_{\alpha,k}\,\big|\,\varphi\in\mathcal{D}(\overline{A_\alpha(k)})\,,\;c\in\mathbb{C}\big\}\,, \\
  A_\alpha^{[\beta]}(k)g\;&:=\;A_\alpha(k)^*g\;=\;\overline{A_\alpha(k)}\,\varphi+c\beta\Phi_{\alpha,k}\,.
 \end{split}
\end{equation}
It is also standard (see, e.g., \cite[Theorem 1]{GMO-KVB2017}) that
\begin{equation}\label{eq:Dadjoint-prelim}
 \begin{split}
  \mathcal{D}(A_\alpha(k)^*)\;&=\;\mathcal{D}(\overline{A_\alpha(k)})\dotplus\mathscr{A}_\alpha(k)^{-1}\mathrm{span}\{\Phi_{\alpha,k}\}\dotplus\mathrm{span}\{\Phi_{\alpha,k}\} \,,\\
  \mathcal{D}(\mathscr{A}_\alpha(k))\;&=\;\mathcal{D}(\overline{A_\alpha(k)})\dotplus\mathscr{A}_\alpha(k)^{-1}\mathrm{span}\{\Phi_{\alpha,k}\}\,.
 \end{split}
\end{equation}

Owing to Lemma \ref{eq:RGinvertsExtS} and to \eqref{eq:defPsi} we can re-write \eqref{eq:temp_fibre_classif-prelim} and \eqref{eq:Dadjoint-prelim} as 
\begin{equation}\label{eq:temp_fibre_classif}
 \begin{split}
  \mathcal{D}(A_\alpha^{[\beta]}(k))\;&=\;\big\{\,g=\varphi+c(\beta\, \Psi_{\alpha,k}+\Phi_{\alpha,k})\,\big|\,\varphi\in\mathcal{D}(\overline{A_\alpha(k)})\,,\;c\in\mathbb{C}\big\}\,, \\
  A_\alpha^{[\beta]}(k)g\;&=\;A_\alpha(k)^*g\;=\;\overline{A_\alpha(k)}\,\varphi+c\,\beta\,\Phi_{\alpha,k}
 \end{split}
\end{equation}
and 
\begin{equation}\label{eq:Dadjoint}
 \begin{split}
  \mathcal{D}(A_\alpha(k)^*)\;&=\;\mathcal{D}(\overline{A_\alpha(k)})\dotplus\mathrm{span}\{\Psi_{\alpha,k}\}\dotplus\mathrm{span}\{\Phi_{\alpha,k}\}\,, \\
  \mathcal{D}(\mathscr{A}_\alpha(k))\;&=\;\mathcal{D}(\overline{A_\alpha(k)})\dotplus\mathrm{span}\{\Psi_{\alpha,k}\}\,.
 \end{split}
\end{equation}

By comparing \eqref{eq:Dadjoint} with the short-range asymptotics for $\Phi_{\alpha,k}$ (formula \eqref{eq:Asymtotics_0} above), for $\Psi_{\alpha,k}$ (Lemma \ref{lem:Psi_asymptotics}), and for the elements of $\mathcal{D}(\overline{A_\alpha(k)})$ (Lemma \ref{lem:BehaviourZeroClosure}), it is immediate to deduce that for a function
\begin{equation}\label{eq:g_in_Dstar}
 g\;=\;\varphi+c_1\Psi_{\alpha,k}+c_0\Phi_{\alpha,k}\;\in\;\mathcal{D}(A_\alpha(k)^*)
\end{equation}
(with $\varphi\in \mathcal{D}(\overline{A_\alpha(k)})$ and $c_0,c_1\in\mathbb{C}$)
the limits
\begin{equation}\label{eq:limitsg0g1}
 \begin{split}
  g_0\;&:=\;\lim_{x\downarrow 0}\,x^{\frac{\alpha}{2}}g(x)\;=\;c_0{\textstyle\sqrt{\frac{\pi(1+\alpha)}{2|k|}}}\,, \\
  g_1\;&:=\;\lim_{x\downarrow 0}\,x^{-(1+\frac{\alpha}{2})}(g(x)-g_0 x^{-\frac{\alpha}{2}})\;=\;c_1{\textstyle\sqrt{\frac{2|k|}{\pi(1+\alpha)^3}}}\,\|\Phi_{\alpha,k}\|_{L^2}^2-c_0{\textstyle\sqrt{\frac{\pi|k|}{2(1+\alpha)}}} 
 \end{split}
\end{equation}
exist and are finite, and one has the asymptotics
\begin{equation}\label{eq:gatzero}
 g(x)\;\stackrel{x\downarrow 0}{=}\;  g_0x^{-\frac{\alpha}{2}}+g_1x^{1+\frac{\alpha}{2}}+o(x^{\frac{3}{2}})\,.
\end{equation}

In turn, by comparing \eqref{eq:g_in_Dstar} with \eqref{eq:temp_fibre_classif} we see that for given $\beta$ the domain of the extension $A_\alpha^{[\beta]}(k)$ consists of all those $g$'s in $\mathcal{D}(A_\alpha(k)^*)$ that, decomposed as in \eqref{eq:g_in_Dstar}, satisfy the condition
\begin{equation}\label{eq:c1betac0}
 c_1\;=\;\beta\,c_0\,.
\end{equation}
Moreover, replacing $c_0$ and $c_1$ of the expression \eqref{eq:g_in_Dstar} with $g_0$ and $g_1$ according to \eqref{eq:limitsg0g1}, the self-adjointness condition \eqref{eq:c1betac0} takes the form
\begin{equation}\label{eq:g1gammag0}
 g_1\;=\;\gamma \,g_0\,,\qquad \gamma\;:=\;\frac{|k|}{1+\alpha}\Big(\frac{\, 2 \|\Phi_{\alpha,k}\|_{L^2}^2 \,}{\pi(1+\alpha)}\, \beta-1\Big)\,.
\end{equation}
We can therefore equivalently parametrise each extension with the new real parameter $\gamma$ and write $A_\alpha^{[\gamma]}(k)$ in place of $A_\alpha^{[\beta]}(k)$, with $\beta$ and $\gamma$ linked by \eqref{eq:g1gammag0}.

We have thus proved the following.

\begin{proposition}\label{prop:temp-class}
 Let $\alpha\in(0,1)$ and $ k\in\mathbb{Z}\setminus\{0\}$. The self-adjoint extensions of $A_\alpha(k)$ in $L^2(\mathbb{R}^+)$ form the family $\{ A_\alpha^{[\gamma]}(k)\,|\,\gamma\in\mathbb{R}\cup\{\infty\}\}$. The extension with $\gamma=\infty$ is the reference extension $\mathscr{A}_\alpha(k)=R_{G_{\alpha,k}}^{-1}$, where $R_{G_{\alpha,k}}$ is the operator defined by \eqref{eq:newRGalphaforus}. For generic $\gamma\in\mathbb{R}$ one has
 \begin{equation}\label{eq:tempclass}
 \begin{split}
  A_\alpha^{[\gamma]}(k)\;&=\;A_\alpha(k)^*\Big|_{\mathcal{D}(A_\alpha^{[\gamma]}(k))}\,, \\
  \mathcal{D}(A_\alpha^{[\gamma]}(k))\;&=\;\{g\in\mathcal{D}(A_\alpha(k)^*)\,|\,g_1=\gamma g_0\}\,,
 \end{split}
 \end{equation}
 where, for each $g$, the constants $g_0$ and $g_1$ are defined by the limits \eqref{eq:limitsg0g1}.
\end{proposition}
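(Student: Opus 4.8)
The plan is to assemble the ingredients prepared in Subsections \ref{subsec:homog_problem}--\ref{subsec:distinguished} into a single application of the Kre\u{\i}n-Vi\v{s}ik-Birman classification. First I would note that, for $k\neq 0$, the operator $A_\alpha(k)$ is densely defined, symmetric and \emph{strictly positive} by \eqref{eq:Axibottom}, and has deficiency index $1$, the deficiency space being $\ker A_\alpha(k)^*=\mathrm{span}\{\Phi_{\alpha,k}\}$ (Lemma \ref{lem:kerAxistar}). Since Lemma \ref{eq:RGinvertsExtS} exhibits a distinguished self-adjoint extension $\mathscr{A}_\alpha(k)$ with everywhere defined and bounded inverse $R_{G_{\alpha,k}}$, I can take $\mathscr{A}_\alpha(k)$ as the reference extension in \cite[Theorem 3.4]{GMO-KVB2017} (equivalently \cite[Corollary 13.12]{Grubb-DistributionsAndOperators-2009}). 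Because the deficiency space is one-dimensional, the abstract parametrisation collapses to a single real parameter $\beta$, with the limiting value $\beta=\infty$ returning the reference extension itself; this is exactly the content of \eqref{eq:temp_fibre_classif-prelim}, rewritten as \eqref{eq:temp_fibre_classif} once one sets $\Psi_{\alpha,k}=\mathscr{A}_\alpha(k)^{-1}\Phi_{\alpha,k}$, together with the adjoint decomposition \eqref{eq:Dadjoint}.

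The second and more substantive step is to translate the abstract constraint $c_1=\beta c_0$ of \eqref{eq:c1betac0} into the concrete boundary condition $g_1=\gamma g_0$. Writing a generic element of $\mathcal{D}(A_\alpha(k)^*)$ as $g=\varphi+c_1\Psi_{\alpha,k}+c_0\Phi_{\alpha,k}$, I would extract the two leading short-distance coefficients by combining the asymptotics \eqref{eq:Asymtotics_0} for $\Phi_{\alpha,k}$, Lemma \ref{lem:Psi_asymptotics} for $\Psi_{\alpha,k}$, and Lemma \ref{lem:BehaviourZeroClosure} for $\varphi$. The decisive point is that $\varphi(x)=o(x^{3/2})$, so the closure component is invisible to both limits in \eqref{eq:limitsg0g1}; hence $g_0,g_1$ are well-defined linear functionals on $\mathcal{D}(A_\alpha(k)^*)$ depending only on $(c_0,c_1)$, and \eqref{eq:limitsg0g1} exhibits the map $(c_0,c_1)\mapsto(g_0,g_1)$ as an explicit invertible linear transformation. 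Inverting it and inserting into $c_1=\beta c_0$ yields $g_1=\gamma g_0$ with $\gamma$ as in \eqref{eq:g1gammag0}.

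To finish, I would verify that $\beta\mapsto\gamma$ is a bijection of $\mathbb{R}$ onto itself: the relation \eqref{eq:g1gammag0} is affine in $\beta$ with slope proportional to $\|\Phi_{\alpha,k}\|_{L^2}^2$, which is finite and strictly positive by \eqref{eq:Phinorm}, so each $\gamma\in\mathbb{R}$ is attained exactly once, while $\gamma=\infty$ matches $\beta=\infty$ and thus $\mathscr{A}_\alpha(k)$. Collecting these facts gives the family $\{A_\alpha^{[\gamma]}(k)\,|\,\gamma\in\mathbb{R}\cup\{\infty\}\}$ and the domain description \eqref{eq:tempclass}.

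The hard part will be the bookkeeping underlying the second step. One must confirm that \eqref{eq:Dadjoint} is a genuine \emph{direct} sum, so that $c_0$ and $c_1$ are uniquely determined by $g$, and that the trace functionals $g_0,g_1$ are insensitive to the choice of representative $\varphi\in\mathcal{D}(\overline{A_\alpha(k)})$. Both rest on the sharp $o(x^{3/2})$ decay of closure elements from Lemma \ref{lem:BehaviourZeroClosure}; without it the boundary condition $g_1=\gamma g_0$ would not even be well-posed.
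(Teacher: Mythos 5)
Your proposal is correct and follows essentially the same route as the paper: the Kre\u{\i}n--Vi\v{s}ik--Birman classification relative to the reference extension $\mathscr{A}_\alpha(k)$ of Lemma \ref{eq:RGinvertsExtS}, the canonical decomposition \eqref{eq:Dadjoint}, and the translation of the abstract constraint $c_1=\beta c_0$ into $g_1=\gamma g_0$ via the short-distance asymptotics \eqref{eq:Asymtotics_0}, Lemma \ref{lem:Psi_asymptotics}, and Lemma \ref{lem:BehaviourZeroClosure}, exactly as in \eqref{eq:limitsg0g1}--\eqref{eq:g1gammag0}. Your explicit checks that the lower-triangular map $(c_0,c_1)\mapsto(g_0,g_1)$ is invertible and that $\beta\mapsto\gamma$ is an affine bijection of $\mathbb{R}$ (slope proportional to $\|\Phi_{\alpha,k}\|_{L^2}^2>0$ by \eqref{eq:Phinorm}) are left implicit in the paper but are correct and complete the argument.
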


Although the above classification is not yet in the final form we wish, it allows us to make now an important identification.

\begin{proposition}\label{eq:RGisSFinv}
 Let $\alpha\in(0,1)$ and $ k\in\mathbb{Z}\setminus\{0\}$. Then  $\mathscr{A}_\alpha(k)=A_{\alpha,F}(k)$, and hence $R_{G_{\alpha,k}}=A_{\alpha,F}(k)^{-1}$ and $\Psi_{\alpha,k}=(A_{\alpha,F}(k))^{-1}\Phi_{\alpha,k}$.
\end{proposition}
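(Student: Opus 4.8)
The plan is to identify the distinguished extension $\mathscr{A}_\alpha(k)$ with the Friedrichs extension by exploiting the single abstract property that characterises $A_{\alpha,F}(k)$ among all self-adjoint extensions of $A_\alpha(k)$: its operator domain is contained in the form domain $\mathcal{D}[\overline{A_\alpha(k)}]$, that is, in the completion of $C^\infty_c(\mathbb{R}^+)$ with respect to the energy norm $\varphi\mapsto(\langle\varphi,A_\alpha(k)\varphi\rangle_{L^2}+\|\varphi\|_{L^2}^2)^{1/2}$. This is legitimate because $A_\alpha(k)$ is strictly positive (see \eqref{eq:Axibottom}) and, by Proposition \ref{prop:temp-class}, $A_{\alpha,F}(k)$ is necessarily one member $A_\alpha^{[\gamma]}(k)$ of the already-classified one-parameter family. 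The whole argument then reduces to determining which value of $\gamma$ it is, and I will do so by an elimination argument based on the domain of multiplication by $x^{-1}$.

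First I would show that the form domain is contained in $\mathcal{D}(x^{-1})$, the self-adjointness domain of the operator of multiplication by $x^{-1}$. For $\varphi\in C^\infty_c(\mathbb{R}^+)$, discarding the nonnegative kinetic and $k^2x^{2\alpha}$ contributions gives $C_\alpha\|x^{-1}\varphi\|_{L^2}^2\le\langle\varphi,A_\alpha(k)\varphi\rangle_{L^2}$, and since $C_\alpha>0$ for $\alpha\in(0,1)$ this yields $\|x^{-1}\varphi\|_{L^2}\le C_\alpha^{-1/2}\|\varphi\|_{\mathrm{form}}$. A routine closure argument (if $\varphi_n\to\varphi$ in the form norm, then $x^{-1}\varphi_n$ is $L^2$-Cauchy, and an almost-everywhere convergent subsequence identifies its $L^2$-limit with $x^{-1}\varphi$) extends this bound to every $\varphi\in\mathcal{D}[\overline{A_\alpha(k)}]$. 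Hence $\mathcal{D}[\overline{A_\alpha(k)}]\subset\mathcal{D}(x^{-1})$, and in particular $\mathcal{D}(A_{\alpha,F}(k))\subset\mathcal{D}(x^{-1})$.

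Next I would rule out every finite $\gamma$. By Proposition \ref{prop:temp-class} the domain of $A_\alpha^{[\gamma]}(k)$, $\gamma\in\mathbb{R}$, contains functions $g$ with $g_0\neq 0$: taking $\varphi=0$ and inverting the triangular system \eqref{eq:limitsg0g1} one realises any prescribed pair $(g_0,g_1)=(1,\gamma)$. For such $g$ the short-distance expansion \eqref{eq:gatzero} shows that $x^{-1}g$ behaves like $g_0\,x^{-1-\alpha/2}$ near the origin, and $\int_0 x^{-2-\alpha}\,\ud x=+\infty$ forces $x^{-1}g\notin L^2(\mathbb{R}^+)$. Thus $\mathcal{D}(A_\alpha^{[\gamma]}(k))\not\subset\mathcal{D}(x^{-1})$ for every finite $\gamma$. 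Combined with the previous step, the Friedrichs extension can only be the $\gamma=\infty$ member, i.e.\ $A_{\alpha,F}(k)=\mathscr{A}_\alpha(k)$. As a consistency check one verifies that the elements of $\mathcal{D}(\mathscr{A}_\alpha(k))=\mathcal{D}(\overline{A_\alpha(k)})\dotplus\mathrm{span}\{\Psi_{\alpha,k}\}$ do lie in $\mathcal{D}(x^{-1})$, using $\varphi(x)=o(x^{3/2})$ from Lemma \ref{lem:BehaviourZeroClosure} and $\Psi_{\alpha,k}(x)=O(x^{1+\alpha/2})$ from Lemma \ref{lem:Psi_asymptotics} near $0$, together with Corollary \ref{cor:RGtoWeightedL2} at infinity.

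Finally, the remaining identities follow at once: $R_{G_{\alpha,k}}=\mathscr{A}_\alpha(k)^{-1}=A_{\alpha,F}(k)^{-1}$ by Lemma \ref{eq:RGinvertsExtS}, and $\Psi_{\alpha,k}=R_{G_{\alpha,k}}\Phi_{\alpha,k}=A_{\alpha,F}(k)^{-1}\Phi_{\alpha,k}$ by the definition \eqref{eq:defPsi}. The main obstacle I anticipate is purely the form-domain step: one must invoke the correct defining property of the Friedrichs extension (operator domain inside the form-closure of $C^\infty_c(\mathbb{R}^+)$) and carry out the closure argument for the $x^{-1}$-bound cleanly, since everything else is a finite-dimensional elimination within the already-established family $\{A_\alpha^{[\gamma]}(k)\}$.
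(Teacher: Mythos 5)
Your proposal is correct and follows essentially the same route as the paper's proof: you eliminate every finite-$\gamma$ extension by exhibiting domain elements with $g_0\neq 0$, for which $x^{-1}g\sim g_0\,x^{-1-\alpha/2}$ fails to be square-integrable at the origin, contradicting the Friedrichs containment $\mathcal{D}(A_{\alpha,F}(k))\subset\mathcal{D}[A_{\alpha,F}(k)]\subset\mathcal{D}(x^{-1})$, so that $A_{\alpha,F}(k)$ must coincide with the $\gamma=\infty$ member $\mathscr{A}_\alpha(k)$, whence the remaining identities follow from Lemma \ref{eq:RGinvertsExtS} and \eqref{eq:defPsi}. Your explicit lower bound $C_\alpha\|x^{-1}\varphi\|_{L^2}^2\leqslant\langle\varphi,A_\alpha(k)\varphi\rangle_{L^2}$ together with the closure argument merely re-derives the content of the paper's Lemma \ref{lem:Fform}, so the two arguments are the same in substance.
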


For the proof of Proposition \ref{eq:RGisSFinv} it is convenient to recall the following.

\begin{lemma}\label{lem:Fform}
 Let $\alpha\in(0,1)$ and $ k\in\mathbb{Z}\setminus\{0\}$. The quadratic form of the Friedrichs extension of $A_\alpha(k)$ is given by
 \begin{equation}\label{eq:Fform}
  \begin{split}
  \mathcal{D}[A_{\alpha,F}(k)]\;&=\;\big\{g\in L^2(\mathbb{R}^+)\,\big|\, \|g'\|_{L^2}^2+\|x^{\alpha} g\|_{L^2}^2+\|x^{-1}g\|_{L^2}^2<+\infty\big\} \,,\\
   A_{\alpha,F}(k)[g,h]\;&=\;\int_0^{+\infty}\!\!\Big(\,\overline{g'(x)}h'(x)+ k^2x^{2\alpha}\,\overline{g(x)}h(x)+C_\alpha\frac{\,\overline{g(x)}h(x)}{\,x^2}\Big)\ud x\,.
  \end{split}
 \end{equation}
 \end{lemma}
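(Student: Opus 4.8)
The plan is to identify $A_{\alpha,F}(k)$ directly through its defining quadratic form. By definition, the Friedrichs extension is the self-adjoint operator associated with the closure of the form $q[\varphi,\psi]:=\langle\varphi,A_\alpha(k)\psi\rangle_{L^2}$ with form core $C^\infty_c(\mathbb{R}^+)$. First I would integrate by parts: for $\varphi,\psi\in C^\infty_c(\mathbb{R}^+)$ the compact support away from $0$ and $+\infty$ makes every boundary term vanish, giving
\[
 q[\varphi,\psi]\;=\;\int_0^{+\infty}\Big(\overline{\varphi'}\psi' + k^2 x^{2\alpha}\,\overline{\varphi}\psi + C_\alpha\,x^{-2}\,\overline{\varphi}\psi\Big)\,\ud x\,.
\]
This already reproduces the claimed sesquilinear expression on the dense core; the genuine content of the Lemma is the identification of the \emph{full} form domain with the completion of $C^\infty_c(\mathbb{R}^+)$ in the form norm.

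Set $V:=\{g\in L^2(\mathbb{R}^+)\,|\,\|g'\|_{L^2}^2+\|x^\alpha g\|_{L^2}^2+\|x^{-1}g\|_{L^2}^2<+\infty\}$ and equip it with $\|g\|_V^2:=\|g'\|_{L^2}^2+k^2\|x^\alpha g\|_{L^2}^2+C_\alpha\|x^{-1}g\|_{L^2}^2+\|g\|_{L^2}^2$. Since $k\neq 0$ and $\alpha\in(0,1)$ force $C_\alpha>0$, this norm is equivalent to the one defining $V$, and it coincides with the form norm $q[\cdot]+\|\cdot\|_{L^2}^2$ on $C^\infty_c(\mathbb{R}^+)$; moreover the strict positivity \eqref{eq:Axibottom} makes $\|\cdot\|_V$ equivalent to $\sqrt{q[\cdot]}$, so the abstract completion injects continuously into $L^2$. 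After checking the (routine) completeness of $(V,\|\cdot\|_V)$, the Lemma reduces to proving that $C^\infty_c(\mathbb{R}^+)$ is $\|\cdot\|_V$-dense in $V$, i.e.\ that its closure inside $V$ exhausts $V$. I would approximate a generic $g\in V$ in three steps. (a) Cut off at infinity with $\chi_R$ equal to $1$ on $[0,R]$, vanishing beyond $2R$, with $|\chi_R'|\lesssim R^{-1}$: the only non-trivial term is $\|\chi_R' g\|_{L^2}^2\lesssim R^{-2}\|g\|_{L^2((R,2R))}^2\to 0$, so $\chi_R g\to g$ in $V$. (b) Cut off at the origin with $\theta_\epsilon$ vanishing on $[0,\epsilon]$, equal to $1$ on $[2\epsilon,+\infty)$, $|\theta_\epsilon'|\lesssim\epsilon^{-1}$: here the crucial estimate is
\[
 \|\theta_\epsilon' g\|_{L^2}^2\;\lesssim\;\epsilon^{-2}\!\int_\epsilon^{2\epsilon}\!|g|^2\,\ud x\;\leqslant\;4\!\int_\epsilon^{2\epsilon}\!|x^{-1}g|^2\,\ud x\;\xrightarrow{\;\epsilon\downarrow 0\;}\;0\,,
\]
where I used $\epsilon^{-2}\leqslant 4x^{-2}$ on $(\epsilon,2\epsilon)$ and the finiteness of $\|x^{-1}g\|_{L^2}$. (c) Mollify the resulting function, now compactly supported in $(0,+\infty)$: since $x^{2\alpha}$ and $x^{-2}$ are bounded above and below on its (compact) support, $H^1$-convergence of the mollifiers upgrades to $V$-convergence. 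Composing the three approximations yields the density, hence $\mathcal{D}[A_{\alpha,F}(k)]=V$.

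The main obstacle is step (b): controlling the cutoff at the \emph{singular} endpoint $x=0$. It is precisely the weighted bound $x^{-1}g\in L^2$ — the form-level trace of the inverse-square term $C_\alpha x^{-2}$ — that lets the commutator $\theta_\epsilon' g$ be absorbed, thereby enforcing the Dirichlet-type vanishing at $0$ intrinsic to the Friedrichs choice. Once density is in hand, extending the integration-by-parts identity from $C^\infty_c(\mathbb{R}^+)$ to all of $V$ by continuity of both sides in $\|\cdot\|_V$ gives the stated expression for $A_{\alpha,F}(k)[g,h]$ and completes the proof. As a consistency check, Hardy's inequality shows that $V$ coincides with $H^1_0(\mathbb{R}^+)\cap L^2(\mathbb{R}^+,\langle x^{2\alpha}\rangle\,\ud x)$, reconciling \eqref{eq:Fform} with the form-domain assertion \eqref{eq:thmAFform}.
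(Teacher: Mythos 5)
Your proposal is correct and takes essentially the same route as the paper, whose proof likewise identifies $\mathcal{D}[A_{\alpha,F}(k)]$ as the closure of $C^\infty_c(\mathbb{R}^+)$ in the form norm $\|g\|_F^2=\langle g,A_\alpha(k)g\rangle_{L^2}+\|g\|_{L^2}^2 = \|g'\|_{L^2}^2+k^2\|x^\alpha g\|_{L^2}^2+C_\alpha\|x^{-1}g\|_{L^2}^2+\|g\|_{L^2}^2$ and then asserts that \eqref{eq:Fform} follows at once since $k^2>0$ and $C_\alpha>0$, deferring the details to the standard construction in the cited reference. The only difference is that you make explicit the genuinely nontrivial step hidden behind that citation, namely the density of $C^\infty_c(\mathbb{R}^+)$ in the claimed form domain (cutoff at infinity, cutoff at the origin with the commutator term $\theta_\epsilon' g$ absorbed by the finiteness of $\|x^{-1}g\|_{L^2}$, then mollification on a compact subset of $(0,+\infty)$ where the weights are bounded), and your estimates in that argument are correct.
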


\begin{proof}
 It is a standard construction (see, e.g., \cite[Theorem 15]{GMO-KVB2017}), that follows from the fact that $\mathcal{D}[A_{\alpha,F}(k)]$ is the closure of $\mathcal{D}(A_\alpha(k))=C^\infty_c(\mathbb{R}^+)$ in the norm
 \[
  \begin{split}
   \|g\|_F^2\;:=&\;\langle g,A_\alpha(k) g\rangle_{L^2}+\langle g,g\rangle_{L^2} \\
   =&\; \|g'\|_{L^2}^2+ k^2\|x^{\alpha} g\|_{L^2}^2+C_\alpha\|x^{-1}g\|_{L^2}^2+\|g\|_{L^2}^2\,.
  \end{split}
 \]
 Then \eqref{eq:Fform} follows at once from the above formula, since $ k^2> 0$ and $C_\alpha>0$.
\end{proof}

\begin{proof}[Proof of Proposition \ref{eq:RGisSFinv}]
%
%
%

 Let $g\in\mathcal{D}(A_\alpha^{[\gamma]}(k))$ for some $\gamma\in\mathbb{R}$. 
 The short-distance expansion \eqref{eq:gatzero}, combined with the self-adjointness condition \eqref{eq:tempclass}, yields
 \[
  x^{-1}g(x)\;\stackrel{x\downarrow 0}{=}\;g_0\, x^{-(1+\frac{\alpha}{2})}+\gamma\,g_0\, x^{\frac{\alpha}{2}}+o(x^{\frac{1}{2}})\,.
 \]
 Therefore, in general (namely whenever $g_0\neq 0$) $x^{-1}g$ is \emph{not} square-integrable at zero. When this is the case, formula \eqref{eq:Fform} prevents $g$ from belonging to $\mathcal{D}[A_{\alpha,F}(k)]$.
 This shows that \emph{no} extension $ A_\alpha^{[\gamma]}(k)$, $\gamma\in\mathbb{R}$, has operator domain entirely contained in $\mathcal{D}[A_{\alpha,F}(k)]$. The latter statement does not cover  $\mathscr{A}_\alpha(k)$ ($\gamma=\infty$). Now, $A_{\alpha,F}(k)$ can be none of the $ A_\alpha^{[\gamma]}(k)$'s, $\gamma\in\mathbb{R}$, because the Friedrichs extension has indeed  operator domain inside $\mathcal{D}[A_{\alpha,F}(k)]$ -- in fact, it is the unique extension with such property.
 Necessarily the conclusion is that $A_{\alpha,F}(k)$ and $\mathscr{A}_\alpha(k)$ are the same.
%
%
%
%
%
\end{proof}

A straightforward consequence of Proposition \ref{eq:RGisSFinv} (and of its proof) is the following.

\begin{corollary}\label{cor:AF_in_x-1}
Let $\alpha\in(0,1)$ and $ k\in\mathbb{Z}\setminus\{0\}$.  
 The Friedrichs extension $A_{\alpha,F}(k)$ of $A_\alpha(k)$ is the only self-adjoint extension whose operator domain is contained in $\mathcal{D}(x^{-1})$. 
\end{corollary}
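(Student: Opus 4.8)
The plan is to leverage the complete classification of self-adjoint extensions from Proposition \ref{prop:temp-class} together with the short-distance asymptotics already in hand, so that the whole statement reduces to checking the square-integrability of $x^{-1}g$ near the origin. By Proposition \ref{prop:temp-class} every self-adjoint extension of $A_\alpha(k)$ equals $A_\alpha^{[\gamma]}(k)$ for some $\gamma\in\mathbb{R}\cup\{\infty\}$, and by Proposition \ref{eq:RGisSFinv} the value $\gamma=\infty$ is precisely the Friedrichs extension $A_{\alpha,F}(k)=\mathscr{A}_\alpha(k)$. Thus the claim splits into two disjoint checks: (a) $\mathcal{D}(A_{\alpha,F}(k))\subset\mathcal{D}(x^{-1})$, and (b) $\mathcal{D}(A_\alpha^{[\gamma]}(k))\not\subset\mathcal{D}(x^{-1})$ for every finite $\gamma$. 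Recall throughout that $\mathcal{D}(x^{-1})=\{g\in L^2(\mathbb{R}^+)\,|\,x^{-1}g\in L^2(\mathbb{R}^+)\}$, and that away from the origin $x^{-1}$ is bounded, so the only regime that can obstruct membership is $x\downarrow 0$.

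For part (b) I would recycle verbatim the computation already performed in the proof of Proposition \ref{eq:RGisSFinv}: for $g\in\mathcal{D}(A_\alpha^{[\gamma]}(k))$ the self-adjointness condition $g_1=\gamma g_0$ combined with the expansion \eqref{eq:gatzero} yields $x^{-1}g(x)\stackrel{x\downarrow 0}{=}g_0\,x^{-(1+\alpha/2)}+\gamma g_0\,x^{\alpha/2}+o(x^{1/2})$. Since the defining constraint $g_1=\gamma g_0$ leaves the coefficient $g_0$ from \eqref{eq:limitsg0g1} free, the domain certainly contains elements with $g_0\neq 0$; for such an element the leading term has square $|x^{-(1+\alpha/2)}|^2=x^{-(2+\alpha)}$, which is non-integrable at the origin precisely because $\alpha>0$. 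Hence $x^{-1}g\notin L^2(\mathbb{R}^+)$, i.e.\ $g\notin\mathcal{D}(x^{-1})$, and the operator domain of every finite-$\gamma$ extension escapes $\mathcal{D}(x^{-1})$.

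For part (a) I would use the representation $\mathcal{D}(A_{\alpha,F}(k))=\mathcal{D}(\overline{A_\alpha(k)})\dotplus\mathrm{span}\{\Psi_{\alpha,k}\}$ from \eqref{eq:Dadjoint}, writing a generic element as $g=\varphi+c\,\Psi_{\alpha,k}$. Lemma \ref{lem:BehaviourZeroClosure} gives $\varphi(x)=o(x^{3/2})$, so that $x^{-1}\varphi(x)=o(x^{1/2})$, and Lemma \ref{lem:Psi_asymptotics} gives $\Psi_{\alpha,k}(x)\sim c_\star\,x^{1+\alpha/2}$, so that $x^{-1}\Psi_{\alpha,k}(x)\sim c_\star\,x^{\alpha/2}$; both are square-integrable in a neighbourhood of $0$ for $\alpha>0$. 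Since $x^{-1}$ is bounded on $[\varepsilon,+\infty)$ and $g\in L^2(\mathbb{R}^+)$, the tail contributes no obstruction, and assembling the two regimes gives $x^{-1}g\in L^2(\mathbb{R}^+)$, i.e.\ $g\in\mathcal{D}(x^{-1})$. Together with part (b) this singles out $A_{\alpha,F}(k)$ as the unique extension with operator domain inside $\mathcal{D}(x^{-1})$.

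The one point I would flag as the genuine content—rather than routine integrability bookkeeping—is the sharp matching of critical exponents: membership in $\mathcal{D}(x^{-1})$ holds \emph{if and only if} the $x^{-\alpha/2}$ head of $g$ (coefficient $g_0$) is annihilated, because $x^{-1}\cdot x^{-\alpha/2}=x^{-(1+\alpha/2)}$ sits just outside $L^2$ while the subleading $x^{1+\alpha/2}$ piece is harmless. To make the dichotomy a genuine characterisation rather than a mere sufficient condition, I must invoke that the coefficients $g_0,g_1$ in \eqref{eq:limitsg0g1} are true invariants of the domain element, so that the assertion ``some domain element has $g_0\neq 0$'' correctly detects failure of containment; this is exactly what the finite-$\gamma$ parametrisation in Proposition \ref{prop:temp-class} guarantees. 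Everything else is a direct transcription of the asymptotics established in Lemmas \ref{lem:BehaviourZeroClosure} and \ref{lem:Psi_asymptotics}.
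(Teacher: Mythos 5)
Your proof is correct, and its negative half coincides with the paper's: the paper derives Corollary \ref{cor:AF_in_x-1} as a ``straightforward consequence of Proposition \ref{eq:RGisSFinv} (and of its proof)'', and the argument you recycle --- the expansion \eqref{eq:gatzero} combined with the constraint $g_1=\gamma g_0$, plus the freedom to take $g_0\neq 0$ guaranteed by the parametrisation \eqref{eq:temp_fibre_classif}, forcing $x^{-1}g(x)\sim g_0\,x^{-(1+\frac{\alpha}{2})}$ to fail square-integrability at the origin --- is exactly the one embedded in that proof. The only place you take a mildly different route is the positive half: the paper reads the inclusion $\mathcal{D}(A_{\alpha,F}(k))\subset\mathcal{D}[A_{\alpha,F}(k)]\subset\mathcal{D}(x^{-1})$ directly off the form-domain formula \eqref{eq:Fform}, whereas you verify it at the level of the operator domain via $\mathcal{D}(A_{\alpha,F}(k))=\mathcal{D}(\overline{A_\alpha(k)})\dotplus\mathrm{span}\{\Psi_{\alpha,k}\}$ together with the asymptotics of Lemmas \ref{lem:BehaviourZeroClosure} and \ref{lem:Psi_asymptotics}; both routes rest on the same identification $\mathscr{A}_\alpha(k)=A_{\alpha,F}(k)$ from Proposition \ref{eq:RGisSFinv}, and yours trades one invocation of Lemma \ref{lem:Fform} for asymptotic bookkeeping that the form domain encodes for free. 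One phrasing slip worth correcting: $x^{-(2+\alpha)}$ fails to be integrable at the origin for \emph{every} $\alpha\geqslant 0$ (integrability would require $2+\alpha<1$), so the divergence is not ``precisely because $\alpha>0$'', and likewise $x^{-(1+\frac{\alpha}{2})}$ is far from, not ``just outside'', $L^2$ near $0$; neither remark affects the validity of your argument, since $\alpha\in(0,1)$ throughout.
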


\subsection{Proof of the classification theorem on fibre}\label{subsec:proof_of_fibrethm}~

Let us collect the results of the preceding discussion and prove Theorem \ref{thm:fibre-thm}.

Clearly, the case when $\alpha=0$ and hence $A_\alpha(k)$ is (a positive shift of) the minimally defined Laplacian, is already well known in the literature (see, e.g., \cite{GTV-2012}) and in this case Theorem \ref{thm:fibre-thm} provides familiar information. In particular, the operator closure has domain $H^2_0(\mathbb{R}^+)$, the adjoint has domain $H^2(\mathbb{R}^+)$, the Friedrichs extension is the Dirichlet Laplacian and has form domain $H^1_0(\mathbb{R}^+)$, etc.

Thus, Theorem \ref{thm:fibre-thm} need only be proved when $\alpha\in(0,1)$, the regime in which the analysis of Subsections \ref{subsec:homog_problem}-\ref{subsec:distinguished} was developed.

Part (i) of Theorem \ref{thm:fibre-thm} is precisely Proposition \ref{prop:domAclosure}. Part (ii) follows from \eqref{eq:Afstar} and \eqref{eq:Dadjoint} concerning the operator domain, and from Lemma \ref{lem:kerAxistar} concerning the kernel.

Part (iv), the actual classification of extensions, is the rephrasing of Proposition \ref{prop:temp-class}, using the fact that the reference extension is $\mathscr{A}_\alpha(k)=A_{\alpha,F}(k)$ (Proposition \ref{eq:RGisSFinv}), and plugging the self-adjointness condition $g_1=\gamma g_0$ into the general asymptotics \eqref{eq:gatzero}.

In part (iii), formula \eqref{eq:thmAFoperator} for the operator domain follows from \eqref{eq:Dadjoint} (with $\mathscr{A}_\alpha(k)=A_{\alpha,F}(k)$) and from the short-range asymptotics for $\Psi_{\alpha,k}$ (Lemma \ref{lem:Psi_asymptotics}), and for the elements of $\mathcal{D}(\overline{A_\alpha(k)})$ (Lemma \ref{lem:BehaviourZeroClosure}) -- which is the same as taking formally $\gamma=\infty$ in the general asymptotics. The distinctive property of $A_{\alpha,F}(k)$ with respect to the space $\mathcal{D}(x^{-1})$ is given by Corollary \ref{cor:AF_in_x-1}.

Thus, it remains to prove \eqref{eq:thmAFform} for the form domain of $A_{\alpha,F}(k)$. The inclusion $\mathcal{D}[A_{\alpha,F}(k)]\subset H^1_0(\mathbb{R}^+)\cap L^2(\mathbb{R}^+,\langle x\rangle^{2\alpha}\,\ud x)$ follows directly from Lemma \ref{lem:Fform}, as \eqref{eq:Fform} prescribes that if $g\in\mathcal{D}[A_{\alpha,F}(k)]$, then $g',x^\alpha g,x^{-1}g\in L^2(\mathbb{R}^+)$, and the latter condition implies necessarily $g(0)=0$. Conversely, if $g\in H^1_0(\mathbb{R}^+)$ \emph{and} $g\in L^2(\mathbb{R}^+,\langle x\rangle^{2\alpha}\,\ud x)$, then $g(x)\stackrel{x\downarrow 0}{=}o(x^{\frac{1}{2}})$ and all three norms $\|g'\|_{L^2}$, $\|x^{\alpha} g\|_{L^2}$, and $\|x^{-1}g\|_{L^2}$ are finite. Owing to \eqref{eq:Fform}, $g\in \mathcal{D}[A_{\alpha,F}(k)]$.

The proof of Theorem \ref{thm:fibre-thm} is completed.

\section{Continuation: the mode $k=0$}\label{sec:zero_mode}

We discuss now how the analysis of the previous Section is to be modified when $k=0$. We follow the same conceptual scheme, but applying it now to the \emph{shifted} operator $A_\alpha(0)+\mathbbm{1}$: owing to \eqref{eq:Axibottom-zero}, such a (densely defined, symmetric) operator has strictly positive bottom.

Thus, whereas for $k\neq 0$ self-adjoint extensions were determined a la Kre\u{\i}n-Vi\v{s}ik-Birman by implementing the self-adjointness condition between regular and singular part of the domain of the adjoint
\[
 \mathcal{D}(A_\alpha(k)^*)\;=\;\mathcal{D}(\overline{A_\alpha(k)})\dotplus(A_{\alpha,F}(k))^{-1}\ker A_\alpha(k)^*\dotplus\ker A_\alpha(k)^*\,,
\]
when $k=0$ the self-adjointness condition is implemented as a restriction in the formula
\[
 \begin{split}
  \mathcal{D}(A_\alpha(0)^*+\mathbbm{1})\;&=\;\mathcal{D}(\overline{A_\alpha(0)}+\mathbbm{1})\dotplus \\
  &\qquad \dotplus(A_{\alpha,F}(0)+\mathbbm{1})^{-1}\ker (A_\alpha(0)^*+\mathbbm{1})\dotplus\ker (A_\alpha(0)^*+\mathbbm{1})\,,
 \end{split}
\]
where obviously $\mathcal{D}(A_\alpha(0)^*+\mathbbm{1})=\mathcal{D}(A_\alpha(0)^*)$ and $\mathcal{D}(\overline{A_\alpha(0)}+\mathbbm{1})=\mathcal{D}(\overline{A_\alpha(0)})$, and analogously the domain of each extension is insensitive to the shift by $\mathbbm{1}$. The main result is Theorem \ref{thm:fibre-thm_zero_mode} below.

In fact, by other means and from a different perspective, the extensions of $A_\alpha(0)$ were also determined in \cite{Bruneau-Derezinski-Georgescu-2011}: we shall therefore omit an amount of details that can be either worked out in the very same manner of Sect.~\ref{sec:fibre-extensions}, or can be found in \cite{Bruneau-Derezinski-Georgescu-2011}.

Let us start with the homogeneous problem 
\begin{equation}\label{eq:homokzero}
 0\;=\;(S_{\alpha,0}+\mathbbm{1})h\;=\;-h''+C_\alpha x^{-2} h+h\,.
\end{equation}
Setting
\[
 w(z)\;:=\;\frac{h(x)}{\sqrt{x}}\,,\qquad \nu\;:=\;\sqrt{\frac{1+4C_\alpha}{4}}\;=\;\frac{1+\alpha}{2}\,,
\]
\eqref{eq:homokzero} takes the form of the modified Bessel equation
\begin{equation}
 x^2w''+x w'-(z^2+\nu^2)w\;=\;0\,,\qquad x\in\mathbb{R}^+\,.
\end{equation}
From the two linearly independent solutions $K_\nu$ and $I_\nu$ to the latter \cite[Sect.~9.6]{Abramowitz-Stegun-1964} we therefore have that
\begin{equation}\label{eq:Phi_and_F_mode_0}
\begin{split}
	\Phi_{\alpha,0}(x)\;&:=\;\sqrt{x}\, K_{\frac{1+\alpha}{2}}(x)\,, \\
	F_{\alpha,0}(x)\;&:=\; \sqrt{x}\, I_{\frac{1+\alpha}{2}}(x)
\end{split}
\end{equation}
are two linearly independent solutions to \eqref{eq:homokzero}. In fact, only $\Phi_{\alpha,0}$ is square-integrable, as is seen from the short-distance asymptotics \cite[Eq.~(9.6.2) and (9.6.10)]{Abramowitz-Stegun-1964} 
\begin{equation}\label{eq:AsymPhi00}
\begin{split}
	\Phi_{\alpha,0}\;&\overset{x \downarrow 0}{=}\;2^{\frac{\alpha-1}{2}} \Gamma\left({\textstyle\frac{1+\alpha}{2}}\right) x^{-\frac{\alpha}{2}}-
	 \textstyle{\frac{\Gamma\left({\textstyle\frac{1-\alpha}{2}} \right)}{2^{\frac{1+\alpha}{2}}(1+\alpha)}}  x^{1+\frac{\alpha}{2}}
	  + O(x^{2-\frac{\alpha}{2}})\,,\\
	F_{\alpha,0}(x) \;&\overset{x \downarrow 0}{=}\; \textstyle{\big(2^{\frac{1+\alpha}{2}}\Gamma\left(\frac{3+\alpha}{2}\right)\big)}^{-1} x^{1+\frac{\alpha}{2}} + O(x^{3+\frac{\alpha}{2}})\,,
\end{split}
\end{equation}
and from the large-distance asymptotics \cite[Eq.~(9.7.1) and (9.7.2)]{Abramowitz-Stegun-1964} 
\begin{equation}\label{eq:Asym_Phi_F_0_Infty}
\begin{split}
	\Phi_{\alpha,0}(x)\;&\overset{x \to +\infty}{=}\; \textstyle{\sqrt{\frac{\pi}{2}}}\, e^{-x}\, (1+O(x^{-1}))\,,\\
		F_{\alpha,0}(x) \;&\overset{x \to +\infty}{=}\; \textstyle{\frac{1}{\sqrt{2 \pi}}}\, e^{x} \,(1+O(x^{-1})) \, .
\end{split}
\end{equation}

Thus, in analogy to Lemma \ref{lem:kerAxistar}, we find:
\begin{lemma}\label{lem:kerAxistarzero}
 For $\alpha\in(0,1)$,
 \begin{equation}\label{eq:kerAxistarzero}
  \ker (A_\alpha(0)^*+\mathbbm{1})\;=\;\mathrm{span}\{\Phi_{\alpha,0}\}\,.
 \end{equation}
\end{lemma}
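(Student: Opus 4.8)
The plan is to mimic verbatim the proof of Lemma \ref{lem:kerAxistar}, adapting it to the shifted operator and to the new Bessel parameter. First I would invoke the differential characterisation of the adjoint (the $k=0$ analogue of \eqref{eq:Afstar}): a function $h\in\ker(A_\alpha(0)^*+\mathbbm{1})$ is precisely an $h\in L^2(\mathbb{R}^+)$ solving, in the distributional sense, the homogeneous equation \eqref{eq:homokzero}, that is $-h''+C_\alpha x^{-2}h+h=0$. By interior elliptic regularity such a weak solution is smooth on $\mathbb{R}^+$, hence a classical solution of a second-order linear ODE, so that $\ker(A_\alpha(0)^*+\mathbbm{1})$ is at most two-dimensional and is contained in the classical solution space of \eqref{eq:homokzero}.

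Next I would solve the ODE explicitly, exactly as indicated before the statement. With the substitution $w(x):=h(x)/\sqrt{x}$ and $\nu:=\tfrac{1+\alpha}{2}=\sqrt{(1+4C_\alpha)/4}$, equation \eqref{eq:homokzero} is carried into the modified Bessel equation \eqref{eq:modifiedBessEq} with that value of $\nu$; note that here, unlike the $k\neq0$ case, no rescaling of the argument is needed, since the term $k^2x^{2\alpha}$ is absent and is replaced by the constant shift $+1$, so the Bessel variable is simply $z=x$. The two linearly independent modified Bessel functions $K_{\nu}$ and $I_{\nu}$ then furnish, through the inverse substitution, the two linearly independent solutions $\Phi_{\alpha,0}$ and $F_{\alpha,0}$ of \eqref{eq:Phi_and_F_mode_0}, which span the full solution space of \eqref{eq:homokzero}.

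It then only remains to single out which of these solutions lies in $L^2(\mathbb{R}^+)$. Near the origin the asymptotics \eqref{eq:AsymPhi00} give $\Phi_{\alpha,0}(x)=O(x^{-\alpha/2})$ and $F_{\alpha,0}(x)=O(x^{1+\alpha/2})$, both square-integrable at $0$ precisely because $\alpha<1$ guarantees $\int_0 x^{-\alpha}\,\ud x<+\infty$. At infinity, instead, the asymptotics \eqref{eq:Asym_Phi_F_0_Infty} show that $\Phi_{\alpha,0}(x)\sim\sqrt{\pi/2}\,e^{-x}$ decays exponentially whereas $F_{\alpha,0}(x)\sim(2\pi)^{-1/2}e^{x}$ grows exponentially. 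Thus only $\Phi_{\alpha,0}$ is globally square-integrable, which yields $\ker(A_\alpha(0)^*+\mathbbm{1})=\mathrm{span}\{\Phi_{\alpha,0}\}$, i.e.\ \eqref{eq:kerAxistarzero}.

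No genuine obstacle arises: the whole argument is the $k=0$ transcription of Lemma \ref{lem:kerAxistar}, the only two points deserving attention being the bookkeeping of the shift by $\mathbbm{1}$ (which merely displaces the spectral parameter and affects neither the domain of the adjoint nor the structure of its kernel) and the verification that the hypothesis $\alpha<1$ is exactly what lets the otherwise more singular solution $\Phi_{\alpha,0}\sim x^{-\alpha/2}$ pass the integrability test at the origin.
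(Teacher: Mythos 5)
Your proof is correct and follows essentially the same route as the paper: the substitution $w(x)=h(x)/\sqrt{x}$ turns \eqref{eq:homokzero} into the modified Bessel equation with $\nu=\frac{1+\alpha}{2}$ and argument $z=x$, and the asymptotics \eqref{eq:AsymPhi00}--\eqref{eq:Asym_Phi_F_0_Infty} single out $\Phi_{\alpha,0}$ as the unique (up to scalars) square-integrable solution, exactly as in the paper's adaptation of Lemma \ref{lem:kerAxistar}. Your two explicit side remarks -- that elliptic regularity upgrades distributional solutions to classical ones, and that $\alpha<1$ is what makes $x^{-\alpha/2}$ square-integrable at the origin -- are points the paper leaves implicit, and both are accurate.
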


Next, concerning the non-homogeneous problem
\begin{equation}\label{eq:Inhomog_ODE_mode_0}
	S_{\alpha,0}u + u \; = \; g
\end{equation}
in the unknown $u$ for given $g$, the Wronskian relative to the fundamental system $\{\Phi_{\alpha,0}, F_{\alpha,0}\}$ is constant in $r$ and explicitly given by
\begin{equation}
	W(\Phi_{\alpha,0},F_{\alpha,0})\; =\; \det \begin{pmatrix}
		\Phi_{\alpha,0}(r) & F_{\alpha,0}(r) \\
		\Phi_{\alpha,0}'(r) & F_{\alpha,0}'(r)	
	\end{pmatrix}
	\; =\; 1\,,
\end{equation}
as one computes based on the asymptotics \eqref{eq:AsymPhi00} or \eqref{eq:Asym_Phi_F_0_Infty}. By standard variation of constants, a particular solution to \eqref{eq:Inhomog_ODE_mode_0} is
\begin{equation}
	u_{\text{part}} (r) \; = \; \int_0^{+\infty} G_{\alpha,0}(r,\rho) g(\rho) \, \ud \rho
\end{equation}
with
\begin{equation}\label{eq:GreenMode0}
	G_{\alpha,0}(r,\rho) \; := \; \begin{cases}
		\Phi_{\alpha,0}(r) F_{\alpha,0}(\rho)\,, \qquad \text{if $0<\rho<r$}\,, \\
		F_{\alpha,0}(r) \Phi_{\alpha,0}(\rho)\,, \qquad \text{if $0< r < \rho$}\,.
	\end{cases}
\end{equation}
With the same arguments used for Lemma  \ref{lem:RGbddsa}, using now the asymptotics \eqref{eq:AsymPhi00}-\eqref{eq:Asym_Phi_F_0_Infty}, we find the following analogue (an explicit proof of which can be found also in \cite[Lemma 4.4]{Bruneau-Derezinski-Georgescu-2011}).

\begin{lemma}
Let $\alpha \in (0,1)$. Let $R_{G_{\alpha,0}}$ be the operator associated to the integral kernel \eqref{eq:GreenMode0}. $R_{G_{\alpha,0}}$ can be realised as an everywhere defined, bounded, and self-adjoint operator on $L^2(\mathbb{R}^+,\ud r)$.
\end{lemma}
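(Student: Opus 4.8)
The plan is to follow verbatim the scheme of the proof of Lemma \ref{lem:RGbddsa}, the $k=0$ case being in fact structurally \emph{simpler}: the exponentials produced by the large-distance asymptotics \eqref{eq:Asym_Phi_F_0_Infty} are the pure $e^{\pm x}$ instead of the stretched $e^{\pm\frac{|k|}{1+\alpha}x^{1+\alpha}}$, so no integration-by-parts gymnastics is needed. First I would dispose of self-adjointness: the kernel \eqref{eq:GreenMode0} is real-valued and manifestly symmetric, $G_{\alpha,0}(r,\rho)=G_{\alpha,0}(\rho,r)$, so once $R_{G_{\alpha,0}}$ is known to be everywhere defined and bounded its adjoint has kernel $\overline{G_{\alpha,0}(\rho,r)}=G_{\alpha,0}(r,\rho)$, whence $R_{G_{\alpha,0}}^*=R_{G_{\alpha,0}}$. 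Everywhere-definedness is then immediate from boundedness, so the entire substance of the statement is the operator-norm bound.

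For the boundedness I would fix a cut-off $M>0$ and split $G_{\alpha,0}$ into the four pieces obtained by inserting $\mathbf{1}_{(0,M)}$ or $\mathbf{1}_{(M,+\infty)}$ in each variable, exactly as in Lemma \ref{lem:RGbddsa}. The three pieces meeting $(0,M)$ in at least one variable I would show to be Hilbert--Schmidt by direct integration. Near the origin the short-distance asymptotics \eqref{eq:AsymPhi00} give $\Phi_{\alpha,0}(x)=O(x^{-\alpha/2})$ and $F_{\alpha,0}(x)=O(x^{1+\frac{\alpha}{2}})$; the singular factor $x^{-\alpha/2}$ is square-integrable at $0$ precisely because $\alpha\in(0,1)$, which is what renders the $(-,-)$ integral $\iint|G_{\alpha,0}|^2$ finite. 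In the two mixed pieces the large variable carries an exponential factor from \eqref{eq:Asym_Phi_F_0_Infty} ($e^{-\rho}$ through $\Phi_{\alpha,0}$, or the controlled pairing $e^{r}e^{-\rho}$), which is integrable on $(M,+\infty)$ against the bounded power-law contribution on $(0,M)$, in complete analogy with the $\mathscr{G}^{-+}$ and $\mathscr{G}^{+-}$ estimates there.

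The only genuinely non-trivial piece is the $(+,+)$ one, for which I would run a Schur test bounding the operator norm by $\sqrt{AB}$ with $A=\sup_{r>M}\int_M^{+\infty}G_{\alpha,0}(r,\rho)\,\ud\rho$ and $B=\sup_{\rho>M}\int_M^{+\infty}G_{\alpha,0}(r,\rho)\,\ud r$. Here the key simplification over Lemma \ref{lem:RGbddsa} is that \eqref{eq:Asym_Phi_F_0_Infty} yields, on the branch $\rho<r$, $\Phi_{\alpha,0}(r)F_{\alpha,0}(\rho)\lesssim e^{-r}e^{\rho}=e^{-(r-\rho)}$, and symmetrically $F_{\alpha,0}(r)\Phi_{\alpha,0}(\rho)\lesssim e^{-(\rho-r)}$ on $r<\rho$, so that $G_{\alpha,0}(r,\rho)\lesssim e^{-|r-\rho|}$ throughout $(M,+\infty)^2$. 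Consequently $\int_M^{+\infty}e^{-|r-\rho|}\,\ud\rho\leqslant 2$ uniformly in $r$, giving $A,B\lesssim 1$ and hence a uniform bound on the $(+,+)$ operator. This region is the step I expect to require the most care, since it is where the two exponentials compete, but it is a direct and milder version of the $A_1,A_2,B_1,B_2$ estimates already carried out. Since all four of these computations are performed explicitly in \cite[Lemma 4.4]{Bruneau-Derezinski-Georgescu-2011}, I would set up the decomposition as above and then simply invoke that reference for the routine integral estimates, exactly as is done elsewhere in this Section.
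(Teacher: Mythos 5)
Your proposal is correct and matches the paper's route exactly: the paper gives no detailed proof here, stating only that the lemma follows ``with the same arguments used for Lemma \ref{lem:RGbddsa}'', now with the asymptotics \eqref{eq:AsymPhi00}--\eqref{eq:Asym_Phi_F_0_Infty}, and citing \cite[Lemma 4.4]{Bruneau-Derezinski-Georgescu-2011} for an explicit proof -- which is precisely your four-piece decomposition, Hilbert--Schmidt estimates near the origin, and Schur test at infinity. Your observation that the $(+,+)$ region simplifies to $G_{\alpha,0}(r,\rho)\lesssim e^{-|r-\rho|}$, making the Schur bound immediate, is a correct and welcome sharpening of the ``same arguments'' claim.
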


Analogously to \eqref{eq:defPsi} we set 
\begin{equation}\label{eq:defPsi_zero_mode}
	\Psi_{\alpha,0}(x)\;:=\; R_{G_{\alpha,0}} \Phi_{\alpha,0}\,.
\end{equation}
The proof of Lemma \ref{lem:Psi_asymptotics} can be then repeated verbatim, with $\Phi_{\alpha,0}$ and $F_{\alpha,0}$ in place of $\Phi_{\alpha,k}$ and $F_{\alpha,k}$, so as to obtain:

\begin{lemma}\label{lem:Psi_asymptotics_mode_zero}
 For $\alpha\in(0,1)$,
 \begin{equation}\label{eq:Psi_Asymptotics_mode_zero}
	\Psi_{\alpha,0}(x)\;\overset{x \downarrow 0}{=}\; \textstyle{\big( 2^{\frac{1+\alpha}{2}}\Gamma\left(\frac{3+\alpha}{2}\right)\!\big)}^{\!-1}\, \Vert \Phi_{\alpha,0} \Vert_{L^2}^2\, x^{1+\frac{\alpha}{2}} + o(x^{\frac{3}{2}})\,.
\end{equation}
\end{lemma}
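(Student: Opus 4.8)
The plan is to replicate verbatim the argument of Lemma \ref{lem:Psi_asymptotics}, now with $\Phi_{\alpha,0}$, $F_{\alpha,0}$, Wronskian $W=1$, and the mode-zero Green kernel \eqref{eq:GreenMode0} in place of their $k\neq 0$ analogues. First I would write out $\Psi_{\alpha,0}=R_{G_{\alpha,0}}\Phi_{\alpha,0}$ explicitly through \eqref{eq:GreenMode0}, splitting the integral according to whether $\rho<x$ or $\rho>x$:
\[
 \Psi_{\alpha,0}(x)\;=\;\Phi_{\alpha,0}(x)\!\int_0^x \! F_{\alpha,0}(\rho)\Phi_{\alpha,0}(\rho)\,\ud\rho+F_{\alpha,0}(x)\!\int_x^{+\infty}\!\!\Phi_{\alpha,0}(\rho)^2\,\ud\rho\,.
\]
Here the Wronskian equals $1$, so no normalising prefactor $1/W$ appears, in contrast to \eqref{eq:Green}, and there is no $|k|^{-2}$ rescaling in the definition \eqref{eq:defPsi_zero_mode} of $\Psi_{\alpha,0}$.

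Next I would estimate each of the two summands by means of the short-distance asymptotics \eqref{eq:AsymPhi00}. For the first summand, $\Phi_{\alpha,0}(x)=O(x^{-\alpha/2})$ while the integrand satisfies $F_{\alpha,0}(\rho)\Phi_{\alpha,0}(\rho)=O(\rho)$, so $\int_0^x F_{\alpha,0}\Phi_{\alpha,0}=O(x^2)$ and the product is $O(x^{2-\frac{\alpha}{2}})=o(x^{3/2})$, the last step being exactly where the hypothesis $\alpha\in(0,1)$ enters. For the second summand I would write
\[
 \int_x^{+\infty}\!\!\Phi_{\alpha,0}(\rho)^2\,\ud\rho\;=\;\|\Phi_{\alpha,0}\|_{L^2}^2-\int_0^x\!\Phi_{\alpha,0}(\rho)^2\,\ud\rho\,,
\]
note that $\int_0^x\Phi_{\alpha,0}^2=O(x^{1-\alpha})$ (again using $\alpha\in(0,1)$ for integrability near $0$), and multiply by $F_{\alpha,0}(x)=\big(2^{\frac{1+\alpha}{2}}\Gamma(\tfrac{3+\alpha}{2})\big)^{-1}x^{1+\frac{\alpha}{2}}+O(x^{3+\frac{\alpha}{2}})$. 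The leading contribution is then $\big(2^{\frac{1+\alpha}{2}}\Gamma(\tfrac{3+\alpha}{2})\big)^{-1}\|\Phi_{\alpha,0}\|_{L^2}^2\,x^{1+\frac{\alpha}{2}}$, while the remainder $F_{\alpha,0}(x)\cdot O(x^{1-\alpha})=O(x^{2-\frac{\alpha}{2}})=o(x^{3/2})$.

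Collecting the two estimates, the second summand is leading and yields precisely \eqref{eq:Psi_Asymptotics_mode_zero}. There is essentially no substantive obstacle here, since the structure is identical to the $k\neq 0$ case already established in Lemma \ref{lem:Psi_asymptotics}; the only points requiring care are the bookkeeping differences from that case — the Wronskian is $W=1$ rather than $1+\alpha$, the numerical prefactor now arises from the mode-zero coefficient of $F_{\alpha,0}$ read off from \eqref{eq:AsymPhi00}, and one must verify that the correction exponent $2-\frac{\alpha}{2}$ and the intermediate exponent $1-\alpha$ remain compatible with the claimed $o(x^{3/2})$ remainder throughout the whole range $\alpha\in(0,1)$.
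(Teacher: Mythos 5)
Your proposal is correct and is exactly the paper's own argument: the paper proves Lemma \ref{lem:Psi_asymptotics_mode_zero} by repeating the proof of Lemma \ref{lem:Psi_asymptotics} verbatim with $\Phi_{\alpha,0}$, $F_{\alpha,0}$ in place of $\Phi_{\alpha,k}$, $F_{\alpha,k}$, and your bookkeeping (Wronskian $W=1$, no $|k|^{-2}$ rescaling, leading coefficient $\big(2^{\frac{1+\alpha}{2}}\Gamma(\frac{3+\alpha}{2})\big)^{-1}$ from \eqref{eq:AsymPhi00}, and remainder exponents $2-\frac{\alpha}{2}>\frac{3}{2}$ for $\alpha\in(0,1)$) is accurate throughout.
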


Concerning $\overline{A_\alpha(0)}$, it suffices for our purposes to import from the literature the following analogue of Lemma \ref{lem:BehaviourZeroClosure}.

\begin{lemma}\label{lem:BehaviourZeroClosure_zero_mode}
Let $\alpha \in (0,1)$. Then $\mathcal{D}(\overline{A_\alpha(0)})=H^2_0(\mathbb{R}^+)$. In particular, every $\varphi \in \mathcal{D}(\overline{A_\alpha(0)})$ satisfies $\varphi(x)=o(x^{\frac{3}{2}})$ and $\varphi'(x)=o(x^{\frac{1}{2}})$ as $x \downarrow 0$.
\end{lemma}

\begin{proof}
A direct consequence of \cite[Theorem 4.1]{Derezinski-Georgescu-2021}: in the notation therein $\overline{A_\alpha(0)}$ is the operator $L_\delta^\mathrm{min}$ with $\delta-\frac{1}{4}=C_\alpha$ (the present $\delta$ replaces the notation $\alpha$ from \cite{Derezinski-Georgescu-2021} so as not to clash with the current meaning of $\alpha$), that is, $\delta=(\frac{1+\alpha}{2})^2$; the requirement $\mathfrak{Re}\sqrt{\delta}<1$ needed for the applicability of \cite[Theorem 4.1]{Derezinski-Georgescu-2021} is therefore satisfied, since $\alpha\in(0,1)$.
\end{proof}

As a further step, repeating the argument for Lemma \ref{eq:RGinvertsExtS} one concludes that $R_{G_{\alpha,0}}^{-1}$ is a self-adjoint extension of $A_\alpha(0)+\mathbbm{1}$ with everywhere defined and bounded inverse, whose domain clearly contains $\Psi_{\alpha,0}$. Such a reference extension induces a classification of all other self-adjoint extensions in complete analogy to what discussed in Subsect.~\ref{subsec:distinguished}. Thus, \eqref{eq:temp_fibre_classif} and \eqref{eq:Dadjoint} are valid in the identical form also when $k=0$, and the short-range asymptotics for $\Phi_{\alpha,0}$ (formula \eqref{eq:AsymPhi00}), for $\Psi_{\alpha,0}$ (Lemma \ref{lem:Psi_asymptotics_mode_zero}), and for the elements of $\mathcal{D}(\overline{A_\alpha(0)})$ (Lemma \ref{lem:BehaviourZeroClosure_zero_mode}) imply that for a generic
\begin{equation}\label{eq:g_in_Dstar_zero_mode}
 g\;=\;\varphi+c_1\Psi_{\alpha,0}+c_0\Phi_{\alpha,0}\;\in\;\mathcal{D}(A_\alpha(0)^*)
\end{equation}
(with $\varphi\in \mathcal{D}(\overline{A_\alpha(0)})$ and $c_0,c_1\in\mathbb{C}$)
the limits
\begin{equation}\label{eq:limitsg0g1_zero_mode}
\begin{split}
	g_0 \; :=& \; \lim_{x \downarrow 0}  x^{\frac{\alpha}{2}} g(x) \; = \;  c_0\,\textstyle{2^{-\frac{1-\alpha}{2}} \Gamma\left( {\textstyle \frac{1+\alpha}{2}}\right)}\,, \\
	g_1\;  :=& \;\lim_{x \downarrow 0} x^{-(1+\frac{\alpha}{2})} (g(x)-g_0x^{-\frac{\alpha}{2}}) \\
	=& \; c_1 \textstyle{\big(2^{\frac{1+\alpha}{2}}\Gamma({\textstyle\frac{3+\alpha}{2}})\big)}^{\!-1} \Vert \Phi_{\alpha,0} \Vert^2_{L^2(\mathbb{R}^+)} -
	c_0 \textstyle{\big(2^{\frac{1+\alpha}{2}}(1+\alpha)\big)}^{\!-1}\Gamma({\textstyle\frac{1-\alpha}{2}})
\end{split}
\end{equation}
exist and are finite, and one has the asymptotics
\begin{equation}\label{eq:gatzero_zero_mode}
 g(x)\;\stackrel{x\downarrow 0}{=}\;  g_0x^{-\frac{\alpha}{2}}+g_1x^{1+\frac{\alpha}{2}}+o(x^{\frac{3}{2}})\,.
\end{equation}
Then, analogously to \eqref{eq:c1betac0}-\eqref{eq:g1gammag0}, the condition of self-adjointness reads again as $c_1=\beta c_0$ for some $\beta\in\mathbb{R}$, or equivalently as 
\begin{equation}\label{eq:g1gammag0_zero_mode}
 g_1\;=\;\gamma g_0\,,\qquad \gamma\;:=\;\frac{\|\Phi_{\alpha,0}\|_{L^2}^2}{2^\alpha\Gamma(\frac{1+\alpha}{2})\Gamma(\frac{3+\alpha}{2})}\Big(\beta-\frac{\Gamma(\frac{1-\alpha}{2})\Gamma(\frac{3+\alpha}{2})}{(1+\alpha)\|\Phi_{\alpha,0}\|_{L^2}^2}\Big).
\end{equation}
This yields an obvious analogue of the `temporary' classification of Prop.~\ref{prop:temp-class}, where if $A_\alpha^{[\gamma]}(0)+\mathbbm{1}$ is a self-adjoint extension of $A_\alpha(0)+\mathbbm{1}$, so is $A_\alpha^{[\gamma]}(0)$ for $A_\alpha(0)$, with $\mathcal{D}(A_\alpha^{[\gamma]}(0)+\mathbbm{1})=\mathcal{D}(A_\alpha^{[\gamma]}(0))$.

In fact, based on the very same argument of Lemma \ref{lem:Fform}, repeated now for the characterisation of the form domain of $A_{\alpha,F}(0)$, one can also reproduce the argument of Prop.~\ref{eq:RGisSFinv}, establishing the following analogue.

\begin{proposition}\label{eq:RGisSFinv_zero_mode}
 For $\alpha\in(0,1)$, one has $A_{\alpha,F}(0)+\mathbbm{1}=R_{G_{\alpha,0}}^{-1}$ and $\Psi_{\alpha,0}=(A_{\alpha,F}(0)+\mathbbm{1})^{-1}\Phi_{\alpha,0}$.
\end{proposition}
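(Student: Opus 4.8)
The plan is to transcribe the argument of Proposition \ref{eq:RGisSFinv} to the shifted operator $A_\alpha(0)+\mathbbm{1}$, the only structural changes being the absence of the $k^2x^{2\alpha}$ term and the presence of the harmless shift by the identity (which alters neither operator domains nor the deficiency structure). The first step is to establish the analogue of Lemma \ref{lem:Fform}, i.e.\ the explicit form of the Friedrichs quadratic form. Since $\mathcal{D}[A_{\alpha,F}(0)]$ is by construction the closure of $C^\infty_c(\mathbb{R}^+)$ in the form norm
\[
 \|g\|_F^2\;=\;\langle g,(A_\alpha(0)+\mathbbm{1})g\rangle_{L^2}\;=\;\|g'\|_{L^2}^2+C_\alpha\|x^{-1}g\|_{L^2}^2+\|g\|_{L^2}^2\,,
\]
and since $C_\alpha>0$ for $\alpha\in(0,1)$, one obtains
\[
 \mathcal{D}[A_{\alpha,F}(0)]\;=\;\big\{g\in L^2(\mathbb{R}^+)\,\big|\,\|g'\|_{L^2}^2+\|x^{-1}g\|_{L^2}^2<+\infty\big\}\,,
\]
the decisive constraint being $x^{-1}g\in L^2(\mathbb{R}^+)$, which in particular forces $g(0)=0$.

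Second, I would feed the short-distance expansion \eqref{eq:gatzero_zero_mode} into this form domain. For a generic finite-$\gamma$ extension, imposing the self-adjointness condition $g_1=\gamma g_0$ from \eqref{eq:g1gammag0_zero_mode} gives
\[
 x^{-1}g(x)\;\stackrel{x\downarrow 0}{=}\;g_0\,x^{-(1+\frac{\alpha}{2})}+\gamma\,g_0\,x^{\frac{\alpha}{2}}+o(x^{\frac{1}{2}})\,,
\]
so that $x^{-1}g$ fails to be square-integrable near the origin whenever $g_0\neq 0$. Consequently no extension $A_\alpha^{[\gamma]}(0)$ with $\gamma\in\mathbb{R}$ has operator domain contained in $\mathcal{D}[A_{\alpha,F}(0)]$. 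Because the Friedrichs extension is the \emph{unique} self-adjoint extension whose operator domain lies inside its own form domain (a general fact of the Kre\u{\i}n--Vi\v{s}ik--Birman theory, exactly as exploited in the proof of Proposition \ref{eq:RGisSFinv}), and because the only remaining candidate in the classification analogous to Proposition \ref{prop:temp-class} is the reference extension $\gamma=\infty$, namely $R_{G_{\alpha,0}}^{-1}$, it follows that $A_{\alpha,F}(0)+\mathbbm{1}=R_{G_{\alpha,0}}^{-1}$.

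Finally, the second assertion is immediate: inverting the identity just obtained yields $(A_{\alpha,F}(0)+\mathbbm{1})^{-1}=R_{G_{\alpha,0}}$, whence $\Psi_{\alpha,0}=R_{G_{\alpha,0}}\Phi_{\alpha,0}=(A_{\alpha,F}(0)+\mathbbm{1})^{-1}\Phi_{\alpha,0}$ by the very definition \eqref{eq:defPsi_zero_mode}.

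The one point requiring genuine (if routine) care is the first step: verifying that the form-norm closure of $C^\infty_c(\mathbb{R}^+)$ is precisely the weighted space above, and in particular that the Hardy-type term $C_\alpha\|x^{-1}g\|_{L^2}^2$ survives the closure and controls the short-distance behaviour so as to enforce $g(0)=0$. This is exactly where $C_\alpha>0$, i.e.\ $\alpha\in(0,1)$, is indispensable; once it is in place, the remainder of the argument is a verbatim repetition of Proposition \ref{eq:RGisSFinv}.
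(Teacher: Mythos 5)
Your proposal is correct and follows essentially the same route as the paper, which indeed proves this proposition by remarking that the argument of Lemma \ref{lem:Fform} can be repeated for the shifted form norm $\|g'\|_{L^2}^2+C_\alpha\|x^{-1}g\|_{L^2}^2+\|g\|_{L^2}^2$ and that the argument of Proposition \ref{eq:RGisSFinv} then transcribes verbatim, using the $k=0$ asymptotics \eqref{eq:gatzero_zero_mode}--\eqref{eq:g1gammag0_zero_mode} to show $x^{-1}g\notin L^2$ near the origin whenever $g_0\neq 0$, so that only the reference extension $R_{G_{\alpha,0}}^{-1}$ can be the Friedrichs one. Your closing remark correctly isolates the only point needing care (the identification of the form-norm closure, resting on $C_\alpha>0$), which the paper likewise treats as standard.
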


Notably, the following useful characterisation of the domain of the Friedrichs extension of $A_\alpha(0)$ is available in the literature.

\begin{proposition}
 For $\alpha\in(0,1)$,
 \begin{equation}\label{eq:Friedrichs_mode_0}
	\mathcal{D}(A_{\alpha,F}(0)) = \mathcal{D}(\overline{A_\alpha(0)}) + \mathrm{span}\{ x^{1+\frac{\alpha}{2}} P \}\,,
\end{equation}
where $P\in C^\infty_c([0,+\infty))$ with $P(0)=1$. 
\end{proposition}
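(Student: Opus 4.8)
The plan is to start from the abstract Kre\u{\i}n--Vi\v{s}ik--Birman description of the Friedrichs domain, which is available here because \eqref{eq:Dadjoint} holds verbatim for $k=0$ and, by Proposition \ref{eq:RGisSFinv_zero_mode}, the reference extension $\mathscr{A}_\alpha(0)$ coincides with $A_{\alpha,F}(0)+\mathbbm{1}$, so that
\begin{equation*}
 \mathcal{D}(A_{\alpha,F}(0))\;=\;\mathcal{D}(\overline{A_\alpha(0)})\dotplus\mathrm{span}\{\Psi_{\alpha,0}\}\,.
\end{equation*}
The entire task is therefore to trade the only implicitly known function $\Psi_{\alpha,0}$ for the explicit model function $x^{1+\frac{\alpha}{2}}P$, modulo the closure $\mathcal{D}(\overline{A_\alpha(0)})$.

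First I would check that $x^{1+\frac{\alpha}{2}}P\in\mathcal{D}(A_\alpha(0)^*)$ by direct computation. The key elementary identity is $(1+\frac{\alpha}{2})\frac{\alpha}{2}=\frac{\alpha(2+\alpha)}{4}=C_\alpha$, which says precisely that $x^{1+\frac{\alpha}{2}}$ solves the homogeneous equation $S_{\alpha,0}u=0$. Consequently
\begin{equation*}
 S_{\alpha,0}\big(x^{1+\frac{\alpha}{2}}P\big)\;=\;-2\big(x^{1+\frac{\alpha}{2}}\big)'P'-x^{1+\frac{\alpha}{2}}P''\,,
\end{equation*}
and since $P\equiv 1$ near the origin, both $P'$ and $P''$ are supported away from $x=0$; hence the right-hand side is smooth and compactly supported in $\mathbb{R}^+$, thus in $L^2(\mathbb{R}^+)$. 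Together with $x^{1+\frac{\alpha}{2}}P\in L^2(\mathbb{R}^+)$, criterion \eqref{eq:Afstar} yields $x^{1+\frac{\alpha}{2}}P\in\mathcal{D}(A_\alpha(0)^*)$.

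Next I would read off the boundary coefficients. Because $P\equiv 1$ near $0$, the function $x^{1+\frac{\alpha}{2}}P$ has the short-distance expansion \eqref{eq:gatzero_zero_mode} with $g_0=0$ and $g_1=1$; in particular the vanishing of $g_0$ identifies it as an element of $\mathcal{D}(A_{\alpha,F}(0))$, the Friedrichs extension being the $\gamma=\infty$ member of the family \eqref{eq:g1gammag0_zero_mode}, i.e.\ the locus $\{g_0=0\}$. On the other hand, by \eqref{eq:Psi_Asymptotics_mode_zero} (equivalently \eqref{eq:limitsg0g1_zero_mode} with $c_1=1$, $c_0=0$) the function $\Psi_{\alpha,0}$ has $g_0=0$ and $g_1=\kappa$, where $\kappa:=\big(2^{\frac{1+\alpha}{2}}\Gamma(\frac{3+\alpha}{2})\big)^{-1}\|\Phi_{\alpha,0}\|_{L^2}^2>0$. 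Forming the combination $\Psi_{\alpha,0}-\kappa\,x^{1+\frac{\alpha}{2}}P\in\mathcal{D}(A_\alpha(0)^*)$, both its boundary coefficients vanish, $g_0=0$ and $g_1=\kappa-\kappa=0$, whence $\Psi_{\alpha,0}-\kappa\,x^{1+\frac{\alpha}{2}}P\in\mathcal{D}(\overline{A_\alpha(0)})$. This furnishes both inclusions between $\mathcal{D}(\overline{A_\alpha(0)})\dotplus\mathrm{span}\{\Psi_{\alpha,0}\}$ and $\mathcal{D}(\overline{A_\alpha(0)})+\mathrm{span}\{x^{1+\frac{\alpha}{2}}P\}$, proving \eqref{eq:Friedrichs_mode_0}; the sum is in fact direct, since $g_1=1\neq 0$ rules out $x^{1+\frac{\alpha}{2}}P\in\mathcal{D}(\overline{A_\alpha(0)})$.

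The step I expect to be the actual obstacle is the last implication, namely that the vanishing of \emph{both} boundary data $g_0=g_1=0$ forces membership in the closure $\mathcal{D}(\overline{A_\alpha(0)})$. For $k\neq 0$ this is the equivalence (i)$\Leftrightarrow$(iv) of Lemma \ref{prop:EquivalentClosure}, but for $k=0$ the excerpt only imports the one-directional decay bound of Lemma \ref{lem:BehaviourZeroClosure_zero_mode}. I would therefore justify the converse not from a pointwise characterisation but from the direct-sum structure \eqref{eq:Dadjoint} itself: writing $g=\varphi+c_1\Psi_{\alpha,0}+c_0\Phi_{\alpha,0}$ and using \eqref{eq:limitsg0g1_zero_mode}, one has $g_0\propto c_0$ and, once $c_0=0$, $g_1\propto c_1$ with a nonzero proportionality constant, so $g_0=g_1=0$ is equivalent to $c_0=c_1=0$, i.e.\ $g=\varphi\in\mathcal{D}(\overline{A_\alpha(0)})$. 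This is the same bookkeeping already carried out for $k\neq 0$, and it uses only the asymptotic coefficients recorded in \eqref{eq:limitsg0g1_zero_mode}, not any sharp regularity input.
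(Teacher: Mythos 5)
Your proof is correct, but it takes a genuinely different route from the paper's. The paper proves \eqref{eq:Friedrichs_mode_0} essentially by citation: it translates the problem into the framework of Bruneau--Derezi\'{n}ski--Georgescu \cite{Bruneau-Derezinski-Georgescu-2011}, identifying $A_{\alpha,F}(0)$ with their operator $H_m^{\theta}$ for $m=\frac{1+\alpha}{2}$ and $\theta=\frac{\pi}{2}$, recognising $H_m^{\pi/2}=L_m^{u_{\pi/2}}$ with $u_{\pi/2}(x)=x^{1+\alpha/2}$, and invoking their domain formula $\mathcal{D}(L_m^{u_\theta})=\mathcal{D}(L_m^{\mathrm{min}})+\mathrm{span}\{u_\theta P\}$. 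You instead argue intrinsically, entirely within the machinery already built in Section \ref{sec:zero_mode}: the decomposition $\mathcal{D}(A_{\alpha,F}(0))=\mathcal{D}(\overline{A_\alpha(0)})\dotplus\mathrm{span}\{\Psi_{\alpha,0}\}$ licensed by Proposition \ref{eq:RGisSFinv_zero_mode}; the elementary identity $(1+\frac{\alpha}{2})\frac{\alpha}{2}=C_\alpha$, which shows $x^{1+\alpha/2}$ is the subdominant homogeneous solution of the \emph{unshifted} equation $S_{\alpha,0}u=0$ and hence that $x^{1+\frac{\alpha}{2}}P\in\mathcal{D}(A_\alpha(0)^*)$; the matching of the leading coefficient $\kappa$ from Lemma \ref{lem:Psi_asymptotics_mode_zero}; and --- the step you rightly isolated as the crux --- the implication $g_0=g_1=0\Rightarrow g\in\mathcal{D}(\overline{A_\alpha(0)})$, which the paper never states as a $k=0$ analogue of Lemma \ref{prop:EquivalentClosure}, and which you correctly recover from the injectivity of the triangular map $(c_0,c_1)\mapsto(g_0,g_1)$ in \eqref{eq:limitsg0g1_zero_mode}, whose diagonal entries $2^{-\frac{1-\alpha}{2}}\Gamma(\frac{1+\alpha}{2})$ and $\kappa=\big(2^{\frac{1+\alpha}{2}}\Gamma(\frac{3+\alpha}{2})\big)^{-1}\|\Phi_{\alpha,0}\|_{L^2}^2$ are nonzero; this is the same bookkeeping the paper itself performs in the $k=0$ classification and in the proof of Theorem \ref{prop:g_with_Pweight}, so your use of it is non-circular. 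What each approach buys: the paper's proof is three lines but external, requiring a notational dictionary with \cite{Bruneau-Derezinski-Georgescu-2011}; yours is self-contained, explains \emph{why} the model function is precisely $x^{1+\alpha/2}$, and delivers as a by-product that the sum in \eqref{eq:Friedrichs_mode_0} is in fact direct.

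One cosmetic point: the proposition assumes only $P(0)=1$, whereas your computation of $S_{\alpha,0}\big(x^{1+\frac{\alpha}{2}}P\big)$ invokes $P\equiv 1$ near the origin (the paper's standing convention \eqref{eq:Pcutoff}). This costs nothing: for general $P\in C^\infty_c([0,+\infty))$ with $P(0)=1$, the same cancellation gives $S_{\alpha,0}\big(x^{1+\frac{\alpha}{2}}P\big)=-2\big(x^{1+\frac{\alpha}{2}}\big)'P'-x^{1+\frac{\alpha}{2}}P''=O(x^{\alpha/2})$ near $x=0$, hence still in $L^2(\mathbb{R}^+)$, and the boundary data remain $g_0=0$, $g_1=P(0)=1$, so your argument goes through unchanged.
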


\begin{proof}
 In the notation of \cite{Bruneau-Derezinski-Georgescu-2011}, the Friedrichs extension is the operator $H_m^{\theta}$ with $m^2-\frac{1}{4}=C_\alpha$, hence $m=\frac{1+\alpha}{2}\in(0,1)$, and with $\theta=\frac{\pi}{2}$ (\cite[Prop.~4.19]{Bruneau-Derezinski-Georgescu-2011}), whereas $\overline{A_\alpha(0)}$ is the operator $L_m^\mathrm{min}$. In turn, such $H_m^{\theta}$ is recognised to be the operator $L_m^{u_{\theta}}$, where $u_\theta$ is the function that for $\theta=\frac{\pi}{2}$ has the form $u_{\pi/2}(x)=x^{1+\frac{\alpha}{2}}$ (\cite[Prop.~4.17(1)]{Bruneau-Derezinski-Georgescu-2011}). With this correspondence, the formula $\mathcal{D}(L_m^{u_{\theta}})=\mathcal{D}(L_m^\mathrm{min})+\mathrm{span}\{u_\theta P\}$ (\cite[Prop.~A.5]{Bruneau-Derezinski-Georgescu-2011}) then yields precisely \eqref{eq:Friedrichs_mode_0}. 
\end{proof}

With all the ingredients collected so far, and based on a straightforward adaptation of the arguments of Subsect.~\ref{subsec:proof_of_fibrethm}, the above `temporary' classification then takes the following final form.

\begin{theorem}\label{thm:fibre-thm_zero_mode}
 Let $\alpha\in[0,1)$.
 \begin{itemize}
  \item[(i)] The adjoint of $A_\alpha(0)$ has domain
  \begin{equation}\label{eq:Dadjoint_kequalzero}
   \begin{split}
    \mathcal{D}(A_\alpha(0)^*)\;&=\;\left\{\!\!
  \begin{array}{c}
   g\in L^2(\mathbb{R}^+)\;\;\textrm{such that} \\
   \big(-\frac{\ud^2}{\ud x^2}+\frac{\,\alpha(2+\alpha)\,}{4x^2}\big)g\in L^2(\mathbb{R}^+)
  \end{array}
  \!\!\right\} \\
   &=\;H^2_0(\mathbb{R}^+)\dotplus\mathrm{span}\{\Psi_{\alpha,0}\}\dotplus\mathrm{span}\{\Phi_{\alpha,0}\}\,,
   \end{split} 
  \end{equation}
   where $\Phi_{\alpha,0}$ and $\Psi_{\alpha,0}$ are two smooth functions on $\mathbb{R}^+$ explicitly defined, in terms of modified Bessel functions, respectively by formulas \eqref{eq:Phi_and_F_mode_0}, \eqref{eq:GreenMode0}, and \eqref{eq:defPsi_zero_mode}. Moreover,
   \begin{equation}\label{eq:kerAxistar-in-thm_zero_mode}
  \ker (A_\alpha(0)^*+\mathbbm{1})\;=\;\mathrm{span}\{\Phi_{\alpha,0}\}\,.
 \end{equation}
   \item[(ii)] The self-adjoint extensions of $A_\alpha(0)$ in $L^2(\mathbb{R}^+)$ form the family
   \[
\{ A_\alpha^{[\gamma]}(0)\,|\,\gamma\in\mathbb{R}\cup\{\infty\}\}\,.    
   \]
 The extension with $\gamma=\infty$ is the Friedrichs extension $A_{\alpha,F}(0)$, whose domain is given by \eqref{eq:Friedrichs_mode_0}, and moreover $(A_{\alpha,F}(0)+\mathbbm{1})^{-1}=R_{G_{\alpha,0}}$, the everywhere defined and bounded operator with integral kernel given by \eqref{eq:GreenMode0}. For generic $\gamma\in\mathbb{R}$ one has
 \begin{equation}
  \mathcal{D}(A_\alpha^{[\gamma]}(0))\,=\,\big\{g\in\mathcal{D}(A_\alpha(0)^*)\,\big|\,g(x)\,\stackrel{x\downarrow 0}{=}\,g_0 x^{-\frac{\alpha}{2}}+\gamma g_0x^{1+\frac{\alpha}{2}}+o(x^{\frac{3}{2}})\,,\; g_0\in\mathbb{C}\big\}\,.\!\!\!\!\!\!\!\!\!\!
  \end{equation}
 \end{itemize}
\end{theorem}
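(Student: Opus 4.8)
The plan is to observe that Theorem \ref{thm:fibre-thm_zero_mode} is the $k=0$ counterpart of Theorem \ref{thm:fibre-thm}, and that every ingredient needed to run the Kre\u{\i}n-Vi\v{s}ik-Birman classification has already been assembled in the shifted form appropriate to this mode. The conceptual point is that $A_\alpha(0)$ is merely non-negative (its bottom is $0$, by \eqref{eq:Axibottom-zero}), so the extension theory is applied not to $A_\alpha(0)$ itself but to the strictly positive operator $A_\alpha(0)+\mathbbm{1}$; since the operator domains of all the relevant objects (the closure, the adjoint, each self-adjoint extension) are insensitive to the shift by $\mathbbm{1}$, every conclusion drawn for $A_\alpha(0)+\mathbbm{1}$ transfers verbatim to $A_\alpha(0)$. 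I would therefore simply collect the preparatory results in the order mirroring Subsection \ref{subsec:proof_of_fibrethm}.

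For part (i), the first description of $\mathcal{D}(A_\alpha(0)^*)$ as the maximal domain on which the differential expression $S_{\alpha,0}$ stays in $L^2(\mathbb{R}^+)$ is the $k=0$ instance of the general adjoint formula \eqref{eq:Afstar}. The direct-sum decomposition in \eqref{eq:Dadjoint_kequalzero} is precisely \eqref{eq:Dadjoint} specialised to this mode, with $\Psi_{\alpha,0}=R_{G_{\alpha,0}}\Phi_{\alpha,0}$ as defined in \eqref{eq:defPsi_zero_mode} playing the role of the regular singular generator; here one uses that $R_{G_{\alpha,0}}^{-1}$ is the reference extension and that $\Phi_{\alpha,0},\Psi_{\alpha,0}$ are linearly independent and not in $\mathcal{D}(\overline{A_\alpha(0)})$. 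The kernel identity \eqref{eq:kerAxistar-in-thm_zero_mode} is exactly Lemma \ref{lem:kerAxistarzero}.

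For part (ii), since the homogeneous problem \eqref{eq:homokzero} has a one-dimensional square-integrable solution space, the deficiency index is $1$ and the self-adjoint extensions of $A_\alpha(0)+\mathbbm{1}$ form a one-real-parameter family indexed by the Kre\u{\i}n-Vi\v{s}ik-Birman parameter $\beta$, via the restriction $c_1=\beta c_0$ on the coefficients in \eqref{eq:g_in_Dstar_zero_mode} (the analogue of Proposition \ref{prop:temp-class}). I would then substitute the explicit coefficient-to-boundary-limit relations \eqref{eq:limitsg0g1_zero_mode} to re-express this as the boundary condition $g_1=\gamma g_0$ with $\gamma$ given by \eqref{eq:g1gammag0_zero_mode}, and feed $g_1=\gamma g_0$ into the universal short-distance expansion \eqref{eq:gatzero_zero_mode} to obtain the asymptotic characterisation of $\mathcal{D}(A_\alpha^{[\gamma]}(0))$. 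The identification of $\gamma=\infty$ with the Friedrichs extension, together with $A_{\alpha,F}(0)+\mathbbm{1}=R_{G_{\alpha,0}}^{-1}$ and the explicit domain \eqref{eq:Friedrichs_mode_0}, is supplied by Proposition \ref{eq:RGisSFinv_zero_mode} (whose proof in turn mimics Proposition \ref{eq:RGisSFinv}, using the form-domain criterion that singles out the Friedrichs extension as the only one with operator domain inside $\mathcal{D}(x^{-1})$).

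Since almost all of the analytic labour has been front-loaded into the preparatory lemmas, I do not expect a genuine conceptual obstacle. The one point demanding care is purely bookkeeping: the constants in \eqref{eq:limitsg0g1_zero_mode}, and hence in \eqref{eq:g1gammag0_zero_mode}, must be read off correctly from the Bessel asymptotics \eqref{eq:AsymPhi00}, so that the change of parametrisation $\beta\mapsto\gamma$ is exact. A secondary subtlety is that the short-distance regularity of $\mathcal{D}(\overline{A_\alpha(0)})$ (Lemma \ref{lem:BehaviourZeroClosure_zero_mode}) and the Friedrichs domain \eqref{eq:Friedrichs_mode_0} are imported from \cite{Bruneau-Derezinski-Georgescu-2011} under the correspondence $m=\frac{1+\alpha}{2}\in(0,1)$, so one must verify that this parameter matching is consistent throughout before invoking those external results.
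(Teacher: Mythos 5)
Your proposal follows essentially the same route as the paper: Section \ref{sec:zero_mode} runs the Kre\u{\i}n-Vi\v{s}ik-Birman scheme on the shifted operator $A_\alpha(0)+\mathbbm{1}$, assembles the $k=0$ analogues of the preparatory lemmas (kernel via Lemma \ref{lem:kerAxistarzero}, the Green operator $R_{G_{\alpha,0}}$ and $\Psi_{\alpha,0}$, the closure asymptotics imported from \cite{Bruneau-Derezinski-Georgescu-2011} under $m=\frac{1+\alpha}{2}$, and Proposition \ref{eq:RGisSFinv_zero_mode} for the Friedrichs identification via the form-domain argument of Lemma \ref{lem:Fform}), and then declares the theorem a straightforward adaptation of Subsection \ref{subsec:proof_of_fibrethm}, exactly as you describe, including the $\beta\mapsto\gamma$ re-parametrisation through \eqref{eq:limitsg0g1_zero_mode}--\eqref{eq:g1gammag0_zero_mode} and the insertion of $g_1=\gamma g_0$ into \eqref{eq:gatzero_zero_mode}. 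The two points of care you flag (exactness of the Bessel-asymptotics constants and the parameter correspondence with \cite{Bruneau-Derezinski-Georgescu-2011}) are precisely where the paper's verification effort lies, so your proof is correct as proposed.
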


\section{Bilateral-fibre extensions}\label{sec:bilateralfibreext}

In this Section we study the `doubling' of the problem considered in Sections \ref{sec:fibre-extensions} and \ref{sec:zero_mode}, namely the problem of the self-adjoint extensions in $L^2(\mathbb{R})$ of the `bilateral' differential operator
\begin{equation}
 \begin{split}
  \mathcal{D}(A_\alpha(k))\;&=\;C^\infty_c(\mathbb{R}^-)\boxplus C^\infty_c(\mathbb{R}^+) \\
  A_\alpha(k)\;&=\;A_\alpha^-(k)\oplus A_\alpha^+(k)\,,
 \end{split}
\end{equation}
already defined in \eqref{eq:Axibilateral}. The Hilbert space $L^2(\mathbb{R})$ is now canonically decomposed into the orthogonal sum 
\begin{equation}\label{eq:againdecomp}
 L^2(\mathbb{R},\ud x)\;\cong\;L^2(\mathbb{R}^-,\ud x)\oplus L^2(\mathbb{R}^+,\ud x)\,.
\end{equation}
Each $g\in L^2(\mathbb{R})$ reads therefore
\begin{equation}\label{eq:gdecomp}
 g\;=\;g^-\oplus g^+\;\equiv\;
 \begin{pmatrix}
  g^- \\ g^+
 \end{pmatrix}
\,,\qquad g^\pm(x)\,:=\,g(x)\;\;\textrm{ for }x\in\mathbb{R}^\pm\,,
\end{equation}
and 
\begin{equation}
 A_\alpha(k)g\;=\;S_{\alpha,k}g^-\oplus S_{\alpha,k}g^+\,,\qquad S_{\alpha,k}\;:=\; -\frac{\ud^2}{\ud x^2}+ k^2 |x|^{2\alpha}+\frac{C_\alpha}{x^2}\,.
\end{equation}

As $A_\alpha^\pm(k)$ has deficiency index 1 in $L^2(\mathbb{R}^\pm)$, $A_\alpha(k)$ has deficiency index 2 in $L^2(\mathbb{R})$, and therefore has a richer variety of extensions.

Among them, as commented already in Section \ref{sec:preparatory_direct-integral}, one has extensions of form
\begin{equation}\label{eq:fibre_uncoupled_ext}
 B_\alpha^-(k) \oplus B_\alpha^+(k)\,,
\end{equation}
where $B_\alpha^\pm(k)$ is a self-adjoint extension of $A_\alpha^\pm(k)$ in $L^2(\mathbb{R}^\pm)$, namely a member of the family determined in the previous Section (Theorem \ref{thm:fibre-thm}). Extensions of type \eqref{eq:fibre_uncoupled_ext} are \emph{reduced} with respect to the decomposition \eqref{eq:againdecomp} (in the usual sense of, e.g., \cite[Sect.~1.4]{schmu_unbdd_sa}): they are \emph{decoupled} self-adjoint realisations of the differential operator $S_{\alpha,k}$, with no constraint between the behaviour as $x\to 0^+$ and $x\to 0^-$. An important extension of this type is the Friedrichs extension $A_{\alpha,F}(k)$: indeed, it is straightforward to argue that
\begin{equation}\label{eq:biFriedrichs}
 A_{\alpha,F}(k)\;=\;A_{\alpha,F}^-(k)\oplus A_{\alpha,F}^+(k)\,,
\end{equation}
where $A_{\alpha,F}^\pm$ is the Friedrichs extension of $A_\alpha^\pm(k)$ in $L^2(\mathbb{R}^\pm)$, which we already described in Theorem \ref{thm:fibre-thm}(iii).

Generic extensions, instead, are not reduced as in \eqref{eq:fibre_uncoupled_ext}, and are characterised by \emph{coupled} bilateral boundary conditions. We classify them using again the convenient  Kre\u{\i}n-Vi\v{s}ik-Birman scheme \cite{GMO-KVB2017}.

Following the same steps of Sections \ref{sec:fibre-extensions} and \ref{sec:zero_mode}, we are now interested in self-adjoint \emph{restrictions} of the adjoint $A_\alpha(k)^*=A_\alpha^-(k)^*\oplus A_\alpha^+(k)^*$ (see formula \eqref{eq:Afstar_sum} above).

In order to export the `one-sided' analysis of Sections \ref{sec:fibre-extensions} and \ref{sec:zero_mode} to the present `two-sided' context, let us introduce a unique expression for the functions of relevance, $\Phi_{\alpha,k}$ and $\Psi_{\alpha,k}$, valid for the left and the right side. Thus, we set
\begin{equation}\label{eq:PhiPsitilde}
 \widetilde{\Phi}_{\alpha,k}(x)\;:=\;\Phi_{\alpha,k}(|x|)\,,\qquad\widetilde{\Psi}_{\alpha,k}(x)\;:=\;\Psi_{\alpha,k}(|x|)\,,
\end{equation}
understanding $\widetilde{\Phi}_{\alpha,k}$ and $\widetilde{\Psi}_{\alpha,k}$ both as functions on $\mathbb{R}^-$ and on $\mathbb{R}^+$, depending on the context. Let us recall that such functions are defined in \eqref{eq:Phi_and_F_explicit} and \eqref{eq:defPsi} when $k\neq 0$, and in \eqref{eq:Phi_and_F_mode_0} and \eqref{eq:defPsi_zero_mode} when $k=0$.

Let us discuss the case $k\neq 0$ first. We deduce at once, respectively from Proposition \ref{prop:domAclosure}, Lemma \ref{lem:kerAxistar}, formula \eqref{eq:defPsi}, and Proposition \ref{eq:RGisSFinv}, that
\begin{eqnarray}
 \mathcal{D}(\overline{A_\alpha(k)})\!&=&\!\big( H^2_0(\mathbb{R}^-)\boxplus H^2_0(\mathbb{R}^+)\big)\cap L^2(\mathbb{R},\langle x\rangle^{4\alpha}\,\ud x) \label{eq:Dclosureclosure}\\
 \ker A_\alpha(k)^* \!&=&\!\mathrm{span}\{\widetilde{\Phi}_{\alpha,k}\}\oplus\mathrm{span}\{\widetilde{\Phi}_{\alpha,k}\} \label{eq:bilateralkernel} \\
 A_{\alpha,F}(k)^{-1}\ker A_\alpha(k)^* \!&=&\!\mathrm{span}\{\widetilde{\Psi}_{\alpha,k}\}\oplus\mathrm{span}\{\widetilde{\Psi}_{\alpha,k}\}\,, \label{eq:bilateralAFinvKer}
\end{eqnarray}
whence also \cite[Theorem 1]{GMO-KVB2017} 
\begin{equation}\label{eq:bilateralDAxistar}
 \begin{split}
  \mathcal{D}(A_\alpha(k)^*)\;&=\;\big( H^2_0(\mathbb{R}^-)\boxplus H^2_0(\mathbb{R}^+)\big)\cap L^2(\mathbb{R},\langle x\rangle^{4\alpha}\,\ud x) \\
  &\qquad \dotplus\mathrm{span}\{\widetilde{\Psi}_{\alpha,k}\}\oplus\mathrm{span}\{\widetilde{\Psi}_{\alpha,k}\} \\
  &\qquad \dotplus\mathrm{span}\{\widetilde{\Phi}_{\alpha,k}\}\oplus\mathrm{span}\{\widetilde{\Phi}_{\alpha,k}\}\,,
 \end{split}
\end{equation}
namely, the analogue of \eqref{eq:Dadjoint}.

In the notation of \eqref{eq:gdecomp}, a generic $g\in \mathcal{D}(A_\alpha(k)^*)$ has therefore the short-range asymptotics
\begin{equation}\label{eq:gbilateralasympt}
 g(x)\;\equiv\;\begin{pmatrix} g^-(x) \\ g^+(x) \end{pmatrix}\;\stackrel{x\to 0}{=} \;\begin{pmatrix} g_0^- \\ g_0^+ \end{pmatrix} |x|^{-\frac{\alpha}{2}}+\begin{pmatrix} g_1^- \\ g_1^+ \end{pmatrix}|x|^{1+\frac{\alpha}{2}}+o(|x|^{\frac{3}{2}})
\end{equation}
for suitable $g_0^\pm,g_1^\pm\in\mathbb{C}$ given by the (thereby existing) limits
 \begin{equation}\label{eq:bilimitsg0g1}
  \begin{split}
   g_0^\pm\;&=\;\lim_{x\to 0^\pm} |x|^{\frac{\alpha}{2}}g^\pm(x)\,, \\
   g_1^\pm\;&=\;\lim_{x\to 0^\pm} |x|^{-(1+\frac{\alpha}{2})}\big(g^\pm(x)-g_0^\pm |x|^{-\frac{\alpha}{2}}\big)\,.
  \end{split}
 \end{equation}
 Formula \eqref{eq:gbilateralasympt} follows from \eqref{eq:bilateralDAxistar} and the usual short-range asymptotics for $\Phi_{\alpha,k}$, $\Psi_{\alpha,k}$, and $\mathcal{D}(\overline{A_\alpha(k)})$.


Now, the Kre\u{\i}n-Vi\v{s}ik-Birman extension theory establishes a one-to-one correspondence between self-adjoint extensions of $A_\alpha(k)$
and self-adjoint operators $T$ in Hilbert subspaces of $\ker A_\alpha(k)^*$: denoting by $A_\alpha^{(T)}\!(k)$ each such extension, and by $\mathcal{K}\subset \ker A_\alpha(k)^*$ the Hilbert subspace where $T$ acts in, $A_\alpha^{(T)}\!(k)$ is the restriction of $A_\alpha(k)^*$ to the domain \cite[Theorem 5]{GMO-KVB2017}
\begin{equation}\label{eq:Birman_formula}
 \mathcal{D}\big(A_\alpha^{(T)}\!(k)\big)\;=\;
 \left\{
 \begin{array}{c}
  g=\varphi+A_{\alpha,F}(k)^{-1}(Tv+w)+v \\
  \textrm{with} \\
  \varphi\in\big( H^2_0(\mathbb{R}^-)\boxplus H^2_0(\mathbb{R}^+)\big)\cap L^2(\mathbb{R},\langle x\rangle^{4\alpha}\,\ud x)\,, \\
  v\in\mathcal{K}\,,\quad w\in \mathrm{span}\{\widetilde{\Phi}_{\alpha,k}\}\oplus\mathrm{span}\{\widetilde{\Phi}_{\alpha,k}\}\,,\quad w\perp v
 \end{array}
 \right\}.
\end{equation}
Clearly $\dim\mathcal{K}$ can be equal to 0, 1, or 2.  The former case corresponds to taking formally `$T=\infty$' on $\mathcal{D}(T)=\{0\}$, and reproduces the Friedrichs extension. The other two cases produce the rest of the family of extensions.

All the preceding discussion has an immediate counterpart when $k=0$, based on the findings of Sect.~\ref{sec:zero_mode}. The above formulas are valid for $k=0$ too, except for \eqref{eq:Dclosureclosure}, that need be replaced with
\begin{equation}
 \mathcal{D}(\overline{A_\alpha(0)})\;=\;H^2_0(\mathbb{R}^-)\oplus H^2_0(\mathbb{R}^+)
\end{equation}
(on account of Lemma \ref{lem:BehaviourZeroClosure_zero_mode}), and except for \eqref{eq:bilateralDAxistar}, that consequently reads now
\begin{equation}\label{eq:bilateralDAxistar_zero_mode}
 \begin{split}
  \mathcal{D}(A_\alpha(0)^*)\;&=\;\mathcal{D}\big(\overline{A^-_\alpha(0)}\big)\boxplus \mathcal{D}\big(\overline{A^+_\alpha(0)}\big) \\
  &\qquad \dotplus\mathrm{span}\{\widetilde{\Psi}_{\alpha,0}\}\oplus\mathrm{span}\{\widetilde{\Psi}_{\alpha,0}\} \\
  &\qquad \dotplus\mathrm{span}\{\widetilde{\Phi}_{\alpha,0}\}\oplus\mathrm{span}\{\widetilde{\Phi}_{\alpha,0}\}\,.
 \end{split}
\end{equation}
Thus when $k=0$ formula \eqref{eq:Birman_formula} takes the form
\begin{equation}\label{eq:Birman_formula_zero_mode}
 \mathcal{D}\big(A_\alpha^{(T)}\!(0)\big)\;=\;
 \left\{
 \begin{array}{c}
  g=\varphi+(A_{\alpha,F}(0)+\mathbbm{1})^{-1}(Tv+w)+v \\
  \textrm{with} \\
  \varphi\in\mathcal{D}\big(\overline{A^-_\alpha(0)}\big)\boxplus \mathcal{D}\big(\overline{A^+_\alpha(0)}\big)\,, \\
  v\in\mathcal{K}\,,\quad w\in \mathrm{span}\{\widetilde{\Phi}_{\alpha,0}\}\oplus\mathrm{span}\{\widetilde{\Phi}_{\alpha,0}\}\,,\quad w\perp v
 \end{array}
 \right\},
\end{equation}
where now $\mathcal{K}$ is a Hilbert subspace of $\ker(A_\alpha(0)^*+\mathbbm{1})$ and $T$ is a self-adjoint operator in $\mathcal{K}$.

We can now formulate the main result of this Section.

\begin{theorem}\label{thm:bifibre-extensions}
 Let $\alpha\in[0,1)$ and $k\in\mathbb{Z}$. Each self-adjoint extension $B_\alpha(k)$ of $A_\alpha(k)$ acts as 
 \begin{equation}
  B_\alpha(k) g\;=\;S_{\alpha,k} \,g^- \oplus S_{\alpha,k} \,g^+
 \end{equation}
 on a generic $g$ of its domain, written in the notation of \eqref{eq:gdecomp} and \eqref{eq:gbilateralasympt}-\eqref{eq:bilimitsg0g1}. The family of self-adjoint extensions of $A_\alpha(k)$ is formed by the following sub-families.
 \begin{itemize}
  \item \underline{Friedrichs extension}. 
  
  It is the operator \eqref{eq:biFriedrichs}. Its domain consists of those functions in $\mathcal{D}(A_\alpha(k)^*)$ whose asymptotics \eqref{eq:gbilateralasympt} has $g_0^\pm=0$.
    \item \underline{Family $\mathrm{I_R}$}.
  
  It is the family $\{A_{\alpha,R}^{[\gamma]}(k)\,|\,\gamma\in\mathbb{R}\}$ defined, with respect to the  asymptotics \eqref{eq:gbilateralasympt}, by
  \[
   \mathcal{D}(A_{\alpha,R}^{[\gamma]}(k))\;=\;\{g\in\mathcal{D}(A_\alpha(k)^*)\,|\,g_0^-=0\,,\;g_1^+=\gamma g_0^+\}\,.
  \]
   \item \underline{Family $\mathrm{I_L}$}.
  
  It is the family $\{A_{\alpha,L}^{[\gamma]}(k)\,|\,\gamma\in\mathbb{R}\}$ defined, with respect to the  asymptotics \eqref{eq:gbilateralasympt}, by
  \[
   \mathcal{D}(A_{\alpha,L}^{[\gamma]}(k))\;=\;\{g\in\mathcal{D}(A_\alpha(k)^*)\,|\,g_0^+=0\,,\;g_1^-=\gamma g_0^-\}\,.
  \]
   \item \underline{Family $\mathrm{II}_a$} with $a\in\mathbb{C}$.
   
   It is the family $\{A_{\alpha,a}^{[\gamma]}(k)\,|\,\gamma\in\mathbb{R}\}$ defined, with respect to the  asymptotics \eqref{eq:gbilateralasympt}, by
  \[
   \mathcal{D}(A_{\alpha,a}^{[\gamma]}(k))\;=\;\left\{g\in\mathcal{D}(A_\alpha(k)^*)\left|\!
   \begin{array}{c}
    g_0^+=a\,g_0^- \,,\\
    g_1^-+\overline{a}\,g_1^+=\gamma\, g_0^-
   \end{array}\!\!
   \right.\right\}.
  \]
   \item \underline{Family $\mathrm{III}$}.
   
   It is the family $\{A_{\alpha}^{[\Gamma]}(k)\,|\,\Gamma\equiv(\gamma_1,\gamma_2,\gamma_3,\gamma_4)\in\mathbb{R}^4\}$ defined, with respect to the  asymptotics \eqref{eq:gbilateralasympt}, by
     \[
   \mathcal{D}(A_{\alpha}^{[\Gamma]}(k))\;=\;\left\{g\in\mathcal{D}(A_\alpha(k)^*)\left|\!
   \begin{array}{c}
    g_1^-=\gamma_1 g_0^-+\zeta g_0^+ \,,\\
    g_1^+=\overline{\zeta} g_0^-+\gamma_4 g_0^+ \,,\\
    \zeta:=\gamma_2+\ii\gamma_3
   \end{array}\!\!
   \right.\right\}.
  \]
 \end{itemize}
 The families $\mathrm{I_R}$, $\mathrm{I_L}$, $\mathrm{II}_a$ for all $a\in\mathbb{C}\setminus\{0\}$, and $\mathrm{III}$ are mutually disjoint and, together with the Friedrichs extension, exhaust the family of self-adjoint extensions of $A_\alpha(k)$. 
\end{theorem}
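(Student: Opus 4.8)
The plan is to exploit the Kreĭn--Višik--Birman correspondence, already recorded in \eqref{eq:Birman_formula} (and its $k=0$ counterpart \eqref{eq:Birman_formula_zero_mode}), which puts the self-adjoint extensions of $A_\alpha(k)$ into bijection with the pairs $(\mathcal{K},T)$, where $\mathcal{K}$ is a subspace of the two-dimensional deficiency space $\ker A_\alpha(k)^*$ (see \eqref{eq:bilateralkernel}), with the two summands spanned by the left and right copies $\widetilde\Phi_{\alpha,k}^{-},\widetilde\Phi_{\alpha,k}^{+}$ of $\widetilde\Phi_{\alpha,k}$, and $T$ is a self-adjoint operator acting in $\mathcal{K}$. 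Since $\dim\ker A_\alpha(k)^*=2$, the whole classification is governed by the three cases $\dim\mathcal{K}\in\{0,1,2\}$, and it suffices to translate, in each case, the abstract prescription $g=\varphi+A_{\alpha,F}(k)^{-1}(Tv+w)+v$ into explicit linear relations among the coefficients $g_0^\pm,g_1^\pm$ of the short-range expansion \eqref{eq:gbilateralasympt}.

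First I would fix the dictionary between the two coordinate systems. Writing $g=\varphi+\sum_{\pm}(c_1^\pm\widetilde\Psi_{\alpha,k}+c_0^\pm\widetilde\Phi_{\alpha,k})$ according to \eqref{eq:bilateralDAxistar}, the limit formulas \eqref{eq:limitsg0g1} (and \eqref{eq:limitsg0g1_zero_mode} for $k=0$) give an invertible real-linear change of variables $(c_0^\pm,c_1^\pm)\mapsto(g_0^\pm,g_1^\pm)$ in which $g_0^\pm$ is a positive multiple of $c_0^\pm$ alone, while $g_1^\pm$ is a nondegenerate combination of $c_1^\pm$ and $c_0^\pm$. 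In the KVB datum the singular part $v=\sum_\pm v^\pm\widetilde\Phi_{\alpha,k}$ fixes $c_0^\pm=v^\pm$, whereas $A_{\alpha,F}(k)^{-1}\widetilde\Phi_{\alpha,k}=\widetilde\Psi_{\alpha,k}$ (Proposition \ref{eq:RGisSFinv} and \eqref{eq:bilateralAFinvKer}) shows that $c_1^\pm$ is the $\widetilde\Phi^\pm$-coefficient of $Tv+w$. Thus $\mathcal{K}$ imposes a constraint purely on $(g_0^-,g_0^+)$, while $T$ together with $w\in\mathcal{K}^\perp$ produces the relation tying $(g_1^-,g_1^+)$ to $(g_0^-,g_0^+)$.

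The three cases are then dispatched as follows. For $\dim\mathcal{K}=0$ one has $v=0$, hence $c_0^\pm=0$, i.e. $g_0^\pm=0$: this is $\mathcal{D}(A_{\alpha,F}(k))$, reproducing the Friedrichs extension \eqref{eq:biFriedrichs}. For $\dim\mathcal{K}=2$ one has $w=0$ and $T$ is an arbitrary self-adjoint operator on the two-dimensional space; since the basis $\{\widetilde\Phi^-,\widetilde\Phi^+\}$ is orthogonal with equal norms, $T$ is a Hermitian matrix, and translating $c_1=Tc_0$ through the dictionary again yields $(g_1^-,g_1^+)=M(g_0^-,g_0^+)$ with $M=\bigl(\begin{smallmatrix}\gamma_1&\zeta\\ \bar\zeta&\gamma_4\end{smallmatrix}\bigr)$ an arbitrary Hermitian matrix, $\zeta=\gamma_2+\ii\gamma_3$, which is family $\mathrm{III}$. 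For $\dim\mathcal{K}=1$ the line $\mathcal{K}$ is described projectively by the ratio $a:=g_0^+/g_0^-\in\mathbb{C}\cup\{\infty\}$ and $T$ reduces to multiplication by a real scalar $\gamma$; the endpoints $a=\infty$ and $a=0$ give the one-sided constraints $g_0^-=0$ and $g_0^+=0$ of families $\mathrm{I_R}$ and $\mathrm{I_L}$, while each $a\in\mathbb{C}\setminus\{0\}$ gives $g_0^+=ag_0^-$ together with a weighted scalar condition, i.e. family $\mathrm{II}_a$ (with $\mathrm{II}_0=\mathrm{I_L}$, whence disjointness is asserted only for $a\neq0$).

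The one genuinely delicate point, and the step I expect to be the main obstacle, is the $\dim\mathcal{K}=1$ case, specifically the appearance of the complex conjugate $\bar a$ in the condition $g_1^-+\bar a\,g_1^+=\gamma g_0^-$. This conjugate is forced by the $L^2$-orthogonality constraint $w\perp v$ in \eqref{eq:Birman_formula}: parametrising $\mathcal{K}=\mathrm{span}\{\widetilde\Phi_{\alpha,k}^{-}+a\,\widetilde\Phi_{\alpha,k}^{+}\}$ (the normalisation dictated by $a=g_0^+/g_0^-$), the orthogonal complement inside $\ker A_\alpha(k)^*$ is $\mathcal{K}^\perp=\mathrm{span}\{\widetilde\Phi_{\alpha,k}^{+}-\bar a\,\widetilde\Phi_{\alpha,k}^{-}\}$, and carrying the non-unit, $k$-dependent norms $\|\widetilde\Phi_{\alpha,k}\|_{L^2}$ and the constants of \eqref{eq:limitsg0g1} through $Tv+w$, a short computation shows that the $w$-contribution cancels and the $|a|^2$-terms recombine, leaving exactly $g_1^-+\bar a\,g_1^+=\gamma g_0^-$ with $\gamma$ real and running over all of $\mathbb{R}$. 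Finally, mutual disjointness and exhaustiveness follow directly from the KVB bijection: distinct $\mathcal{K}$, respectively distinct $T$ on a fixed $\mathcal{K}$, give distinct extensions, and the enumeration above, namely $\dim\mathcal{K}=0$, the projective line of one-dimensional $\mathcal{K}$ split as $\{\infty\}\sqcup\{0\}\sqcup(\mathbb{C}\setminus\{0\})$, and $\dim\mathcal{K}=2$, exhausts all admissible pairs $(\mathcal{K},T)$. The argument is uniform in $k$: for $k=0$ it is run verbatim on the shifted operator $A_\alpha(0)+\mathbbm{1}$, whose boundary expansion \eqref{eq:gatzero_zero_mode} has the identical structure.
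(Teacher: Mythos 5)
Your proposal is correct and follows essentially the same route as the paper's proof: the Kre\u{\i}n--Vi\v{s}ik--Birman formula \eqref{eq:Birman_formula} (and \eqref{eq:Birman_formula_zero_mode} for $k=0$) enumerated over $\dim\mathcal{K}\in\{0,1,2\}$, translated into the coefficients $g_0^\pm,g_1^\pm$ through \eqref{eq:limitsg0g1}, including the same key computation in the one-dimensional case where the constraint $w\perp v$ forces the $\overline{a}$ and the $w$-contribution cancels in $g_1^-+\overline{a}\,g_1^+$. The only cosmetic difference is that you package $\mathrm{I_R}$, $\mathrm{I_L}$, $\mathrm{II}_a$ as a single projective family $a\in\mathbb{C}\cup\{\infty\}$, whereas the paper treats the two decoupled endpoint cases separately as $A_{\alpha,F}^{\mp}(k)\oplus A_{\alpha}^{\pm,[\gamma]}(k)$.
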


\begin{remark}
 As already observed, the extensions are operators of the form $A_\alpha^{(T)}\!(k)$ for some self-adjoint $T$ acting on a Hilbert subspace $\mathcal{K}\subset\ker A_\alpha(k)^*$ if $k\neq 0$, or $\mathcal{K}\subset(\ker A_\alpha(0)^*+\mathbbm{1})$ if $k=0$. We are going to show in the proof of Theorem \ref{thm:bifibre-extensions} that the correspondence between each of the considered family and the choice of $\mathcal{K}$ is summarised by Table \ref{tab:extensions}. Thus, extensions of type $\mathrm{I_R}$, $\mathrm{I_L}$, and  $\mathrm{II}_a$ correspond to $\dim\mathcal{K}=1$, type-$\mathrm{III}$ extensions correspond to to $\dim\mathcal{K}=2$, and the Friedrichs extension is the case with $\dim\mathcal{K}=0$.
\end{remark}

\begin{table}
\begin{center}
\begin{tabular}{|c|c|c|c|c|}
 \hline
 \!\!\begin{tabular}{c} family of \\ extensions \end{tabular}\!\! & space $\mathcal{K}$ & \begin{tabular}{c} boundary \\ conditions \end{tabular} & parameters & notes \\
 \hline
 \hline
 Friedrichs & $\{0\} \oplus \{0\}$ & $g_0^{\pm}=0$ &  & \!\!\begin{tabular}{c} bilateral \\ confining \end{tabular}\!\!\\
 \hline
  $\mathrm{I_R}$ & $\{0\}\oplus \mathrm{span}\{\widetilde{\Phi}_{\alpha,k}\}$ & $\begin{array}{c} g_0^-= 0 \\ g_1^+=\gamma g_0^+ \end{array}$ & $\gamma\in\mathbb{R}$ & \!\!\begin{tabular}{c} left \\ confining \end{tabular}\!\! \\
 \hline
   $\mathrm{I_L}$ & $\mathrm{span}\{\widetilde{\Phi}_{\alpha,k}\}\oplus \{0\}$ & $\begin{array}{c} g_1^-=\gamma g_0^- \\ g_0^+= 0 \end{array}$ & $\gamma\in\mathbb{R}$ & \!\!\begin{tabular}{c} right \\ confining \end{tabular}\!\! \\
 \hline
  \begin{tabular}{c} $\mathrm{II}_a$ \\ $a\in\mathbb{C}$ \end{tabular} & $\mathrm{span}\{\widetilde{\Phi}_{\alpha,k}\oplus a \widetilde{\Phi}_{\alpha,k}\}$ & \!\!\!\!$\begin{array}{c}
    g_0^+=a\,g_0^- \\
    g_1^-+\overline{a}\,g_1^+=\gamma\, g_0^-
   \end{array}$\!\!\!\! & $\gamma\in\mathbb{R}$ & \!\!\!\!\begin{tabular}{c} bridging \\ for $a=1$ \\ and $\gamma=0$ \end{tabular}\!\!\!\! \\
  \hline
    $\mathrm{III}$ & $\mathrm{span}\{\widetilde{\Phi}_{\alpha,k}\}\oplus \mathrm{span}\{\widetilde{\Phi}_{\alpha,k}\}$ & \!\!\!\!$\begin{array}{c}
    g_1^-=\gamma_1 g_0^-+\zeta g_0^+ \\
    g_1^+=\overline{\zeta} g_0^-+\gamma_4 g_0^+ \\
    \zeta:=\gamma_2+\ii\gamma_3
   \end{array}$\!\!\!\! & \!\!\!\!\begin{tabular}{c} $\gamma_j\in\mathbb{R}$ \\ $j=1,2,3,4$ \end{tabular}\!\!\!\! &  \\
 \hline
\end{tabular}
\medskip
\caption{Summary of all possible boundary conditions of self-adjointness for the bilateral-fibre extensions of $A_\alpha(k)$}\label{tab:extensions}
\end{center}
\end{table}

\begin{proof}[Proof of Theorem \ref{thm:bifibre-extensions}]
 Let us consider first $k\neq 0$ and let us exploit the classification formula \eqref{eq:Birman_formula} in all possible cases.

 The choice $\mathcal{K}=\{0\}\oplus \{0\}$ yields the extension with domain
 \[
  \mathcal{D}(\overline{A_\alpha(k)})\dotplus A_{\alpha,F}(k)^{-1}\ker A_\alpha(k)^*\;=\;\mathcal{D}(A_{\alpha,F}(k))\,,
 \]
namely the Friedrichs extension. Formula \eqref{eq:thmAFoperator} of Theorem \ref{thm:fibre-thm}, applied on both sides $\mathbb{R}^+$ and $\mathbb{R}^-$, then implies $g_0^+=0=g_0^-$.

 The choice $\mathcal{K}=\{0\}\oplus \mathrm{span}\{\widetilde{\Phi}_{\alpha,k}\}$ yields the extensions in the domain of which a function $g=\varphi+A_{\alpha,F}(k)^{-1}(Tv+w)+v$ is decoupled into a component $g^-$ in the domain of $A_{\alpha,F}^-(k)$ (the Friedrichs extension of $A_{\alpha}^-(k)$) and a component $g^+$ in the domain of a self-adjoint extension of $A_\alpha^+(k)$ in $L^2(\mathbb{R}^+)$. This identifies a family $\{A_{\alpha,R}^{[\gamma]}(k)\,|\,\gamma\in\mathbb{R}\}$ of extensions with
 \[
  A_{\alpha,R}^{[\gamma]}(k)\;=\;A_{\alpha,F}^-(k)\oplus A_{\alpha}^{+,[\gamma]}(k)\,,
 \]
 where $A_{\alpha}^{+,[\gamma]}(k)$ denotes here the generic extension of $A_\alpha^+(k)$, according to the classification of Theorem \ref{thm:fibre-thm}(iv), for which therefore $g_1^+=\gamma g_0^+$. The symmetric choice $\mathcal{K}=\mathrm{span}\{\widetilde{\Phi}_{\alpha,k}\}\oplus \{0\}$ is treated in a completely analogous way.

 The next one-dimensional choice is $\mathcal{K}=\mathrm{span}\{\widetilde{\Phi}_{\alpha,k}\oplus a \widetilde{\Phi}_{\alpha,k}\}$ for some $a\in\mathbb{C}$. We can exclude the case $a=0$ that yields type-$\mathrm{I_L}$ extensions already discussed above. Formula \eqref{eq:Birman_formula} is now to be specialised with
 \[
  v\in\mathcal{K}\,,\qquad w\in \mathcal{K}^\perp\cap 
  \big(\mathrm{span}\{\widetilde{\Phi}_{\alpha,k}\}\oplus\mathrm{span}\{\widetilde{\Phi}_{\alpha,k}\}\big)=\mathrm{span}\{\widetilde{\Phi}_{\alpha,k}\oplus (-\overline{a}^{-1}) \widetilde{\Phi}_{\alpha,k}\}\,.
 \]
 The generic self-adjoint operator $T$ on $\mathcal{K}$ is now the multiplication by some $\tau\in\mathbb{R}$.
 Then \eqref{eq:Birman_formula} reads
 \[
 \begin{split}
  g\;&=\;\varphi+A_{\alpha,F}(k)^{-1}\left( \tau c_0 \begin{pmatrix} \widetilde{\Phi}_{\alpha,k} \\ a \widetilde{\Phi}_{\alpha,k} \end{pmatrix} + \widetilde{c}_0 \begin{pmatrix} \widetilde{\Phi}_{\alpha,k} \\ -\overline{a}^{-1}\,  \widetilde{\Phi}_{\alpha,k}\end{pmatrix} \right)+c_0 \begin{pmatrix} \widetilde{\Phi}_{\alpha,k} \\ a \widetilde{\Phi}_{\alpha,k} \end{pmatrix} \\
  &=\;\varphi+\begin{pmatrix} (\tau c_0+\widetilde{c}_0)\widetilde{\Psi}_{\alpha,k} \\ (\tau c_0 a-\widetilde{c}_0 \,\overline{a}^{-1})\widetilde{\Psi}_{\alpha,k}  \end{pmatrix}+c_0 \begin{pmatrix} \widetilde{\Phi}_{\alpha,k} \\ a \widetilde{\Phi}_{\alpha,k} \end{pmatrix}
 \end{split}
 \]
 for generic coefficients $c_0,\widetilde{c}_0\in\mathbb{C}$.
 From the expression above we find that the limits \eqref{eq:bilimitsg0g1}, computed with the short-range asymptotics \eqref{eq:Asymtotics_0} and \eqref{eq:Psi_asymptotics} (and Lemma \ref{lem:BehaviourZeroClosure}), amount to
 \[
  \begin{split}
   g_0^-\;&=\;c_0\textstyle\sqrt{\frac{\pi(1+\alpha)}{2|k|}}\,, \\
   g_0^+\;&=\;c_0 \,a\textstyle\sqrt{\frac{\pi(1+\alpha)}{2|k|}}\,, \\
   g_1^-\;&=\;\textstyle(\tau c_0+\widetilde{c}_0)\sqrt{\frac{2|k|}{\pi(1+\alpha)^3}}\|\Phi_{\alpha,k}\|_{L^2(\mathbb{R}^+)}^2-c_0\sqrt{\frac{\pi|k|}{2(1+\alpha)}}\,, \\
   g_1^+\;&=\;\textstyle(\tau c_0 a-\widetilde{c}_0\,\overline{a}^{-1})\sqrt{\frac{2|k|}{\pi(1+\alpha)^3}}\|\Phi_{\alpha,k}\|_{L^2(\mathbb{R}^+)}^2-c_0 a \sqrt{\frac{\pi|k|}{2(1+\alpha)}}\,.
  \end{split}
 \]
 Let us stress that here the constant $\|\Phi_{\alpha,k}\|_{L^2}$ is the $L^2$-norm of $\Phi_{\alpha,k}$ on the sole positive half-line. The first two equations above yield $g_0^+=ag_0^-$. The last two yield
 \[
  \begin{split}
  g_1^- +\overline{a}\,g_1^+\;&=\;\textstyle c_0(1+|a|^2)\Big(\tau\sqrt{\frac{2|k|}{\pi(1+\alpha)^3}}\|\Phi_{\alpha,k}\|_{L^2(\mathbb{R}^+)}^2-\sqrt{\frac{\pi|k|}{2(1+\alpha)}} \Big) \\
  &=\;g_0^-\textstyle(1+|a|^2) \frac{|k|}{1+\alpha} \Big( \frac{\,2 \|\Phi_{\alpha,k}\|_{L^2(\mathbb{R}^+)}^2 \,}{\pi (1+\alpha)} \, \tau -1 \Big)\,,
  \end{split} 
 \]
 having replaced $c_0=g_0^-\sqrt{\frac{2|k|}{\pi(1+\alpha)}}$. We can also write
 \[
  g_1^- +\overline{a}\,g_1^+\;=\;\gamma\, g_0^-
 \]
 after re-parametrising the extension parameter as
 \begin{equation}\label{eq:IkTheorem51}
  \gamma\;:=\;(1+|a|^2) \frac{|k|}{1+\alpha} \Big( \frac{\,2 \|\Phi_{\alpha,k}\|_{L^2(\mathbb{R}^+)}^2 \,}{\pi (1+\alpha)} \, \tau -1 \Big)\,\in\,\mathbb{R}\,.
 \end{equation}
 
 This completes the identification of the extensions $A_{\alpha,a}^{[\gamma]}(k)$.

 The remaining choice for $\mathcal{K}$ is $\mathcal{K}=\mathrm{span}\{\widetilde{\Phi}_{\alpha,k}\}\oplus \mathrm{span}\{\widetilde{\Phi}_{\alpha,k}\}$, namely the whole $\ker A_\alpha(k)^*$. In this case formula \eqref{eq:Birman_formula} only has $v$-vectors and no $w$-vectors, and the self-adjoint $T$ is represented by a generic $2\times 2$ Hermitian matrix
 \[
  T\;=\;
  \begin{pmatrix}
   \tau_1 & \tau_2 +\ii\tau_3 \\
   \tau_2 -\ii\tau_3 &\tau_4
  \end{pmatrix},\qquad\tau_1,\tau_2,\tau_3,\tau_4\in\mathbb{R}\,.
 \]
 Then \eqref{eq:Birman_formula} reads
 \[
  \begin{split}
   g\;&=\;\varphi+A_{\alpha,\mathrm{F}}(k)^{-1}T\begin{pmatrix} c_0^-\widetilde{\Phi}_{\alpha,k} \\ c_0^+\widetilde{\Phi}_{\alpha,k} \end{pmatrix}+\begin{pmatrix} c_0^-\widetilde{\Phi}_{\alpha,k} \\ c_0^+\widetilde{\Phi}_{\alpha,k} \end{pmatrix} \\
   &=\;\varphi+\begin{pmatrix} (\tau_1 c_0^-+(\tau_2+\ii\tau_3)c_0^+) \widetilde{\Psi}_{\alpha,k} \\ ((\tau_2-\ii\tau_3)c_0^-+\tau_4c_0^+) \widetilde{\Psi}_{\alpha,k}\end{pmatrix}+\begin{pmatrix} c_0^-\widetilde{\Phi}_{\alpha,k} \\ c_0^+\widetilde{\Phi}_{\alpha,k} \end{pmatrix}
  \end{split}
 \]
 for generic coefficients $c_0^\pm\in\mathbb{C}$.
 From the expression above we find that the limits \eqref{eq:bilimitsg0g1}, computed with the short-range asymptotics \eqref{eq:Asymtotics_0} and \eqref{eq:Psi_asymptotics} (and Lemma \ref{lem:BehaviourZeroClosure}), amount to
 \[
  \begin{split}
   g_0^\pm\;&=\;c_0^\pm\textstyle\sqrt{\frac{\pi(1+\alpha)}{2|k|}}\,, \\
   g_1^-\;&=\;\textstyle c_0^-\Big(\tau_1\sqrt{\frac{2|k|}{\pi(1+\alpha)^3}}\|\Phi_{\alpha,k}\|_{L^2(\mathbb{R}^+)}^2-\sqrt{\frac{\pi|k|}{2(1+\alpha)}}\Big)+c_0^+(\tau_2+\ii\tau_3)\sqrt{\frac{2|k|}{\pi(1+\alpha)^3}}\|\Phi_{\alpha,k}\|_{L^2(\mathbb{R}^+)}^2\,, \\
   g_1^+\;&=\;\textstyle c_0^-(\tau_2-\ii\tau_3)\sqrt{\frac{2|k|}{\pi(1+\alpha)^3}}\|\Phi_{\alpha,k}\|_{L^2(\mathbb{R}^+)}^2+c_0^+\Big(\tau_4\sqrt{\frac{2|k|}{\pi(1+\alpha)^3}}\|\Phi_{\alpha,k}\|_{L^2(\mathbb{R}^+)}^2+\sqrt{\frac{\pi|k|}{2(1+\alpha)}}\Big).
  \end{split}
 \]
 Replacing $c_0^\pm=g_0^\pm\sqrt{\frac{2|k|}{\pi(1+\alpha)}}$ in the last two equations above and re-defining the extension parameters as
 \begin{equation}\label{eq:IIkTheorem51}
  \begin{split}
   \gamma_1\;&:=\;  \frac{|k|}{1+\alpha} \Big( \frac{\,2 \|\Phi_{\alpha,k}\|_{L^2(\mathbb{R}^+)}^2 \,}{\pi (1+\alpha)} \, \tau_1 -1 \Big)\,, \\
   \gamma_2+\ii\gamma_3\;&:=\; (\tau_2+\ii\tau_3)\frac{2|k|}{\pi(1+\alpha)^2}\|\Phi_{\alpha,k}\|_{L^2(\mathbb{R}^+)}^2\,, \\
   \gamma_4\;&:=\;  \frac{|k|}{1+\alpha} \Big( \frac{\,2 \|\Phi_{\alpha,k}\|_{L^2(\mathbb{R}^+)}^2 \,}{\pi (1+\alpha)} \, \tau_4 -1 \Big),,  \end{split}
 \end{equation}
 yields precisely the boundary condition that characterises the extension $A_{\alpha}^{[\Gamma]}(k)$ with $\Gamma=(\gamma_1,\gamma_2,\gamma_3,\gamma_4)$.

 Last, let us repeat the above reasonings when $k=0$, based now on the classification formula \eqref{eq:Birman_formula_zero_mode}. The only modifications needed are the replacement of $A_{\alpha,\mathrm{F}}(k)^{-1}$ with $(A_{\alpha,\mathrm{F}}(0)+\mathbbm{1})^{-1}$, and the use, instead of the short-range asymptotics given by \eqref{eq:Asymtotics_0}, \eqref{eq:Psi_asymptotics}, and Lemma \ref{lem:BehaviourZeroClosure} valid for $k\neq 0$, of the short-range asymptotics given by \eqref{eq:AsymPhi00}, \eqref{eq:Psi_Asymptotics_mode_zero}, and Lemma \ref{lem:BehaviourZeroClosure_zero_mode} valid for $k=0$.
 
 The net result concerning the extensions of type $\mathrm{II}_a$, namely the extensions $A^{[\gamma]}_{\alpha,a}(0)$, is that \eqref{eq:IkTheorem51} is replaced by 
 \begin{equation}\label{eq:I0Theorem51}
  	\gamma \;:=\;  {\frac{(1+|a|^2)\Gamma\left( \frac{1-\alpha}{2}\right)}{2^\alpha (1+\alpha)\Gamma\left( \frac{1+\alpha}{2}\right)} \Big( \frac{(1+\alpha)\Vert \Phi_{\alpha,0} \Vert_{L^2(\mathbb{R}^+)}^2}{\Gamma\left( \frac{3+\alpha}{2}\right)\Gamma\left( \frac{1-\alpha}{2}\right)}  \, \tau - 1 \Big)}\;\in\;\mathbb{R}\,.
 \end{equation}

 Analogously, concerning the extensions of type $\mathrm{III}$, namely the extensions $A^{[\Gamma]}_{\alpha}(0)$, \eqref{eq:IIkTheorem51} is now replaced by 
  \begin{equation}\label{eq:II0Theorem51}
  \begin{split}
   \gamma_1\;&:=\;  \frac{\Gamma(\frac{1-\alpha}{2})}{2^\alpha (1+\alpha)\Gamma(\frac{1+\alpha}{2})} \Big( \frac{\, (1+\alpha) \|\Phi_{\alpha,0}\|_{L^2(\mathbb{R}^+)}^2 \,}{\Gamma(\frac{3+\alpha}{2}) \Gamma (\frac{1-\alpha}{2}) } \, \tau_1 -1 \Big)\,, \\
   \gamma_2+\ii\gamma_3\;&:=\; (\tau_2+\ii\tau_3)\frac{\| \Phi_{\alpha,0} \|_{L^2(\mathbb{R}^+)}^2}{2^\alpha \Gamma(\frac{3+\alpha}2) \Gamma(\frac{1+\alpha}2)}\,, \\
   \gamma_4\;&:=\;  \frac{\Gamma(\frac{1-\alpha}{2})}{2^\alpha (1+\alpha)\Gamma(\frac{1+\alpha}{2})} \Big( \frac{\, (1+\alpha) \|\Phi_{\alpha,0}\|_{L^2(\mathbb{R}^+)}^2 \,}{\Gamma(\frac{3+\alpha}{2}) \Gamma (\frac{1-\alpha}{2}) } \, \tau_4 -1 \Big)\,.  \end{split}
 \end{equation}
 
 The proof is now completed.
\end{proof}

\begin{remark}
 The type-$\mathrm{II}_a$ extension with $a=1$ and extension parameter $\gamma=0$ is caracterised by the distinguished boundary condition
 \begin{equation}
  g_0^-\,=\,g_0^+\,,\qquad g_1^-\,=\,-g_1^+\,.
 \end{equation}
 We shall interpret this condition as the maximally transmitting, or `bridging' condition between the two sides of the bilateral fibre.
\end{remark}

Whereas Theorem \ref{thm:bifibre-extensions} expresses the various conditions of self-adjointness in terms of the representation \eqref{eq:gdecomp} and \eqref{eq:gbilateralasympt}-\eqref{eq:bilimitsg0g1} of a generic $g\in\mathcal{D}(A_\alpha(k)^*)$, that is, in terms of the short-range behaviour of $g$, for the forthcoming analysis it will be convenient to re-formulate the above classification in two further equivalent forms.

The first one refers to the representation \eqref{eq:gdecomp}, \eqref{eq:bilateralDAxistar}, and \eqref{eq:bilateralDAxistar_zero_mode}
of $g\in\mathcal{D}(A_\alpha(k)^*)$,  that is,
 \begin{equation}\label{eq:gkrepresentationc0c1}
 g\;=\;\begin{pmatrix}\widetilde{\varphi}^- \\ \widetilde{\varphi}^+\end{pmatrix}+\begin{pmatrix} c_{1}^-\widetilde{\Psi}_{\alpha,k} \\ c_{1}^+\widetilde{\Psi}_{\alpha,k} \end{pmatrix}+\begin{pmatrix} c_{0}^-\widetilde{\Phi}_{\alpha,k} \\ c_{0}^+\widetilde{\Phi}_{\alpha,k} \end{pmatrix}
\end{equation}
with $\widetilde{\varphi}^\pm\in\mathcal{D}(\overline{A^\pm_\alpha(k)})$ and $c_0^\pm,c_1^\pm\in\mathbb{C}$. Then the proof of Theorem \ref{thm:bifibre-extensions} demonstrates also the following.

\begin{theorem}\label{thm:bifibre-extensionsc0c1}
 Let $\alpha\in[0,1)$ and $k\in\mathbb{Z}$. The family of self-adjoint extensions of $A_\alpha(k)$ is formed by the following sub-families.
 \begin{itemize}
  \item \underline{Friedrichs extension}.  It is the operator \eqref{eq:biFriedrichs}. Its domain consists of those functions in $\mathcal{D}(A_\alpha(k)^*)$ whose representation \eqref{eq:gkrepresentationc0c1} has $c_0^\pm=0$.
    \item \underline{Family $\mathrm{I_R}$}.   It is the family $\{A_{\alpha,R}^{[\gamma]}(k)\,|\,\gamma\in\mathbb{R}\}$ defined, with respect to the  representation \eqref{eq:gkrepresentationc0c1}, by
  \[
   \mathcal{D}(A_{\alpha,R}^{[\gamma]}(k))\;=\;\{g\in\mathcal{D}(A_\alpha(k)^*)\,|\,c_0^-=0\,,\;c_1^+=\beta c_0^+\}\,,
  \]
  where $\beta$ and $\gamma$ are related by \eqref{eq:g1gammag0} for $k\neq 0$ and \eqref{eq:g1gammag0_zero_mode} for $k=0$.
   \item \underline{Family $\mathrm{I_L}$}.   It is the family $\{A_{\alpha,L}^{[\gamma]}(k)\,|\,\gamma\in\mathbb{R}\}$ defined, with respect to the  representation \eqref{eq:gkrepresentationc0c1}, by
  \[
   \mathcal{D}(A_{\alpha,L}^{[\gamma]}(k))\;=\;\{g\in\mathcal{D}(A_\alpha(k)^*)\,|\,c_0^+=0\,,\;c_1^-=\beta c_0^-\}\,,
  \]
  where $\beta$ and $\gamma$ are related by \eqref{eq:g1gammag0} for $k\neq 0$ and \eqref{eq:g1gammag0_zero_mode} for $k=0$.
   \item \underline{Family $\mathrm{II}_a$} with $a\in\mathbb{C}$.   It is the family $\{A_{\alpha,a}^{[\gamma]}(k)\,|\,\gamma\in\mathbb{R}\}$ defined by
%
   \[
    \mathcal{D}(A_{\alpha,a}^{[\gamma]}(k))\;=\;\left\{\!
    \begin{array}{c}
     g\in\mathcal{D}(A_\alpha(k)^*)\textrm{ with \eqref{eq:gkrepresentationc0c1} of the form } \\
      g=\begin{pmatrix}\widetilde{\varphi}^- \\ \widetilde{\varphi}^+\end{pmatrix}+\begin{pmatrix} (\tau c_0+\widetilde{c}_0)\widetilde{\Psi}_{\alpha,k} \\ (\tau c_0 a-\widetilde{c}_0 \,\overline{a}^{-1})\widetilde{\Psi}_{\alpha,k}  \end{pmatrix}+c_0 \begin{pmatrix} \widetilde{\Phi}_{\alpha,k} \\ a \widetilde{\Phi}_{\alpha,k} \end{pmatrix}
    \end{array}
    \!\right\},
   \]
  where $\tau$ and $\gamma$ are related by \eqref{eq:IkTheorem51} if $k\neq 0$, and by \eqref{eq:I0Theorem51} if $k=0$.
   \item \underline{Family $\mathrm{III}$}.   It is the family $\{A_{\alpha}^{[\Gamma]}(k)\,|\,\Gamma\equiv(\gamma_1,\gamma_2,\gamma_3,\gamma_4)\in\mathbb{R}^4\}$ defined by
   \[
     \mathcal{D}(A_{\alpha}^{[\Gamma]}(k))\;=\;\left\{\!
     \begin{array}{c}
       g\in\mathcal{D}(A_\alpha(k)^*) \textrm{ satisfying \eqref{eq:gkrepresentationc0c1} with } \\
       \begin{pmatrix}
		c_{1}^- \\ c_1^+
	\end{pmatrix} = \begin{pmatrix}
		\tau_1 & \tau_2 + \ii \tau_3 \\
		\tau_2 - \ii \tau_3 & \tau_4
	\end{pmatrix} \begin{pmatrix}
		c_0^- \\
		c_0^+
	\end{pmatrix}
     \end{array}
     \!\right\},
   \]
   where $(\tau_1,\tau_2,\tau_3,\tau_4)$ and $(\gamma_1,\gamma_2,\gamma_3,\gamma_4)$ are related by \eqref{eq:IIkTheorem51} if $k\neq 0$ and \eqref{eq:II0Theorem51} if $k=0$.   
 \end{itemize}
\end{theorem}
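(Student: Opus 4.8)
The plan is to recognise that Theorem \ref{thm:bifibre-extensionsc0c1} requires no new computation: it is precisely the construction carried out in the proof of Theorem \ref{thm:bifibre-extensions}, merely stopped one step earlier, before the intermediate coefficients are converted into the short-range limits $g_0^\pm,g_1^\pm$. The key preliminary remark I would make is that the decomposition \eqref{eq:gkrepresentationc0c1} \emph{coincides} with the Kre\u{\i}n--Vi\v{s}ik--Birman decomposition \eqref{eq:Birman_formula} (resp. \eqref{eq:Birman_formula_zero_mode} when $k=0$). Indeed, writing a generic element of $\mathcal{D}(A_\alpha^{(T)}(k))$ as $g=\varphi+A_{\alpha,F}(k)^{-1}(Tv+w)+v$, the summand $v$ ranges in $\mathrm{span}\{\widetilde{\Phi}_{\alpha,k}\}\oplus\mathrm{span}\{\widetilde{\Phi}_{\alpha,k}\}$ and supplies the $c_0^\pm\,\widetilde{\Phi}_{\alpha,k}$ terms, while by Proposition \ref{eq:RGisSFinv} (resp. \ref{eq:RGisSFinv_zero_mode}) together with \eqref{eq:defPsi} one has $A_{\alpha,F}(k)^{-1}\widetilde{\Phi}_{\alpha,k}=\widetilde{\Psi}_{\alpha,k}$, so that $A_{\alpha,F}(k)^{-1}(Tv+w)$ supplies the $c_1^\pm\,\widetilde{\Psi}_{\alpha,k}$ terms. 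Hence $(c_0^\pm,c_1^\pm)$ are bona fide linear coordinates on the deficiency-space part of $\mathcal{D}(A_\alpha(k)^*)$, and the self-adjointness constraints can be read off directly in these coordinates.

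Second, I would traverse the five sub-families exactly as in the proof of Theorem \ref{thm:bifibre-extensions}, simply omitting the final passage to $g_0^\pm,g_1^\pm$ via \eqref{eq:limitsg0g1} (resp. \eqref{eq:limitsg0g1_zero_mode}). For the Friedrichs extension the choice $\mathcal{K}=\{0\}\oplus\{0\}$ forces $v=0$, i.e.\ $c_0^\pm=0$, recovering \eqref{eq:biFriedrichs}. For type $\mathrm{I_R}$ the choice $\mathcal{K}=\{0\}\oplus\mathrm{span}\{\widetilde{\Phi}_{\alpha,k}\}$ gives $c_0^-=0$, while $T$ is multiplication by the real scalar $\beta$ on the right fibre, whence $c_1^+=\beta c_0^+$; here the orthogonality $w\perp v$ leaves the opposite-fibre coefficient $c_1^-$ \emph{unconstrained}, which is exactly what makes the extension reduce to $A_{\alpha,F}^-(k)$ on the confined left side. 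The symmetric choice yields $\mathrm{I_L}$. For types $\mathrm{II}_a$ and $\mathrm{III}$ the displayed formulas for $g$ in the proof of Theorem \ref{thm:bifibre-extensions} are already written in the shape \eqref{eq:gkrepresentationc0c1}, so I would merely identify $c_0^\pm$ with the coefficients of $\widetilde{\Phi}_{\alpha,k}$ and $c_1^\pm$ with those of $\widetilde{\Psi}_{\alpha,k}$: this produces $c_0^+=a\,c_0^-$ with the $(\tau,\widetilde{c}_0)$-parametrization of $c_1^\pm$ for $\mathrm{II}_a$, and the Hermitian relation $\binom{c_1^-}{c_1^+}=T\binom{c_0^-}{c_0^+}$ for $\mathrm{III}$.

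Finally, consistency with Theorem \ref{thm:bifibre-extensions} is automatic: on each fibre the linear map $(c_0^\pm,c_1^\pm)\mapsto(g_0^\pm,g_1^\pm)$ of \eqref{eq:limitsg0g1} (resp.\ \eqref{eq:limitsg0g1_zero_mode}) has lower-triangular matrix with nonvanishing diagonal entries, hence is invertible, so the two parametrizations describe the very same domains; and the parameter identifications \eqref{eq:g1gammag0}, \eqref{eq:IkTheorem51}, \eqref{eq:IIkTheorem51} (resp.\ their $k=0$ counterparts \eqref{eq:g1gammag0_zero_mode}, \eqref{eq:I0Theorem51}, \eqref{eq:II0Theorem51}) relating $\beta,\tau,\tau_j$ to $\gamma,\gamma_j$ carry over verbatim. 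I do not expect a genuine obstacle, since the statement is a change-of-coordinates restatement of an already established classification; the only point that demands care is the bookkeeping in the one-dimensional cases $\mathrm{I_R}$ and $\mathrm{I_L}$, namely checking that the constraint $w\perp v$ frees the opposite-fibre coefficient $c_1^\mp$ and thereby encodes the left/right reducibility correctly.
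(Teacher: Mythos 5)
Your proposal is correct and matches the paper's own route: the paper derives Theorem \ref{thm:bifibre-extensionsc0c1} precisely by observing that the proof of Theorem \ref{thm:bifibre-extensions} already produces the domains in the coordinates $(c_0^\pm,c_1^\pm)$ of the Kre\u{\i}n--Vi\v{s}ik--Birman decomposition \eqref{eq:Birman_formula}/\eqref{eq:Birman_formula_zero_mode}, before converting to $(g_0^\pm,g_1^\pm)$ via \eqref{eq:limitsg0g1}. Your case-by-case bookkeeping (including the role of $w\perp v$ in freeing $c_1^\mp$ for types $\mathrm{I_R}$/$\mathrm{I_L}$, and the invertibility of the triangular change of coordinates) is exactly the content the paper compresses into the remark that the earlier proof ``demonstrates also'' this statement.
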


The second alternative for the self-adjointness conditions is in fact a very close re-phrasing of Theorem \ref{thm:bifibre-extensions}, with the same short-range parameters $g_0^\pm$ and $g_1^\pm$ and the same classification parameters $\gamma$ or $\Gamma$, except that it is referred to the following representation of $g$, which is valid identically for any $x\in\mathbb{R}\setminus\{0\}$, and not just as $|x|\to 0$.

To this aim, and also for later convenience, we shall henceforth refer to $P$ as a cut-off function in $C^\infty_c(\mathbb{R})$ such that
\begin{equation}\label{eq:Pcutoff}
 P(x)\;=\;
 \begin{cases}
  \;1\,, &\textrm{ if }\;|x|<1\,, \\
  \;0\,, &\textrm{ if }\;|x|>2\,.
 \end{cases}
\end{equation}
In fact, in the following Theorem it is enough that $P$ be smooth, compactly supported, and with $P(0)=1$, but we keep the general assumption \eqref{eq:Pcutoff} for later use.

\begin{theorem}\label{prop:g_with_Pweight}
 Let $\alpha\in[0,1)$ and let $k\in\mathbb{Z}$. Then for any $g\in\mathcal{D}(A_\alpha(k)^*)$ there exist a unique $\varphi\in\mathcal{D}(\overline{A_{\alpha}(k)})$ (in particular, $\varphi^\pm\in H^2_0(\mathbb{R}^\pm)$) and uniquely determined coefficients 
 $g_0^\pm,g_1^\pm\in\mathbb{C}$ such that
 \begin{equation}\label{eq:gwithPweight}
  g(x)\;=\;\varphi(x)+g_0\,|x|^{-\frac{\alpha}{2}}\,P(x)+g_1\,|x|^{1+\frac{\alpha}{2}}\,P(x)\qquad\forall x\in\mathbb{R}\setminus\{0\}
 \end{equation}
 in the usual notation
 \begin{equation*}
  \varphi(x)\equiv\begin{pmatrix} \varphi^-(x) \\ \varphi^+(x) \end{pmatrix},\quad g_0\equiv\begin{pmatrix} g_0^- \\ g_0^+\end{pmatrix}, \quad g_1\equiv\begin{pmatrix} g_1^- \\ g_1^+\end{pmatrix}.
 \end{equation*}
 Here $g_0^\pm$ and $g_1^\pm$ are precisely the same as in the asymptotics \eqref{eq:gbilateralasympt}-\eqref{eq:bilimitsg0g1}. Therefore, the same classification of Theorem \ref{thm:bifibre-extensions} in terms of $g_0^\pm$ and $g_1^\pm$ is applicable. 
\end{theorem}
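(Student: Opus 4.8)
The plan is to establish the representation \eqref{eq:gwithPweight} by an explicit subtraction argument, starting from the already-available structural decomposition of $\mathcal{D}(A_\alpha(k)^*)$. First I would take a generic $g\in\mathcal{D}(A_\alpha(k)^*)$ and write it according to \eqref{eq:bilateralDAxistar} (for $k\neq 0$) or \eqref{eq:bilateralDAxistar_zero_mode} (for $k=0$) as
\[
 g\;=\;\begin{pmatrix}\widetilde{\varphi}^- \\ \widetilde{\varphi}^+\end{pmatrix}+\begin{pmatrix} c_{1}^-\widetilde{\Psi}_{\alpha,k} \\ c_{1}^+\widetilde{\Psi}_{\alpha,k} \end{pmatrix}+\begin{pmatrix} c_{0}^-\widetilde{\Phi}_{\alpha,k} \\ c_{0}^+\widetilde{\Phi}_{\alpha,k} \end{pmatrix}\,,
\]
with $\widetilde{\varphi}^\pm\in\mathcal{D}(\overline{A^\pm_\alpha(k)})$ and $c_0^\pm,c_1^\pm\in\mathbb{C}$. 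The coefficients $g_0^\pm,g_1^\pm$ are then read off from the limits \eqref{eq:bilimitsg0g1}; by the short-range asymptotics \eqref{eq:Asymtotics_0} and \eqref{eq:Psi_asymptotics} for $k\neq 0$ (respectively \eqref{eq:AsymPhi00} and \eqref{eq:Psi_Asymptotics_mode_zero} for $k=0$), these are linear, invertible functions of $(c_0^\pm,c_1^\pm)$, which simultaneously fixes $g_0^\pm,g_1^\pm$ uniquely and identifies them as exactly the coefficients appearing in \eqref{eq:gbilateralasympt}. This secures the claim that the $g_0^\pm,g_1^\pm$ here coincide with those of Theorem \ref{thm:bifibre-extensions}, so that the stated classification transfers verbatim.

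Next I would define the candidate remainder
\[
 \varphi(x)\;:=\;g(x)-g_0\,|x|^{-\frac{\alpha}{2}}\,P(x)-g_1\,|x|^{1+\frac{\alpha}{2}}\,P(x)\,,
\]
and prove that $\varphi\in\mathcal{D}(\overline{A_\alpha(k)})$. The strategy is to show that the two subtracted pieces account precisely for the singular part of $g$, so that $\varphi$ inherits the vanishing conditions characterising $\mathcal{D}(\overline{A_\alpha(k)})$. Concretely, by construction $\varphi$ has the short-range expansion $\varphi(x)=o(|x|^{3/2})$ as $|x|\to 0$, since the explicit powers $|x|^{-\alpha/2}$ and $|x|^{1+\alpha/2}$ carried by $g$ through \eqref{eq:gbilateralasympt} are exactly cancelled by the corresponding terms weighted by $P$ (recall $P\equiv 1$ near the origin). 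I would then verify that $\varphi\in\mathcal{D}(A_\alpha(k)^*)$, i.e.\ that $S_{\alpha,k}\varphi\in L^2(\mathbb{R})$: this holds because $g\in\mathcal{D}(A_\alpha(k)^*)$ and because each subtracted term, being smooth and compactly supported away from $x=0$ and matching a homogeneous-type profile near $x=0$, lands in $\mathcal{D}(A_\alpha(k)^*)$ as well — the action $S_{\alpha,k}$ applied to $|x|^{-\alpha/2}P$ and $|x|^{1+\alpha/2}P$ produces $L^2$ functions, the potentially dangerous terms near $0$ being annihilated since $x^{-\alpha/2}$ and $x^{1+\alpha/2}$ are (up to lower order) the two Frobenius solutions of $S_{\alpha,k}u=0$. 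Having placed $\varphi$ in $\mathcal{D}(A_\alpha(k)^*)$ with the sharp decay $\varphi=o(|x|^{3/2})$, $\varphi'=o(|x|^{1/2})$, the characterisation in Lemma \ref{prop:EquivalentClosure}(iv) (the vanishing of the $a_0,a_\infty$ coefficients), together with the large-distance integrability already guaranteed by \eqref{eq:Dclosureclosure}, forces $\varphi\in\mathcal{D}(\overline{A_\alpha(k)})$; for $k\neq 0$ Proposition \ref{prop:domAclosure} then upgrades this to $\varphi^\pm\in H^2_0(\mathbb{R}^\pm)$.

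For uniqueness I would argue that if $g=\varphi+g_0|x|^{-\alpha/2}P+g_1|x|^{1+\alpha/2}P=\widehat{\varphi}+\widehat{g}_0|x|^{-\alpha/2}P+\widehat{g}_1|x|^{1+\alpha/2}P$ are two such decompositions, then the difference $\varphi-\widehat{\varphi}\in\mathcal{D}(\overline{A_\alpha(k)})$ must equal a combination of the two power profiles times $P$; matching the $|x|^{-\alpha/2}$ and $|x|^{1+\alpha/2}$ coefficients against the mandatory $o(|x|^{3/2})$ decay of elements of $\mathcal{D}(\overline{A_\alpha(k)})$ (Lemma \ref{lem:BehaviourZeroClosure}, or Lemma \ref{lem:BehaviourZeroClosure_zero_mode} when $k=0$) forces $\widehat{g}_0=g_0$, $\widehat{g}_1=g_1$, and hence $\widehat{\varphi}=\varphi$. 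I expect the main obstacle to be the clean verification that the tails of the subtracted terms — where $P$ transitions from $1$ to $0$ on the annulus $1<|x|<2$ — together with $S_{\alpha,k}$ applied to the singular profiles, genuinely stay in $L^2(\mathbb{R})$ uniformly, i.e.\ that introducing the cutoff $P$ does not spoil membership in $\mathcal{D}(A_\alpha(k)^*)$; this is routine but must be done carefully because $|x|^{-\alpha/2}$ is only borderline integrable and its derivatives interact with $P'$. Everything else is a matter of bookkeeping built on the asymptotics and the closure characterisation already proved.
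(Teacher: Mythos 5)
Your proposal is correct and takes essentially the same route as the paper's own proof: both start from the canonical decomposition \eqref{eq:gkrepresentationc0c1}, identify $g_0^\pm,g_1^\pm$ with the limits \eqref{eq:bilimitsg0g1}, and reduce the whole matter to the two facts that $S_{\alpha,k}$ applied to $|x|^{-\frac{\alpha}{2}}P$ and $|x|^{1+\frac{\alpha}{2}}P$ is square-integrable near the origin (these powers being the indicial solutions of $-u''+C_\alpha x^{-2}u=0$, so only the locally-$L^2$ remainders $k^2|x|^{\frac{3\alpha}{2}}$ and $k^2|x|^{1+\frac{5\alpha}{2}}$ survive) and that an element of $\mathcal{D}(A_\alpha(k)^*)$ vanishing like $o(|x|^{3/2})$ belongs to $\mathcal{D}(\overline{A_\alpha(k)})$, with the $k=0$ case handled by the mode-zero asymptotics exactly as in the paper. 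The only difference is organisational — you subtract the cutoff power profiles from $g$ wholesale, whereas the paper corrects $\widetilde{\Phi}_{\alpha,k}$ and $\widetilde{\Psi}_{\alpha,k}$ individually through the auxiliary functions $L_{0,k},L_{1,k}$ — and your explicit uniqueness argument via the mandatory $o(|x|^{3/2})$ decay is a sound (and welcome) addition.
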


\begin{proof}
 Let $k\neq 0$ and let us decompose $g\in\mathcal{D}(A_\alpha(k)^*)$ as $g^\pm=\widetilde{\varphi}^\pm+c_1^\pm\widetilde{\Psi}_{\alpha,k}+c_0^\pm\widetilde{\Phi}_{\alpha,k}$ with respect to the decomposition \eqref{eq:gkrepresentationc0c1}. For short, let us discuss only the component $g^+$, dropping the `$+$' superscript: the discussion for $g^-$ is completely analogous. Thus, $g=\widetilde{\varphi}+c_1\widetilde{\Psi}_{\alpha,k}+c_0\widetilde{\Phi}_{\alpha,k}$ for all $x>0$ and uniquely determined $\widetilde{\varphi}\in\mathcal{D}(\overline{A_{\alpha}(k)})$ and $c_0,c_1\in\mathbb{C}$. Let us introduce the functions
 \[
  \begin{split}
   L_{0,k}(x)\,&:=\,\Big({\textstyle\sqrt{\frac{\pi(1+\alpha)}{2|k|}}-\sqrt{\frac{\pi|k|}{2(1+\alpha)}}\,|x|^{1+\alpha}} \Big)\,P(x)\,, \\
   L_{1,k}(x)\,&:=\,{\textstyle\sqrt{\frac{2|k|}{\pi(1+\alpha)^3}}}\,\|\Phi_{\alpha,k}\|_{L^2}^2 \,P(x)
  \end{split}
 \]
 and re-write
 \[
 \begin{split}
  g\;&=\;\widetilde{\varphi}+c_1(\widetilde{\Psi}_{\alpha,k}-|x|^{1+\frac{\alpha}{2}}L_{1,k})+c_0(\widetilde{\Phi}_{\alpha,k}-|x|^{-\frac{\alpha}{2}}L_{0,k})+c_1\,|x|^{1+\frac{\alpha}{2}}L_{1,k}+c_0\,|x|^{-\frac{\alpha}{2}}L_{0,k} \\
  &=\;\varphi+\Big( c_1{\textstyle\sqrt{\frac{2|k|}{\pi(1+\alpha)^3}}}\,\|\Phi_{\alpha,k}\|_{L^2}^2-c_0{\textstyle\sqrt{\frac{\pi|k|}{2(1+\alpha)}}}\Big)\,|x|^{1+\frac{\alpha}{2}}P+c_0{\textstyle\sqrt{\frac{\pi(1+\alpha)}{2|k|}}}\,|x|^{-\frac{\alpha}{2}} P\,,
 \end{split}
 \]
 having set 
 \[
  \varphi\;:=\;\widetilde{\varphi}+c_1(\widetilde{\Psi}_{\alpha,k}-|x|^{1+\frac{\alpha}{2}}L_{1,k})+c_0(\widetilde{\Phi}_{\alpha,k}-|x|^{-\frac{\alpha}{2}}L_{0,k})\,.
 \]
 Because of the relation \eqref{eq:limitsg0g1} between $c_0,c_1$ and $g_0,g_1$, we also have
 \[
  g\;=\;\varphi+g_0\,|x|^{-\frac{\alpha}{2}}P+g_1\,|x|^{1+\frac{\alpha}{2}}P\,.
 \]
 Next, let us argue that $\varphi\in\mathcal{D}(\overline{A_{\alpha}(k)})$. First, we observe that both $|x|^{-\frac{\alpha}{2}}L_{0,k}$ and $|x|^{1+\frac{\alpha}{2}}L_{1,k}$ belong to $\mathcal{D}(A_\alpha(k)^*)$. The latter statement, owing to \eqref{eq:Afstar} and \eqref{eq:Saxi}, is proved by checking the square-integrability of $S_{\alpha,k}(|x|^{-\frac{\alpha}{2}}L_{0,k})$ and of $S_{\alpha,k}(|x|^{1+\frac{\alpha}{2}}L_{1,k})$. Since $P$ localises $L_{0,k}$ and $L_{1,k}$ around $x=0$, square-integrability must only be checked \emph{locally}. It is then routine to see that
 \[
  \begin{array}{lllll}
   -(|x|^{-\frac{\alpha}{2}}L_{0,k})''\,   +k^2 |x|^{2\alpha}(|x|^{-\frac{\alpha}{2}}L_{0,k})\,   +C_\alpha x^{-2}(|x|^{-\frac{\alpha}{2}}L_{0,k})\,, \\
   -(|x|^{1+\frac{\alpha}{2}}L_{1,k})''\,   +k^2 |x|^{2\alpha}(|x|^{1+\frac{\alpha}{2}}L_{1,k})\,   +C_\alpha x^{-2}(|x|^{1+\frac{\alpha}{2}}L_{1,k})\,,
  \end{array}
 \]
 are both square-integrable around $x=0$. As a consequence, both $(\widetilde{\Psi}_{\alpha,k}-|x|^{1+\frac{\alpha}{2}}L_{1,k})$ and $(\widetilde{\Phi}_{\alpha,k}-|x|^{-\frac{\alpha}{2}}L_{0,k})$ are elements of $\mathcal{D}(A_\alpha(k)^*)$. Therefore, owing to the representation \eqref{eq:Dadjoint}-\eqref{eq:limitsg0g1}, in order to check that such two functions also belong to $\mathcal{D}(\overline{A_{\alpha}(k)})$ it suffices to verify the limits 
 \[
 \begin{split}
  \lim_{x\to 0}|x|^{\frac{\alpha}{2}}(\widetilde{\Psi}_{\alpha,k}-|x|^{1+\frac{\alpha}{2}}L_{1,k})\;=\;\lim_{x\to 0}|x|^{\frac{\alpha}{2}}(\widetilde{\Phi}_{\alpha,k}-|x|^{-\frac{\alpha}{2}}L_{0,k})\;&=\;0 ,,\\
  \lim_{x\to 0}|x|^{-(1+\frac{\alpha}{2})}(\widetilde{\Psi}_{\alpha,k}-|x|^{1+\frac{\alpha}{2}}L_{1,k})\;=\;\lim_{x\to 0}|x|^{-(1+\frac{\alpha}{2})}(\widetilde{\Phi}_{\alpha,k}-|x|^{-\frac{\alpha}{2}}L_{0,k})\;&=\;0\,.
 \end{split}
 \]
 This is straightforward to check, thanks to the short-distance asymptotics that were chosen for $L_{0,k}$ and $L_{1,k}$ precisely so as to suitably match with the short-distance asymptotics  \eqref{eq:Asymtotics_0} of $\widetilde{\Phi}_{\alpha,k}$ and \eqref{eq:Psi_asymptotics} of $\widetilde{\Psi}_{\alpha,k}$. This finally shows that $\varphi\in\mathcal{D}(\overline{A_{\alpha}(k)})$ and establishes \eqref{eq:gwithPweight}. Of course, if conversely a function $g$ of the form \eqref{eq:gwithPweight} is given with $\varphi\in\mathcal{D}(\overline{A_{\alpha}(k)})$, unfolding the above arguments one sees that $g\in\mathcal{D}(A_\alpha(k)^*)$.

 If instead $k=0$, the same argument can be repeated decomposing now $g\in\mathcal{D}(A_\alpha(0)^*)$ as $g^\pm=\widetilde{\varphi}^\pm+c_1^\pm\widetilde{\Psi}_{\alpha,0}+c_0^\pm\widetilde{\Phi}_{\alpha,0}$ according to the decomposition \eqref{eq:bilateralDAxistar_zero_mode}, and using now the short-range asymptotics \eqref{eq:AsymPhi00}, \eqref{eq:Psi_Asymptotics_mode_zero}, and Lemma \ref{lem:BehaviourZeroClosure_zero_mode} valid for $k=0$. We omit the straightforward details. 
\end{proof}

\section{General extensions of $\mathscr{H}_\alpha$}\label{sec:genextscrHa}

Let us now come in this Section to the study of the self-adjoint extensions, in the Hilbert space \eqref{eq:Hxispace}, namely
\begin{equation}\label{eq:Hspaceonceagain}
 \cH\;\cong\;\bigoplus_{k\in\mathbb{Z}}\mathfrak{h}_k\;\cong\;\ell^2(\mathbb{Z},\mathfrak{h})\,,\qquad \mathfrak{h}_k\;\cong\;\mathfrak{h}\;\cong\;L^2(\mathbb{R}^-)\oplus L^2(\mathbb{R}^+)\,,
\end{equation}
of the operator $\mathscr{H}_\alpha$ introduced in \eqref{eq:actiondomainHalpha} for $\alpha\in(0,1)$. 
Such extensions are restrictions of $\mathscr{H}_\alpha^*$, and it was seen in Lemma \ref{lem:sumstar-sumclosure} (eq.~\eqref{eq:sumstar}) that $\mathscr{H}_\alpha^*=\bigoplus_{k\in\mathbb{Z}}A_\alpha(k)^*$, in the sense of  the general construction \eqref{eq:Tdirectint}-\eqref{eq:T_direct_integral}.

Let us start with some preliminaries (Lemmas \ref{lem:HalphaFrie-decomposable}-\ref{lem:gkkrepr}) and then present the complete variety of extensions.


Clearly, $\mathscr{H}_\alpha$ is non-negative, since so too are all the $A_\alpha(k)$'s (see \eqref{eq:Axibottom}) and $\mathscr{H}_\alpha\subset\bigoplus_{k\in\mathbb{Z}} A_\alpha(k)$, and therefore has Friedrichs extension $\mathscr{H}_{\alpha,F}$. Recall that $\mathscr{H}_\alpha\varsubsetneq\bigoplus_{k\in\mathbb{Z}} A_\alpha(k)$ (Remark \ref{rem:Halphanotsum}) and $\mathscr{H}_\alpha^*=\bigoplus_{k\in\mathbb{Z}} \,A_\alpha(k)^*$,  $\overline{\mathscr{H}_\alpha}=\bigoplus_{k\in\mathbb{Z}} \,\overline{A_\alpha(k)}$ (Lemma \ref{lem:Halphaadj-decomposable}).


 \begin{lemma}\label{lem:HalphaFrie-decomposable}
  Let $\alpha\in[0,1)$. One has
  \begin{equation}\label{eq:HalphaFriedrichs_unif-fibred}
   \mathscr{H}_{\alpha,F}\;=\;\bigoplus_{k\in\mathbb{Z}}\,A_{\alpha,F}(k)\,.
  \end{equation}
 \end{lemma}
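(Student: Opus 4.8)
The plan is to identify $B:=\bigoplus_{k\in\mathbb{Z}}A_{\alpha,F}(k)$ with the Friedrichs extension of $\mathscr{H}_\alpha$ by comparing quadratic forms. First I would note that, by Proposition \ref{prop:BextendsHalpha}, $B$ is a self-adjoint extension of $\mathscr{H}_\alpha$, and it is non-negative because each fibre operator is (owing to \eqref{eq:Axibottom}-\eqref{eq:Axibottom-zero} the bottom of $A_\alpha(k)$, hence of its Friedrichs extension $A_{\alpha,F}(k)$, is $\geqslant 0$). Since $\mathscr{H}_\alpha$ is itself non-negative, $\mathscr{H}_{\alpha,F}$ is by definition the self-adjoint operator associated with the closure $\overline{q}$ of the form $q(\psi):=\langle\psi,\mathscr{H}_\alpha\psi\rangle$ on $\mathcal{D}(\mathscr{H}_\alpha)$. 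Thus \eqref{eq:HalphaFriedrichs_unif-fibred} will follow once I show that the (closed, non-negative) form $\mathfrak{q}_B$ of $B$ coincides with $\overline{q}$.

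As a preliminary, I would describe $\mathfrak{q}_B$ explicitly: in complete analogy with the treatment of adjoints in Lemma \ref{lem:sumstar-sumclosure}, the form of the orthogonal-sum operator $\bigoplus_k A_{\alpha,F}(k)$ assembled from non-negative self-adjoint summands is the ``sum of the fibre forms'', namely $\mathfrak{q}_B(\psi)=\sum_k\mathfrak{a}_k(\psi_k)$ on $\mathcal{D}[\mathfrak{q}_B]=\{\psi\in\cH : \psi_k\in\mathcal{D}[\mathfrak{a}_k]\ \forall k,\ \sum_k\mathfrak{a}_k(\psi_k)<+\infty\}$, where $\mathfrak{a}_k$ is the form of $A_{\alpha,F}(k)$; this is routine and is checked exactly as in Lemma \ref{lem:sumstar-sumclosure}. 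The easy inclusion $\overline{q}\subseteq\mathfrak{q}_B$ is then immediate: every $\psi\in\mathcal{D}(\mathscr{H}_\alpha)$ has, by \eqref{eq:actiondomainHalpha}, fibre components $\psi_k\in C^\infty_c(\mathbb{R}\setminus\{0\})=\mathcal{D}(A_\alpha(k))\subset\mathcal{D}[\mathfrak{a}_k]$ with $\sum_k\mathfrak{a}_k(\psi_k)=\langle\psi,\mathscr{H}_\alpha\psi\rangle<+\infty$, so $\mathcal{D}(\mathscr{H}_\alpha)\subset\mathcal{D}[\mathfrak{q}_B]$ and $\mathfrak{q}_B|_{\mathcal{D}(\mathscr{H}_\alpha)}=q$; as $\mathfrak{q}_B$ is closed it then contains $\overline{q}$.

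The substance of the proof is the reverse inclusion, i.e.\ that $\mathcal{D}(\mathscr{H}_\alpha)$ is a form core for $\mathfrak{q}_B$. Given $\psi\in\mathcal{D}[\mathfrak{q}_B]$, I would first truncate in the momentum variable, setting $\psi^{(N)}:=(\psi_k\mathbf{1}_{|k|\leqslant N})_k$; the summability $\sum_k\mathfrak{a}_k(\psi_k)<+\infty$ forces $\psi^{(N)}\to\psi$ in the form norm $\|\cdot\|_{\mathfrak{q}_B}^2=\mathfrak{q}_B(\cdot)+\|\cdot\|^2$. Each $\psi^{(N)}$ has only finitely many non-zero fibres, and on each of them $\mathcal{D}(A_\alpha(k))=C^\infty_c(\mathbb{R}^-)\boxplus C^\infty_c(\mathbb{R}^+)$ is a form core for $\mathfrak{a}_k$ by the very definition of the fibre Friedrichs extension $A_{\alpha,F}(k)$ (cf.\ \eqref{eq:biFriedrichs}). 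Hence each $\psi_k$ with $|k|\leqslant N$ can be approximated in fibre form norm by some $\chi_k\in C^\infty_c(\mathbb{R}\setminus\{0\})$, and the finite collection $\chi:=(\chi_k)_{|k|\leqslant N}$ approximates $\psi^{(N)}$ in $\|\cdot\|_{\mathfrak{q}_B}$. Finally I would check that such $\chi$ lies in $\mathcal{D}(\mathscr{H}_\alpha)=\mathcal{F}_2 U_\alpha C^\infty_c(M)$: the function $f:=|x|^{\alpha/2}\sum_{|k|\leqslant N}\chi_k(x)e_k(y)$ is a finite sum of products of functions in $C^\infty_c(\mathbb{R}\setminus\{0\})$ and smooth functions on $\mathbb{S}^1$, multiplied by the factor $|x|^{\alpha/2}$ which is smooth and non-vanishing on the $x$-support, so $f\in C^\infty_c(M)$ and indeed $\mathcal{F}_2 U_\alpha f=\chi$. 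A diagonal combination of the two approximations then yields $\psi\in\overline{\mathcal{D}(\mathscr{H}_\alpha)}^{\,\|\cdot\|_{\mathfrak{q}_B}}=\mathcal{D}[\overline{q}]$, establishing $\mathfrak{q}_B\subseteq\overline{q}$.

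The two inclusions give $\mathfrak{q}_B=\overline{q}$, whence $B=\mathscr{H}_{\alpha,F}$ and \eqref{eq:HalphaFriedrichs_unif-fibred}. I expect the main obstacle to be exactly this last step: the proper inclusion $\mathscr{H}_\alpha\varsubsetneq\bigoplus_k A_\alpha(k)$ pointed out in Remark \ref{rem:Halphanotsum} means that $\mathcal{D}(\mathscr{H}_\alpha)$ is genuinely smaller than the full algebraic fibre sum, so the form-core property cannot be read off directly and must be recovered through the combined $k$-truncation and fibrewise approximation, with the crucial bookkeeping that the truncated approximants remain inside the Fourier image $\mathcal{F}_2 U_\alpha C^\infty_c(M)$ of the original minimal domain.
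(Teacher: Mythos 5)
Your proof is correct, but it takes a genuinely different route from the paper's. The paper proves a general abstract lemma (Lemma \ref{lem:sumofFriedrichs}: the Friedrichs extension commutes with orthogonal sums under a uniform lower bound, shown via the characterisation of $T_F$ as the unique self-adjoint extension with $\mathcal{D}(T_F)\subset\mathcal{D}[T]$, together with the functional-calculus identity $\mathcal{D}[T]=\mathcal{D}\big(\bigoplus_k(T(k)-m\mathbbm{1}_k)^{1/2}\big)=\op_k\mathcal{D}[T(k)]$) and then applies it not to $\mathscr{H}_\alpha$ itself but to its closure $\overline{\mathscr{H}_\alpha}=\bigoplus_k\overline{A_\alpha(k)}$, which by Lemma \ref{lem:Halphaadj-decomposable} \emph{is} genuinely decomposable; this sidesteps entirely the obstruction $\mathscr{H}_\alpha\varsubsetneq\bigoplus_k A_\alpha(k)$ of Remark \ref{rem:Halphanotsum} that you correctly flag as the crux, at the tacit price of using that an operator and its closure share the same Friedrichs extension. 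You instead confront that obstruction head-on at the level of quadratic forms: you verify directly that the minimal domain $\mathcal{F}_2U_\alpha C^\infty_c(M)$ is a form core for $B=\bigoplus_k A_{\alpha,F}(k)$, through momentum truncation plus fibrewise approximation, with the essential observation that a finite family $(\chi_k)_{|k|\leqslant N}$ with $\chi_k\in C^\infty_c(\mathbb{R}\setminus\{0\})$ corresponds to a finite trigonometric sum $|x|^{\alpha/2}\sum_{|k|\leqslant N}\chi_k(x)e_k(y)\in C^\infty_c(M)$ and hence does land in $\mathcal{D}(\mathscr{H}_\alpha)$. What each approach buys: the paper's argument is shorter given the machinery already established (Lemmas \ref{lem:Halphaadj-decomposable} and \ref{lem:sumofFriedrichs}) and produces a reusable general statement about direct sums; yours is self-contained, stays entirely within the elementary form-closure definition of the Friedrichs extension, and makes transparent \emph{why} the properness of the inclusion in Remark \ref{rem:Halphanotsum} is harmless here — density in form norm only requires finitely-supported-in-$k$ approximants, which the Fourier image of $C^\infty_c(M)$ always contains. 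All your intermediate claims (the diagonal form of $\mathfrak{q}_B$, the convergence $\mathfrak{q}_B(\psi-\psi^{(N)})=\sum_{|k|>N}\mathfrak{a}_k(\psi_k)\to 0$, and the identity $\sum_k\mathfrak{a}_k(\psi_k)=\langle\psi,\mathscr{H}_\alpha\psi\rangle$ on $\mathcal{D}(\mathscr{H}_\alpha)$) check out.
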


 Lemma \ref{lem:HalphaFrie-decomposable} is an application of a general fact that for convenience we revisit here (of course in the following the identification $\mathfrak{h}_k\cong\mathfrak{h}$ for all $k$ does not play a role).

 \begin{lemma}\label{lem:sumofFriedrichs}
  Let $T=\bigoplus_{k\in\mathbb{Z}}T(k)$ be a direct sum operator acting on the Hilbert space $\cH=\bigoplus_{k\in\mathbb{Z}}\mathfrak{h}_k$, where each $T(k)$ is densely defined, symmetric, and semi-bounded from below on $\mathfrak{h}_k$, with uniform lower bound
  \[
   m\;:=\;\inf_{k\in\mathbb{Z}}\;\inf_{\substack{u\in\mathcal{D}(T(k)) \\ u\neq 0}}  \frac{\langle u,T(k)u\rangle_{\mathfrak{h}_k}}{\|u\|^2_{\mathfrak{h}_k}}\;>\;-\infty\,.
  \]
  Denote by $T_F$, resp.~$T_F(k)$, the Friedrichs extension of $T$, resp.~$T(k)$. Then
  \[
   T_F\;=\;\bigoplus_{k\in\mathbb{Z}}\,T_F(k)\,.
  \]
 \end{lemma}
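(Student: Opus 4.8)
The plan is to reduce the statement to an identity of quadratic forms and then to show that the minimal domain $\mathcal{D}(T)$ is a form core for the candidate operator $B:=\bigoplus_{k\in\mathbb{Z}}T_F(k)$. First I would dispose of the semi-boundedness bookkeeping: since the Friedrichs extension satisfies $(S+c\,\mathbbm{1})_F=S_F+c\,\mathbbm{1}$ for every real $c$, both for $T$ and for each fibre operator $T(k)$, I may replace $T(k)$ by $T(k)+(1-m)\mathbbm{1}$ and thereby assume without loss of generality that $m=1$, i.e. $\langle u,T(k)u\rangle_{\mathfrak{h}_k}\geqslant\|u\|_{\mathfrak{h}_k}^2$ for all $u\in\mathcal{D}(T(k))$ and all $k$. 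Under this normalisation every $T_F(k)\geqslant\mathbbm{1}$, and by Proposition \ref{prop:BextendsHalpha} the operator $B$ is a self-adjoint extension of $T$, the inclusion $T\subset B$ being clear from $T\subset\bigoplus_k T(k)\subset\bigoplus_k T_F(k)=B$.

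Next I would set up the form framework. On each fibre let $\|u\|_{+,k}^2:=\langle u,T(k)u\rangle_{\mathfrak{h}_k}$ and let $\mathcal{Q}_k:=\mathcal{D}[T_F(k)]$ be the completion of $\mathcal{D}(T(k))$ in this norm; this is exactly the form domain of the fibre Friedrichs extension, with $q_{T_F(k)}(u)=\|u\|_{+,k}^2$. I form the Hilbert-space direct sum $\mathcal{Q}:=\{u=(u_k)_k\,|\,u_k\in\mathcal{Q}_k,\ \sum_k\|u_k\|_{+,k}^2<\infty\}$, which embeds continuously into $\cH$ because $\|u_k\|_{\mathfrak{h}_k}\leqslant\|u_k\|_{+,k}$. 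Writing $B^{1/2}=\bigoplus_k T_F(k)^{1/2}$ (functional calculus respects direct sums), one reads off that $\mathcal{D}[B]=\mathcal{D}(B^{1/2})=\mathcal{Q}$ and $q_B(u)=\sum_k\|u_k\|_{+,k}^2=\|u\|_{\mathcal{Q}}^2$. On the other hand, $T_F$ is by definition the self-adjoint operator associated, via the first representation theorem, with the closure $\overline{q_T}$ of the form $q_T(u)=\langle u,Tu\rangle=\sum_k\|u_k\|_{+,k}^2$ on $\mathcal{D}(T)$; thus $\mathcal{D}[T_F]$ is the completion of $\mathcal{D}(T)$ in $\|\cdot\|_{\mathcal{Q}}$. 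Since $q_T$ and $q_B$ are given by the very same expression, the whole matter reduces to the single identity $\mathcal{D}[T_F]=\mathcal{Q}$, equivalently to the density of $\mathcal{D}(T)$ in $(\mathcal{Q},\|\cdot\|_{\mathcal{Q}})$.

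The heart of the proof, and the step I expect to be the main obstacle, is precisely this density, because an arbitrary fibrewise approximation of $u\in\mathcal{Q}$ by elements of $\mathcal{D}(T(k))$ need not assemble into an element of $\mathcal{D}(T)$, since the constraint $\sum_k\|T(k)v_k\|_{\mathfrak{h}_k}^2<\infty$ may fail. I would resolve it by a truncation-and-approximation argument. Given $u\in\mathcal{Q}$ and $\varepsilon>0$, first truncate: as $\sum_k\|u_k\|_{+,k}^2<\infty$, the finitely supported section $u^{(N)}$, equal to $u_k$ for $|k|\leqslant N$ and to $0$ otherwise, satisfies $\|u-u^{(N)}\|_{\mathcal{Q}}\to 0$, so $\|u-u^{(N)}\|_{\mathcal{Q}}<\varepsilon/2$ for $N$ large. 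Then, for this fixed $N$, approximate each of the finitely many nonzero components by $v_k\in\mathcal{D}(T(k))$ with $\|u_k-v_k\|_{+,k}<\varepsilon/(2\sqrt{2N+1})$, and set $v$ to be the section with entries $v_k$ for $|k|\leqslant N$ and $0$ otherwise. Being finitely supported with each $v_k\in\mathcal{D}(T(k))$, the section $v$ lies in $\mathcal{D}(T)$, because $\sum_k\|T(k)v_k\|_{\mathfrak{h}_k}^2$ is then a finite sum of finite terms; moreover $\|u^{(N)}-v\|_{\mathcal{Q}}^2=\sum_{|k|\leqslant N}\|u_k-v_k\|_{+,k}^2<\varepsilon^2/4$, whence $\|u-v\|_{\mathcal{Q}}<\varepsilon$. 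This proves that $\mathcal{D}(T)$ is dense in $\mathcal{Q}$, so that $\mathcal{D}[T_F]=\mathcal{Q}=\mathcal{D}[B]$ with $q_{T_F}=q_B$, and the uniqueness in the representation theorem yields $T_F=B=\bigoplus_k T_F(k)$. Undoing the initial shift restores the general lower bound $m$ and completes the argument.
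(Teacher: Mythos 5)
Your proof is correct, and it differs from the paper's in the concluding mechanism rather than in the computational core. Both arguments hinge on the same key identity, namely that the form domain of $B:=\bigoplus_{k\in\mathbb{Z}}T_F(k)$ is the constant-fibre sum $\boxplus_{k\in\mathbb{Z}}\,\mathcal{D}[T(k)]$ with the summed form norm, obtained because the functional calculus (here the square root) passes through direct sums in the sense of \eqref{eq:Tdirectint}. They then part ways: the paper checks that the \emph{operator} domain of $B$ is contained in the \emph{form} domain $\mathcal{D}[T]$ and invokes the characterisation of the Friedrichs extension as the unique self-adjoint extension with this property, compressing the identification $\mathcal{D}[T]=\boxplus_{k\in\mathbb{Z}}\,\mathcal{D}[T(k)]$ into a chain of square-root domain identities; you instead match the closed quadratic forms, showing that $q_B$ is the closure of $q_T$ and concluding by uniqueness in the representation theorem. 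Your truncation-plus-fibrewise-approximation argument makes fully explicit the density of $\mathcal{D}(T)$ in $\boxplus_{k\in\mathbb{Z}}\,\mathcal{D}[T(k)]$ with respect to the form norm --- precisely the point the paper's chain of identities leaves implicit, since the completion of $\mathcal{D}(T)$ in the form norm is not a priori the direct sum of the fibre completions. What each approach buys: yours is more self-contained (no appeal to the distinguished-extension criterion, an explicitly verified form-core statement, and a harmless normalisation $m=1$ justified by the shift-invariance of the Friedrichs construction), while the paper's is shorter at the cost of leaning on the Kre\u{\i}n-type uniqueness property of $T_F$. One cosmetic remark: your citation of Proposition \ref{prop:BextendsHalpha} for the self-adjointness of $B$ is off-label, as that proposition is stated for the fibre operators $A_\alpha(k)$, but its proof is generic and applies verbatim, and the paper itself simply asserts this point as clear.
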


 \begin{proof}
  It is clear that $\bigoplus_{k\in\mathbb{Z}}T_F(k)$ is a self-adjoint extension of $T$. To recognise it as the Friedrichs extension, it suffices to check that the \emph{operator} domain $\mathcal{D}(\bigoplus_{k\in\mathbb{Z}}T_F(k))$ is an actual subspace of the \emph{form} domain $\mathcal{D}[T]$. To this aim, let us observe that
  \[
   \begin{split}
    \mathcal{D}\Big(\bigoplus_{k\in\mathbb{Z}}T_F(k)\Big)\;&=\;\op_{k\in\mathbb{Z}}\mathcal{D}(T_F(k))\;\subset\;\op_{k\in\mathbb{Z}}\mathcal{D}[T(k)]
   \end{split}
  \]
  (the first identity is precisely \eqref{eq:shorthandDTDTk} discussed previously, and the inclusion is due to the fact that for each $k$ the Friedrichs-extension characterising property $\mathcal{D}(T_F(k))\subset\mathcal{D}[T(k)]$ holds). On the other hand, $\mathcal{D}[T]=\mathcal{D}((T-m\mathbbm{1})^{\frac{1}{2}})$ and $\mathcal{D}[T(k)]=\mathcal{D}((T(k)-m\mathbbm{1}_k)^{\frac{1}{2}})$, whence
  \[
   \begin{split}
    \mathcal{D}[T]\;&=\;\mathcal{D}\Big[\bigoplus_{k\in\mathbb{Z}}T(k)\Big]\;=\;
    \mathcal{D}\Big(\Big(\bigoplus_{k\in\mathbb{Z}}\,(T(k)-m\mathbbm{1}_k)\Big)^{\!\frac{1}{2}}\,\Big) \\
    &=\;\mathcal{D}\Big(\bigoplus_{k\in\mathbb{Z}}\,(T(k)-m\mathbbm{1}_k)^{\frac{1}{2}}\Big)\;=\;\op_{k\in\mathbb{Z}}\mathcal{D}\big((T(k)-m\mathbbm{1}_k)^{\frac{1}{2}}\big)\;=\;\op_{k\in\mathbb{Z}}\mathcal{D}[T(k)]\,.
   \end{split}
  \]
 This proves the desired inclusion.
 \end{proof}

 \begin{proof}[Proof of Lemma \ref{lem:HalphaFrie-decomposable}]
  One applies Lemma \ref{lem:sumofFriedrichs} to $\overline{\mathscr{H}_\alpha}=\bigoplus_{k\in\mathbb{Z}} \,\overline{A_\alpha(k)}$.  
 \end{proof}

There is an obvious peculiarity of the mode $k=0$ that needs be dealt with separately. Indeed, \eqref{eq:Axibottom} and \eqref{eq:Axibottom-zero} show that the bottom of the spectrum of $A_{\alpha,\mathrm{F}}(k)$ is strictly positive when $k\in\mathbb{Z}\setminus\{0\}$, explicitly
\begin{equation}\label{eq:AFbottom}
 A_{\alpha,\mathrm{F}}(k)\;\geqslant\;(1+\alpha)\big(\textstyle{\frac{2+\alpha}{4}}\big)^{\frac{\alpha}{1+\alpha}}\,\mathbbm{1}_k\,,\qquad k\in\mathbb{Z}\setminus\{0\}\,,\quad\alpha\geqslant 0\,,
\end{equation}
 and instead amounts precisely to zero when $k=0$. Thus, all Friedrichs extensions on fibre have everywhere-defined bounded inverse, but the one corresponding to $k=0$.

It is then convenient to consider a positive shift of $\mathscr{H}_\alpha$ in the zero mode only. Clearly, with $\mathbbm{1}_0$ acting as the identity in the $0$-th fibre and as the zero operator in all other fibres, the operators $\mathscr{H}_\alpha$ and $\mathscr{H}_\alpha+\mathbbm{1}_0$ have precisely the same domain, and so do the respective adjoints and the respective Friedrichs extensions.

\begin{lemma}\label{lem:psikinspaces}
 Let $\alpha\in[0,1)$. Let $(\psi_k)_{k\in\mathbb{Z}}\in\cH\cong\ell^2(\mathbb{Z},\mathfrak{h})$. Then:
 \begin{itemize}
  \item[(i)] $(\psi_k)_{k\in\mathbb{Z}}\in\ker( \mathscr{H}_\alpha+\mathbbm{1}_0)^*$ if and only if
  \begin{equation}
   \psi_k\;=\;c_{0,k}^-\widetilde{\Phi}_{\alpha,k}\oplus c_{0,k}^+\widetilde{\Phi}_{\alpha,k}
   \qquad\quad\forall k\in\mathbb{Z}
  \end{equation}
  for coefficients $c_{0,k}^\pm\in\mathbb{C}$ such that
  \begin{equation}
   \sum_{k\in \mathbb{Z}\setminus\{0\}}|k|^{-\frac{2}{1+\alpha}}|c_{0,k}^\pm|^2\:<\:+\infty\,.
  \end{equation}
  Thus, there is a natural Hilbert space isomorphism 
 \begin{equation}
  \ker( \mathscr{H}_\alpha+\mathbbm{1}_0)^*\;\cong\;\ell^2(\mathbb{Z},\mathbb{C}^2,\mu_k)
 \end{equation}
 with
 \begin{equation}\label{eq:DefMeasureMuK}
  \mu_k\,:=\,
  \begin{cases}
   |k|^{-\frac{2}{1+\alpha}}\,,
   & k\neq 0\,, \\
   1\,, & k=0\,,
  \end{cases}
 \end{equation}
 where $\ell^2(\mathbb{Z},\mathbb{C}^2,\mu_k)$ is the Hilbert space of sequences $\Big(\begin{pmatrix} c_k^- \\ c_k^+\end{pmatrix}\Big)_{k\in\mathbb{Z}}$ with obvious (component-wise) vector space structure and with scalar product
 \begin{equation}
  \left\langle \Big(\begin{pmatrix} c_k^- \\ c_k^+\end{pmatrix}\Big)_{k\in\mathbb{Z}},\Big(\begin{pmatrix} d_k^- \\ d_k^+\end{pmatrix}\Big)_{k\in\mathbb{Z}}\right\rangle_{\!\ell^2(\mathbb{Z},\mathbb{C}^2,\mu_k)}=\:\;\sum_{k\in\mathbb{Z}}\mu_k\big(\,\overline{c_k^-}\,d_k^-+\overline{c_k^+}\,d_k^+\big)\,.
 \end{equation}
  \item[(ii)] $(\psi_k)_{k\in\mathbb{Z}}\in(\mathscr{H}_\alpha^F+\mathbbm{1}_0)^{-1}\ker( \mathscr{H}_\alpha+\mathbbm{1}_0)^*$ if and only if
  \begin{equation}
   \psi_k\;=\;c_{1,k}^-\widetilde{\Psi}_{\alpha,k}\oplus c_{1,k}^+\widetilde{\Psi}_{\alpha,k}
   \qquad\quad\forall k\in\mathbb{Z}
  \end{equation}
  for coefficients $c_{1,k}^\pm\in\mathbb{C}$ such that
  \begin{equation}\label{eq:Regularityg1}
   \sum_{k\in \mathbb{Z}\setminus\{0\}}|k|^{-\frac{2}{1+\alpha}}|c_{1,k}^\pm|^2\:<\:+\infty\,.
  \end{equation}
 \end{itemize}
\end{lemma}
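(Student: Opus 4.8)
The plan is to reduce both statements to the fibrewise data already assembled in Sections \ref{sec:fibre-extensions}--\ref{sec:bilateralfibreext} and then to read off the two summability conditions directly from the $L^2$-norms of the fibre functions $\widetilde{\Phi}_{\alpha,k}$ and $\widetilde{\Psi}_{\alpha,k}$. Throughout I work with the shifted operator $\mathscr{H}_\alpha+\mathbbm{1}_0$, exactly because (as noted before the statement) every shifted fibre Friedrichs operator $A_{\alpha,F}(k)+\delta_{k,0}\mathbbm{1}$ then has an everywhere-defined bounded inverse, uniformly in $k$, so that the $k=0$ mode can be treated on the same footing as the others.

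For part (i) I would first observe that, since the shift $\mathbbm{1}_0$ is bounded and self-adjoint, $(\mathscr{H}_\alpha+\mathbbm{1}_0)^*=\mathscr{H}_\alpha^*+\mathbbm{1}_0=\bigoplus_{k\in\mathbb{Z}}(A_\alpha(k)+\delta_{k,0}\mathbbm{1})^*$ by Lemma \ref{lem:Halphaadj-decomposable}, whence by \eqref{eq:Halphaadj-sumkernel} applied to the shifted operator $\ker(\mathscr{H}_\alpha+\mathbbm{1}_0)^*=\bigoplus_{k\in\mathbb{Z}}\ker(A_\alpha(k)+\delta_{k,0}\mathbbm{1})^*$; concretely, this is the subspace of $(\psi_k)_k\in\cH$ with $\psi_k$ in each fibre kernel and $\sum_k\|\psi_k\|_{\mathfrak{h}}^2<+\infty$. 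The fibre kernels are two-dimensional and explicit: formula \eqref{eq:bilateralkernel} for $k\neq0$, and the bilateral version of Lemma \ref{lem:kerAxistarzero} for $k=0$, both giving $\mathrm{span}\{\widetilde{\Phi}_{\alpha,k}\}\oplus\mathrm{span}\{\widetilde{\Phi}_{\alpha,k}\}$. Hence $\psi_k=c_{0,k}^-\widetilde{\Phi}_{\alpha,k}\oplus c_{0,k}^+\widetilde{\Phi}_{\alpha,k}$, as claimed. The summability condition then comes from $\|\psi_k\|_{\mathfrak{h}}^2=(|c_{0,k}^-|^2+|c_{0,k}^+|^2)\,\|\Phi_{\alpha,k}\|_{L^2(\mathbb{R}^+)}^2$, using that $\widetilde{\Phi}_{\alpha,k}(x)=\Phi_{\alpha,k}(|x|)$ makes both half-line norms equal to $\|\Phi_{\alpha,k}\|_{L^2}$, together with the explicit value \eqref{eq:Phinorm}, which for $k\neq0$ is a fixed constant times $|k|^{-2/(1+\alpha)}$. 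Thus $\sum_k\|\psi_k\|_{\mathfrak{h}}^2$ is comparable, with $k$-independent constants and a harmlessly finite $k=0$ term, to $\sum_{k\neq0}|k|^{-2/(1+\alpha)}(|c_{0,k}^-|^2+|c_{0,k}^+|^2)$; sending $\psi\mapsto((c_{0,k}^-,c_{0,k}^+))_k$ then realises the Hilbert-space isomorphism onto $\ell^2(\mathbb{Z},\mathbb{C}^2,\mu_k)$, the weights $\mu_k$ being chosen precisely so that the two norms are equivalent.

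For part (ii) I would invert the Friedrichs operator fibrewise: by Lemma \ref{lem:HalphaFrie-decomposable} and the shift, $\mathscr{H}_\alpha^F+\mathbbm{1}_0=\bigoplus_k(A_{\alpha,F}(k)+\delta_{k,0}\mathbbm{1})$, whose inverse is the bounded direct sum of the fibre inverses. Because $\Psi_{\alpha,k}=A_{\alpha,F}(k)^{-1}\Phi_{\alpha,k}$ for $k\neq0$ by \eqref{eq:defPsi} and Proposition \ref{eq:RGisSFinv}, and $\Psi_{\alpha,0}=(A_{\alpha,F}(0)+\mathbbm{1})^{-1}\Phi_{\alpha,0}$ by Proposition \ref{eq:RGisSFinv_zero_mode}, the operator $(\mathscr{H}_\alpha^F+\mathbbm{1}_0)^{-1}$ acts diagonally on the kernel, sending $\widetilde{\Phi}_{\alpha,k}\mapsto\widetilde{\Psi}_{\alpha,k}$ on each half-line with \emph{unchanged} coefficients. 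Hence a generic element of $(\mathscr{H}_\alpha^F+\mathbbm{1}_0)^{-1}\ker(\mathscr{H}_\alpha+\mathbbm{1}_0)^*$ is $\psi_k=c_{1,k}^-\widetilde{\Psi}_{\alpha,k}\oplus c_{1,k}^+\widetilde{\Psi}_{\alpha,k}$, where $c_{1,k}^\pm$ are exactly the coefficients $c_{0,k}^\pm$ of its preimage $\xi=(\mathscr{H}_\alpha^F+\mathbbm{1}_0)\psi\in\ker(\mathscr{H}_\alpha+\mathbbm{1}_0)^*$; applying part (i) to $\xi$ forces $\sum_{k\neq0}|k|^{-2/(1+\alpha)}|c_{1,k}^\pm|^2<+\infty$, which is precisely \eqref{eq:Regularityg1}.

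The one genuine subtlety, and the step I would take most care with, is exactly this last point: the correct weight $|k|^{-2/(1+\alpha)}$ in part (ii) is inherited from the kernel through the bounded injection $(\mathscr{H}_\alpha^F+\mathbbm{1}_0)^{-1}$, and is \emph{not} the one obtained by naively imposing $\psi\in\cH$. Since $(\mathscr{H}_\alpha^F+\mathbbm{1}_0)^{-1}$ is bounded, $\psi\in\cH$ is automatic and would yield only the strictly weaker condition governed by $\|\Psi_{\alpha,k}\|_{L^2}^2$ (whose $k$-decay is faster, hence less restrictive); the binding constraint is instead that $(\mathscr{H}_\alpha^F+\mathbbm{1}_0)\psi$ be square-summable, i.e.\ that $\psi$ genuinely lie in the range of the inverse applied to the kernel. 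Keeping the roles of these two norms distinct, and handling the $k=0$ mode via the shift uniformly throughout, is what makes the argument go through cleanly.
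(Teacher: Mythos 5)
Your proposal is correct and follows essentially the same route as the paper's proof: part (i) via the fibre decomposition of the kernel (eq.~\eqref{eq:Halphaadj-sumkernel}, Lemmas \ref{lem:kerAxistar} and \ref{lem:kerAxistarzero}, formula \eqref{eq:bilateralkernel}) together with the normalisation \eqref{eq:Phinorm}, and part (ii) via the fibrewise Friedrichs inverse (Lemma \ref{lem:HalphaFrie-decomposable}, Propositions \ref{eq:RGisSFinv} and \ref{eq:RGisSFinv_zero_mode}) mapping $\widetilde{\Phi}_{\alpha,k}\mapsto\widetilde{\Psi}_{\alpha,k}$ with unchanged coefficients. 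Your closing observation that the binding condition is square-summability of $(\mathscr{H}_{\alpha,F}+\mathbbm{1}_0)\psi$ rather than of $\psi$ itself is exactly the content of the paper's identity $\sum_{k\neq 0}\big\|A_\alpha(k)^*\big(c_{1,k}^\pm\widetilde{\Psi}_{\alpha,k}\big)\big\|_{\mathfrak{h}}^2=\sum_{k\neq 0}\big\|c_{1,k}^\pm\widetilde{\Phi}_{\alpha,k}\big\|_{\mathfrak{h}}^2$, so the two arguments coincide.
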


\begin{proof}
 Part (i) follows from $\ker \mathscr{H}_\alpha^*=\bigoplus_{k\in\mathbb{Z}}\ker A(k)^*$ (Lemma \ref{lem:Halphaadj-decomposable}, eq.~\eqref{eq:Halphaadj-sumkernel}), from $\ker (A(k)^*+\delta_{k,0}\mathbbm{1}_0)=\mathrm{span}\{\widetilde{\Phi}_{\alpha,k}\}\oplus\mathrm{span}\{\widetilde{\Phi}_{\alpha,k}\}$ (Lemmas \ref{lem:kerAxistar} and \ref{lem:kerAxistarzero}, and formula \eqref{eq:bilateralkernel}), and from $\|\Phi_{\alpha,k}\|_{L^2(\mathbb{R}^+)}^2\sim |k|^{-\frac{2}{1+\alpha}}$ for $k\neq 0$ (formula \eqref{eq:Phinorm}). Part (ii) follows from the identity
 \[
  \begin{split}
  (\mathscr{H}_\alpha^F+\mathbbm{1}_0)^{-1}\ker( \mathscr{H}_\alpha+\mathbbm{1}_0)^*\;&=\;\bigoplus_{k\in\mathbb{Z}\setminus\{0\}}\!(A_{\alpha,F}(k))^{-1}\ker A_\alpha(k)^*  \\
  &\qquad\qquad \oplus (A_\alpha(0)+\mathbbm{1}_0)^{-1}\ker(A_\alpha(k)^*+\mathbbm{1}_0)\,,
  \end{split}
  \]
 which is a consequence of Lemma \ref{lem:Halphaadj-decomposable} (eq.~\eqref{eq:Halphaadj-sumkernel}) and Lemma \ref{lem:HalphaFrie-decomposable}, from the identity
 \[
  (A_{\alpha,F}(k)+\delta_{k,0}\mathbbm{1}_0)^{-1}\ker (A_\alpha(k)^*+\delta_{k,0}\mathbbm{1}_0)\;=\;\mathrm{span}\{\widetilde{\Psi}_{\alpha,k}\}\oplus\mathrm{span}\{\widetilde{\Psi}_{\alpha,k}\}\,,
 \]
 which is a consequence of Lemmas \ref{lem:kerAxistar} and \ref{lem:kerAxistarzero}, and of Propositions  \ref{eq:RGisSFinv} and \ref{eq:RGisSFinv_zero_mode}, from the consequent identity
 \[
  \sum_{k\in\mathbb{Z}\setminus\{0\}}\left\|A_\alpha(k)^*\!\begin{pmatrix} c_{1,k}^-\widetilde{\Psi}_{\alpha,k} \\ c_{1,k}^+\widetilde{\Psi}_{\alpha,k}\end{pmatrix}\right\|_{\mathfrak{h}}^2\;=\;
  \sum_{k\in\mathbb{Z}\setminus\{0\}}\left\|\begin{pmatrix} c_{1,k}^-\widetilde{\Phi}_{\alpha,k} \\ c_{1,k}^+\widetilde{\Phi}_{\alpha,k}\end{pmatrix}\right\|_{\mathfrak{h}}^2\,,
 \]
 and again from the normalisation $\|\Phi_{\alpha,k}\|_{L^2(\mathbb{R}^+)}^2\sim |k|^{-\frac{2}{1+\alpha}}$. 
\end{proof}

 Lemma \ref{lem:psikinspaces} has the next follow-up concerning the fibre-wise structure of the domain of $\mathscr{H}_\alpha^*$. 
 Recall, to this aim, the general `canonical' representation of $\mathcal{D}(\mathscr{H}_\alpha^*)$ (see, e.g., \cite[Theorem 1]{GMO-KVB2017}):
 
 \begin{equation}\label{eq:repreDHstar}
  \begin{split}
  \mathcal{D}&(\mathscr{H}_\alpha^*)\;=\;\mathcal{D}((\mathscr{H}_\alpha+\mathbbm{1}_0)^*) \\
  &=\;\mathcal{D}(\overline{\mathscr{H}_\alpha+\mathbbm{1}_0})\dotplus(\mathscr{H}_{\alpha,F}+\mathbbm{1}_0)^{-1}\ker( \mathscr{H}_\alpha+\mathbbm{1}_0)^*\dotplus \ker( \mathscr{H}_\alpha+\mathbbm{1}_0)^* \\
  &=\;\mathcal{D}(\overline{\mathscr{H}_\alpha})\dotplus(\mathscr{H}_{\alpha,F}+\mathbbm{1}_0)^{-1}\ker( \mathscr{H}_\alpha^*+\mathbbm{1}_0)\dotplus\ker( \mathscr{H}_\alpha^*+\mathbbm{1}_0)\,.
  \end{split}
 \end{equation}

\begin{lemma}\label{lem:gkkrepr}
 Let $\alpha\in[0,1)$. Let $(g_k)_{k\in\mathbb{Z}}\in\cH\cong\ell^2(\mathbb{Z},\mathfrak{h})$. Then $(g_k)_{k\in\mathbb{Z}}\in\mathcal{D}(\mathscr{H}_\alpha^*)$ if and only if
 \begin{equation}\label{eq:gkrepresentation}
 g_k\;=\;\begin{pmatrix}\widetilde{\varphi}_k^- \\ \widetilde{\varphi}_k^+\end{pmatrix}+\begin{pmatrix} c_{1,k}^-\widetilde{\Psi}_{\alpha,k} \\ c_{1,k}^+\widetilde{\Psi}_{\alpha,k} \end{pmatrix}+\begin{pmatrix} c_{0,k}^-\widetilde{\Phi}_{\alpha,k} \\ c_{0,k}^+\widetilde{\Phi}_{\alpha,k} \end{pmatrix}\qquad\quad\forall k\in\mathbb{Z}
\end{equation}
with
\begin{equation}\label{eq:pileupcond1}
  (\widetilde{\varphi}_k)_{k\in\mathbb{Z}}\:\in\:\mathcal{D}(\overline{\mathscr{H}_\alpha})\,,\qquad  \widetilde{\varphi}_k\,\equiv\,\begin{pmatrix}\widetilde{\varphi}_k^- \\ \widetilde{\varphi}_k^+\end{pmatrix}\\
\end{equation}
and
\begin{eqnarray}\label{eq:pileupcond2}
 & &  \sum_{k\in \mathbb{Z}\setminus\{0\}}|k|^{-\frac{2}{1+\alpha}}|c_{0,k}^\pm|^2\:<\:+\infty\,, \\
 & &  \sum_{k\in \mathbb{Z}\setminus\{0\}}|k|^{-\frac{2}{1+\alpha}}|c_{1,k}^\pm|^2\:<\:+\infty\,. \label{eq:pileupcond3}
\end{eqnarray} 
\end{lemma}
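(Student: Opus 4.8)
The plan is to read the claimed representation off the canonical Kre\u{\i}n--Vi\v{s}ik--Birman decomposition \eqref{eq:repreDHstar} of $\mathcal{D}(\mathscr{H}_\alpha^*)$, combined with the fibre-wise identifications of Lemma \ref{lem:psikinspaces}. The crucial structural observation, which I would record first, is that each of the three summands in \eqref{eq:repreDHstar} is itself an orthogonal sum over $k\in\mathbb{Z}$ of the corresponding fibre subspaces. Indeed, $\mathcal{D}(\overline{\mathscr{H}_\alpha})=\op_{k\in\mathbb{Z}}\mathcal{D}(\overline{A_\alpha(k)})$ by Lemma \ref{lem:Halphaadj-decomposable}, while by the same lemma together with Lemma \ref{lem:HalphaFrie-decomposable} one has $\ker(\mathscr{H}_\alpha+\mathbbm{1}_0)^*=\bigoplus_{k\in\mathbb{Z}}\ker(A_\alpha(k)^*+\delta_{k,0}\mathbbm{1}_0)$ and $(\mathscr{H}_{\alpha,F}+\mathbbm{1}_0)^{-1}\ker(\mathscr{H}_\alpha+\mathbbm{1}_0)^*=\bigoplus_{k\in\mathbb{Z}}(A_{\alpha,F}(k)+\delta_{k,0}\mathbbm{1}_0)^{-1}\ker(A_\alpha(k)^*+\delta_{k,0}\mathbbm{1}_0)$. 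Thus the global decomposition \eqref{eq:repreDHstar} can be read fibre by fibre, and on fibre $k$ it reduces exactly to the transversal decomposition \eqref{eq:bilateralDAxistar} (resp.\ \eqref{eq:bilateralDAxistar_zero_mode} for $k=0$).

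For the forward implication I would take $(g_k)_{k\in\mathbb{Z}}\in\mathcal{D}(\mathscr{H}_\alpha^*)$ and split it uniquely according to \eqref{eq:repreDHstar} as a sum of one element of $\mathcal{D}(\overline{\mathscr{H}_\alpha})$, one of $(\mathscr{H}_{\alpha,F}+\mathbbm{1}_0)^{-1}\ker(\mathscr{H}_\alpha+\mathbbm{1}_0)^*$, and one of $\ker(\mathscr{H}_\alpha+\mathbbm{1}_0)^*$. Projecting onto the $k$-th fibre and invoking Lemma \ref{lem:psikinspaces}(ii),(i) identifies the last two pieces as $(c_{1,k}^-\widetilde{\Psi}_{\alpha,k},c_{1,k}^+\widetilde{\Psi}_{\alpha,k})$ and $(c_{0,k}^-\widetilde{\Phi}_{\alpha,k},c_{0,k}^+\widetilde{\Phi}_{\alpha,k})$, with coefficients obeying \eqref{eq:pileupcond3} and \eqref{eq:pileupcond2} respectively, while the first piece gives $\widetilde{\varphi}_k$ with $(\widetilde{\varphi}_k)_{k\in\mathbb{Z}}\in\mathcal{D}(\overline{\mathscr{H}_\alpha})$, i.e.\ \eqref{eq:pileupcond1}; reassembling over $k$ yields \eqref{eq:gkrepresentation}. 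The converse is then immediate: given a sequence of the form \eqref{eq:gkrepresentation} subject to \eqref{eq:pileupcond1}--\eqref{eq:pileupcond3}, the three component sequences lie respectively in $\mathcal{D}(\overline{\mathscr{H}_\alpha})$, in $(\mathscr{H}_{\alpha,F}+\mathbbm{1}_0)^{-1}\ker(\mathscr{H}_\alpha+\mathbbm{1}_0)^*$ (by Lemma \ref{lem:psikinspaces}(ii), whose summability hypothesis \eqref{eq:Regularityg1} is precisely \eqref{eq:pileupcond3}), and in $\ker(\mathscr{H}_\alpha+\mathbbm{1}_0)^*$ (by Lemma \ref{lem:psikinspaces}(i) and \eqref{eq:pileupcond2}), so their sum belongs to $\mathcal{D}(\mathscr{H}_\alpha^*)$ by \eqref{eq:repreDHstar}.

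The hard part will be the bookkeeping behind the fibre-by-fibre reading of \eqref{eq:repreDHstar}, namely that the three global summands intersect each fibre transversally and that the coefficients $\widetilde{\varphi}_k, c_{0,k}^\pm, c_{1,k}^\pm$ extracted globally agree with those produced fibre-wise; this is secured by the directness of \eqref{eq:bilateralDAxistar}. The quantitative heart of the matter, however, is already supplied by Lemma \ref{lem:psikinspaces}: the emergence of the weight $\mu_k=|k|^{-2/(1+\alpha)}$ in \eqref{eq:pileupcond2}--\eqref{eq:pileupcond3} rests on the normalisation $\|\Phi_{\alpha,k}\|_{L^2}^2\sim|k|^{-2/(1+\alpha)}$ from \eqref{eq:Phinorm}, together with the relation $A_\alpha(k)^*\widetilde{\Psi}_{\alpha,k}=\widetilde{\Phi}_{\alpha,k}$ (from $\Psi_{\alpha,k}=A_{\alpha,F}(k)^{-1}\Phi_{\alpha,k}$), which makes the graph-norm contribution of the $\widetilde{\Psi}$-part equal to the $L^2$-contribution of the $\widetilde{\Phi}$-part and thereby explains why the \emph{same} weighted-$\ell^2$ constraint governs both the kernel coefficients and the $(\mathscr{H}_{\alpha,F}+\mathbbm{1}_0)^{-1}$-image coefficients. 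Since these estimates are borrowed wholesale, the remaining work is purely organisational, and no new analytic input is expected.
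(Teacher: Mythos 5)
Your proposal is correct and follows essentially the same route as the paper: the paper's own proof consists precisely of invoking the canonical decomposition \eqref{eq:repreDHstar} and then concluding via Lemma \ref{lem:psikinspaces}, exactly as you do. The fibre-wise bookkeeping and the identity $A_\alpha(k)^*\widetilde{\Psi}_{\alpha,k}=\widetilde{\Phi}_{\alpha,k}$ (in its shifted form for $k=0$) that you spell out are indeed the content the paper leaves implicit, already secured by Lemmas \ref{lem:Halphaadj-decomposable}, \ref{lem:HalphaFrie-decomposable}, and the proof of Lemma \ref{lem:psikinspaces}.
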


\begin{proof} From the representation \eqref{eq:repreDHstar} one deduces at once that in order for $(g_k)_{k\in\mathbb{Z}}$ to belong to  $\mathcal{D}(\mathscr{H}_\alpha^*)$ it is necessary and sufficient that 
 \[
  (g_k)_{k\in\mathbb{Z}}\;=\;(\widetilde{\varphi}_k)_{k\in\mathbb{Z}}+(\psi_k)_{k\in\mathbb{Z}}+(\xi_k)_{k\in\mathbb{Z}}
 \]
 for some $(\widetilde{\varphi}_k)_{k\in\mathbb{Z}}\in\mathcal{D}(\overline{\mathscr{H}_\alpha})$, $(\psi_k)_{k\in\mathbb{Z}}\in (\mathscr{H}_\alpha^F+\mathbbm{1}_0)^{-1}\ker( \mathscr{H}_\alpha^*+\mathbbm{1}_0)$, and $(\xi_k)_{k\in\mathbb{Z}}\in \ker( \mathscr{H}_\alpha^*+\mathbbm{1}_0)$. The conclusion then follows from Lemma \ref{lem:psikinspaces}. 
\end{proof}

\begin{remark}
 We knew already from $\mathscr{H}_\alpha^*=\bigoplus_{k\in\mathbb{Z}}A_\alpha(k)^*$ and from the analysis of $A_\alpha(k)^*$ made in Section \ref{sec:bilateralfibreext} (formulas \eqref{eq:bilateralDAxistar} and \eqref{eq:bilateralDAxistar_zero_mode}) that an element in $\mathcal{D}(\mathscr{H}_\alpha^*)$ must have the form $(g_k)_{k\in\mathbb{Z}}$ with $g_k$ satisfying \eqref{eq:gkrepresentation} for some $\widetilde{\varphi}_k\in\mathcal{D}(\overline{A_\alpha(k)})$ and some $c_{0,k}^\pm,c_{1,k}^\pm\in\mathbb{C}$. However, a generic collection $(g_k)_{k\in\mathbb{Z}}$ in $\ell^2(\mathbb{Z},\mathfrak{h})$ of $g_k$'s satisfying \eqref{eq:gkrepresentation} does not necessarily belong to $\mathcal{D}(\mathscr{H}_\alpha^*)$, in particular the corresponding collection 
 $(\widetilde{\varphi}_k)_{k\in\mathbb{Z}\setminus\{0\}}$ does not necessarily belong to $\mathcal{D}(\overline{\mathscr{H}_\alpha})$. Only under the conditions prescribed by Lemma \ref{lem:gkkrepr} 
  can one pile up such $g_k$'s so as to obtain an actual element in $\mathcal{D}(\mathscr{H}_\alpha^*)$ (in fact, \eqref{eq:pileupcond1}-\eqref{eq:pileupcond3} impose some kind of \emph{uniformity} in $k$ of $\widetilde{\varphi}_k$, $c_{0,k}^\pm$, $c_{1,k}^\pm$). 
\end{remark}

\begin{remark}\label{rem:regularitydeficiencyspace}
Lemmas \ref{lem:psikinspaces}(i) and \ref{lem:gkkrepr} characterise $\ker(\mathscr{H}_\alpha^*+\mathbbm{1}_0)$, the deficiency space for $\mathscr{H}_\alpha+\mathbbm{1}_0$, which is isomorphic to the deficiency space of $H_\alpha$. By exploiting the same unitary equivalence \eqref{eq:unitary_transf_pm}, it was determined in the already-mentioned work \cite{Posilicano-2014-sum-trace-maps} by Posilicano that the deficiency space of $H_\alpha^+$ is isomorphic to $H^{-\frac{1}{2}\frac{1-\alpha}{1+\alpha}}(\mathbb{S}^1)$ -- more precisely, isomorphic to $H^{\frac{1}{2}\frac{1-\alpha}{1+\alpha}}(\mathbb{S}^1)$ or equivalently to $H^{-\frac{1}{2}\frac{1-\alpha}{1+\alpha}}(\mathbb{S}^1)$ depending on the different explicit isomorphisms (namely the different `coordinate systems', or also the different `boundary triplets') between the trace space and the deficiency space. Our analysis is thus completely consistent with that finding: indeed, $\mathcal{F}_2:H^{-\frac{1}{2}\frac{1-\alpha}{1+\alpha}}(\mathbb{S}^1)\stackrel{\cong}{\longrightarrow}\ell^2(\mathbb{Z},\mathbb{C}^2, \mu_k)$.
\end{remark}

After the above preparations, our subsequent analysis takes two separate directions. One, which we complete here in the remaining part of the present Section, is the characterisation of the \emph{whole family} of self-adjoint extensions of $\mathscr{H}_\alpha$ in $\cH$, an information that surely deserves interest per se. Another, which is the object of the next Section, is the study of the \emph{distinguished family} of extensions of $\mathscr{H}_\alpha$ produced by Prop.~\ref{prop:BextendsHalpha}. In fact, for the latter a clean and explicit description can be further obtained when going back to the physical variables $(x,y)$ -- and this turns out to be indeed the physically relevant sub-family of self-adjoint Hamiltonians on the Grushin cylinder.

\begin{theorem}\label{thm:Halphageneralext}
 Let $\alpha\in[0,1)$. There is a one-to-one correspondence $S\leftrightarrow \mathscr{H}_\alpha^S$ between the self-adjoint extensions $\mathscr{H}_\alpha^S$ of $\mathscr{H}_\alpha$ and the self-adjoint operators $S$ defined on Hilbert subspaces of $\ker (\mathscr{H}_\alpha^*+\mathbbm{1}_0)\cong\ell^2(\mathbb{Z},\mathbb{C}^2,\mu_k)$. If $S$ is any such operator, the corresponding extension $\mathscr{H}_\alpha^S$ is given by
 \begin{equation}\label{eq:Halphageneralext}
  \begin{split}
   \mathcal{D}(\mathscr{H}_\alpha^S)\;&=\;\left\{
   \begin{array}{c}
    \psi=\widetilde{\varphi}+ (\mathscr{H}_{\alpha,F}+\mathbbm{1}_0)^{-1}(Sv+w)+v \\
    \textrm{such that} \\
    \widetilde{\varphi}\in\mathcal{D}(\overline{\mathscr{H}_\alpha})\,,\quad v\in\mathcal{D}(S)\,, \\
    w\in\ker (\mathscr{H}_\alpha^*+\mathbbm{1}_0)\cap \mathcal{D}(S)^\perp
   \end{array}\right\} \\
   (\mathscr{H}_\alpha^S+\mathbbm{1}_0)\psi\;&=\;(\overline{\mathscr{H}_\alpha}+\mathbbm{1}_0)\widetilde{\varphi}+Sv+w\,.
  \end{split}
 \end{equation}
\end{theorem}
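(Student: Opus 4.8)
The plan is to recognise Theorem~\ref{thm:Halphageneralext} as a direct application of the abstract Kre\u{\i}n--Vi\v{s}ik--Birman parametrisation \cite[Theorem 3.4]{GMO-KVB2017} to the densely defined, symmetric, lower semibounded operator $\mathscr{H}_\alpha+\mathbbm{1}_0$, whose strict positivity is the one structural feature we must secure before invoking the abstract theorem. The KVB theory establishes precisely the claimed one-to-one correspondence between self-adjoint extensions of a strictly positive symmetric operator $\mathscr{S}$ and self-adjoint operators $S$ acting on (closed) Hilbert subspaces of the deficiency space $\ker\mathscr{S}^*$, with each extension domain given exactly by the formula \eqref{eq:Halphageneralext} in terms of the reference (Friedrichs) extension. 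So the entire content of the proof is to verify the hypotheses and to match the abstract ingredients with the concrete objects identified in the preceding lemmas.

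First I would argue that $\mathscr{H}_\alpha+\mathbbm{1}_0$ is strictly positive. Indeed $\mathscr{H}_\alpha\subset\bigoplus_{k\in\mathbb{Z}}A_\alpha(k)$ is semibounded from below with bottom zero, the only obstruction to strict positivity being the mode $k=0$, whose bottom is exactly $0$ by \eqref{eq:Axibottom-zero}; adding $\mathbbm{1}_0$, which shifts precisely that mode, produces a uniform strictly positive lower bound across all fibres thanks to \eqref{eq:AFbottom}. Consequently the Friedrichs extension $\mathscr{H}_{\alpha,F}+\mathbbm{1}_0$ has everywhere-defined bounded inverse, which is exactly the reference operator required by the KVB scheme. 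Here I would also note that $\mathscr{H}_\alpha$ and $\mathscr{H}_\alpha+\mathbbm{1}_0$ share domain, adjoint, closure, and Friedrichs extension, so that classifying the extensions of the shifted operator is tantamount to classifying those of $\mathscr{H}_\alpha$ itself, the shift being reabsorbed in the final line $(\mathscr{H}_\alpha^S+\mathbbm{1}_0)\psi=(\overline{\mathscr{H}_\alpha}+\mathbbm{1}_0)\widetilde{\varphi}+Sv+w$.

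Next I would feed into \cite[Theorem 3.4]{GMO-KVB2017} the three concrete ingredients already computed: the closure $\overline{\mathscr{H}_\alpha}$ characterised in Lemma~\ref{lem:Halphaadj-decomposable} together with the domain representation \eqref{eq:gkrepresentation}--\eqref{eq:pileupcond3} of Lemma~\ref{lem:gkkrepr}; the deficiency space $\ker(\mathscr{H}_\alpha^*+\mathbbm{1}_0)$, identified in Lemma~\ref{lem:psikinspaces}(i) with the weighted sequence space $\ell^2(\mathbb{Z},\mathbb{C}^2,\mu_k)$; and the image $(\mathscr{H}_{\alpha,F}+\mathbbm{1}_0)^{-1}\ker(\mathscr{H}_\alpha^*+\mathbbm{1}_0)$ described in Lemma~\ref{lem:psikinspaces}(ii). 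With these at hand the abstract decomposition $\mathcal{D}(\mathscr{H}_\alpha^*)=\mathcal{D}(\overline{\mathscr{H}_\alpha})\dotplus(\mathscr{H}_{\alpha,F}+\mathbbm{1}_0)^{-1}\ker(\mathscr{H}_\alpha^*+\mathbbm{1}_0)\dotplus\ker(\mathscr{H}_\alpha^*+\mathbbm{1}_0)$ is precisely \eqref{eq:repreDHstar}, and the KVB theorem then asserts verbatim that the self-adjoint extensions are parametrised by self-adjoint $S$ on subspaces of the deficiency space via the stated restriction formula, where $v\in\mathcal{D}(S)$ runs over the domain of $S$ and $w$ over the orthogonal complement $\ker(\mathscr{H}_\alpha^*+\mathbbm{1}_0)\cap\mathcal{D}(S)^\perp$.

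The only genuinely non-routine point, which I would treat carefully rather than dismiss, is that the abstract theory is phrased for an arbitrary strictly positive symmetric operator and its abstract deficiency Hilbert space, whereas here that space must be handled through the honest isomorphism $\ker(\mathscr{H}_\alpha^*+\mathbbm{1}_0)\cong\ell^2(\mathbb{Z},\mathbb{C}^2,\mu_k)$ of Lemma~\ref{lem:psikinspaces}, so that the orthogonality condition $w\perp v$ and the self-adjointness of $S$ are understood with respect to the \emph{weighted} inner product carrying the factors $\mu_k=|k|^{-2/(1+\alpha)}$. I would therefore check explicitly that the inner product induced on the deficiency space by the ambient $\cH$-scalar product is exactly the weighted one of Lemma~\ref{lem:psikinspaces}(i), using the normalisation $\|\Phi_{\alpha,k}\|_{L^2}^2\sim|k|^{-2/(1+\alpha)}$ from \eqref{eq:Phinorm}; this guarantees that ``self-adjoint $S$ on a subspace of $\ker(\mathscr{H}_\alpha^*+\mathbbm{1}_0)$'' is an intrinsic, coordinate-free notion and that the bijection $S\leftrightarrow\mathscr{H}_\alpha^S$ is the abstract one. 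Once this identification is in place, no further computation is needed and the theorem follows.
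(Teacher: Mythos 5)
Your proposal is correct and follows essentially the same route as the paper, whose proof consists precisely of invoking the Kre\u{\i}n--Vi\v{s}ik--Birman parametrisation \cite[Theorem 3.4]{GMO-KVB2017} applied to the shifted operator $\mathscr{H}_\alpha+\mathbbm{1}_0$, with the action formula obtained from $(\mathscr{H}_\alpha^S+\mathbbm{1}_0)=(\mathscr{H}_\alpha^*+\mathbbm{1}_0)\upharpoonright\mathcal{D}(\mathscr{H}_\alpha^S)$; your extra verifications (strict positivity of the shift via \eqref{eq:Axibottom-zero} and \eqref{eq:AFbottom}, invariance of domain/adjoint/closure/Friedrichs extension under the shift, and the identification of the deficiency-space inner product with the weighted one of Lemma~\ref{lem:psikinspaces}) merely make explicit what the paper leaves implicit through its preparatory Lemmas~\ref{lem:HalphaFrie-decomposable}--\ref{lem:gkkrepr} and the representation \eqref{eq:repreDHstar}.
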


\begin{proof}
 A direct application of the Kre\u{\i}n-Vi\v{s}ik-Birman self-adjoint extension theory -- see, e.g., \cite[Theorem 5]{GMO-KVB2017}. The second formula in \eqref{eq:Halphageneralext} follows from the first as $(\mathscr{H}_\alpha^S+\mathbbm{1}_0)=(\mathscr{H}_\alpha^*+\mathbbm{1}_0)\upharpoonright\mathcal{D}(\mathscr{H}_\alpha^S)$. 
\end{proof}

Theorem \ref{thm:Halphageneralext} encompasses a huge variety of extensions, since $\mathscr{H}_\alpha$ has infinite deficiency index. The self-adjointness condition for each $\mathscr{H}_\alpha^S$ is in fact a \emph{restriction condition} on the domain $\mathscr{H}_\alpha^*$: in terms of the representation \eqref{eq:repreDHstar}, such a restriction selects, among the generic elements 
\[
 \psi\;=\;\widetilde{\varphi}+ (\mathscr{H}_{\alpha,F}+\mathbbm{1}_0)^{-1}\eta+\xi 
\]
of $\mathcal{D}(\mathscr{H}_\alpha^*)$, only those for which the vectors $\xi,\eta\in\ker (\mathscr{H}_\alpha^*+\mathbbm{1}_0)$ (customarily referred to as the `\emph{charges}' of $\psi$, see e.g.~\cite{MO-2016} and references therein) satisfy
\[ 
\begin{split}
 \xi\;&=\;v\;\in\;\mathcal{D}(S)\,, \\
 \eta\;&=\;Sv+w\,,\quad w\in\ker (\mathscr{H}_\alpha^*+\mathbbm{1}_0)\cap \mathcal{D}(S)^\perp\,.
\end{split}
\]
In this respect, the above condition produces in general a complicated, non-fibre-preserving mixing of the charge $\eta$ with respect to the charge $\xi$: such a mixing is encoded in the auxiliary operator $S$.

For physically relevant extensions the above mixing is absent instead, and the restriction condition of self-adjointness operates \emph{independently in each fibre}, namely in each momentum mode $k$. This is the case when
\begin{equation}\label{eq:fibredS}
 S\;=\;\bigoplus_{k\in\mathbb{Z}}S(k)\qquad \textrm{ on } \qquad \ker (\mathscr{H}_\alpha^*+\mathbbm{1}_0)\;=\;\bigoplus_{k}\,\ker(A_\alpha(k)^*+\delta_{k,0}\mathbbm{1})
\end{equation}
for operators $S(k)$'s each of which is self-adjoint on a (zero-, one-, two-dimensional) subspace $\mathcal{K}$ of the two-dimensional space $\ker(A_\alpha(k)^*+\delta_{k,0}\mathbbm{1})$. Extensions \eqref{eq:Halphageneralext} where $S$ is of the type \eqref{eq:fibredS} are \emph{fibred} in the sense that the self-adjointness condition is compatible with the fibre structure.

Explicitly, if $\mathscr{H}_\alpha^S$ is a fibred extension of $\mathscr{H}_\alpha$, then a generic element $(g_k)_{k\in\mathbb{Z}}$ of $\mathcal{D}(\mathscr{H}_\alpha^S)$ is such that 
\begin{equation}\label{eq:Birman_formula-fibred}
 g_k\;=\;\widetilde{\varphi}_k+(A_{\alpha,F}(k)+\delta_{k,0}\mathbbm{1})^{-1}(S(k) v_k+w_k)+v_k\,,\qquad k\in\mathbb{Z}\,,
\end{equation}
for some $\widetilde{\varphi}_k\in\mathcal{D}(\overline{A_\alpha(k)})$, $v_k\in\mathcal{D}(S(k))$, $w_k\in \ker (A_\alpha(k)^*+\delta_{k,0}\mathbbm{1})\cap\mathcal{D}(S(k))^\perp$.  Comparing \eqref{eq:Birman_formula-fibred} with \eqref{eq:Birman_formula} and \eqref{eq:Birman_formula_zero_mode} one immediately sees that the component $g_k$ belongs to the domain of the extension $A_\alpha^{(S(k))}(k)$ of $A_\alpha(k)$ (following the notation of \eqref{eq:Birman_formula} and \eqref{eq:Birman_formula_zero_mode}) with respect to the Hilbert space $\mathfrak{h}$.
Thus, fibred extensions of $\mathscr{H}_\alpha$ are precisely of the form $\bigoplus_{k\in\mathbb{Z}}B(k)$, where each $B(k)$ is a self-adjoint extension of $A_\alpha(k)$ in $\mathfrak{h}$, namely the extensions produced through the mechanism discussed in Prop.~\ref{prop:BextendsHalpha}.

\section{Uniformly fibred extensions of $\mathscr{H}_\alpha$}\label{sec:uniformlyfirbredext}

In this Section we focus on the most relevant and physically meaningful sub-class of self-adjoint extensions of $\mathscr{H}_\alpha$: those that we refer to as \emph{uniformly fibred extensions}. For such extensions we shall obtain a more explicit and convenient characterisation, namely Theorem \ref{thm:classificationUF} below, as compared to the general classification of Theorem \ref{thm:Halphageneralext}.

\subsection{Generalities and classification theorem}~

 The focus now is on extensions that on the one hand are \emph{fibred}, in the sense discussed in the end of Sect.~\ref{sec:genextscrHa}, hence reduced as the direct sum of self-adjoint extensions of $A_\alpha(k)$ on each fibre, and therefore with conditions of self-adjointness that do not couple different fibres, and which \emph{in addition} display the following kind of uniformity.

Let us recall that a generic fibred extension acts on each fibre as a generic self-adjoint realisation of $A_\alpha(k)$ that belongs to one of the families of the classification of Theorem \ref{thm:bifibre-extensions}, and is therefore parametrised (apart when it is $A_{\alpha,F}(k)$) by one real parameter or four real parameters. Such extension types and extension parameters may differ fibre by fibre, say, parameter $\gamma^{(k_1)}$ for an extension of type $\mathrm{I_R}$ or $\mathrm{I_L}$ or $\mathrm{II}_{a_k}$ on the $k_1$-th fibre, and parameters $\gamma_1^{(k_2)},\dots,\gamma_4^{(k_2)}$ for an extension of type $\mathrm{III}$ on the $k_2$-th fibre.

\emph{Uniformly} fibred extensions are those for which, fibre by fibre, the type of extension of $A_\alpha(k)$ is the same, and all have the same extension parameter(s) $\gamma$ (and $a$), or $\gamma_1,\dots,\gamma_4$.

By definition, uniformly fibred extensions can be therefore grouped into sub-families in complete analogy to those of Theorem \ref{thm:bifibre-extensions}:
\begin{itemize}
  \item \underline{Friedrichs extension}: the operator $\mathscr{H}_{\alpha,F}=\bigoplus_{k\in\mathbb{Z}}A_{\alpha,F}(k)$ (see Lemma \ref{lem:HalphaFrie-decomposable}); 
  \item \underline{Family $\mathrm{I_R}$}: operators of the form
  \begin{equation}\label{eq:HalphaR_unif-fibred}
   \mathscr{H}_{\alpha,R}^{[\gamma]}\;:=\;\bigoplus_{k\in\mathbb{Z}}A_{\alpha,R}^{[\gamma]}(k)
  \end{equation}
  for some $\gamma\in\mathbb{R}$;
  \item \underline{Family $\mathrm{I_L}$}: operators of the form
  \begin{equation}\label{eq:HalphaL_unif-fibred}
   \mathscr{H}_{\alpha,L}^{[\gamma]}\;:=\;\bigoplus_{k\in\mathbb{Z}}A_{\alpha,L}^{[\gamma]}(k)
  \end{equation}
 for some $\gamma\in\mathbb{R}$;
  \item \underline{Family $\mathrm{II}_a$} for given $a\in\mathbb{C}$: operators of the form
  \begin{equation}\label{eq:Halpha-IIa_unif-fibred}
   \mathscr{H}_{\alpha,a}^{[\gamma]}\;:=\;\bigoplus_{k\in\mathbb{Z}}A_{\alpha,a}^{[\gamma]}(k)
  \end{equation}
 for some $\gamma\in\mathbb{R}$;
  \item \underline{Family $\mathrm{III}$}: operators of the form
  \begin{equation}\label{eq:Halpha-III_unif-fibred}
   \mathscr{H}_{\alpha}^{[\Gamma]}\;:=\;\bigoplus_{k\in\mathbb{Z}}A_{\alpha}^{[\Gamma]}(k)
  \end{equation}
 for some $\Gamma\equiv(\gamma_1,\gamma_2,\gamma_3,\gamma_4)\in\mathbb{R}^4$.
 \end{itemize}

Physically, uniformly fibred extensions have surely a special status in that the boundary condition experienced as $x\to 0$ by the quantum particle governed by any such Hamiltonian has both the same form and the same `magnitude' (hence the same $\gamma$-parameter, or $\gamma_j$-parameters) irrespective of the transversal momentum, namely the quantum number $k$. 

 Mathematically, uniformly fibred extensions have a completely explicit description not only in mixed position-momentum variables $(x,k)$, namely extensions of $\mathscr{H}_\alpha$, but also in the original physical coordinates $(x,y)$, namely extensions of the symmetric operator $\mathsf{H}_\alpha=\mathcal{F}_2^{-1}\mathscr{H}_\alpha\mathcal{F}_2$ acting on $L^2(\mathbb{R}\times\mathbb{S}^1,\ud x\ud y)$, explicitly described in \eqref{eq:explicit-tildeHalpha}. 

This is the content of the main result of the present Section.

\begin{theorem}\label{thm:classificationUF}
 Let $\alpha\in[0,1)$. The densely defined, symmetric operator
 \[
  \begin{split}
   \mathsf{H}_\alpha\;&=\;\mathcal{F}_2^{-1}\mathscr{H}_\alpha\mathcal{F}_2\;=\;-\frac{\partial^2}{\partial x^2}- |x|^{2\alpha}\frac{\partial^2}{\partial y^2}+\frac{\,\alpha(2+\alpha)\,}{4x^2}\,, \\
  \mathcal{D}(\mathsf{H}_\alpha^\pm)\;&=\;C^\infty_c(\mathbb{R}^\pm_x\times\mathbb{S}^1_y)
  \end{split}
 \]
 admits, among others, the following families of self-adjoint extensions in $L^2(\mathbb{R}\times\mathbb{S}^1,\ud x \ud y):$  
 \begin{itemize}
  \item \underline{Friedrichs extension}: $\mathsf{H}_{\alpha,F}$, where $\mathsf{H}_{\alpha,F}=\mathcal{F}_2^{-1}\mathscr{H}_{\alpha,F}\mathcal{F}_2$;
  \item \underline{Family $\mathrm{I_R}$}: $\{\mathsf{H}_{\alpha,R}^{[\gamma]}\,|\,\gamma\in\mathbb{R}\}$, where $\mathsf{H}_{\alpha,R}^{[\gamma]}=\mathcal{F}_2^{-1}\mathscr{H}_{\alpha,R}^{[\gamma]}\,\mathcal{F}_2$;
  \item \underline{Family $\mathrm{I_L}$}: $\{\mathsf{H}_{\alpha,L}^{[\gamma]}\,|\,\gamma\in\mathbb{R}\}$, where $\mathsf{H}_{\alpha,L}^{[\gamma]}=\mathcal{F}_2^{-1}\mathscr{H}_{\alpha,L}^{[\gamma]}\,\mathcal{F}_2$;
  \item \underline{Family $\mathrm{II}_a$} with $a\in\mathbb{C}$: $\{\mathsf{H}_{\alpha,a}^{[\gamma]}\,|\,\gamma\in\mathbb{R}\}$, where $\mathsf{H}_{\alpha,a}^{[\gamma]}=\mathcal{F}_2^{-1} \mathscr{H}_{\alpha,a}^{[\gamma]}\,\mathcal{F}_2$;
  \item \underline{Family $\mathrm{III}$}: $\{\mathsf{H}_{\alpha}^{[\Gamma]}\,|\,\Gamma\equiv(\gamma_1,\gamma_2,\gamma_3,\gamma_4)\in\mathbb{R}^4\}$, where $\mathsf{H}_{\alpha}^{[\Gamma]}=\mathcal{F}_2^{-1} \mathscr{H}_{\alpha}^{[\Gamma]}\,\mathcal{F}_2$.
 \end{itemize}
 Each element from any such family is characterised by being the \emph{restriction} of the adjoint operator
  \begin{equation}\label{eq:HHalphaadjointagain}
  \begin{split}
    \mathcal{D}(\mathsf{H}_\alpha^*)\;&=\;\left\{\!
  \begin{array}{c}
   \phi\in L^2(\mathbb{R}\times\mathbb{S}^1,\ud x \ud y)\textrm{ such that} \\
   \Big(-\frac{\partial^2}{\partial x^2}- |x|^{2\alpha}\frac{\partial^2}{\partial y^2}+\frac{\,\alpha(2+\alpha)\,}{4x^2}\Big)\phi^\pm\in L^2(\mathbb{R}^\pm\times\mathbb{S}^1,\ud x \ud y)
  \end{array}\!
  \right\}, \\
  (\mathsf{H}_\alpha^\pm)^*\phi^\pm\;&=\;-\frac{\partial^2\phi^\pm}{\partial x^2}- |x|^{2\alpha}\frac{\partial^2\phi^\pm}{\partial y^2}+\frac{\,\alpha(2+\alpha)\,}{4x^2}\phi^\pm
  \end{split}
\end{equation}
 to the functions
 \[
  \phi\;=\;\begin{pmatrix} \phi^- \\ \phi^+ \end{pmatrix},\qquad \phi^\pm\;\in\;L^2(\mathbb{R}^\pm\times\mathbb{S}^1,\ud x \ud y)
 \]
 for which the limits
  \begin{eqnarray}
   \phi_0^\pm(y)\!\!&=&\!\!\lim_{x\to 0^\pm} |x|^{\frac{\alpha}{2}}\,\phi^\pm(x,y)\,, \label{eq:limitphi0} \\
   \phi_1^\pm(y)\!\!&=&\!\!\lim_{x\to 0^\pm} |x|^{-(1+\frac{\alpha}{2})}\big(\phi^\pm(x,y)-\phi^\pm_0(y)|x|^{-\frac{\alpha}{2}}\big) \label{eq:limitphi1} \\
   &=&\!\!\pm(1+\alpha)^{-1}\lim_{x\to 0^\pm} |x|^{-\alpha}\partial_x\big(|x|^{\frac{\alpha}{2}}\phi^\pm(x,y)\big) \nonumber
  \end{eqnarray}
 exist and are finite for almost every $y\in\mathbb{S}^1$, and satisfy the following boundary conditions, depending on the considered type of extension, for almost every $y\in\mathbb{R}$:
 \begin{eqnarray}
  \phi_0^\pm(y)\,=\,0\,, \qquad \quad\;\;& & \textrm{if }\;  \phi\in\mathcal{D}(\mathsf{H}_{\alpha,F})\,, \label{eq:DHalpha_cond3_Friedrichs-NOWEIGHTS}\\
  \begin{cases}
   \;\phi_0^-(y)= 0\,,  \\
   \;\phi_1^+(y)=\gamma \phi_0^+(y)\,,
  \end{cases} & & \textrm{if }\;  \phi\in\mathcal{D}(\mathsf{H}_{\alpha,F}^{[\gamma]})\,, \\
   \begin{cases}
   \;\phi_1^-(y)=\gamma \phi_0^-(y)\,, \\
   \;\phi_0^+(y)= 0 \,,
  \end{cases} & & \textrm{if }\;  \phi\in\mathcal{D}(\mathsf{H}_{\alpha,L}^{[\gamma]}) \,, \label{eq:DHalpha_cond3_L-NOWEIGHTS}\\
     \begin{cases}
   \;\phi_0^+(y)=a\,\phi_0^-(y)\,, \\
   \;\phi_1^-(y)+\overline{a}\,\phi_1^+(y)=\gamma \phi_0^-(y)\,,
  \end{cases} & & \textrm{if }\;  \phi\in\mathcal{D}(\mathsf{H}_{\alpha,a}^{[\gamma]})\,, \label{eq:DHalpha_cond3_IIa-NOWEIGHTS} \\
   \begin{cases}
   \;\phi_1^-(y)=\gamma_1 \phi_0^-(y)+(\gamma_2+\ii\gamma_3) \phi_0^+(y)\,, \\
   \;\phi_1^+(y)=(\gamma_2-\ii\gamma_3) \phi_0^-(y)+\gamma_4 \phi_0^+(y)\,,
  \end{cases} & & \textrm{if }\;  \phi\in\mathcal{D}(\mathsf{H}_{\alpha}^{[\Gamma]})\,. \label{eq:DHalpha_cond3_III-NOWEIGHTS}
 \end{eqnarray} 
 Moreover,
 \begin{equation}
  \phi_0^\pm \in H^{s_{0,\pm}}(\mathbb{S}^1, \ud y)\qquad\textrm{ and }\qquad \phi_1^\pm\in H^{s_{1,\pm}}(\mathbb{S}^1,\ud y)
 \end{equation}
 with
 \begin{itemize}
 	\item $s_{1,\pm}=\frac{1}{2}\frac{1-\alpha}{1+\alpha}$\qquad\qquad\qquad\qquad\qquad\; for the Friedrichs extension,
 	\item $s_{1,-}=\frac{1}{2}\frac{1-\alpha}{1+\alpha}$, $s_{0,+}=s_{1,+}=\frac{1}{2}\frac{3+\alpha}{1+\alpha}$ \;\;\; for extensions of type $\mathrm{I}_R$,
 	\item  $s_{1,+}=\frac{1}{2}\frac{1-\alpha}{1+\alpha}$, $s_{0,-}=s_{1,-}=\frac{1}{2}\frac{3+\alpha}{1+\alpha}$ \quad for extensions of type $\mathrm{I}_L$,
 	\item $s_{1,\pm}=s_{0,\pm}=\frac{1}{2}\frac{1-\alpha}{1+\alpha}$ \qquad\qquad\qquad \;\;\;\,for extensions of type $\mathrm{II}_a$,
 	\item $s_{1,\pm}=s_{0,\pm}=\frac{1}{2}\frac{3+\alpha}{1+\alpha}$ \qquad\qquad\qquad \;\; for extensions of type $\mathrm{III}$.
 \end{itemize}
\end{theorem}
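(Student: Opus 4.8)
The plan is to assemble each extension in the momentum representation, where Proposition~\ref{prop:BextendsHalpha} does most of the work, and then to push the fibre-wise boundary data back to the physical variables by inverting $\mathcal{F}_2$. First I would dispatch self-adjointness and the identification of the differential action. By definition \eqref{eq:HalphaR_unif-fibred}--\eqref{eq:Halpha-III_unif-fibred} each candidate operator is a constant-fibre direct sum $\bigoplus_{k\in\mathbb{Z}}B(k)$ whose $k$-th summand is, by Theorem~\ref{thm:bifibre-extensions}, a genuine self-adjoint extension of $A_\alpha(k)$ on $\mathfrak{h}$; Proposition~\ref{prop:BextendsHalpha} then gives self-adjointness of the sum, and hence of the unitarily equivalent $\mathsf{H}_{\alpha,\bullet}=\mathcal{F}_2^{-1}\mathscr{H}_{\alpha,\bullet}\mathcal{F}_2$. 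Since each $B(k)$ is a restriction of $A_\alpha(k)^*$ acting by $S_{\alpha,k}$, and $\mathscr{H}_\alpha^*=\bigoplus_k A_\alpha(k)^*$ by Lemma~\ref{lem:Halphaadj-decomposable}, the sum is a restriction of $\mathscr{H}_\alpha^*$; transporting through $\mathcal{F}_2$ shows that $\mathsf{H}_{\alpha,\bullet}$ is a restriction of $\mathsf{H}_\alpha^*$ with the action \eqref{eq:HHalphaadjointagain}. The Friedrichs case is exactly Lemma~\ref{lem:HalphaFrie-decomposable}.

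Next I would transfer the boundary conditions. Given $\phi$ in the domain of one of these extensions, set $\psi=\mathcal{F}_2\phi$, so that $\psi_k(x)=\int_0^{2\pi}\overline{e_k(y)}\,\phi(x,y)\,\ud y$ and each component $\psi_k^\pm$ carries the short-range data $g_{0,k}^\pm,g_{1,k}^\pm$ of \eqref{eq:bilimitsg0g1}. Because the extension is \emph{uniformly} fibred, these data satisfy, for \emph{every} $k$, one and the same linear relation of Theorem~\ref{thm:bifibre-extensions} with a fixed $\gamma$ (or $\Gamma$, $a$). The key point is that this relation has $k$-independent coefficients, so once one shows that $\phi_j^\pm(y)=\sum_k g_{j,k}^\pm e_k(y)$ (for $j=0,1$) converges in $L^2(\mathbb{S}^1)$ and coincides a.e.\ with the limits \eqref{eq:limitphi0}--\eqref{eq:limitphi1}, the fibre-wise relation passes verbatim to the Fourier sums and yields the local conditions \eqref{eq:DHalpha_cond3_Friedrichs-NOWEIGHTS}--\eqref{eq:DHalpha_cond3_III-NOWEIGHTS} for a.e.\ $y$. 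The equivalent derivative form of $\phi_1^\pm$ in \eqref{eq:limitphi1} follows by differentiating the expansion $|x|^{\alpha/2}\phi^\pm(x,y)=\phi_0^\pm(y)+\phi_1^\pm(y)\,|x|^{1+\alpha}+o(|x|^{3/2+\alpha/2})$ and using $\partial_x|x|^{1+\alpha}=\pm(1+\alpha)|x|^{\alpha}$ as $x\to0^\pm$.

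The Sobolev exponents are then obtained by bookkeeping: the summability \eqref{eq:pileupcond2}--\eqref{eq:pileupcond3} of Lemma~\ref{lem:gkkrepr}, with weight $\mu_k=|k|^{-2/(1+\alpha)}$, is matched against the definition of $H^s(\mathbb{S}^1)$ through $\|g_0^\pm\|_{H^s}^2\asymp\sum_k|k|^{2s}|g_{0,k}^\pm|^2$. Here one uses $g_{0,k}^\pm\asymp|k|^{-1/2}c_{0,k}^\pm$ from \eqref{eq:limitsg0g1} and $\|\Phi_{\alpha,k}\|_{L^2}^2\asymp|k|^{-2/(1+\alpha)}$ from \eqref{eq:Phinorm}, together with the crucial observation that, to keep $\gamma$ (resp.\ $\Gamma$, $a$) fixed across modes, the fibre parameters in \eqref{eq:g1gammag0}, \eqref{eq:IkTheorem51}, \eqref{eq:IIkTheorem51} must grow like $|k|^{2/(1+\alpha)}$; feeding this into the weighted sums, family by family, produces the exponents listed in the statement, consistently with the deficiency-space regularity recorded in Remark~\ref{rem:regularitydeficiencyspace}.

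The hard part will be the interchange of the limit $x\to0^\pm$ with the summation over $k$ that underlies the trace identification. As stressed in the introduction around \eqref{eq:oddbutconvenient}, if one writes $\phi=\varphi+G_0+G_1$ with $G_0(x,y)=|x|^{-\alpha/2}P(x)\phi_0(y)$ and $G_1(x,y)=|x|^{1+\alpha/2}P(x)\phi_1(y)$ --- the $y$-assembled version of the per-fibre representation of Theorem~\ref{prop:g_with_Pweight} --- then \emph{none} of the three summands need lie in $\mathcal{D}(\mathsf{H}_\alpha^*)$ individually: applying $-|x|^{2\alpha}\partial_y^2$ to $G_0$ or $G_1$ would demand more $y$-regularity of $\phi_0,\phi_1$ than the $H^s$ just found, and only cancellations inherited from $\psi\in\mathcal{D}(\mathscr{H}_\alpha^*)$ keep the total in the domain. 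Consequently no term-by-term estimate is available, and the a.e.\ existence of the pointwise limits together with the $L^2(\mathbb{S}^1)$-convergence of $\sum_k g_{j,k}^\pm e_k$ must be extracted from the global weighted-$\ell^2$ bounds alone, via a uniform tail control furnished by Lemmas~\ref{lem:psikinspaces} and \ref{lem:gkkrepr} and the $P$-weighted representation of Theorem~\ref{prop:g_with_Pweight}. Once this limit-exchange is secured, the remaining steps reduce to the bookkeeping described above.
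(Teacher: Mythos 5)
Your architecture matches the paper's: self-adjointness of each $\mathscr{H}_{\alpha,\bullet}=\bigoplus_k B(k)$ via Proposition~\ref{prop:BextendsHalpha}, identification as restrictions of $\mathscr{H}_\alpha^*=\bigoplus_k A_\alpha(k)^*$, the $P$-weighted representation of Theorem~\ref{prop:g_with_Pweight} assembled over $k$, the observation that $k$-independent boundary relations pass verbatim to the Fourier sums, and the weighted-$\ell^2$ bookkeeping (with the fibre parameters $\beta_k,\tau_{j,k}\sim|k|^{2/(1+\alpha)}$) for the Sobolev exponents --- all of this is exactly how the paper proceeds, and your diagnosis that the three summands $\varphi+G_0+G_1$ individually fall outside $\mathcal{D}(\mathsf{H}_\alpha^*)$ is precisely the paper's Lemma~\ref{lem:singular_decomposition_adjoint}. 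However, at the step you yourself flag as ``the hard part'' your proposal has a genuine gap: you assert that the a.e.\ existence of the limits \eqref{eq:limitphi0}--\eqref{eq:limitphi1} can be ``extracted from the global weighted-$\ell^2$ bounds alone, via a uniform tail control furnished by Lemmas~\ref{lem:psikinspaces} and \ref{lem:gkkrepr},'' but those a priori bounds are quantitatively insufficient. Concretely, writing $\varphi_k=\widetilde{\varphi}_k+\vartheta_{0,k}+\vartheta_{1,k}$ as in \eqref{eq:phi=phitilde+q}--\eqref{eq:defq0q1decomp-q1}, the scaling of the explicit remainders gives $\|x^{-2}\vartheta_{0,k}^\pm\|_{L^2(I^\pm)}^2\lesssim|c_{0,k}^\pm|^2|k|^{2/(1+\alpha)}$ (Corollary~\ref{cor:varthetasummability}), and the a priori condition $\sum_k|k|^{-2/(1+\alpha)}|c_{0,k}^\pm|^2<\infty$ from Lemma~\ref{lem:gkkrepr} does \emph{not} make this summable over $k$; what is needed is the enhanced summability $\sum_k|k|^{2/(1+\alpha)}|c_{0,k}^\pm|^2<\infty$, which holds only because the self-adjointness constraints of the uniformly fibred family force it (Proposition~\ref{prop:g0g1Sobolev}, Remark~\ref{rem:enhanced_summability}). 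You invoke the $|k|^{2/(1+\alpha)}$ growth of the fibre parameters in your Sobolev bookkeeping, but you never feed it into the limit-exchange argument, which is where it is indispensable.

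The second missing mechanism concerns the regular part. Since $\varphi\notin\mathcal{D}(\mathsf{H}_\alpha^*)$ in general, no trace asymptotics can be claimed for it a priori; the paper resolves this by splitting $\varphi=\widetilde{\varphi}+\vartheta$ with $\widetilde{\varphi}\in\mathcal{D}(\overline{\mathsf{H}_\alpha})$, proving the key elliptic-type estimate
\[
 \big\|\,|x|^{-2}\widetilde{\varphi}^\pm\big\|_{L^2}+\big\|\partial_x^2\widetilde{\varphi}^\pm\big\|_{L^2}\;\leqslant\;K_\alpha\big\|\overline{\mathsf{H}_\alpha^\pm}\,\widetilde{\varphi}^\pm\big\|_{L^2}
\]
(Proposition~\ref{prop:Hclosurecontrol}), whose proof requires two nontrivial inputs your proposal never supplies: the iterated Hardy inequality $\|r^{-2}h\|_{L^2}\leqslant\frac{4}{3}\|h''\|_{L^2}$ (Lemma~\ref{lem:doubleHardy}) and the \emph{uniform-in-$k$} operator bound on $x^{2\alpha}k^2R_{G_{\alpha,k}}$ (Lemma~\ref{lem:RGbddsa}(ii), the very reason the paper proved that lemma with uniformity). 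Both the $\widetilde{\varphi}$-bounds and the $\vartheta$-bounds are then converted into the pointwise a.e.-$y$ asymptotics $\varphi^\pm(x,y)=o(|x|^{3/2})$, $\partial_x\varphi^\pm(x,y)=o(|x|^{1/2})$ through the auxiliary embedding Lemma~\ref{lem:grand_auxiliary_lemma} (membership of $x\mapsto R(x,y)$ in $H^2_0((0,1])$ for a.e.~$y$), which is the actual device that legitimises the limit-exchange. Without these three ingredients --- the splitting with the double-Hardy/uniform-resolvent control of $\widetilde{\varphi}$, the enhanced summability for $\vartheta_0$, and the $H^2_0$-trace lemma --- your final sentence (``once this limit-exchange is secured, the remaining steps reduce to bookkeeping'') leaves the central analytic step of the theorem unproved.
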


\subsection{General strategy}\label{sec:genstrategy}~

The proof of Theorem \ref{thm:classificationUF} is going to require quite a detailed analysis, as we shall now explain. All the preparation is developed in Subsect.~\ref{sec:genstrategy} through \ref{sec:q0q1}, and the proof will be discussed in Subsect.~\ref{sec:proofclassifthm}.

The trivial part is of course the reconstruction of each uniformly fibred extension of $\mathscr{H}_\alpha$ through a direct sum of self-adjoint extensions of the $A_\alpha(k)$'s. Instead, the difficult part is to extract the appropriate information so as to export the boundary conditions of self-adjointness from the mixed position-momentum variables $(x,k)$ to the physical coordinates $(x,y)$. The inverse Fourier transform $\mathcal{F}_2^{-1}$ is indeed a non-local operation, and in order to `add up' the boundary conditions initially available $k$ by $k$, one needs suitable \emph{uniformity} controls in $k$.

Let $\mathscr{H}_\alpha^{\mathrm{u.f.}}$ be a uniformly fibred extension of $\mathscr{H}_\alpha$. A generic element $(g_k)_{k\in\mathbb{Z}}\in\mathcal{D}(\mathscr{H}_\alpha^{\mathrm{u.f.}})$ can be represented as in \eqref{eq:gkrepresentation} with the `summability' conditions \eqref{eq:pileupcond1}-\eqref{eq:pileupcond2} that guarantee $(g_k)_{k\in\mathbb{Z}}$ to belong to $\mathcal{D}(\mathscr{H}_\alpha^*)$ (Lemma \ref{lem:gkkrepr}), plus additional constraints among the coefficients $c_{0,k}^\pm$ and $c_{1,k}^\pm$ that guarantee that $\mathcal{D}(\mathscr{H}_\alpha^{\mathrm{u.f.}})$ is indeed a domain of self-adjointness. Actually, the latter requirement imposes \emph{stronger} summability conditions on the $c_{0,k}^\pm$'s and $c_{1,k}^\pm$'s, as we shall discuss in Subsect.~\ref{subsec:g0g1Sobolev}.

However, the above-mentioned representation \eqref{eq:gkrepresentation} for the elements of $\mathcal{D}(\mathscr{H}_\alpha^{\mathrm{u.f.}})$ is problematic when one needs to describe $\mathcal{F}_2^{-1}\mathcal{D}(\mathscr{H}_\alpha^{\mathrm{u.f.}})$, namely the same domain in $(x,y)$-coordinates (it is immediate from \eqref{eq:unitary_transf_pm} that $\mathcal{F}_2^{-1}\mathcal{D}(\mathscr{H}_\alpha^{\mathrm{u.f.}})$ is the domain of the self-adjoint extension $\mathcal{F}_2^{-1}\mathscr{H}_\alpha^{\mathrm{u.f.}}\mathcal{F}_2$ of $\mathsf{H}_\alpha=\mathcal{F}_2^{-1}\mathscr{H}_\alpha\mathcal{F}_2$).

More precisely, when applying $\mathcal{F}_2^{-1}$ to \eqref{eq:gkrepresentation}, one loses control on the self-adjointness constraint that now becomes a rather implicit condition between the $(x,y)$-functions
\begin{equation}\label{eq:badF2-1}
 \mathcal{F}_2^{-1} \left(\begin{pmatrix} c_{1,k}^-\widetilde{\Psi}_{\alpha,k} \\ c_{1,k}^+\widetilde{\Psi}_{\alpha,k} \end{pmatrix}\right)_{\!k\in\mathbb{Z}}\,,\qquad\qquad \mathcal{F}_2^{-1} \left(\begin{pmatrix} c_{0,k}^-\widetilde{\Phi}_{\alpha,k} \\ c_{0,k}^+\widetilde{\Phi}_{\alpha,k} \end{pmatrix}\right)_{\!k\in\mathbb{Z}}.
\end{equation}
Recall indeed from \eqref{eq:defF2} that
\[
 (\mathcal{F}_2^+)^{-1}(( c_{1,k}^+\widetilde{\Psi}_{\alpha,k}))_{k\in\mathbb{Z}}\;=\;\frac{1}{\sqrt{2\pi}}\sum_{k\in\mathbb{Z}}c_{1,k}\widetilde{\Psi}_{\alpha,k}(x) e^{\ii k y}\,,
\]
and similarly for the other components: on such functions of $x$ and $y$ it is not evident if differentiating or taking the limit $x\to 0$ term by term in the series in $k$ is actually justified -- and it is precisely in terms of such operations that the final boundary conditions are going to be expressed.

From another perspective, the known regularity and asymptotic properties of $\widetilde{\Psi}_{\alpha,k}$ (and, analogously, $\widetilde{\Phi}_{\alpha,k}$) may well provide the above information on the function  $(\mathcal{F}_2^+)^{-1}((\widetilde{\Psi}_{\alpha,k}))_{k\in\mathbb{Z}}$, but it is not evident how to read out useful information from $(\mathcal{F}_2^+)^{-1}(( c_{1,k}^+\widetilde{\Psi}_{\alpha,k}))_{k\in\mathbb{Z}}$ so as to finally express the boundary conditions of self-adjointness in terms of limits as $x\to 0$ of the functions in the domain and on their derivatives.

As taking the inverse Fourier transform directly on \eqref{eq:gkrepresentation} appears not to be informative in practice, we shall follow a second route inspired to the alternative representation \eqref{eq:gwithPweight} (Theorem \ref{prop:g_with_Pweight}).

Now the generic element $(g_k)_{k\in\mathbb{Z}}\in\mathcal{D}(\mathscr{H}_\alpha^{\mathrm{u.f.}})$ is represented for each $k$ as
 \begin{equation}\label{eq:gwithPweight_k}
  g_k\;=\;\begin{pmatrix} \varphi_k^- \\ \varphi_k^+ \end{pmatrix}+\begin{pmatrix} g_{0,k}^- \\ g_{0,k}^+\end{pmatrix}|x|^{-\frac{\alpha}{2}}\,P+\begin{pmatrix} g_{1,k}^- \\ g_{1,k}^+\end{pmatrix}|x|^{1+\frac{\alpha}{2}}\,P
 \end{equation}
where each $\varphi_k\in\mathcal{D}(\overline{A_\alpha(k)})$ and $P$ is the short-scale cut-off \eqref{eq:Pcutoff}.

The evident advantage of \eqref{eq:gwithPweight_k}, as compared to \eqref{eq:gkrepresentation}, is that computing
\begin{equation}
 \phi\;:=\;\mathcal{F}_2^{-1}(g_k)_{k\in\mathbb{Z}}
\end{equation}
and using the linearity of $\mathcal{F}_2^{-1}$ yields \emph{formally}
\begin{equation}\label{eq:afterF2-1}
 \phi(x,y)\;=\;\varphi(x,y)+g_1(y)|x|^{1+\frac{\alpha}{2}}P(x)+g_0(y)|x|^{-\frac{\alpha}{2}}P(x)
\end{equation}
with
\begin{eqnarray}
 \varphi\!\!&:=&\!\!\mathcal{F}_2^{-1}(\varphi_k)_{k\in\mathbb{Z}} \,,\label{eq:F2-1phi}\\
 g_0\!\!&:=&\!\!\mathcal{F}_2^{-1}(g_{0,k})_{k\in\mathbb{Z}}\,, \label{e1:F2-1g0}\\
 g_1\!\!&:=&\!\!\mathcal{F}_2^{-1}(g_{1,k})_{k\in\mathbb{Z}} \label{eq:F2-1g1}\,.
\end{eqnarray}

In \eqref{eq:afterF2-1} the function $\varphi$ is expected to retain the regularity in $x$ and the fast vanishing properties, as $x\to 0$, of each $\varphi_k$, and hence $\varphi$ is expected to be a sub-leading term when taking $\lim_{x\to 0}\phi(x,y)$ and $\lim_{x\to 0}\partial_x\phi(x,y)$; on the other hand, the regularity and short-distance behaviour in $x$ of the other two summands in the r.h.s.~of \eqref{eq:afterF2-1} are immediately read out, unlike the situation with the functions \eqref{eq:badF2-1}. Moreover, and most importantly, since $\mathscr{H}_\alpha^{\mathrm{u.f.}}$ is a \emph{uniformly fibred} extension, the boundary condition of self-adjointness in \eqref{eq:gwithPweight_k} (namely a condition among those listed in the third column of Table \ref{tab:extensions}) takes the same form, with the same extension parameter, irrespective of $k$, and therefore is immediately exported, in the same form and with the same extension parameter, between $g_0(y)$ and $g_1(y)$ for almost every $y\in\mathbb{S}^1$.

The above reasoning paves the way to a classification of the family of uniformly fibred extensions of $H_\alpha$ in terms of explicit boundary conditions as $x\to 0$.

Clearly, so far \eqref{eq:afterF2-1} is only formal: one must guarantee that \eqref{eq:F2-1phi}-\eqref{eq:F2-1g1} are actually well-posed and define square-integrable functions in the corresponding variables, with the desired properties. This is in fact the price to pay for the present strategy, whereas for the functions \eqref{eq:badF2-1} it was clear a priori that $\mathcal{F}_2^{-1}$ is applicable, thanks to Lemma \ref{eq:repreDHstar}.

As we shall comment further on (Subsect.~\ref{sec:singular_decomposition_adjoint}), such a strategy will lead to the following somewhat awkward circumstance:  whereas Lemma \ref{eq:repreDHstar} guarantees that applying $\mathcal{F}_2^{-1}$ on $(g_k)_{k\in\mathbb{Z}}$ represented as in \eqref{eq:gkrepresentation} yields three distinct functions, each of which belongs to $\mathcal{F}_2^{-1}\mathcal{D}(\mathscr{H}_\alpha^*)=\mathcal{D}(\mathsf{H}_\alpha^*)$, the three summands in the r.h.s.~of \eqref{eq:afterF2-1} will be proved to belong to $L^2(\mathbb{R}\times\mathbb{S}^1,\ud x\ud y)$, \emph{none} of which being however in $\mathcal{D}(\mathsf{H}_\alpha^*)$ in general! -- only their sum is, due to cancellations of singularities. This explains why the analysis is going to be onerous.

\subsection{Integrability and Sobolev regularity of $g_0$ and $g_1$}\label{subsec:g0g1Sobolev}~

Following the programme outlined in the previous Subsection, let us show that \eqref{e1:F2-1g0} and \eqref{eq:F2-1g1} indeed define functions in $L^2(\mathbb{S}^1,\ud y)$ with suitable regularity.

\begin{proposition}\label{prop:g0g1Sobolev}
 Let $\alpha\in[0,1)$ and let $(g_k)_{k\in\mathbb{Z}}\in\mathcal{D}(\mathscr{H}_\alpha^{\mathrm{u.f.}})$, where $\mathscr{H}_\alpha^{\mathrm{u.f.}}$ is one of the operators \eqref{eq:HalphaFriedrichs_unif-fibred} or \eqref{eq:HalphaR_unif-fibred}-\eqref{eq:Halpha-III_unif-fibred}, for given parameters $\gamma\in\mathbb{R}$, $a\in\mathbb{C}$, $\Gamma\in\mathbb{R}^4$, depending on the type. With respect to the representation \eqref{eq:gwithPweight_k} of each $g_k$, one has the following. 
\begin{itemize}
	\item[(i)] If $\mathscr{H}_{\alpha}^{\mathrm{u.f.}}$ is the Friedrichs extension, then
	\begin{equation}\label{eq:g0g1Sobolev_typeF}
		\sum_{k \in \mathbb{Z}} |k|^{\frac{1-\alpha}{1+\alpha}}|g_{1,k}^\pm|^2 \; < \;+\infty \,,\qquad g_{0,k}^\pm\;=\;0\,.
	\end{equation}
	\item[(ii)] If $\mathscr{H}_{\alpha}^{\mathrm{u.f.}}$ is of type $\mathrm{I}_{\mathrm{R}}$, then
	\begin{equation}\label{eq:g0g1Sobolev_typeIR}
		\begin{split}
		\sum_{k \in \mathbb{Z}} |k|^{\frac{1-\alpha}{1+\alpha}}|g_{1,k}^-|^2  \; < \; +\infty& \,, \qquad g_{0,k}^- \;=\;0\,, \\
		\sum_{k \in \mathbb{Z}}  |k|^{\frac{3+\alpha}{1+\alpha}}|g_{1,k}^+|^2 \; < \; + \infty& \,, \qquad \sum_{k \in \mathbb{Z}}  |k|^{\frac{3+\alpha}{1+\alpha}}|g_{0,k}^+|^2 \; < \; + \infty\,.
		\end{split}
	\end{equation}
	\item[(iii)] If $\mathscr{H}_{\alpha}^{\mathrm{u.f.}}$ is of type $\mathrm{I}_{\mathrm{L}}$, then
	\begin{equation}\label{eq:g0g1Sobolev_typeIL}
		\begin{split}
		\sum_{k \in \mathbb{Z}} |k|^{\frac{1-\alpha}{1+\alpha}}|g_{1,k}^+|^2  \; < \; +\infty& \,, \qquad g_{0,k}^+ \;=\;0\,, \\
		\sum_{k \in \mathbb{Z}} |k|^{\frac{3+\alpha}{1+\alpha}} |g_{1,k}^-|^2  \; < \; + \infty& \,, \qquad \sum_{k \in \mathbb{Z}} |k|^{\frac{3+\alpha}{1+\alpha}}|g_{0,k}^-|^2  \; < \; + \infty\,.
		\end{split}
	\end{equation}
	\item[(iv)] If $\mathscr{H}_{\alpha}^{\mathrm{u.f.}}$ is of type $\mathrm{II}_a$, then
	\begin{equation}\label{eq:g0g1Sobolev_typeIIa}
		\begin{split}
		&\sum_{k \in \mathbb{Z}} |k|^{\frac{1-\alpha}{1+\alpha}}|g_{1,k}^\pm|^2  \; < \; +\infty \,, \qquad \sum_{k \in \mathbb{Z}}  |k|^{\frac{3+\alpha}{1+\alpha}} |g_{0,k}^\pm|^2\; < \; + \infty\,, \\
		&\sum_{k \in \mathbb{Z}} |k|^{\frac{3+\alpha}{1+\alpha}}|g_{1,k}^-+ \overline{a} g_{1,k}^+|^2  \; < \; + \infty\,. 
		\end{split}
	\end{equation}
	\item[(v)] If $\mathscr{H}_{\alpha}^{\mathrm{u.f.}}$ is of type $\mathrm{III}$, then
	\begin{equation}\label{eq:g0g1Sobolev_typeIII}
	   \sum_{k\in\mathbb{Z}}|k|^{\frac{3+\alpha}{1+\alpha}}|g^\pm_{0,k}|^2\;<\;+\infty\,,\qquad \sum_{k\in\mathbb{Z}}|k|^{\frac{3+\alpha}{1+\alpha}}|g^\pm_{1,k}|^2\;<\;+\infty\,.
	\end{equation}
\end{itemize} 
\end{proposition}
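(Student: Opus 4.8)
The plan is to translate the membership $(g_k)_{k\in\mathbb{Z}}\in\mathcal{D}(\mathscr{H}_\alpha^{\mathrm{u.f.}})$ into two kinds of quantitative information about the coefficients $g_{0,k}^\pm$ and $g_{1,k}^\pm$: first, the self-adjointness boundary conditions on each fibre (from Theorem \ref{thm:bifibre-extensions} or equivalently Table \ref{tab:extensions}), and second, the summability constraints that guarantee $(g_k)_k\in\mathcal{D}(\mathscr{H}_\alpha^*)$. The starting point is Lemma \ref{lem:gkkrepr}, which tells me that for any element of $\mathcal{D}(\mathscr{H}_\alpha^*)$ the coefficients $c_{0,k}^\pm,c_{1,k}^\pm$ appearing in the representation \eqref{eq:gkrepresentation} satisfy the weighted $\ell^2$-bounds \eqref{eq:pileupcond2}--\eqref{eq:pileupcond3}, namely $\sum_{k\neq 0}|k|^{-\frac{2}{1+\alpha}}|c_{i,k}^\pm|^2<\infty$. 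The first step, therefore, is to record the precise dictionary between the canonical coefficients $c_{0,k}^\pm,c_{1,k}^\pm$ of \eqref{eq:gkrepresentation} and the $P$-weighted coefficients $g_{0,k}^\pm,g_{1,k}^\pm$ of \eqref{eq:gwithPweight_k}: this is exactly the content of the relations \eqref{eq:limitsg0g1} for $k\neq 0$ and \eqref{eq:limitsg0g1_zero_mode} for $k=0$. From those formulas one reads off the $k$-scaling
\[
 g_{0,k}^\pm\;\sim\;|k|^{-\frac{1}{1+\alpha}}\,c_{0,k}^\pm\,,\qquad g_{1,k}^\pm\;=\;c_{1,k}^\pm\,\|\Phi_{\alpha,k}\|_{L^2}^2\,{\textstyle\sqrt{\frac{2|k|}{\pi(1+\alpha)^3}}}-c_{0,k}^\pm\,{\textstyle\sqrt{\frac{\pi|k|}{2(1+\alpha)}}}\,,
\]
where $\|\Phi_{\alpha,k}\|_{L^2}^2\sim|k|^{-\frac{2}{1+\alpha}}$ by \eqref{eq:Phinorm}.

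The second step is to feed this dictionary into the raw summability $\sum_{k\neq0}|k|^{-\frac{2}{1+\alpha}}|c_{i,k}^\pm|^2<\infty$. Substituting $c_{0,k}^\pm\sim|k|^{\frac{1}{1+\alpha}}g_{0,k}^\pm$ converts the bound on $c_{0,k}$ into $\sum_k|k|^{\frac{2}{1+\alpha}-\frac{2}{1+\alpha}}|g_{0,k}^\pm|^2=\sum_k|g_{0,k}^\pm|^2<\infty$, i.e.\ mere $L^2$; similarly the bound on $c_{1,k}$ converts, after using the $\Phi$-norm scaling, into $\sum_k|k|^{\frac{1-\alpha}{1+\alpha}}|g_{1,k}^\pm|^2<\infty$. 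So the \emph{generic} membership in $\mathcal{D}(\mathscr{H}_\alpha^*)$ already yields the weak exponent $\frac{1-\alpha}{1+\alpha}$ for the $g_1$'s and $L^2$ for the $g_0$'s. This is precisely the regularity that survives for the Friedrichs case (i) and for the ``free'' components in (ii)--(iv), and it accounts for every line carrying the exponent $\frac{1-\alpha}{1+\alpha}$.

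The third and decisive step is to exploit the \emph{self-adjointness} boundary conditions to upgrade the exponent from $\frac{1-\alpha}{1+\alpha}$ to $\frac{3+\alpha}{1+\alpha}$ on the appropriate components. The key observation is that these boundary conditions are linear relations $g_{1,k}^\pm=(\text{const})\,g_{0,k}^\pm$ (for types $\mathrm{I}_R,\mathrm{I}_L$) or the matrix relation of type $\mathrm{III}$, or the $\mathrm{II}_a$ relations $g_{0,k}^+=a\,g_{0,k}^-$ and $g_{1,k}^-+\overline{a}g_{1,k}^+=\gamma g_{0,k}^-$; crucially, because the extension is \emph{uniformly} fibred, the constants $\gamma,\Gamma,a$ are $k$-independent. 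Take type $\mathrm{I}_R$ as the model: there $g_{0,k}^-=0$ and $g_{1,k}^+=\gamma g_{0,k}^+$. The relation $g_{1,k}^+=\gamma g_{0,k}^+$ forces the two sequences $(g_{0,k}^+)$ and $(g_{1,k}^+)$ to coincide up to the fixed scalar $\gamma$, so they must satisfy the \emph{same} weighted summability; but from step two $(g_{0,k}^+)$ only sat in $L^2$ while $(g_{1,k}^+)$ sat in the $\frac{1-\alpha}{1+\alpha}$-weighted space. The point is that when I re-express the constraint in terms of the canonical charges, the condition $c_{1,k}^+=\beta c_{0,k}^+$ with $\beta$ adjusted to $k$ via \eqref{eq:g1gammag0} means that the two contributions to $g_{1,k}^+$ no longer partially cancel but reinforce, so that $g_{1,k}^+$ inherits the \emph{stronger} $|k|$-growth $\sim|k|^{\frac{1}{1+\alpha}}|k|^{\frac{1}{1+\alpha}}g_{0,k}^+$-type scaling; tracking the exponents then gives $\sum_k|k|^{\frac{3+\alpha}{1+\alpha}}|g_{0,k}^+|^2<\infty$ and the same for $g_{1,k}^+$. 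I would run the identical bookkeeping for $\mathrm{I}_L$ (mirror), for $\mathrm{II}_a$ (where the coupled condition gives the improved exponent only on the combination $g_{1,k}^-+\overline{a}g_{1,k}^+$ and on $g_{0,k}^\pm$, while each individual $g_{1,k}^\pm$ keeps the weak exponent), and for $\mathrm{III}$ (where the full $2\times2$ Hermitian relation upgrades \emph{all four} components to $\frac{3+\alpha}{1+\alpha}$).

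The main obstacle I anticipate is the third step, specifically controlling the cancellation/reinforcement in the formula $g_{1,k}^\pm=c_{1,k}^\pm\|\Phi_{\alpha,k}\|^2\sqrt{\tfrac{2|k|}{\pi(1+\alpha)^3}}-c_{0,k}^\pm\sqrt{\tfrac{\pi|k|}{2(1+\alpha)}}$. The two terms have the same nominal $|k|$-power after the $\Phi$-norm scaling, so whether the improved exponent $\frac{3+\alpha}{1+\alpha}$ genuinely holds depends on showing that the self-adjointness condition does not allow a conspiracy that keeps $g_{1,k}$ small while $g_{0,k}$ is large (or vice versa). The cleanest route is to avoid re-deriving everything from the $c$'s and instead observe that the boundary relations are \emph{exactly} of the form used to define the auxiliary self-adjoint operator $S(k)$ on $\ker(A_\alpha(k)^*+\delta_{k,0}\mathbbm{1}_0)$, and that $(g_k)_k\in\mathcal{D}(\mathscr{H}_\alpha^{\mathrm{u.f.}})$ requires $v_k\in\mathcal{D}(S(k))$ together with the charge identity $\eta_k=S(k)v_k+w_k$ from \eqref{eq:Birman_formula-fibred}; the finiteness $\sum_k\|(\mathscr{H}_{\alpha,F}+\mathbbm{1}_0)^{-1}(S(k)v_k+w_k)\|^2<\infty$ combined with Lemma \ref{lem:psikinspaces}(ii) then directly produces the weighted $\ell^2$-bound on the charge $\eta_k$, which in the boundary-condition cases where $\eta$ is forced to be a fixed multiple of $v$ transfers the $\frac{3+\alpha}{1+\alpha}$ weight onto both $g_{0,k}$ and $g_{1,k}$. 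I would present the $\mathrm{I}_R$ case in full and then state that $\mathrm{I}_L$, $\mathrm{II}_a$, and $\mathrm{III}$ follow by the identical argument applied to the appropriate component or linear combination, with the exponent $\frac{3+\alpha}{1+\alpha}$ appearing precisely on those components constrained by the boundary condition and $\frac{1-\alpha}{1+\alpha}$ persisting on the free ones.
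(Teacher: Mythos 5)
Your overall strategy does coincide with the paper's: combine the a priori weighted bounds \eqref{eq:pileupcond2}--\eqref{eq:pileupcond3} of Lemma \ref{lem:gkkrepr} with the fibre-wise self-adjointness constraints of Theorem \ref{thm:bifibre-extensionsc0c1}, whose fibre parameters $\beta_k,\tau_k,\tau_{j,k}$ are forced to grow (e.g.\ $\beta_k\sim|k|^{\frac{2}{1+\alpha}}$) precisely because the $g$-level parameters $\gamma,\Gamma$ are $k$-independent, and then transport the resulting gain to the $g$'s through \eqref{eq:Relationg0c0}--\eqref{eq:Relationg1c1}. But your quantitative bookkeeping contains a genuine error: from \eqref{eq:limitsg0g1} the dictionary is $g_{0,k}^\pm=c_{0,k}^\pm\sqrt{\pi(1+\alpha)/(2|k|)}\sim|k|^{-\frac{1}{2}}c_{0,k}^\pm$, \emph{not} $|k|^{-\frac{1}{1+\alpha}}c_{0,k}^\pm$ as you wrote (you transplanted the exponent of $\|\Phi_{\alpha,k}\|_{L^2}^2$ into the wrong place). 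Tracked literally, your scaling would convert the enhanced bound $\sum_k|k|^{\frac{2}{1+\alpha}}|c_{0,k}^\pm|^2<\infty$ into the exponent $\frac{4}{1+\alpha}$ instead of $\frac{3+\alpha}{1+\alpha}$ (these agree only at the excluded value $\alpha=1$); it is exactly the identity $\frac{2}{1+\alpha}+1=\frac{3+\alpha}{1+\alpha}$, with the correct $|k|^{1/2}$ factor, that produces the stated exponents.

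Your ``step two'' is also false as stated: mere membership in $\mathcal{D}(\mathscr{H}_\alpha^*)$ yields, via the correct dictionary, only $\sum_k|k|^{-\frac{1-\alpha}{1+\alpha}}|g_{0,k}^\pm|^2<\infty$ --- strictly weaker than $\ell^2$ --- and it yields \emph{no} bound on the $g_{1,k}^\pm$'s whenever $c_{0,k}^\pm\neq 0$, because $g_{1,k}^\pm$ contains the cross term $-c_{0,k}^\pm\sqrt{\pi|k|/(2(1+\alpha))}$, whose weak-weighted sum equals $\sum_k|k|^{\frac{2}{1+\alpha}}|c_{0,k}^\pm|^2$ and is not controlled by \eqref{eq:pileupcond2} (take $c_{1,k}^\pm=0$ and $c_{0,k}^\pm=|k|^{\frac{1}{1+\alpha}-\frac{1}{2}-\varepsilon}$ for a counterexample). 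So the weak exponent $\frac{1-\alpha}{1+\alpha}$ is \emph{not} generic for type $\mathrm{II}_a$, and the order of your argument must be reversed there: as in the paper's proof of case (iv), one first derives the enhanced bound $\sum_k|k|^{\frac{3+\alpha}{1+\alpha}}|g_{0,k}^\pm|^2<\infty$ from the constraint $c_{1,k}^-+\overline{a}\,c_{1,k}^+=(1+|a|^2)\tau_k c_{0,k}$ with $\tau_k\sim|k|^{\frac{2}{1+\alpha}}$, and only then controls the cross term in the weak $g_1$-bound. Your fallback via the charge identity is indeed the correct mechanism, but beware the phrase ``a fixed multiple of $v$'': the whole gain of $|k|^{\frac{4}{1+\alpha}}$ comes from the multiple \emph{not} being fixed, $\eta_k=\beta_k v_k$ with $\beta_k\sim|k|^{\frac{2}{1+\alpha}}$, as enforced by uniform fibring through \eqref{eq:g1gammag0} and \eqref{eq:IkTheorem51}--\eqref{eq:IIkTheorem51}; and for type $\mathrm{III}$ one additionally needs the asymptotic inversion of the $2\times 2$ relation, using that the off-diagonal entries $\tau_{2,k}\pm\ii\tau_{3,k}\sim|k|^{\frac{1-\alpha}{1+\alpha}}$ are subordinate to the diagonal ones --- a step you gesture at but do not supply.
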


\begin{corollary}\label{cor:(g0)k_(g1)k_in_Hs}
 Under the assumptions of Proposition \ref{prop:g0g1Sobolev}, $(g^\pm_{0,k})_{k\in\mathbb{Z}}$ and $(g^\pm_{1,k})_{k\in\mathbb{Z}}$ belong $\ell^2(\mathbb{Z})$. Hence, \eqref{e1:F2-1g0} and \eqref{eq:F2-1g1} define functions $y\mapsto g_0^\pm(y)$ and $y\mapsto g_1^\pm(y)$ that belong to $L^2(\mathbb{S}^1,\ud y)$. In particular, the summability properties \eqref{eq:g0g1Sobolev_typeF}-\eqref{eq:g0g1Sobolev_typeIII} imply that $g_0^\pm \in H^{s_{0,\pm}}(\mathbb{S}^1, \ud y)$ and $g_1^\pm\in H^{s_{1,\pm}}(\mathbb{S}^1,\ud y)$, where the order of such Sobolev spaces is, respectively,
 \begin{itemize}
 	\item[(i)] $s_{1,\pm}=\frac{1}{2}\frac{1-\alpha}{1+\alpha}$\qquad\qquad\qquad\qquad\qquad\; for the Friedrichs extension,
 	\item[(ii)] $s_{1,-}=\frac{1}{2}\frac{1-\alpha}{1+\alpha}$, $s_{0,+}=s_{1,+}=\frac{1}{2}\frac{3+\alpha}{1+\alpha}$ \quad for extensions of type $\mathrm{I}_R$,
 	\item[(iii)]  $s_{1,+}=\frac{1}{2}\frac{1-\alpha}{1+\alpha}$, $s_{0,-}=s_{1,-}=\frac{1}{2}\frac{3+\alpha}{1+\alpha}$ \quad for extensions of type $\mathrm{I}_L$,
 	\item[(iv)] $s_{1,\pm}=s_{0,\pm}=\frac{1}{2}\frac{1-\alpha}{1+\alpha}$ \qquad\qquad\qquad \;\;\;\,for extensions of type $\mathrm{II}_a$,
 	\item[(v)] $s_{1,\pm}=s_{0,\pm}=\frac{1}{2}\frac{3+\alpha}{1+\alpha}$ \qquad\qquad\qquad \;\; for extensions of type $\mathrm{III}$.
 \end{itemize}
 \end{corollary}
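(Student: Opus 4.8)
The plan is to prove Corollary \ref{cor:(g0)k_(g1)k_in_Hs} as an immediate consequence of Proposition \ref{prop:g0g1Sobolev}, by translating the weighted-$\ell^2$ summability conditions \eqref{eq:g0g1Sobolev_typeF}--\eqref{eq:g0g1Sobolev_typeIII} into membership in the appropriate Sobolev spaces on $\mathbb{S}^1$. The key observation is that, via the discrete Fourier transform $\mathcal{F}_2$, the Sobolev space $H^s(\mathbb{S}^1,\ud y)$ is characterised by its Fourier coefficients: a function $u=\sum_k u_k e_k$ belongs to $H^s(\mathbb{S}^1)$ if and only if $\sum_{k\in\mathbb{Z}}\langle k\rangle^{2s}|u_k|^2<+\infty$, where $\langle k\rangle=\sqrt{1+k^2}$. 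Since the quantities $(g_{0,k}^\pm)_{k\in\mathbb{Z}}$ and $(g_{1,k}^\pm)_{k\in\mathbb{Z}}$ are precisely the Fourier coefficients of $g_0^\pm$ and $g_1^\pm$ defined by \eqref{e1:F2-1g0}--\eqref{eq:F2-1g1}, the weighted sums appearing in Proposition \ref{prop:g0g1Sobolev} are exactly the squared Sobolev norms, up to the comparison between $|k|^{2s}$ and $\langle k\rangle^{2s}$.

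First I would record that each weight exponent appearing in \eqref{eq:g0g1Sobolev_typeF}--\eqref{eq:g0g1Sobolev_typeIII} matches twice the claimed Sobolev order: the exponent $\frac{1-\alpha}{1+\alpha}$ corresponds to $s=\frac{1}{2}\frac{1-\alpha}{1+\alpha}$, and $\frac{3+\alpha}{1+\alpha}$ corresponds to $s=\frac{1}{2}\frac{3+\alpha}{1+\alpha}$, so that in every case the relevant summability reads $\sum_{k}|k|^{2s}|g_{j,k}^\pm|^2<+\infty$. Since every such exponent is non-negative (indeed $0\le\frac{1}{2}\frac{1-\alpha}{1+\alpha}<\frac{1}{2}\frac{3+\alpha}{1+\alpha}$ for $\alpha\in[0,1)$), convergence of $\sum_k|k|^{2s}|g_{j,k}^\pm|^2$ with $s\ge 0$ in particular forces $\sum_k|g_{j,k}^\pm|^2<+\infty$, giving the plain $\ell^2(\mathbb{Z})$-membership and hence, by Parseval, that $g_0^\pm,g_1^\pm\in L^2(\mathbb{S}^1,\ud y)$. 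This handles the first assertion of the corollary.

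For the Sobolev statement, I would invoke the elementary two-sided bound: there exist constants $c_s,C_s>0$ (depending only on $s\ge 0$) such that $c_s\langle k\rangle^{2s}\le 1+|k|^{2s}\le C_s\langle k\rangle^{2s}$ for all $k\in\mathbb{Z}$. Combining the $\ell^2$-bound with the weighted bound from Proposition \ref{prop:g0g1Sobolev} yields
\[
 \sum_{k\in\mathbb{Z}}\langle k\rangle^{2s}|g_{j,k}^\pm|^2\;\le\;\sum_{k\in\mathbb{Z}}\big(1+|k|^{2s}\big)|g_{j,k}^\pm|^2\;=\;\|(g_{j,k}^\pm)\|_{\ell^2}^2+\sum_{k\in\mathbb{Z}}|k|^{2s}|g_{j,k}^\pm|^2\;<\;+\infty\,,
\]
so that by the Fourier characterisation of $H^s(\mathbb{S}^1)$ each boundary function lies in the claimed space. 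Reading off case by case which index $(j,\pm)$ carries which exponent reproduces exactly the list (i)--(v), and the vanishing statements ($g_{0,k}^-=0$ in type $\mathrm{I}_R$, etc.) mean the corresponding $g_0^\pm$ is simply the zero function, trivially in every $H^s$.

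There is essentially no obstacle here: the corollary is purely a restatement of Proposition \ref{prop:g0g1Sobolev} in the language of Sobolev spaces, the only ingredients being Parseval's identity and the standard Fourier description of $H^s(\mathbb{S}^1)$. The genuine analytic work—establishing the weighted summability bounds themselves, which encode the uniformity in $k$ needed to survive the inverse Fourier transform—resides entirely in Proposition \ref{prop:g0g1Sobolev}, which I would assume as given. Thus the proof of the corollary is short, and the main point to be careful about is merely the bookkeeping of matching each weight exponent to its Sobolev order and verifying the exponents are non-negative throughout the range $\alpha\in[0,1)$.
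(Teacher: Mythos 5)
Your proposal is correct and coincides with the paper's treatment: the paper gives no separate proof of this corollary, regarding it, exactly as you do, as immediate from Proposition \ref{prop:g0g1Sobolev} together with the Fourier-coefficient characterisation of $H^s(\mathbb{S}^1)$ and Parseval's identity, the only ingredients being the comparison $1+|k|^{2s}\sim\langle k\rangle^{2s}$ and the bookkeeping of exponents. One cosmetic remark: in case (iv) the proposition actually yields the stronger weight $|k|^{\frac{3+\alpha}{1+\alpha}}$ for $g_{0,k}^{\pm}$, so your phrase ``reading off exactly'' is slightly imprecise there --- the corollary's stated $s_{0,\pm}=\frac{1}{2}\frac{1-\alpha}{1+\alpha}$ for type $\mathrm{II}_a$ is simply the weaker statement implied via the embedding $H^{s'}(\mathbb{S}^1)\subset H^{s}(\mathbb{S}^1)$ for $s'>s$.
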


\begin{proof}[Proof of Proposition \ref{prop:g0g1Sobolev}]
For each case, the proof is organised in two stages. First, we consider each family of extensions on fibre Hilbert space as characterised by Theorem \ref{thm:bifibre-extensionsc0c1} in terms of certain self-adjointness constraints between the coefficients $c_0^\pm$ and $c_1^\pm$ of the representation \eqref{eq:repreDHstar}-\eqref{eq:gkrepresentation} of the elements of $\mathcal{D}(\mathscr{H}_\alpha^*)$, and we show that owing to such constraints the a priori summability \eqref{eq:pileupcond2}-\eqref{eq:pileupcond3} of the $c_0^\pm$'s and $c_1^\pm$'s is actually enhanced (see also Remark \ref{rem:enhanced_summability} below). Then, we export the resulting summability of the $c_0^\pm$'s and $c_1^\pm$'s on to the $g_0^\pm$'s and $g_1^\pm$'s by means of the relations
\begin{eqnarray}
 g_{0,k}^\pm \!& = &\!c_{0,k}^\pm \textstyle{\sqrt{\frac{\pi(1+\alpha)}{2 |k|}}}\,, \label{eq:Relationg0c0} \\
 g_{1,k}^\pm \!& = &\! c_{1,k}^\pm \textstyle{\sqrt{\frac{2 |k|}{\pi (1+\alpha)^3}}} \Vert \Phi_{\alpha,k} \Vert_{L^2(\mathbb{R}^+)}^2 - c_{0,k}^\pm \textstyle{\sqrt{\frac{\pi |k|}{2(1+\alpha)}}}\label{eq:Relationg1c1}
\end{eqnarray}
valid for $k\neq 0$ (see \eqref{eq:limitsg0g1} above). Obviously, it suffices to prove the final summability properties for $k\in\mathbb{Z}\setminus\{0\}$. Let us also recall from \eqref{eq:Phinorm} that
\[
 \Vert \Phi_{\alpha,k} \Vert_{L^2(\mathbb{R}^+)}^2\;\sim\; |k|^{-\frac{2}{1+\alpha}}\,,
\]
namely for some multiplicative constant depending only on $\alpha$.


(i) Theorem \ref{thm:bifibre-extensionsc0c1} states that for this case $c_{0,k}^\pm =0$. This, together with  \eqref{eq:pileupcond2} and \eqref{eq:Relationg1c1}, yields
\[
	+ \infty \; > \sum_{k\in\mathbb{Z}\setminus\{0\}} |k|^{-\frac{2}{1+\alpha}} |c_{1,k}^\pm|^2 \;\sim\sum_{k\in\mathbb{Z}\setminus\{0\}} |k|^{\frac{1-\alpha}{1+\alpha}} |g_{1,k}^\pm |^2\,.
\]

(ii) Theorem \ref{thm:bifibre-extensionsc0c1} states that for this case $c_{0,k}^+=0$ and $c_{1,k}^+ =\beta_k c_{0,k}^+$ with $\beta_k$ given for $k\neq 0$ by 
\[
\gamma \; = \; \textstyle\frac{|k|}{1+\alpha}\Big(\frac{\, 2 \|\Phi_{\alpha,k}\|_{L^2}^2 \,}{\pi(1+\alpha)}\, \beta_k-1\Big)
\]
(see \eqref{eq:g1gammag0} above), that is, $\beta_k\sim |k|^{\frac{2}{1+\alpha}}$ at the leading order in $k$. (Here the operator of multiplication by $\beta_k$ is what we denoted in abstract by $S(k)$ in the discussion following Theorem \eqref{thm:Halphageneralext} -- see \eqref{eq:fibredS} above.) This, together with  \eqref{eq:pileupcond3} and \eqref{eq:Relationg0c0} yields
\[
 +\infty\;>\sum_{k\in\mathbb{Z}\setminus\{0\}} |k|^{-\frac{2}{1+\alpha}} |c_{1,k}^+|^2\;=\sum_{k\in\mathbb{Z}\setminus\{0\}} |k|^{-\frac{2}{1+\alpha}} |\beta_k c_{0,k}^+|^2\;\sim\sum_{k\in\mathbb{Z}\setminus\{0\}} |k|^{\frac{3+\alpha}{1+\alpha}}|g_{0,k}^+|^2\,.
\]
From this one also obtains
\[
 \sum_{k\in\mathbb{Z}\setminus\{0\}} |k|^{\frac{3+\alpha}{1+\alpha}}|g_{1,k}^+|^2\;<\;+\infty\,,
\]
owing to the self-adjointness condition in the form $g_{1,k}^+=\gamma g_{0,k}^+$ (Theorem \ref{thm:bifibre-extensions}). As for the summability of the $c_{1,k}^-$, one proceeds precisely as in case (i).

(iii) The reasoning for this case is completely analogous as for case (ii), upon exchanging the `$+$' coefficients with the `$-$' coefficients.

(iv) Theorem \ref{thm:bifibre-extensionsc0c1} states for this case 
\[
	\begin{split}
		c_{0,k}^- \;&=\; c_{0,k} \, , \qquad\qquad\! c_{1,k}^- \; = \; \tau_k c_{0,k} + \widetilde{c}_{0,k} \, , \\
		c_{0,k}^+ \;& = \; a  c_{0,k}\,, \qquad\quad\; c_{1,k}^+ \;=\; \tau_k a c_{0,k} - \overline{a}^{-1} \widetilde{c}_{0,k}\,,
	\end{split}
\]
with $\tau_k$ given for $k \neq 0$ by
\[
			  \gamma\;:=\;\textstyle{(1+|a|^2) \frac{|k|}{1+\alpha}} \Big( \frac{\,2 \|\Phi_{\alpha,k}\|_{L^2}^2 \,}{\pi (1+\alpha)} \, \tau_k -1 \Big)\,,
\]
(see \eqref{eq:IkTheorem51} above), that is, $\tau_k \sim |k|^{\frac{2}{1+\alpha}}$ at the leading order in $k$. This, together with the a priori bounds \eqref{eq:pileupcond3}, and with \eqref{eq:Relationg0c0}, yields
\[
\begin{split}
 + \infty  \; &> \; \sum_{k\in\mathbb{Z}\setminus\{0\}} |k|^{-\frac{2}{1+\alpha}} |c_{1,k}^- + \overline{a} c_{1,k}^+ |^2 \;=\; \sum_{k\in\mathbb{Z}\setminus\{0\}} |k|^{-\frac{2}{1+\alpha}} |(1+|a|^2) \tau_k c_{0,k} |^2 \\
 &\sim \; \sum_{k\in\mathbb{Z}\setminus\{0\}}|k|^{\frac{3+\alpha}{1+\alpha}} |g_{0,k}^-|^2 \, .
\end{split}
\]
From this, and self-adjointness conditions $g_{0,k}^+= a g_{0,k}^-$ and $g_{1,k}^-+\overline{a} g_{1,k}^+= \gamma g_{0,k}^-$  (Theorem \ref{thm:bifibre-extensions}), one obtains the last two conditions in \eqref{eq:g0g1Sobolev_typeIIa}. As for establishing the first condition in \eqref{eq:g0g1Sobolev_typeIIa}, one has
\[
 \begin{split}
  \sum_{k\in\mathbb{Z}\setminus\{0\}} &|k|^{\frac{1-\alpha}{1+\alpha}}\,|g_{1,k}^{\pm}|^2 \\
  &\leqslant\; \sum_{k\in\mathbb{Z}\setminus\{0\}}|k|^{\frac{1-\alpha}{1+\alpha}}\,|c_{1,k}^{\pm}|^2{\textstyle{\frac{4|k|}{\pi(1+\alpha^3)}}}\|\Phi_{\alpha,k}\|_{L^2(\mathbb{R}^+)}^4+ \!\!\sum_{k\in\mathbb{Z}\setminus\{0\}}|k|^{\frac{1-\alpha}{1+\alpha}}\,|c_{0,k}^{\pm}|^2\textstyle{\frac{\pi|k|}{(1+\alpha)}} \\
  &\sim\;\sum_{k\in\mathbb{Z}\setminus\{0\}}|k|^{-\frac{2}{1+\alpha}}|c_{1,k}^{\pm}|^2+\!\!\sum_{k\in\mathbb{Z}\setminus\{0\}}|k|^{\frac{3+\alpha}{1+\alpha}}|g_{0,k}^{\pm}|^2\;<\;+\infty\,,
 \end{split}
\]
having used \eqref{eq:Relationg1c1} for the first step, \eqref{eq:Relationg0c0} for the second step, and the a priori bounds \eqref{eq:pileupcond3} as well as the already proved second condition in \eqref{eq:g0g1Sobolev_typeIIa} for the last step.

(v) Theorem \ref{thm:bifibre-extensionsc0c1} states for this case
\[
	\begin{pmatrix}
		c_{1,k}^- \\
		c_{1,k}^+
	\end{pmatrix} \; = \; \begin{pmatrix}
		\tau_{1,k} & \tau_{2,k} + \ii \tau_{3,k} \\
		\tau_{2,k} - \ii \tau_{3,k} & \tau_{4,k}
	\end{pmatrix}
	\begin{pmatrix}
		c_{0,k}^- \\
		c_{0,k}^+
	\end{pmatrix}
\]
with
 \begin{equation*}
  \begin{split}
   \gamma_1\;&=\; \textstyle \frac{|k|}{1+\alpha} \Big( \frac{\,2 \|\Phi_{\alpha,k}\|_{L^2}^2 \,}{\pi (1+\alpha)} \, \tau_{1,k} -1 \Big)\,, \\
   \gamma_2+\ii\gamma_3\;&=\; (\tau_{2,k}+\ii\tau_{3,k})\textstyle\frac{2|k|}{\pi(1+\alpha)^2}\|\Phi_{\alpha,k}\|_{L^2}^2\,, \\
   \gamma_4\;&=\; \textstyle \frac{|k|}{1+\alpha} \Big( \frac{\,2 \|\Phi_{\alpha,k}\|_{L^2}^2 \,}{\pi (1+\alpha)} \, \tau_{4,k} -1 \Big)  \end{split}
 \end{equation*}
(see \eqref{eq:IIkTheorem51} above). Thus,
\[
\tau_{1,k}\;\sim\;|k|^{\frac{2}{1+\alpha}}\,,\qquad   \tau_{2,k}\pm\ii\tau_{3,k}\;\sim\;|k|^{\frac{1-\alpha}{1+\alpha}} \,,\qquad \tau_{4,k}\;\sim\;|k|^{\frac{2}{1+\alpha}}\,,
\]
and 
\[
	\begin{pmatrix}
		c_{1,k}^- \\
		c_{1,k}^+
	\end{pmatrix} \; \sim \; \begin{pmatrix}
		|k|^{\frac{2}{1+\alpha}} & |k|^{\frac{1-\alpha}{1+\alpha}} \\
		|k|^{\frac{1-\alpha}{1+\alpha}} & |k|^{\frac{2}{1+\alpha}}
	\end{pmatrix}
	\begin{pmatrix}
		c_{0,k}^- \\
		c_{0,k}^+
	\end{pmatrix}\;\sim\;
	 \begin{pmatrix}
		|k|^{\frac{5+\alpha}{2(1+\alpha)}} & |k|^{\frac{3-\alpha}{2(1+\alpha)}} \\
		|k|^{\frac{3-\alpha}{2(1+\alpha)}} & |k|^{\frac{5+\alpha}{2(1+\alpha)}}
	\end{pmatrix}
	\begin{pmatrix}
		g_{0,k}^- \\
		g_{0,k}^+
	\end{pmatrix}
\]
at the leading order in $k$, having used \eqref{eq:Relationg0c0} in the last asymptotics. As the above matrix has determinant of leading order $|k|^{\frac{5+\alpha}{1+\alpha}}$, a standard inversion formula yields
\[
 \begin{pmatrix}
		g_{0,k}^- \\
		g_{0,k}^+
	\end{pmatrix}\;\sim\;|k|^{-\frac{5+\alpha}{2(1+\alpha)}}
	 \begin{pmatrix}
		1 & -|k|^{-1} \\
		-|k|^{-1} & 1
	\end{pmatrix}\begin{pmatrix}
		c_{1,k}^- \\
		c_{1,k}^+
	\end{pmatrix},
\]
whence
\[
 |g_{0,k}^-|^2+|g_{0,k}^+|^2\;\lesssim\;|k|^{-\frac{5+\alpha}{1+\alpha}}\big(|c_{0,k}^-|^2+|c_{0,k}^+|^2\big)
\]
at the leading order in $k$. Therefore,
\[
 \begin{split}
   \sum_{k\in\mathbb{Z}\setminus\{0\}}|k|^{\frac{3+\alpha}{1+\alpha}}\, |g_{0,k}^\pm|^2\;\lesssim \sum_{k\in\mathbb{Z}\setminus\{0\}}|k|^{-\frac{2}{1+\alpha}}\, \big(|c_{0,k}^-|^2+|c_{0,k}^+|^2\big)\;<\;+\infty\,,
 \end{split}
\]
having used the a priori bound  \eqref{eq:pileupcond3} for the last step. This establishes the first condition in \eqref{eq:g0g1Sobolev_typeIII}. The second condition follows at once from the first by means of the self-adjointness constraints 
\[
 \begin{split}
  g_{1,k}^-\;&=\;\gamma_1 g_{0,k}^-+(\gamma_2+\ii\gamma_3)g_{0,k}^+ \\
  g_{1,k}^+\;&=\;(\gamma_2-\ii\gamma_3)g_{0,k}^-+\gamma_4 g_{0,k}^+
 \end{split}
\]
from Theorem \ref{thm:bifibre-extensions}.
\end{proof}


\begin{remark}[Enhanced summability]\label{rem:enhanced_summability}
 Let $(g_k)_{k\in\mathbb{Z}}\in\mathcal{D}(\mathscr{H}_\alpha^*)$. As established in Lemma \ref{lem:gkkrepr}, the coefficients $c_{0,k}$ given by the representation \eqref{eq:repreDHstar}-\eqref{eq:gkrepresentation} of $g_k$ satisfy
 \[
  \sum_{k\in\mathbb{Z}\setminus\{0\}}|k|^{-\frac{2}{1+\alpha}}|c_{0,k}^\pm|^2\,\;<\;+\infty\,.
 \]
 If \emph{in addition}  $(g_k)_{k\in\mathbb{Z}}\in\mathcal{D}(\mathscr{H}_\alpha^{\mathrm{u.f.}})$ for some uniformly-fibred extension of $\mathscr{H}_\alpha$, then Prop.~\ref{prop:g0g1Sobolev} above shows that the coefficients $g_{0,k}$ given by the representation \eqref{eq:gwithPweight_k} of $g_k$ satisfy
 \[
  \sum_{k\in\mathbb{Z}\setminus\{0\}}|k|^{\frac{3+\alpha}{1+\alpha}}|g_{0,k}^\pm|^2\,\;<\;+\infty
 \]
 (this covers also the case when the $g_{0,k}^+$'s or the $g_{0,k}^-$'s are all zero, depending on the considered type of extension). The latter condition, owing to \eqref{eq:Relationg0c0} and hence $g_{0,k}^\pm\sim |k|^{-\frac{1}{2}}c_{0,k}^\pm$, implies
  \[
  \sum_{k\in\mathbb{Z}\setminus\{0\}}|k|^{\frac{2}{1+\alpha}}|c_{0,k}^\pm|^2\,\;<\;+\infty\,.
 \]
 Thus, the condition of belonging to $\mathcal{D}(\mathscr{H}_\alpha^{\mathrm{u.f.}})$, instead of generically to $\mathcal{D}(\mathscr{H}_\alpha^*)$, enhances the summability of the sequence $(c_{0,k}^\pm)_{k\in\mathbb{Z}}$. 
%
\end{remark}

\subsection{Decomposition of the adjoint into singular terms}\label{sec:singular_decomposition_adjoint}~

As alluded to at the end of Subsect.~\ref{sec:genstrategy}, let us show that the decomposition induced by \eqref{eq:gwithPweight_k} of a generic element in the domain of a uniformly fibred extension $\mathscr{H}_\alpha^{\mathrm{u.f.}}$, namely
\begin{equation}\label{eq:singular_decomposition_adjoint}
 (g_k)_{k\in\mathbb{Z}}\;=\;(\varphi_k)_{k\in\mathbb{Z}}+\big(g_{1,k}|x|^{1+\frac{\alpha}{2}}P\big)_{k\in\mathbb{Z}}+\big(g_{0,k}|x|^{-\frac{\alpha}{2}}P\big)_{k\in\mathbb{Z}}\,,
\end{equation}
unavoidably displays an annoying form of singularity, which affects our subsequent analysis, in the following sense.

\begin{lemma}\label{lem:singular_decomposition_adjoint}
Let $\alpha\in[0,1)$ and let $\mathscr{H}_\alpha^{\mathrm{u.f.}}$ be a uniformly fibred self-adjoint extension. There exists $ (g_k)_{k\in\mathbb{Z}}\in\mathcal{D}(\mathscr{H}_\alpha^{\mathrm{u.f.}})$ such that, with respect to the decomposition \eqref{eq:singular_decomposition_adjoint},
\[
 \begin{split}
   \big(g_{1,k}|x|^{1+\frac{\alpha}{2}}P\big)_{k\in\mathbb{Z}}\;&\notin\;\mathcal{D}(\mathscr{H}_\alpha^*)\,, \\
   \big(g_{0,k}|x|^{-\frac{\alpha}{2}}P\big)_{k\in\mathbb{Z}}\;&\notin\;\mathcal{D}(\mathscr{H}_\alpha^*)\,,
 \end{split}
\]
with the obvious exception of those objects above that are prescribed to be identically zero for all elements of the domain of the considered uniformly fibred extension.
%
%
\end{lemma}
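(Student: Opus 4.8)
The plan is to exhibit, for each type of uniformly fibred extension, an explicit element $(g_k)_{k\in\mathbb{Z}}\in\mathcal{D}(\mathscr{H}_\alpha^{\mathrm{u.f.}})$ whose singular summands fail the square-integrability test that characterises membership in $\mathcal{D}(\mathscr{H}_\alpha^*)$. Recall from \eqref{eq:Hfstar} and Lemma \ref{lem:Halphaadj-decomposable} that a sequence $(h_k)_{k\in\mathbb{Z}}$ belongs to $\mathcal{D}(\mathscr{H}_\alpha^*)=\bigoplus_{k\in\mathbb{Z}}A_\alpha(k)^*$ if and only if each $h_k\in\mathcal{D}(A_\alpha(k)^*)$ and $\sum_k\|A_\alpha(k)^*h_k\|_{\mathfrak{h}}^2<+\infty$. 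The strategy is therefore to compute $S_{\alpha,k}$ applied to each singular term and estimate its $L^2$-norm as a function of $k$, then show that the summability of the coefficients $g_{0,k}^\pm$, $g_{1,k}^\pm$ guaranteed by Proposition \ref{prop:g0g1Sobolev} is \emph{exactly} at the borderline where the individual summands diverge while their combination (which is $(g_k)_{k\in\mathbb{Z}}$ itself) converges by construction.

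First I would treat the term $\big(g_{1,k}|x|^{1+\frac{\alpha}{2}}P\big)_{k\in\mathbb{Z}}$. A direct computation using $S_{\alpha,k}=-\frac{\ud^2}{\ud x^2}+k^2|x|^{2\alpha}+C_\alpha x^{-2}$ shows that $S_{\alpha,k}(|x|^{1+\frac{\alpha}{2}}P)$ contains the term $k^2|x|^{2\alpha}\cdot|x|^{1+\frac{\alpha}{2}}P=k^2|x|^{1+\frac{5\alpha}{2}}P$, whose $L^2(\mathbb{R})$-norm is a fixed positive constant (independent of $k$, since $P$ localises near the origin and $1+\frac{5\alpha}{2}>-\frac12$). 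Hence $\|A_\alpha(k)^*(|x|^{1+\frac{\alpha}{2}}P)\|_{\mathfrak{h}}\sim |k|^2$ at leading order, so $\|A_\alpha(k)^*(g_{1,k}|x|^{1+\frac{\alpha}{2}}P)\|_{\mathfrak{h}}^2\sim |k|^4|g_{1,k}|^2$. For the divergence, I would choose $(g_k)_{k\in\mathbb{Z}}$ so that, say for a type-$\mathrm{III}$ extension, the coefficients $g_{1,k}^\pm$ saturate the summability $\sum_k|k|^{\frac{3+\alpha}{1+\alpha}}|g_{1,k}^\pm|^2<+\infty$ of \eqref{eq:g0g1Sobolev_typeIII} but decay no faster; since $\frac{3+\alpha}{1+\alpha}<4$ for $\alpha\in[0,1)$, one can pick $|g_{1,k}^\pm|^2\sim |k|^{-\frac{3+\alpha}{1+\alpha}}(\log|k|)^{-1}$, making $\sum_k|k|^4|g_{1,k}^\pm|^2$ diverge. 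An entirely analogous computation applies to $\big(g_{0,k}|x|^{-\frac{\alpha}{2}}P\big)_{k\in\mathbb{Z}}$: here $S_{\alpha,k}(|x|^{-\frac{\alpha}{2}}P)$ has leading singular behaviour $k^2|x|^{\frac{3\alpha}{2}}P$ with $k$-independent $L^2$-norm, so again $\|A_\alpha(k)^*(g_{0,k}|x|^{-\frac{\alpha}{2}}P)\|^2_{\mathfrak{h}}\sim|k|^4|g_{0,k}|^2$, and the summability $\sum_k|k|^{\frac{3+\alpha}{1+\alpha}}|g_{0,k}^\pm|^2<+\infty$ again falls short of the $|k|^4$ weight.

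The remaining work is bookkeeping across the five families, which I would organise by type. The phrase ``with the obvious exception of those terms prescribed to be identically zero'' handles the cases where one of the two singular pieces vanishes identically on one or both half-lines: for the Friedrichs extension $g_{0,k}^\pm=0$, so only the $g_1$-term need be examined; for type $\mathrm{I}_R$ the $-$ side has $g_{0,k}^-=0$; and so on as recorded in \eqref{eq:g0g1Sobolev_typeF}--\eqref{eq:g0g1Sobolev_typeIII} and Table \ref{tab:extensions}. In each non-excepted case I would verify that the relevant summability weight produced by Proposition \ref{prop:g0g1Sobolev} is strictly smaller than $4$ (indeed it is always $\frac{1-\alpha}{1+\alpha}$ or $\frac{3+\alpha}{1+\alpha}$, both $<4$ on $[0,1)$), which leaves room to construct a saturating sequence for which the individual $A_\alpha(k)^*$-image has non-summable norm. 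Concretely I would take $(g_k)_{k\in\mathbb{Z}}$ equal to the single-charge element of $\mathcal{D}(\mathscr{H}_\alpha^{\mathrm{u.f.}})$ built from the prescribed boundary-condition data with coefficients decaying at the borderline rate, apply Lemma \ref{lem:gkkrepr} to confirm it lies in $\mathcal{D}(\mathscr{H}_\alpha^*)$, and then read off the divergence of the singular summands from the estimates above.

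The main obstacle will be managing the cross term: since $g_{1,k}$ is related to $g_{0,k}$ through \eqref{eq:Relationg1c1} and the self-adjointness constraint, the two singular pieces are \emph{not} independent, and one must ensure that a choice of coefficients that makes one summand fail $\mathcal{D}(\mathscr{H}_\alpha^*)$ is genuinely realisable by an element of the domain rather than being accidentally cancelled. The resolution is that the failure of \emph{either} singular term to lie in $\mathcal{D}(\mathscr{H}_\alpha^*)$ suffices for the statement, and the two terms carry the \emph{same} leading weight $|k|^4$ against coefficients constrained only by the weaker $|k|^{\frac{3+\alpha}{1+\alpha}}$ (or $|k|^{\frac{1-\alpha}{1+\alpha}}$) summability; since $\varphi=(\varphi_k)_{k\in\mathbb{Z}}\in\mathcal{D}(\overline{\mathscr{H}_\alpha})\subset\mathcal{D}(\mathscr{H}_\alpha^*)$ always, and $(g_k)_{k\in\mathbb{Z}}\in\mathcal{D}(\mathscr{H}_\alpha^*)$, a failure of one singular summand to be in $\mathcal{D}(\mathscr{H}_\alpha^*)$ forces the \emph{other} one to fail as well, so it is enough to produce a single explicit borderline-decaying sequence and verify non-summability of $\sum_k|k|^4|g_{1,k}^\pm|^2$, which then propagates to the $g_0$-term by the same argument. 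This closes the proof without needing delicate control of the cancellations themselves.
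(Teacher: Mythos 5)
Your overall route is the paper's own: the paper proves this lemma by computing, via \eqref{eq:action_on_short_x}, that on $|x|<1$ (where $P\equiv 1$) the operator $A_\alpha^\pm(k)^*$ sends the two profiles to $k^2|x|^{\frac{3\alpha}{2}}$ and $k^2|x|^{1+\frac{5\alpha}{2}}$, deducing the lower bounds \eqref{eq:Hastar-shortdist-g0}--\eqref{eq:Hastar-shortdist-g1} with weight $k^4$, and then exhibiting, family by family, elements with power-law charges $a_k(\varepsilon)=|k|^{\frac{1}{1+\alpha}-\frac{1}{2}(1+\varepsilon)}$, $b_k(\varepsilon)=|k|^{-\frac{1}{1+\alpha}-\frac{1}{2}(1+\varepsilon)}$ built from $\widetilde{\Phi}_{\alpha,k},\widetilde{\Psi}_{\alpha,k}$ subject to the fibrewise constraints of Theorem \ref{thm:bifibre-extensionsc0c1}, with membership checked through Lemma \ref{lem:gkkrepr}. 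However, your concluding ``resolution'' rests on a false premise: the claim that $\varphi=(\varphi_k)_{k\in\mathbb{Z}}\in\mathcal{D}(\overline{\mathscr{H}_\alpha})\subset\mathcal{D}(\mathscr{H}_\alpha^*)$ \emph{always} holds. Each $\varphi_k$ does lie in $\mathcal{D}(\overline{A_\alpha(k)})$, but membership of the collection in $\mathcal{D}(\overline{\mathscr{H}_\alpha})=\bigoplus_{k\in\mathbb{Z}}\overline{A_\alpha(k)}$ additionally requires $\sum_k\|\overline{A_\alpha(k)}\,\varphi_k\|_{\mathfrak{h}}^2<+\infty$ (Lemma \ref{lem:Halphaadj-decomposable}), and this can fail -- indeed the paper stresses, at \eqref{eq:oddbutconvenient} and in the paragraph right after the present lemma, that for the very elements this lemma produces \emph{none} of the three canonical summands $\varphi$, $G_0$, $G_1$ belongs to $\mathcal{D}(\mathscr{H}_\alpha^*)$; only their sum does. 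Your propagation shortcut (prove divergence for the $g_1$-term only, deduce it for the $g_0$-term from $G_0=(g-\varphi)-G_1$) therefore needs $g-\varphi\in\mathcal{D}(\mathscr{H}_\alpha^*)$, which is precisely the unjustified step. You must verify the divergence of \emph{both} $k^4$-weighted sums directly and consistently with the boundary constraints; note in particular that in types $\mathrm{II}_a$ and $\mathrm{III}$ you cannot prescribe the $g_{1,k}^\pm$ freely, since they are tied to the $g_{0,k}^\pm$ by a fixed, $k$-independent boundary matrix, so the construction should be set up in the charges ($c_{0,k},\widetilde{c}_{0,k}$, resp.\ $c_{0,k}^\pm$), where the available free sequences per fibre suffice -- exactly as the paper does.

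There is also a quantitative slip: your proposed borderline sequence $|g_{1,k}^\pm|^2\sim|k|^{-\frac{3+\alpha}{1+\alpha}}(\log|k|)^{-1}$ is inadmissible, because $\sum_k|k|^{\frac{3+\alpha}{1+\alpha}}|g_{1,k}^\pm|^2=\sum_k(\log|k|)^{-1}=+\infty$, violating the very constraint it was meant to saturate. The fix is harmless: take, e.g., $|g_{1,k}^\pm|^2\sim|k|^{-\frac{3+\alpha}{1+\alpha}-1-\varepsilon}$, admissible for every $\varepsilon>0$, for which
\begin{equation*}
 \sum_{k\in\mathbb{Z}\setminus\{0\}}k^4\,|g_{1,k}^\pm|^2\;\sim\;\sum_{k\in\mathbb{Z}\setminus\{0\}}|k|^{\frac{2\alpha}{1+\alpha}-\varepsilon}\;=\;+\infty\qquad\textrm{for all }\varepsilon\in\big(0,{\textstyle\frac{1+3\alpha}{1+\alpha}}\big]\,,
\end{equation*}
and since $\frac{1+3\alpha}{1+\alpha}\geqslant 1$ there is always room (the analogous window for the weight $\frac{1-\alpha}{1+\alpha}$ is $\varepsilon\in(0,\frac{3+5\alpha}{1+\alpha}]$). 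With these two repairs -- direct verification of both terms in the charge variables, and correct borderline exponents -- your argument becomes essentially the paper's proof.
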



 Clearly, the fact that
 \begin{equation}\label{eq:varhisquareintegrable}
  (\varphi_k)_{k\in\mathbb{Z}}\in\ell^2(\mathbb{Z},L^2(\mathbb{R},\ud x))
 \end{equation}
 follows at once by difference from \eqref{eq:singular_decomposition_adjoint}, because owing to Corollary \ref{cor:(g0)k_(g1)k_in_Hs} both $(g_{1,k}|x|^{1+\frac{\alpha}{2}}P)_{k\in\mathbb{Z}}$ and $(g_{0,k}|x|^{-\frac{\alpha}{2}}P)_{k\in\mathbb{Z}}$ belong to $\ell^2(\mathbb{Z},L^2(\mathbb{R},\ud x))$. However, whereas in \eqref{eq:gwithPweight_k}/\eqref{eq:singular_decomposition_adjoint} each $\varphi_k$ belongs to $\mathcal{D}(\overline{A_{\alpha}(k)})$, their collection $(\varphi_k)_{k\in\mathbb{Z}}$ may fail to belong to $\mathcal{D}(\overline{\mathscr{H}_\alpha})$ because it may even fail to belong to $\mathcal{D}(\mathscr{H}_\alpha^*)$!

In preparation for the proof of Lemma \ref{lem:singular_decomposition_adjoint}, a simple computation shows that
\[
 \begin{split}
   A_\alpha^\pm(k)^*\big(|x|^{-\frac{\alpha}{2}}P\big)\;&=\; \alpha|x|^{-(1+\frac{\alpha}{2})}P'-|x|^{-\frac{\alpha}{2}}P''+k^2|x|^{\frac{3\alpha}{2}} P\,, \\
  A_\alpha^\pm(k)^*\big(|x|^{1+\frac{\alpha}{2}}P\big)\;&=\;-(2+\alpha)|x|^{\frac{\alpha}{2}}P'-|x|^{1+\frac{\alpha}{2}}P''+k^2|x|^{1+\frac{5\alpha}{2}} P
 \end{split}
\]
for any $k\in\mathbb{Z}$ and $x\gtrless 0$ depending on the `+' or the `$-$' case. In particular, as the cut-off function $P$ is constantly equal to one when $|x|<1$, 
\begin{equation}\label{eq:action_on_short_x}
 \begin{split}
 \mathbf{1}_{I^{\pm}}(x)A_\alpha^\pm(k)^*\big(|x|^{-\frac{\alpha}{2}}P\big)\;&=\;\mathbf{1}_{I^{\pm}}(x)k^2|x|^{\frac{3\alpha}{2}}\,, \\
 \mathbf{1}_{I^{\pm}}(x) A_\alpha^\pm(k)^*\big(|x|^{1+\frac{\alpha}{2}}P\big)\;&=\;\mathbf{1}_{I^{\pm}}(x) k^2|x|^{1+\frac{5\alpha}{2}}\,,
 \end{split}
\end{equation}
where $I^-:=(-1,0)$ and $I^+:=(0,1)$. We can see that this implies
\begin{eqnarray}
 \big\|(\mathscr{H}_\alpha^\pm)^*\big(g_{0,k}^{\pm}|x|^{-\frac{\alpha}{2}}P)_{k\in\mathbb{Z}}\big\|^2_{\cH^\pm}\!&\geqslant&\!\sum_{k\in\mathbb{Z}}k^4|g_{0,k}^\pm|^2\,, \label{eq:Hastar-shortdist-g0}\\
 \big\|(\mathscr{H}_\alpha^\pm)^*\big(g_{1,k}^{\pm}|x|^{1+\frac{\alpha}{2}}P)_{k\in\mathbb{Z}}\big\|^2_{\cH^\pm}\!&\geqslant&\!\sum_{k\in\mathbb{Z}}k^4|g_{1,k}^\pm|^2\,.  \label{eq:Hastar-shortdist-g1}
\end{eqnarray}
Indeed,
\[
 \begin{split}
  \big\|(\mathscr{H}_\alpha^+)^*\big(g_{1,k}^{+}x^{1+\frac{\alpha}{2}}P)_{k\in\mathbb{Z}}\big\|_{\cH^+}^2\;&=\;\sum_{k\in\mathbb{Z}}\big\|A_\alpha^+(k)^*\big(g_{1,k}^+x^{1+\frac{\alpha}{2}}P\big)\big\|^2_{L^2(\mathbb{R}^+,\ud x)} \\
  &\geqslant\;\sum_{k\in\mathbb{Z}}\big\|g_{1,k}^+ k^2 x^{1+\frac{5\alpha}{2}}\big\|^2_{L^2((0,1),\ud x)} \\
  &=\;(3+5\alpha)^{-1}\sum_{k\in\mathbb{Z}}k^4|g_{1,k}^+|^2\,,
 \end{split}
\]
where we used \eqref{eq:Halphaadj-decomposable} in the first step and \eqref{eq:action_on_short_x} in the second; all other cases for  \eqref{eq:Hastar-shortdist-g0}-\eqref{eq:Hastar-shortdist-g1} are obtained in a completely analogous way.

%
%
%
%
 
\begin{proof}[Proof of Lemma \ref{lem:singular_decomposition_adjoint}]
Let us discuss case by case all possible types of uniformly fibred extensions. For arbitrary $\varepsilon>0$ let
\[
 \begin{split}
  a_k(\varepsilon)\;&:=\;
  \begin{cases}
   \;|k|^{\frac{1}{1+\alpha}-\frac{1}{2}(1+\varepsilon)}\,, & \;\textrm{ if }k\in\mathbb{Z}\setminus\{0\}\,, \\
   \qquad 0\,, & \;\textrm{ if }k=0\,,
  \end{cases} \\
  b_k(\varepsilon)\;&:=\;
  \begin{cases}
   \;|k|^{-\frac{1}{1+\alpha}-\frac{1}{2}(1+\varepsilon)}\,, & \textrm{if }k\in\mathbb{Z}\setminus\{0\} \,,\\
   \qquad 0\,, & \textrm{if }k=0\,.
  \end{cases}
 \end{split}
\]

(i) Friedrichs extension $\mathscr{H}_{\alpha,F}=\bigoplus_{k\in\mathbb{Z}}A_{\alpha,F}(k)$.
For this case we choose $(g_k)_{k\in\mathbb{Z}}$ with
\[
 g_k\;:=\;\begin{pmatrix}
  a_k(\varepsilon)\,\widetilde{\Psi}_{\alpha,k} \\
  a_k(\varepsilon)\,\widetilde{\Psi}_{\alpha,k}
 \end{pmatrix}.
\]
With respect to the representation \eqref{eq:gkrepresentation}, $c_{0,k}^\pm=0$ and $c_{1,k}^\pm=a_k(\varepsilon)$. Therefore,
\[
 \sum_{k\in\mathbb{Z}}|k|^{-\frac{2}{1+\alpha}}|c_{1,k}^\pm|^2\;=\;\sum_{k\in\mathbb{Z}\setminus\{0\}}|k|^{-1-\varepsilon}\;<\;+\infty
\]
and, owing to Lemma \ref{lem:gkkrepr}, $(g_k)_{k\in\mathbb{Z}}\in\mathcal{D}(\mathscr{H}_\alpha^*)$.
Moreover, by construction $g_k$ satisfies the conditions of self-adjointness characterising $\mathcal{D}(A_{\alpha,F}(k))$ stated in Theorem \ref{thm:bifibre-extensionsc0c1}; thus, $(g_k)_{k\in\mathbb{Z}}\in\mathcal{D}(\mathscr{H}_{\alpha,F})$. Expressing now $(g_k)_{k\in\mathbb{Z}}$ in the representation \eqref{eq:singular_decomposition_adjoint}, formulas \eqref{eq:Relationg0c0}-\eqref{eq:Relationg1c1} yield
\[
 g_{0,k}^\pm\;=\;0\,,\qquad g_{1,k}^{\pm}\;\sim\;|k|^{-\frac{1}{2}(\frac{2}{1+\alpha}+\varepsilon)}\quad(k\neq 0)\,,
\]
whence
\[
 \sum_{k\in\mathbb{Z}\setminus\{0\}}k^4|g_{1,k}^\pm|^2\;\sim\sum_{k\in\mathbb{Z}\setminus\{0\}}|k|^{\frac{2+4\alpha}{1+\alpha}-\varepsilon}\;=\;+\infty\quad\Leftrightarrow\quad\varepsilon\in(0,{\textstyle\frac{3+5\alpha}{1+\alpha}}]\,.
\]
Thus, for $\varepsilon\in(0,{\textstyle\frac{3+5\alpha}{1+\alpha}}]$, we deduce from \eqref{eq:Hastar-shortdist-g1} that $(g_{1,k}|x|^{1+\frac{\alpha}{2}}P)_{k\in\mathbb{Z}}\notin\mathcal{D}(\mathscr{H}_\alpha^*)$.

(ii) Extensions of type $\mathrm{I}_{\mathrm{R}}$: for $\gamma\in\mathbb{R}$ let us consider $\mathscr{H}_{\alpha,R}^{[\gamma]}=\bigoplus_{k\in\mathbb{Z}}A_{\alpha,R}^{[\gamma]}(k)$. For this case we choose $(g_k)_{k\in\mathbb{Z}}$ with
\[
 g_k\;:=\;\begin{pmatrix}
		a_k(\varepsilon) \widetilde{\Psi}_{\alpha,k}\\
		\beta_k b_k(\varepsilon) \widetilde{\Psi}_{\alpha,k}+b_k(\varepsilon) \widetilde{\Phi}_{\alpha,k}
	\end{pmatrix}
\]
and $\beta_k$ given by
\[
 \gamma \; = \; \textstyle\frac{|k|}{1+\alpha}\Big(\frac{\, 2 \|\Phi_{\alpha,k}\|_{L^2(\mathbb{R}^+)}^2 \,}{\pi(1+\alpha)}\, \beta_k-1\Big) \, .
\]
From \eqref{eq:Phinorm}, $\|\Phi_{\alpha,k}\|_{L^2(\mathbb{R}^+)}^2\sim|k|^{-\frac{2}{1+\alpha}}$ (for some multiplicative $\alpha$-dependent constant), whence $\beta_k\sim |k|^{\frac{2}{1+\alpha}}$ at the leading order in $k\in\mathbb{Z}\setminus\{0\}$. With respect to the representation \eqref{eq:gkrepresentation},
\[
 \begin{array}{rclcrcl}
  c_{0,k}^- \!\! &=& \!\! 0\,, & \quad & c_{1,k}^- \!\! &=& \!\! a_k(\varepsilon)\;=\;|k|^{\frac{1}{1+\alpha}-\frac{1}{2}(1+\varepsilon)}\,, \\
  c_{0,k}^+ \!\! &=& \!\! b_k(\varepsilon)\;=\;|k|^{-\frac{1}{1+\alpha}-\frac{1}{2}(1+\varepsilon)}\,, & \; & c_{1,k}^+ \!\! &=& \!\! \beta_k b_k(\varepsilon)\;\sim\;|k|^{\frac{1}{1+\alpha}-\frac{1}{2}(1+\varepsilon)}\,,
 \end{array}
\]
at the leading order in $k\in\mathbb{Z}\setminus\{0\}$, whereas all the above coefficients vanish for $k=0$. Therefore,
\[
 \begin{split}
  \sum_{k\in\mathbb{Z}}|k|^{-\frac{2}{1+\alpha}}|c_{0,k}^+|^2\;&=\sum_{k\in\mathbb{Z}\setminus\{0\}}|k|^{-\frac{4}{1+\alpha}-1-\varepsilon}\;<\;+\infty\,, \\
  \sum_{k\in\mathbb{Z}}|k|^{-\frac{2}{1+\alpha}}|c_{1,k}^\pm|^2\;&= \sum_{k\in\mathbb{Z}\setminus\{0\}}|k|^{-1-\varepsilon}\;<\;+\infty\,,
 \end{split}
\]
which implies, owing to Lemma \ref{lem:gkkrepr}, that $(g_k)_{k\in\mathbb{Z}}\in\mathcal{D}(\mathscr{H}_\alpha^*)$.
Moreover, by construction $g_k$ satisfies the conditions of self-adjointness characterising $\mathcal{D}(A_{\alpha,R}^{[\gamma]}(k))$ stated in Theorem \ref{thm:bifibre-extensionsc0c1}; thus, $(g_k)_{k\in\mathbb{Z}}\in\mathcal{D}(\mathscr{H}_{\alpha,R}^{[\gamma]})$. Expressing now $(g_k)_{k\in\mathbb{Z}}$ in the representation \eqref{eq:singular_decomposition_adjoint}, formulas \eqref{eq:Relationg0c0}-\eqref{eq:Relationg1c1} yield
\[
 \begin{array}{rclcrcl}
  g_{0,k}^- \!\! &=& \!\! 0\,, & \quad & g_{1,k}^- \!\! &\sim& \!\! |k|^{-\frac{1}{2}(\frac{2}{1+\alpha}+\varepsilon)} \,, \\
  g_{0,k}^+ \!\! &\sim& \!\!|k|^{-\frac{1}{2}(\frac{4+2\alpha}{1+\alpha}+\varepsilon)}\,, & \; & g_{1,k}^+ \!\! &\sim& \!\!|k|^{-\frac{1}{2}(\frac{4+2\alpha}{1+\alpha}+\varepsilon)}\,,
 \end{array}
\]
for $k\in\mathbb{Z}\setminus\{0\}$, up to multiplicative pre-factors depending on $\alpha$ and $\gamma$ only, all the above coefficients vanishing for $k=0$. From this one obtains
\[
 \begin{split}
  \sum_{k\in\mathbb{Z}}k^4|g_{0,k}^+|^2\;&\sim\sum_{k\in\mathbb{Z}\setminus\{0\}}|k|^{\frac{2\alpha}{1+\alpha}-\varepsilon}\;=\;+\infty\quad\Leftrightarrow\quad\varepsilon\in(0,{\textstyle\frac{1+3\alpha}{1+\alpha}}]\,, \\
 \sum_{k\in\mathbb{Z}}k^4|g_{1,k}^+|^2\;&\sim\sum_{k\in\mathbb{Z}\setminus\{0\}}|k|^{\frac{2\alpha}{1+\alpha}-\varepsilon}\;=\;+\infty\quad\Leftrightarrow\quad\varepsilon\in(0,{\textstyle\frac{1+3\alpha}{1+\alpha}}]\,, \\
  \sum_{k\in\mathbb{Z}}k^4|g_{1,k}^-|^2\;&\sim\sum_{k\in\mathbb{Z}\setminus\{0\}}|k|^{\frac{2+4\alpha}{1+\alpha}-\varepsilon}\;=\;+\infty\quad\Leftrightarrow\quad\varepsilon\in(0,{\textstyle\frac{3+5\alpha}{1+\alpha}}]\,.
 \end{split}
\]
Thus, for $\varepsilon\in(0,{\textstyle\frac{1+3\alpha}{1+\alpha}}]$, we deduce from \eqref{eq:Hastar-shortdist-g0}-\eqref{eq:Hastar-shortdist-g1} that $(g_{0,k}|x|^{-\frac{\alpha}{2}}P)_{k\in\mathbb{Z}}\notin\mathcal{D}(\mathscr{H}_\alpha^*)$ and $(g_{1,k}|x|^{1+\frac{\alpha}{2}}P)_{k\in\mathbb{Z}}\notin\mathcal{D}(\mathscr{H}_\alpha^*)$.

(iii)  Extensions of type $\mathrm{I}_{\mathrm{L}}$: for $\gamma\in\mathbb{R}$ let us consider $\mathscr{H}_{\alpha,L}^{[\gamma]}=\bigoplus_{k\in\mathbb{Z}}A_{\alpha,L}^{[\gamma]}(k)$. For this case we choose $(g_k)_{k\in\mathbb{Z}}$ with
\[
 g_k\;:=\;\begin{pmatrix}
		\beta_k b_k(\varepsilon) \widetilde{\Psi}_{\alpha,k}+b_k(\varepsilon) \widetilde{\Phi}_{\alpha,k} \\
		a_k(\varepsilon) \widetilde{\Psi}_{\alpha,k}
	\end{pmatrix},
\]
with the same $\beta_k$ as in case (ii). With the obvious inversion between `-' and `+' components, the reasoning is the same as in case (ii).

(iv) Extensions of type $\mathrm{II}_a$ for given $a\in\mathbb{C}\setminus\{0\}$: for $\gamma\in\mathbb{R}$ let us consider $ \mathscr{H}_{\alpha,a}^{[\gamma]}=\bigoplus_{k\in\mathbb{Z}}A_{\alpha,a}^{[\gamma]}(k)$.
For this case we choose $(g_k)_{k\in\mathbb{Z}}$ with
\[
 g_k\;:=\;\begin{pmatrix}
		\big(\tau_k b_k(\varepsilon) + a_k(\varepsilon)\big) \widetilde{\Psi}_{\alpha,k}+b_{k}(\varepsilon)\widetilde{\Phi}_{\alpha,k} \\
		\big(\tau_k a b_k(\varepsilon)- \overline{a}^{-1} a_k(\varepsilon)\big)\widetilde{\Psi}_{\alpha,k}+a b_{k}(\varepsilon)\widetilde{\Phi}_{\alpha,k}
	\end{pmatrix}
\]
and $\tau_k$ given by 
\[
			  \gamma\;:=\;\textstyle{(1+|a|^2) \frac{|k|}{1+\alpha}} \Big( \frac{\,2 \|\Phi_{\alpha,k}\|_{L^2(\mathbb{R}^+)}^2 \,}{\pi (1+\alpha)} \, \tau_k -1 \Big)\,.
\]
In particular, $\tau_k\sim |k|^{\frac{2}{1+\alpha}}$ at the leading order in $k\in\mathbb{Z}\setminus\{0\}$. With respect to the representation \eqref{eq:gkrepresentation},
\[
  c_{0,k}^\pm\;\sim\;|k|^{-\frac{1}{1+\alpha}-\frac{1}{2}(1+\varepsilon)} \,,\qquad c_{1,k}^\pm\;\sim\;|k|^{\frac{1}{1+\alpha}-\frac{1}{2}(1+\varepsilon)}
\]
at the leading order in $k\in\mathbb{Z}\setminus\{0\}$, whereas all the above coefficients vanish for $k=0$. Therefore,
\[
 \begin{split}
  \sum_{k\in\mathbb{Z}}|k|^{-\frac{2}{1+\alpha}}|c_{0,k}^\pm|^2\;&=\sum_{k\in\mathbb{Z}\setminus\{0\}}|k|^{-\frac{4}{1+\alpha}-1-\varepsilon}\;<\;+\infty\,, \\
  \sum_{k\in\mathbb{Z}}|k|^{-\frac{2}{1+\alpha}}|c_{1,k}^\pm|^2\;&= \sum_{k\in\mathbb{Z}\setminus\{0\}}|k|^{-1-\varepsilon}\;<\;+\infty\,, 
 \end{split}
\]
which implies, owing to Lemma \ref{lem:gkkrepr}, that $(g_k)_{k\in\mathbb{Z}}\in\mathcal{D}(\mathscr{H}_\alpha^*)$.
Moreover, by construction $g_k$ satisfies the conditions of self-adjointness characterising $\mathcal{D}(A_{\alpha,a}^{[\gamma]}(k))$ stated in Theorem \ref{thm:bifibre-extensionsc0c1}; thus, $(g_k)_{k\in\mathbb{Z}}\in\mathcal{D}(\mathscr{H}_{\alpha,a}^{[\gamma]})$. Expressing now $(g_k)_{k\in\mathbb{Z}}$ in the representation \eqref{eq:singular_decomposition_adjoint}, formulas \eqref{eq:Relationg0c0}-\eqref{eq:Relationg1c1} yield
\[
 g_{0,k}^\pm\;\sim\;|k|^{-\frac{1}{2}(\frac{4+2\alpha}{1+\alpha}-\varepsilon)}\,,\qquad g_{1,k}^\pm\;\sim\;|k|^{-\frac{1}{2}(\frac{2}{1+\alpha}+\varepsilon)}
\]
at the leading order in $k\in\mathbb{Z}\setminus\{0\}$, all the above coefficients vanishing for $k=0$. From this one obtains
\[
 \begin{split}
  \sum_{k\in\mathbb{Z}}k^4|g_{0,k}^\pm|^2\;&\sim\sum_{k\in\mathbb{Z}\setminus\{0\}}|k|^{\frac{2\alpha}{1+\alpha}-\varepsilon}\;=\;+\infty\quad\Leftrightarrow\quad\varepsilon\in(0,{\textstyle\frac{1+3\alpha}{1+\alpha}}]\,, \\
  \sum_{k\in\mathbb{Z}}k^4|g_{1,k}^\pm|^2\;&\sim\sum_{k\in\mathbb{Z}\setminus\{0\}}|k|^{\frac{2+4\alpha}{1+\alpha}-\varepsilon}\;=\;+\infty\quad\Leftrightarrow\quad\varepsilon\in(0,{\textstyle\frac{3+5\alpha}{1+\alpha}}]\,.
 \end{split}
\]
Thus, for $\varepsilon\in(0,{\textstyle\frac{1+3\alpha}{1+\alpha}}]$, we deduce from \eqref{eq:Hastar-shortdist-g0}-\eqref{eq:Hastar-shortdist-g1} that $(g_{0,k}|x|^{-\frac{\alpha}{2}}P)_{k\in\mathbb{Z}}\notin\mathcal{D}(\mathscr{H}_\alpha^*)$ and $(g_{1,k}|x|^{1+\frac{\alpha}{2}}P)_{k\in\mathbb{Z}}\notin\mathcal{D}(\mathscr{H}_\alpha^*)$.

(v) Extensions of type $\mathrm{III}$: for $\Gamma\in\mathbb{R}^4$ let us consider $  \mathscr{H}_{\alpha}^{[\Gamma]}=\bigoplus_{k\in\mathbb{Z}}A_{\alpha}^{[\Gamma]}(k)$.
For this case we choose $(g_k)_{k\in\mathbb{Z}}$ with
\[
 g_k\;:=\;\begin{pmatrix}
		\big(\tau_{1,k}+\tau_{2,k}+ \ii \tau_{3,k}\big)b_{k}(\varepsilon) \widetilde{\Psi}_{\alpha,k}+b_{k}(\varepsilon)\widetilde{\Phi}_{\alpha,k} \\
		\big(\tau_{2,k}- \ii \tau_{3,k}+\tau_{4,k}\big)b_{k}(\varepsilon)\widetilde{\Psi}_{\alpha,k}+b_{k}(\varepsilon)\widetilde{\Phi}_{\alpha,k}
	\end{pmatrix}
\]
and $(\tau_{1,k},\tau_{2,k},\tau_{3,k},\tau_{4,k})$ given by
\[
  \begin{split}
   \gamma_1\;&=\; \textstyle \frac{|k|}{1+\alpha} \Big( \frac{\,2 \|\Phi_{\alpha,k}\|_{L^2(\mathbb{R}^+)}^2 \,}{\pi (1+\alpha)} \, \tau_{1,k} -1 \Big) \,,\\
   \gamma_2+\ii\gamma_3\;&=\; (\tau_{2,k}+\ii\tau_{3,k})\textstyle\frac{2|k|}{\pi(1+\alpha)^2}\|\Phi_{\alpha,k}\|_{L^2(\mathbb{R}^+)}^2 \,,\\
   \gamma_4\;&=\; \textstyle \frac{|k|}{1+\alpha} \Big( \frac{\,2 \|\Phi_{\alpha,k}\|_{L^2(\mathbb{R}^+)}^2 \,}{\pi (1+\alpha)} \, \tau_{4,k} -1 \Big).  \end{split}
\]
In particular, 
\[
\tau_{1,k}\;\sim\;|k|^{\frac{2}{1+\alpha}}\,,\qquad   \tau_{2,k}\pm\ii\tau_{3,k}\;\sim\;|k|^{\frac{1-\alpha}{1+\alpha}} \,,\qquad \tau_{4,k}\;\sim\;|k|^{\frac{2}{1+\alpha}}\,,
\]
at the leading order in $k\in\mathbb{Z}\setminus\{0\}$. With respect to the representation \eqref{eq:gkrepresentation},
\[
  c_{0,k}^\pm\;\sim\;|k|^{-\frac{1}{1+\alpha}-\frac{1}{2}(1+\varepsilon)} \,,\qquad c_{1,k}^\pm\;\sim\;|k|^{\frac{1}{1+\alpha}-\frac{1}{2}(1+\varepsilon)}
\]
at the leading order in $k\in\mathbb{Z}\setminus\{0\}$, whereas all the above coefficients vanish for $k=0$. 
From this point one repeats verbatim the reasoning of part (iv).
%
\end{proof}

\subsection{Detecting short-scale asymptotics and regularity}\label{subsec:auxiliaryLemma}~

 As observed with \eqref{eq:varhisquareintegrable}, $\mathcal{F}_2^{-1}$ is applicable to $(\varphi_k)_{k\in\mathbb{Z}}$ and thus \eqref{eq:F2-1phi} defines a function $\varphi\in L^2(\mathbb{R}\times\mathbb{S}^1,\ud x\ud y)$. The next step in the strategy outlined in Subsect.~\ref{sec:genstrategy} is to show convenient short-scale asymptotics as $x\to 0$ for $\varphi(x,y)$ and $\partial_x\varphi(x,y)$.

 Evidently, the possibility that $\varphi\notin\mathcal{F}_2^{-1}\mathcal{D}(\mathscr{H}_\alpha^*)=\mathcal{D}(\mathsf{H}_\alpha^*)$ (Lemma \ref{lem:singular_decomposition_adjoint}) complicates this analysis: no regularity or short-scale asymptotics of the elements of $\mathcal{D}(\mathsf{H}_\alpha^*)$ can be claimed a priori for $\varphi$.

 For the above purposes we shall make use of the following auxiliary result.

 \begin{lemma}\label{lem:grand_auxiliary_lemma}
  Let $\alpha\in[0,1)$ and let $R:(0,1)\times\mathbb{S}^1\to\mathbb{C}$ be a function such that
  \begin{itemize}
   \item[(a)] $\big\| x^{-(\frac{3}{2}+\frac{\alpha}{2})}R\,\big\|_{L^2((0,1)\times\mathbb{S}^1,\ud x\ud y)}\;<\;+\infty$\,,
   \item[(b)] $\big\| \partial_x^2R\,\big\|_{L^2((0,1)\times\mathbb{S}^1,\ud x\ud y)}\;<\;+\infty$\,.
  \end{itemize}
 Then for almost every $y\in\mathbb{S}^1$ the function $(0,1)\ni x\mapsto R(x,y)$ belongs to $H^2_0((0,1])$ and as such it satisfies the following properties:
 \begin{itemize}
  \item[(i)] $R(\cdot,y)\in C^1(0,1)$,
  \item[(ii)] $R(x,y)\stackrel{x\downarrow 0}{=}o(x^{\frac{3}{2}})$,
  \item[(iii)] $\partial_x R(x,y)\stackrel{x\downarrow 0}{=}o(x^{\frac{1}{2}})$.
 \end{itemize}  
 \end{lemma}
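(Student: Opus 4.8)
The statement is essentially a one-dimensional regularity assertion, for almost every fixed $y$, about a function $R(\cdot,y)$ on the interval $(0,1)$. The plan is to first upgrade the joint $L^2$-hypotheses (a) and (b) into fibre-wise statements valid for almost every $y$, and then invoke a standard one-dimensional Sobolev embedding and boundary-behaviour argument on the interval.

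\textbf{Step 1: Fibre-wise $H^2$-regularity.} First I would use a Fubini-type argument. Hypothesis (b) says $\partial_x^2 R\in L^2((0,1)\times\mathbb{S}^1)$, and hypothesis (a) gives in particular $R\in L^2((0,1)\times\mathbb{S}^1)$ (since $x^{-(3/2+\alpha/2)}\geqslant 1$ on $(0,1)$, so $\|R\|_{L^2}\leqslant\|x^{-(3/2+\alpha/2)}R\|_{L^2}<+\infty$). By Fubini's theorem applied to the non-negative integrands $|\partial_x^2 R(x,y)|^2$ and $|R(x,y)|^2$, for almost every $y\in\mathbb{S}^1$ both $R(\cdot,y)$ and $\partial_x^2 R(\cdot,y)$ belong to $L^2((0,1),\ud x)$. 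By interpolation (or directly, since on a bounded interval control of a function and its second derivative in $L^2$ controls the first derivative in $L^2$ as well, by the standard estimate already invoked in the excerpt, e.g.\ \cite[Theorem 4.20]{Grubb-DistributionsAndOperators-2009}), this yields $R(\cdot,y)\in H^2((0,1))$ for almost every $y$. The Sobolev embedding $H^2((0,1))\hookrightarrow C^1((0,1))$ then gives property (i) immediately.

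\textbf{Step 2: Vanishing at the origin with the sharp rate.} The crucial point is to show $R(\cdot,y)\in H^2_0((0,1])$, by which I mean the functions vanishing together with their first derivative at $x=0$; this is where hypothesis (a), the \emph{weighted} integrability, does the work that plain $H^2$ cannot. The plan is: since $R(\cdot,y)\in C^1((0,1))$, both $\lim_{x\downarrow 0}R(x,y)$ and $\lim_{x\downarrow 0}\partial_x R(x,y)$ exist and are finite as one-sided limits \emph{provided} they exist; more carefully, I would argue that the weight forces these limits to vanish. Indeed, if $R(x,y)$ did not vanish as $x\downarrow 0$ (say $R(x,y)\to\ell\neq 0$), then near $x=0$ one would have $|x^{-(3/2+\alpha/2)}R(x,y)|^2\sim |\ell|^2 x^{-(3+\alpha)}$, which is not integrable at $0$ since $3+\alpha>1$, contradicting the finiteness of $\int_0^1|x^{-(3/2+\alpha/2)}R(x,y)|^2\,\ud x$ (which holds for a.e.\ $y$ by Fubini applied to (a)). A quantitative version of this argument, writing $R(x,y)=\int_0^x\partial_x R(t,y)\,\ud t$ and using Cauchy--Schwarz together with $\partial_x R\in L^2$, gives $R(x,y)=o(x^{1/2})$ at worst; feeding this back into the weighted bound and bootstrapping, one extracts precisely the sharp rates. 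The cleanest route is to observe that $x^{-(3/2+\alpha/2)}R\in L^2((0,1))$ together with $R\in H^2$ is exactly the membership in a weighted Sobolev space whose elements are known to satisfy $R(x,y)=o(x^{3/2})$ and $\partial_x R(x,y)=o(x^{1/2})$; this is the same mechanism by which Lemma \ref{lem:BehaviourZeroClosure} produced the $o(x^{3/2})$ and $o(x^{1/2})$ decay for elements of $\mathcal{D}(\overline{A_\alpha(k)})$, and I would mirror that computation here, using Cauchy--Schwarz on $R(x,y)=-\int_x^{\delta}\partial_x R+R(\delta,y)$ type identities combined with the weighted $L^2$ control to pin down properties (ii) and (iii).

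\textbf{Main obstacle.} The genuinely delicate part is Step 2, specifically matching the weight exponent $-(3/2+\alpha/2)$ in (a) to the asserted decay rates $o(x^{3/2})$ and $o(x^{1/2})$, and ensuring these hold pointwise in $x$ (as $x\downarrow 0$) rather than merely in an averaged sense. The weighted integrability gives decay ``on average,'' and promoting it to a genuine pointwise little-$o$ asymptotic requires combining it with the $C^1$-regularity from Step 1 via a careful estimate: the $H^2$ control prevents oscillation, so average decay upgrades to pointwise decay. I expect the bookkeeping of exactly which power of $x$ is extracted, and verifying it is the \emph{sharp} exponent consistent with both hypotheses simultaneously, to be the step demanding the most care; the rest is routine Fubini and Sobolev embedding.
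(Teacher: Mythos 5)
Your proposal is correct and follows essentially the paper's own route: Fubini to obtain $R(\cdot,y)\in H^2((0,1))$ fibrewise (hypothesis (a) dominating the plain $L^2$-norm since the weight exceeds $1$ on $(0,1)$), then the weighted integrability to kill the boundary data at $x=0$, and finally the standard Cauchy--Schwarz asymptotics for $H^2_0((0,1])$ functions to read off (i)--(iii). The paper merely packages your limit/contradiction step more compactly, decomposing $R(\cdot,y)=a_y+b_y x+r_y$ with $r_y\in H^2_0((0,1])$ and noting that (a) forces $a_y=b_y=0$ because $x^{-(3+\alpha)}$ and $x^{-(1+\alpha)}$ fail to be integrable at the origin for every $\alpha\in[0,1)$ -- which is precisely the content of your ``bootstrap'' eliminating the constant and linear parts.
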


 \begin{remark}
  $H^2_0((0,1])$ in the statement of Lemma \ref{lem:grand_auxiliary_lemma} denotes as usual the closure of $C^\infty_0((0,1])$ in the $H^2$-norm. The edge $x=1$ is included so as to mean that there is no vanishing constraint at $x=1$ for the elements of $H^2_0((0,1])$ and their derivatives: only vanishing as $x\downarrow 0$ emerges, in the form of conditions (ii) and (iii).
 \end{remark}

 \begin{proof}[Proof of Lemma \ref{lem:grand_auxiliary_lemma}]
  Assumption (a) in Lemma \ref{lem:grand_auxiliary_lemma} implies that $R(\cdot,y)\in L^2((0,1))$, and hence together with (b) it implies that $R(\cdot,y)\in H^2((0,1))$ for a.e.~$y\in\mathbb{S}^1$.
  Therefore $R(\cdot,y)=a_y+b_yx+r_y(x)$ for a.e.~$y\in\mathbb{S}^1$, for some $a_y,b_y\in\mathbb{C}$ and $r_y\in H^2_0((0,1])$. For compatibility with assumption (a), necessarily $a_y=b_y=0$, whence $R(\cdot,y)\in H^2_0((0,1])$ for a.e.~$y\in\mathbb{S}^1$.  
 \end{proof}

 Let us discuss the application of Lemma \ref{lem:grand_auxiliary_lemma} to our context.

 As we are interested in characterising for fixed $y\in\mathbb{S}^1$ the behaviour and the regularity of $x\mapsto\varphi(x,y)$ as $x\to 0$ from \emph{each side} of the singular point $x=0$, it suffices to analyse the case $x>0$; then completely analogous conclusions are obtained for $x<0$. Lemma \ref{lem:grand_auxiliary_lemma} is thus meant to be applied to the restriction $R(x,y)=\varphi(x,y)\mathbf{1}_{(0,1)}(x)$.

 In fact, since in general $\varphi\in L^2(\mathbb{R}\times\mathbb{S}^1,\ud x\ud y)\setminus\mathcal{D}(\mathsf{H}_\alpha^*)$, we are not able to check the assumptions (a) and (b) of Lemma \ref{lem:grand_auxiliary_lemma} directly for $\varphi$. We opt instead for splitting $\varphi$ into a component in $\mathcal{D}(\overline{\mathsf{H}_\alpha})$ plus a remainder, the explicit form of which will allow to apply Lemma \ref{lem:grand_auxiliary_lemma}.

 This idea is implicit in the very choice of $(\varphi_k)_{k\in\mathbb{Z}}$ made in \eqref{eq:gwithPweight_k}. Let us recall that for given $(g_k)_{k\in\mathbb{Z}}$ we could represent
 \[
  g_k^{\pm}\;=\;\varphi_k^{\pm}+g_{1,k}^{\pm}|x|^{1+\frac{\alpha}{2}}P+g_{0,k}^{\pm}|x|^{-\frac{\alpha}{2}}P
 \]
 and also
 \[
  g_k^{\pm}\;=\;\widetilde{\varphi}_k^{\pm}+c_{1,k}^{\pm}\widetilde{\Psi}_{\alpha,k}+c_{0,k}^{\pm}\widetilde{\Phi}_{\alpha,k}\,,
 \]
 where
  \begin{equation}\label{eq:phitildeinDclosure}
  (\widetilde{\varphi}_k^{\pm})_{k\in\mathbb{Z}}\;\in\;\mathcal{D}\bigg(\bigoplus_{k\in\mathbb{Z}}\overline{A_\alpha^\pm(k)}\bigg)\;=\;\mathcal{D}\big(\overline{\mathscr{H}_\alpha^\pm}\big)
 \end{equation}
 Moreover, as argued in the proof of Theorem \ref{prop:g_with_Pweight}, for each $k\in\mathbb{Z}\setminus\{0\}$ we can split
 \begin{equation}\label{eq:phi=phitilde+q}
  \varphi_k^{\pm}\;=\;\widetilde{\varphi}_k^{\pm}+\vartheta_k^{\pm}\,,
 \end{equation}
 while keeping
 \begin{equation}\label{eq:varthetazeromode}
  \widetilde{\varphi}_0^{\pm}\;\equiv\; \varphi_0^{\pm}\qquad\textrm{ and }\qquad \vartheta_0^{\pm}\;\equiv\; 0\qquad\textrm{ when }\qquad k=0\,,
 \end{equation}
 where
 \begin{equation}\label{eq:q=q0+q1}
  \vartheta_k^{\pm}\;=\; \vartheta_{0,k}^{\pm}+\vartheta_{1,k}^{\pm} 
 \end{equation}
 with
 \begin{eqnarray}
  & & \vartheta_{0,k}^{\pm}\;:=\;c_{0,k}^\pm\Big(\widetilde{\Phi}_{\alpha,k}-{\textstyle\sqrt{\frac{\pi(1+\alpha)}{2|k|}}|x|^{-\frac{\alpha}{2}}P+\sqrt{\frac{\pi|k|}{2(1+\alpha)}}\,|x|^{1+\frac{\alpha}{2}} P}\Big)\,, \label{eq:defq0q1decomp-q0} \\
  & & \vartheta_{1,k}^{\pm}\;:=\;c_{1,k}^\pm\Big(\widetilde{\Psi}_{\alpha,k}-{\textstyle\sqrt{\frac{2|k|}{\pi(1+\alpha)^3}}}\,\|\Phi_{\alpha,k}\|_{L^2(\mathbb{R}^+)}^2\,|x|^{1+\frac{\alpha}{2}}P\Big) \label{eq:defq0q1decomp-q1}
 \end{eqnarray}
 and 
  \begin{equation}\label{eq:regularityoftheta01}
  \vartheta_{0,k}^{\pm},\vartheta_{1,k}^{\pm}\;\in\;\mathcal{D}\big(\overline{A^\pm_\alpha(k)}\big)\;=\;H^2_0(\mathbb{R}^\pm)\cap L^2(\mathbb{R}^\pm,\langle x\rangle^{4\alpha}\,\ud x)\,.
 \end{equation}
 It is important to remember that for later convenience the zero mode is all cast into $\widetilde{\varphi}_0^{\pm}\equiv \varphi_0^{\pm}$, hence $(\vartheta_k)_{k\in\mathbb{Z}}\equiv(\vartheta_k)_{k\in\mathbb{Z}\setminus\{0\}}$.

 The decomposition \eqref{eq:phi=phitilde+q}-\eqref{eq:defq0q1decomp-q1} induces the splitting
 \begin{equation}\label{eq:fftk}
  (\varphi_k)_{k\in\mathbb{Z}}\;=\;(\widetilde{\varphi}_k)_{k\in\mathbb{Z}}+(\vartheta_k)_{k\in\mathbb{Z}}
 \end{equation}
 as an identity in $\ell^2(\mathbb{Z},L^2(\mathbb{R}^+,\ud x))$, where $(\vartheta_k)_{k\in\mathbb{Z}}$ does not necessarily belong to $\mathcal{D}(\mathscr{H}_\alpha^*)$, as $(\varphi_k)_{k\in\mathbb{Z}}$ does not either (Lemma \ref{lem:singular_decomposition_adjoint}).
 In turn, owing to \eqref{eq:varhisquareintegrable} and \eqref{eq:phitildeinDclosure}, the identity \eqref{eq:fftk} yields the splitting
 \begin{equation}\label{eq:splittingphiphitildetheta}
  \varphi(x,y)\;=\;\widetilde{\varphi}(x,y)+\vartheta(x,y)\,,\qquad(x,y)\in\mathbb{R}\times\mathbb{S}^1\,,
 \end{equation}
 with
 \begin{eqnarray}
    & & \widetilde{\varphi}\;:=\;\mathcal{F}_2^{-1}(\widetilde{\varphi}_k)_{k\in\mathbb{Z}}\;\in\;\mathcal{F}_2^{-1}\mathcal{D}\big(\overline{\mathscr{H}_\alpha^\pm}\big)\;=\;\mathcal{D}(\overline{\mathsf{H}_\alpha}) \\
    & & \vartheta\, \;:=\; \mathcal{F}_2^{-1} (\vartheta_k)_{k\in\mathbb{Z}}\;\in\;L^2(\mathbb{R}\times\mathbb{S}^1,\ud x\ud y)\,. \label{eq:deffunctionvartheta}
 \end{eqnarray}
  Here $\vartheta$ may fail to belong to $\mathcal{D}(\mathsf{H}_\alpha^*)$, precisely as $\varphi$.

 The explicit information that $\widetilde{\varphi}\in\mathcal{D}(\overline{\mathsf{H}_\alpha})$ and the explicit expression for $\vartheta$ will finally allow us to apply Lemma \ref{lem:grand_auxiliary_lemma} separately to each of them. This will be the object of Subsect.~\ref{sec:control-of-tildephi} and \ref{sec:q0q1}.

\subsection{Control of $\widetilde{\varphi}$}\label{sec:control-of-tildephi}~

We are concerned now with the regularity and the behaviour as $x\to 0^\pm$ of the functions belonging to $\mathcal{D}\big(\overline{\mathsf{H}_\alpha^\pm}\big)$.

Clearly, from \eqref{eq:explicit-tildeHalpha},
\begin{equation}\label{eq:DHaclosedclosure}
 \mathcal{D}\big(\overline{\mathsf{H}_\alpha^\pm}\big)\;=\;\overline{C^\infty_c(\mathbb{R}^\pm_x\times\mathbb{S}^1_y)}^{\|\,\|_{\mathsf{H}_\alpha}}\,,
\end{equation}
where $\|h\|_{\mathsf{H}_\alpha}:=\big(\|h\|_{L^2(\mathbb{R}^\pm_x\times\mathbb{S}^1_y)}^2+\|\mathsf{H}_\alpha^\pm h\|_{L^2(\mathbb{R}^\pm_x\times\mathbb{S}^1_y)}^2\big)^{\frac{1}{2}}$.

We also recall, from $\overline{\mathsf{H}_\alpha^\pm}\subset(\mathsf{H}_\alpha^\pm)^*$ and from \eqref{eq:HHalphaadjoint}, that 
\begin{equation}\label{eq:actionofHaclosed}
 \overline{\mathsf{H}_\alpha^\pm}\,\widetilde{\varphi}^\pm\;=\;\Big(-\frac{\partial^2}{\partial x^2}- |x|^{2\alpha}\frac{\partial^2}{\partial y^2}+\frac{\,C_\alpha}{|x|^2}\Big)\widetilde{\varphi}^\pm\qquad\forall\widetilde{\varphi}^\pm\in\mathcal{D}\big(\overline{\mathsf{H}_\alpha^\pm}\big)\,.
\end{equation}

The main result here is the following.

\begin{proposition}\label{prop:Hclosurecontrol}
 Let $\alpha\in[0,1)$. There exists a constant $K_\alpha>0$ such that for any $\widetilde{\varphi}^\pm\in\mathcal{D}(\overline{\mathsf{H}_\alpha^\pm})$ one has
 \begin{equation}\label{eq:Hclosurecontrol}
  \big\|\,|x|^{-2}\widetilde{\varphi}^\pm\,\big\|_{L^2(\mathbb{R}^\pm_x\times\mathbb{S}^1_y)}+\big\|\partial_x^2\widetilde{\varphi}^\pm\big\|_{L^2(\mathbb{R}^\pm_x\times\mathbb{S}^1_y)}\;\leqslant\;K_\alpha\,\big\|\overline{\mathsf{H}_\alpha^\pm}\,\widetilde{\varphi}^\pm\big\|_{L^2(\mathbb{R}^\pm_x\times\mathbb{S}^1_y)}\,.
 \end{equation}
 When $\alpha\uparrow 1$, then $K_\alpha\to +\infty$. As a consequence, $\widetilde{\varphi}^\pm$ satisfies the assumptions of Lemma \ref{lem:grand_auxiliary_lemma} and therefore, for almost every $y\in\mathbb{S}^1$,
 \begin{itemize}
  \item[(i)] the function $x\mapsto\widetilde{\varphi}^\pm(x,y)$ belongs to $C^1(0,1)$,
  \item[(ii)] $\widetilde{\varphi}^\pm(x,y)=o(|x|^{\frac{3}{2}})$ as $x\to 0^\pm$,  
  \item[(iii)] $\partial_x\widetilde{\varphi}^\pm(x,y)=o(|x|^{\frac{1}{2}})$ as $x\to 0^\pm$.
 \end{itemize} 
\end{proposition}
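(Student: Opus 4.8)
The core of the statement is the coercive estimate \eqref{eq:Hclosurecontrol}; conclusions (i)--(iii) then follow essentially for free by feeding $\widetilde\varphi^\pm$ into Lemma \ref{lem:grand_auxiliary_lemma}. Indeed, once \eqref{eq:Hclosurecontrol} is known, both $\|\,|x|^{-2}\widetilde\varphi^\pm\|$ and $\|\partial_x^2\widetilde\varphi^\pm\|$ are finite on $(0,1)\times\mathbb{S}^1$ (the factor $|x|^{-2}$ dominates $x^{-(3/2+\alpha/2)}$ for $\alpha<1$ since $3/2+\alpha/2<2$), so hypotheses (a) and (b) of that Lemma hold and its conclusions (i)--(iii) transfer verbatim to $\widetilde\varphi^\pm$. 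So the work is entirely in establishing \eqref{eq:Hclosurecontrol} together with the blow-up $K_\alpha\to+\infty$ as $\alpha\uparrow 1$.

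The plan for the estimate is to prove it first on the dense core $C^\infty_c(\mathbb{R}^\pm_x\times\mathbb{S}^1_y)$ and then pass to the closure via \eqref{eq:DHaclosedclosure}. It is most natural to work fibrewise in $k$: writing $\widetilde\varphi^\pm=\mathcal{F}_2^{-1}(\widetilde\varphi^\pm_k)_{k\in\mathbb{Z}}$ with $\widetilde\varphi^\pm_k\in\mathcal{D}(\overline{A^\pm_\alpha(k)})$, Parseval reduces \eqref{eq:Hclosurecontrol} to the two one-dimensional bounds
\[
\sum_{k\in\mathbb{Z}}\big\|\,|x|^{-2}\widetilde\varphi^\pm_k\big\|_{L^2}^2\;\leqslant\;K_\alpha^2\sum_{k\in\mathbb{Z}}\big\|\overline{A_\alpha(k)}\,\widetilde\varphi^\pm_k\big\|_{L^2}^2
\]
and the analogous one for $\partial_x^2$, with a \emph{$k$-uniform} constant. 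For the $|x|^{-2}$-term I would exploit the machinery already built: by Proposition \ref{eq:RGisSFinv} we have $\overline{A_\alpha(k)}\subset A_{\alpha,F}(k)=R_{G_{\alpha,k}}^{-1}$, so $\widetilde\varphi_k=R_{G_{\alpha,k}}\,h_k$ with $h_k=\overline{A_\alpha(k)}\widetilde\varphi_k$, and it suffices to bound $\|\,|x|^{-2}R_{G_{\alpha,k}}\,\|_{\mathrm{op}}$ uniformly in $k$. This is precisely the content of Lemma \ref{lem:RGbddsa}(ii) with $a=2\alpha$: the operator $R^{(2\alpha)}_{G_{\alpha,k}}=|k|^2\,x^{2\alpha}R_{G_{\alpha,k}}$ is bounded uniformly in $k$ (which is why the Remark after Lemma \ref{lem:RGbddsa} flagged the uniformity as crucial here). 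Rescaling $x\mapsto|k|^{1/(1+\alpha)}x$ turns $|x|^{-2}$ into an expression controlled by $R^{(2\alpha)}$, yielding a uniform bound; for the mode $k=0$ one works with $A_\alpha(0)+\mathbbm1$ and the bounded operator $R_{G_{\alpha,0}}$ instead. Once $\|\,|x|^{-2}\widetilde\varphi^\pm\|$ is controlled by $\|\overline{\mathsf H_\alpha^\pm}\widetilde\varphi^\pm\|$, the $\partial_x^2$-bound follows by the triangle inequality in \eqref{eq:actionofHaclosed}: one has $\partial_x^2\widetilde\varphi^\pm=-\overline{\mathsf H_\alpha^\pm}\widetilde\varphi^\pm-|x|^{2\alpha}\partial_y^2\widetilde\varphi^\pm+C_\alpha|x|^{-2}\widetilde\varphi^\pm$, and the middle term is controlled after noting that $|x|^{2\alpha}\partial_y^2$ acts as $-k^2|x|^{2\alpha}$ on each mode, which is dominated by $A_\alpha(k)$ itself (again via the strictly positive bottom \eqref{eq:Axibottom} and the Friedrichs form).

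\textbf{The main obstacle} I anticipate is securing the \emph{uniformity in $k$} of the constant $K_\alpha$, and correctly tracking its divergence as $\alpha\uparrow1$. The fibrewise bounds are individually routine, but a $k$-dependent constant would be useless for reassembling via $\mathcal{F}_2^{-1}$, so the argument must route through the uniform statement of Lemma \ref{lem:RGbddsa}(ii) rather than part (i). The blow-up $K_\alpha\to+\infty$ is structurally expected, because at $\alpha=1$ the operator $H_\alpha$ becomes essentially self-adjoint (Theorem \ref{thm:Halpha_esa_or_not}) and the whole extension-theoretic picture degenerates; concretely it should surface through the constants $Z_{a,\alpha},\widetilde Z_{a,\alpha}$ in Lemma \ref{lem:RGbddsa} and the prefactor $\frac{1}{2a+1-\alpha}$-type denominators, which one checks remain finite for $\alpha\in[0,1)$ but need not stay bounded uniformly as $\alpha\uparrow1$. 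Care is also needed for the singular borderline exponent $3/2+\alpha/2$ versus $2$: the Lemma's hypothesis (a) asks only for the weaker weight $x^{-(3/2+\alpha/2)}$, so even a slightly lossy version of the $|x|^{-2}$ estimate would suffice, giving a little room if the sharp uniform constant proves delicate.
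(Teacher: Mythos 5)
There is a genuine gap, and it sits exactly at the step you flag as the crux: the claimed uniform bound on $\|\,|x|^{-2}R_{G_{\alpha,k}}\|_{\mathrm{op}}$ does not exist. Lemma \ref{lem:RGbddsa}(ii) concerns the weight $x^{+2\alpha}$, and no rescaling can convert it into a bound with weight $x^{-2}$: the substitution $x\mapsto |k|^{1/(1+\alpha)}x$ rescales the $k$-dependence but leaves the homogeneity of the weight untouched, so uniformity in $k$ reduces the question to boundedness of $x^{-2}R_{G_{\alpha,1}}$, which fails. Concretely, $R_{G_{\alpha,k}}\Phi_{\alpha,k}=\Psi_{\alpha,k}\stackrel{x\downarrow 0}{\sim} c\,x^{1+\frac{\alpha}{2}}$ (Lemma \ref{lem:Psi_asymptotics}), so $x^{-2}\Psi_{\alpha,k}\sim c\,x^{-1+\frac{\alpha}{2}}$, which is not square-integrable at the origin precisely when $\alpha<1$. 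The point is that $\mathrm{ran}\,R_{G_{\alpha,k}}=\mathcal{D}(A_{\alpha,F}(k))$ strictly contains $\mathcal{D}(\overline{A_\alpha(k)})$, and the estimate \eqref{eq:Hclosurecontrol} is true only on the smaller closure domain, where functions vanish as $o(x^{3/2})$ (Lemma \ref{lem:BehaviourZeroClosure}); the resolvent route is blind to this distinction, since writing $\widetilde\varphi_k=R_{G_{\alpha,k}}h_k$ and passing to an operator norm over all $h_k\in L^2$ inevitably tests the bad $\Psi_{\alpha,k}$ direction. (Your fallback remark does not save this either: even the weaker weight $x^{-(3/2+\alpha/2)}$ gives $x^{-(3/2+\alpha/2)}\Psi_{\alpha,k}\sim c\,x^{-1/2}$, still log-divergent in $L^2$ near $0$.)

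The mechanism the paper uses instead is elementary and exploits the closure domain directly: an iterated Hardy inequality $\|r^{-2}h\|_{L^2(\mathbb{R}^+)}\leqslant\frac{4}{3}\|h''\|_{L^2(\mathbb{R}^+)}$ valid for $h\in C^\infty_c(\mathbb{R}^+)$ (Lemma \ref{lem:doubleHardy}, then Corollary \ref{cor:doubleHardy} in the two variables), combined with the triangle inequality from \eqref{eq:actionofHaclosed} and an \emph{absorption} argument: since $\frac{4}{3}C_\alpha=\frac{\alpha(2+\alpha)}{3}<1$ exactly for $\alpha<1$, the term $\frac{4}{3}C_\alpha\|\partial_x^2\widetilde\varphi\|$ can be reabsorbed on the left, giving $K_\alpha\sim(1+D_\alpha)(1-\frac{4}{3}C_\alpha)^{-1}$. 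This also corrects your account of the blow-up: $K_\alpha\to+\infty$ as $\alpha\uparrow 1$ comes purely from the vanishing denominator $1-\frac{4}{3}C_\alpha$, while the constant $D_\alpha$ from Lemma \ref{lem:RGbddsa}(ii) stays bounded, as the paper notes explicitly. You did correctly identify where Lemma \ref{lem:RGbddsa}(ii) enters — but it is the tool for the term $\|x^{2\alpha}\partial_y^2\widetilde\varphi\|$ (the paper's Lemma \ref{lem:boundedness_x2alphad2y}, proved fibrewise via Plancherel and $\|x^{2\alpha}k^2R_{G_{\alpha,k}}\|_{\mathrm{op}}\leqslant D_\alpha$ uniformly in $k$), not for the $x^{-2}$ weight; your gesture at a form-bound via \eqref{eq:Axibottom} for that middle term would not by itself yield the needed $k$-uniform \emph{operator} bound. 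The reduction to the core $C^\infty_c$ and the deduction of (i)--(iii) from Lemma \ref{lem:grand_auxiliary_lemma} in your proposal are fine.
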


As we only need information on the limit separately from each side of the singularity, it is enough to consider the `+' case: the same conclusions will apply also to the `-' case. Thus, in the remaining part of this Subsection, we shall simply write $\widetilde{\varphi}$ for $\widetilde{\varphi}^+\in\mathcal{D}(\overline{\mathsf{H}_\alpha^+})$.

The proof of Proposition \ref{prop:Hclosurecontrol} relies on two technical estimates. The first is an iterated version of the standard one-dimensional inequality by Hardy
\begin{equation}\label{eq:hardy1}
\Vert r^{-1} h \Vert_{L^2(\mathbb{R}^+,\ud r)} \;\leqslant\; 2 \,\Vert h' \Vert_{L^2(\mathbb{R}^+,\ud r)}\qquad \forall\,h \in C^\infty_0(\mathbb{R}^+)\,.
\end{equation}

\begin{lemma}[Rellich inequality, \cite{Rellich-ICM1954}]\label{lem:doubleHardy}
For any $h\in C^\infty_c(\mathbb{R}^+)$ one has
\begin{equation}\label{eq:hardy2}
 \|r^{-2}h\|_{L^2(\mathbb{R}^+,\ud r)}\;\leqslant\;\frac{4}{3}\,\|h''\|_{L^2(\mathbb{R}^+,\ud r)}\,.
 \end{equation} 
\end{lemma}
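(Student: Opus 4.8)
The statement to prove is the double-Hardy inequality
\[
 \|r^{-2}h\|_{L^2(\mathbb{R}^+,\ud r)}\;\leqslant\;\frac{4}{3}\,\|h''\|_{L^2(\mathbb{R}^+,\ud r)}\qquad\forall\,h\in C^\infty_c(\mathbb{R}^+).
\]
The natural plan is to iterate the standard Hardy inequality \eqref{eq:hardy1} twice, but with a twist: a direct double application would give the constant $4$, since $\|r^{-2}h\|=\|r^{-1}(r^{-1}h)\|\leqslant 2\|(r^{-1}h)'\|$ and then one must bound $\|(r^{-1}h)'\|$ in terms of $\|h''\|$, which is where care is needed to recover the sharper constant $\tfrac{4}{3}$. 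So the first thing I would do is write out $(r^{-1}h)'=r^{-1}h'-r^{-2}h$ and expand $\|(r^{-1}h)'\|_{L^2}^2$ into three terms.

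The key computation is the cross term. Integrating by parts,
\[
 -2\,\re\!\int_0^{+\infty}\frac{\overline{h'(r)}}{r}\,\frac{h(r)}{r^2}\,\ud r
 \;=\;-\int_0^{+\infty}\frac{(|h|^2)'}{r^3}\,\ud r
 \;=\;-3\int_0^{+\infty}\frac{|h(r)|^2}{r^4}\,\ud r,
\]
where the boundary terms vanish because $h$ is compactly supported away from both $0$ and $+\infty$. Hence
\[
 \Big\|\Big(\frac{h}{r}\Big)'\Big\|_{L^2}^2
 \;=\;\Big\|\frac{h'}{r}\Big\|_{L^2}^2-3\,\Big\|\frac{h}{r^2}\Big\|_{L^2}^2.
\]
Now I apply the ordinary Hardy inequality \eqref{eq:hardy1} to $h'\in C^\infty_c(\mathbb{R}^+)$, obtaining $\|r^{-1}h'\|_{L^2}\leqslant 2\|h''\|_{L^2}$, and I apply Hardy once more to $h$ itself to get $\|r^{-1}h\|_{L^2}\leqslant 2\|h'\|_{L^2}\leqslant 4\|h''\|_{L^2}$, so that $\|r^{-2}h\|_{L^2}=\|r^{-1}(r^{-1}h)\|_{L^2}\leqslant 2\|(r^{-1}h)'\|_{L^2}$.

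Combining, $\|r^{-2}h\|_{L^2}^2\leqslant 4\|(r^{-1}h)'\|_{L^2}^2 = 4\|r^{-1}h'\|_{L^2}^2-12\|r^{-2}h\|_{L^2}^2\leqslant 16\|h''\|_{L^2}^2-12\|r^{-2}h\|_{L^2}^2$, whence $13\|r^{-2}h\|_{L^2}^2\leqslant 16\|h''\|_{L^2}^2$. This yields a constant $\sqrt{16/13}\approx 1.11$, which is in fact slightly \emph{better} than $\tfrac{4}{3}\approx 1.33$, so the claimed inequality follows a fortiori; alternatively, dropping the favourable $-12\|r^{-2}h\|^2$ term and simply using $\|(r^{-1}h)'\|_{L^2}\leqslant\|r^{-1}h'\|_{L^2}\leqslant 2\|h''\|_{L^2}$ gives exactly $\|r^{-2}h\|_{L^2}\leqslant\frac{4}{3}\|h''\|_{L^2}$ after optimizing the split. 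The only genuine subtlety — the main point to get right — is the justification that all boundary terms in the integration by parts vanish; this is immediate from $h\in C^\infty_c(\mathbb{R}^+)=C^\infty_c((0,+\infty))$, so that $h$ and all its derivatives vanish identically near $r=0$ and for large $r$, making the negative cross term rigorous. I would present the clean chain that lands exactly on $\tfrac{4}{3}$, noting in passing that the sign of the cross term only helps.
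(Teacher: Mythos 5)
Your decomposition $(r^{-1}h)'=r^{-1}h'-r^{-2}h$ is a viable route, but your key displayed identity is wrong: expanding the square produces \emph{three} terms, and you have dropped the $+\|r^{-2}h\|_{L^2}^2$ one. Your computation of the cross term is correct, $-2\,\re\int_0^{+\infty}\overline{h'(r)}\,h(r)\,r^{-3}\,\ud r=-3\|r^{-2}h\|_{L^2}^2$, but then
\[
\Big\|\Big(\frac{h}{r}\Big)'\Big\|_{L^2}^2\;=\;\Big\|\frac{h'}{r}\Big\|_{L^2}^2-3\,\Big\|\frac{h}{r^2}\Big\|_{L^2}^2+\Big\|\frac{h}{r^2}\Big\|_{L^2}^2\;=\;\Big\|\frac{h'}{r}\Big\|_{L^2}^2-2\,\Big\|\frac{h}{r^2}\Big\|_{L^2}^2\,,
\]
with coefficient $-2$, not $-3$. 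The slip is detectable from your own conclusion: $13\|r^{-2}h\|^2\leqslant 16\|h''\|^2$ would give the constant $\sqrt{16/13}\approx 1.11$, but $\frac{4}{3}$ is in fact \emph{sharp} here (this is the one-dimensional Rellich constant: testing on smooth truncations of $r^{s}$ with $s=\frac{3}{2}$ makes both $\|h''\|^2$ and $\|r^{-2}h\|^2$ logarithmically divergent with ratio $s^2(s-1)^2=\frac{9}{16}$), so no correct argument can produce a constant below $\frac{4}{3}$; your intermediate inequality is false, not ``slightly better''. Relatedly, your fallback remark is also wrong: dropping the favourable negative term gives only $\|r^{-2}h\|\leqslant 2\|(h/r)'\|\leqslant 2\|r^{-1}h'\|\leqslant 4\|h''\|$, i.e.\ constant $4$, and there is no ``split'' left to optimise on that route. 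And the ``main subtlety'' is not the vanishing of boundary terms (trivial for $h\in C^\infty_c(\mathbb{R}^+)$) but precisely this coefficient bookkeeping.

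The good news is that with the corrected coefficient your scheme closes exactly on the claimed constant: Hardy applied to $h/r\in C^\infty_c(\mathbb{R}^+)$ gives $\|r^{-2}h\|^2\leqslant 4\|(h/r)'\|^2=4\|r^{-1}h'\|^2-8\|r^{-2}h\|^2\leqslant 16\|h''\|^2-8\|r^{-2}h\|^2$, whence $9\|r^{-2}h\|^2\leqslant 16\|h''\|^2$, i.e.\ \eqref{eq:hardy2}. Once fixed, this is a legitimate variant of the paper's proof rather than a copy of it: the paper starts from $\|r^{-2}h\|^2$ itself, integrates by parts once to obtain $\frac{2}{3}\,\re\int_0^{+\infty}\overline{h(r)}\,h'(r)\,r^{-3}\,\ud r$, and then combines a weighted Cauchy--Schwarz with a free parameter $a$ and Hardy applied to $h'$, optimising at $a=\frac{3}{2}$ to get $4/(a(3-a))=\frac{16}{9}$. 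Your version replaces the tunable Young inequality plus optimisation by an exact identity for $\|(h/r)'\|^2$ and absorption of the negative term; both rest on the same two ingredients (Hardy used twice, one integration by parts) and, done correctly, land on the same sharp constant $\frac{4}{3}$.
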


\begin{corollary}\label{cor:doubleHardy}
 Let $\widetilde{\varphi}\in C^\infty_c(\mathbb{R}^+_x\times \mathbb{S}^1_y)$. Then
 \begin{equation}
  \|x^{-2}\widetilde{\varphi}\|_{L^2(\mathbb{R}^+_x\times\mathbb{S}^1_y)}\;\leqslant\;\frac{4}{3}\|\partial_x^2 \widetilde{\varphi}\|_{L^2(\mathbb{R}^+_x\times\mathbb{S}^1_y)}\,.
 \end{equation}
\end{corollary}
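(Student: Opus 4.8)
The final statement to prove is Corollary \ref{cor:doubleHardy}, which is a two-dimensional consequence of the one-dimensional double-Hardy inequality of Lemma \ref{lem:doubleHardy}. The plan is to reduce the two-variable estimate to the one-variable one by integrating in the $y$-variable and invoking Fubini, which is precisely the standard mechanism by which a pointwise-in-$y$ inequality lifts to an $L^2$-estimate on the product space.

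\smallskip

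First I would fix $\widetilde{\varphi}\in C^\infty_c(\mathbb{R}^+_x\times\mathbb{S}^1_y)$ and observe that for each fixed $y\in\mathbb{S}^1$ the function $x\mapsto\widetilde{\varphi}(x,y)$ belongs to $C^\infty_c(\mathbb{R}^+)$, so that Lemma \ref{lem:doubleHardy} applies to it in the $x$-variable. This gives, for every $y$,
\begin{equation*}
 \int_{\mathbb{R}^+}\big|x^{-2}\widetilde{\varphi}(x,y)\big|^2\,\ud x\;\leqslant\;\frac{16}{9}\int_{\mathbb{R}^+}\big|\partial_x^2\widetilde{\varphi}(x,y)\big|^2\,\ud x\,.
\end{equation*}
Next I would integrate both sides over $y\in\mathbb{S}^1$. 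Since $\widetilde{\varphi}$ is smooth and compactly supported, all integrands are nonnegative and measurable, so Tonelli's theorem justifies exchanging the order of integration on each side, yielding
\begin{equation*}
 \|x^{-2}\widetilde{\varphi}\|_{L^2(\mathbb{R}^+_x\times\mathbb{S}^1_y)}^2\;\leqslant\;\frac{16}{9}\,\|\partial_x^2\widetilde{\varphi}\|_{L^2(\mathbb{R}^+_x\times\mathbb{S}^1_y)}^2\,.
\end{equation*}
Taking square roots then produces exactly the asserted constant $\frac{4}{3}$, completing the argument.

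\smallskip

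There is essentially no serious obstacle here: the corollary is a routine tensorization of Lemma \ref{lem:doubleHardy}, and the only points requiring minimal care are the measurability/Fubini bookkeeping (trivial for $C^\infty_c$ functions) and checking that the constant squares and unsquares correctly. The genuinely substantive work lies in Lemma \ref{lem:doubleHardy} itself, which I would prove by iterating the classical Hardy inequality \eqref{eq:hardy1}: applying \eqref{eq:hardy1} once to $h$ gives control of $\|r^{-1}h\|_{L^2}$ by $\|h'\|_{L^2}$, and I would then apply a Hardy-type inequality a second time to pass from $\|r^{-2}h\|_{L^2}$ to $\|(r^{-1}h)'\|_{L^2}$ and thence to $\|h''\|_{L^2}$, tracking the optimal constants so that the product collapses to $\frac{4}{3}$ rather than the naive $4$. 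But since Lemma \ref{lem:doubleHardy} is stated as an available result, the proof of the corollary reduces cleanly to the Fubini step described above.
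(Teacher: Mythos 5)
Your proof is correct and is exactly the argument the paper intends (the paper states the corollary without proof, treating it as the immediate slice-and-integrate consequence of Lemma \ref{lem:doubleHardy}): for each fixed $y$ the section $x\mapsto\widetilde{\varphi}(x,y)$ lies in $C^\infty_c(\mathbb{R}^+)$, the one-dimensional inequality applies with constant $\frac{4}{3}$, and Tonelli closes the estimate. No gaps; the constant bookkeeping ($\frac{16}{9}$ squared, $\frac{4}{3}$ after the root) is right, and your aside correctly identifies that the real content sits in the Lemma, which the paper proves by the weighted Cauchy--Schwarz plus Hardy iteration you sketch.
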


 For completeness of presentation, we include a direct, fast proof of Lemma \ref{lem:doubleHardy}, which is simpler than the general demonstration \cite[Chapter 6]{Balinsky-Evans-Lewis-BOOK-Hardy} in the $d$-dimensional case.

\begin{proof}[Proof of Lemma \ref{lem:doubleHardy}]
 As $h\in C^\infty_c(\mathbb{R}^+)$, all the considered integrals are finite, because the integrand functions are supported away from zero, and moreover integration by parts produces no boundary terms. One has
 \[
  \begin{split}
   \|r^{-2}h\|_{L^2}^2\;&=\;\int_0^{+\infty}\frac{|h(r)|^2}{r^4}\,\ud r\;=\;-\frac{1}{3}\int_0^{+\infty}\Big(\frac{1}{r^3}\Big)'\,\overline{h(r)}\,h(r)\,\ud r \\
   &=\;\frac{1}{3}\int_0^{+\infty}\frac{1}{r^3}\,\big(\overline{h(r)}\,h(r)\big)'\,\ud r\;=\;\frac{2}{3}\,\mathfrak{Re}\int_0^{+\infty}\frac{\overline{h(r)}\,h'(r)}{r^3}\,\ud r\,,
  \end{split}
 \]
 and in turn, by means of a weighted Cauchy-Schwarz inequality and Hardy's inequality,
 \[
  \begin{split}   
  \Big|\int_0^{+\infty}\frac{\overline{h(r)}\,h'(r)}{r^3}\,\ud r\Big|\;&\leqslant\;{\textstyle\frac{1}{2}}a\|r^{-2}h\|_{L^2}^2+{\textstyle\frac{1}{2}} a^{-1}\|r^{-1}h'\|_{L^2}^2 \\
  &\leqslant\;{\textstyle\frac{1}{2}}a\|r^{-2}h\|_{L^2}^2+2 a^{-1}\|h''\|_{L^2}^2
  \end{split}
 \]
 for some $a>0$.
 Thus,
 \[
  \|r^{-2}h\|_{L^2}^2\;\leqslant\;{\textstyle\frac{1}{3}}a\,\|r^{-2}h\|_{L^2}^2+{\textstyle\frac{4}{3}}a^{-1}\,\|h''\|_{L^2}^2\,,
 \]
 whence
 \[
   \|r^{-2}h\|_{L^2}^2\;\leqslant\;\frac{4}{a(3-a)}\,\|h''\|_{L^2}^2\,.
 \]
 Optimising over $a\in(0,3)$ yields $a=\frac{3}{2}$, which corresponds to $\|r^{-2}h\|_{L^2}^2\leqslant\frac{16}{9}\|h''\|_{L^2}^2$. This is precisely \eqref{eq:hardy2}.
 \end{proof}

The second estimate is meant to control the term $x^{2\alpha}\partial_y^2$ of $\overline{\mathsf{H}_\alpha}$ and reads as follows.

\begin{lemma}\label{lem:boundedness_x2alphad2y}
 Let $\alpha\in[0,1)$. There exists a constant $D_\alpha>0$ such that for any $\widetilde{\varphi}\in\mathcal{D}(\overline{\mathsf{H}_\alpha^+})$ one has
 \begin{equation}\label{eq:boundedness_x2alphad2y}
  \|x^{2\alpha}\partial_y^2\widetilde{\varphi}\|_{L^2(\mathbb{R}^+_x\times\mathbb{S}^1_y)}\;\leqslant\;D_\alpha\big\|\overline{\mathsf{H}_\alpha^+}\,\widetilde{\varphi}\big\|_{L^2(\mathbb{R}^+_x\times\mathbb{S}^1_y)}\,.
 \end{equation}
\end{lemma}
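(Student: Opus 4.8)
The plan is to exploit the fibred structure already in place. Via the discrete Fourier transform $\mathcal{F}_2^+$ of \eqref{eq:unit2}, $\overline{\mathsf{H}_\alpha^+}$ is unitarily equivalent to $\overline{\mathscr{H}_\alpha^+}=\bigoplus_{k\in\mathbb{Z}}\overline{A_\alpha^+(k)}$ (Lemma \ref{lem:Halphaadj-decomposable}), while the operator $x^{2\alpha}\partial_y^2$ acts on the $k$-th mode as multiplication by $-k^2x^{2\alpha}$. Hence, writing $\widetilde{\varphi}=\sum_k\widetilde{\varphi}_k e_k$, Plancherel's identity in $y$ turns the desired estimate \eqref{eq:boundedness_x2alphad2y} into
\begin{equation*}
 \sum_{k\in\mathbb{Z}}\big\|k^2x^{2\alpha}\widetilde{\varphi}_k\big\|_{L^2(\mathbb{R}^+)}^2\;\leqslant\;D_\alpha^2\sum_{k\in\mathbb{Z}}\big\|\overline{A_\alpha^+(k)}\,\widetilde{\varphi}_k\big\|_{L^2(\mathbb{R}^+)}^2\,,
\end{equation*}
so it suffices to establish the corresponding inequality fibre by fibre, with a constant \emph{independent of $k$}. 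The mode $k=0$ is trivial, since the summand on the left vanishes; thus only $k\in\mathbb{Z}\setminus\{0\}$ matters.

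For fixed $k\neq0$ and $\widetilde{\varphi}_k\in\mathcal{D}(\overline{A_\alpha^+(k)})$, I would set $g_k:=\overline{A_\alpha^+(k)}\widetilde{\varphi}_k\in L^2(\mathbb{R}^+)$. Since $\overline{A_\alpha^+(k)}\subset A_{\alpha,F}(k)$ with both acting as $S_{\alpha,k}$, and $A_{\alpha,F}(k)$ has strictly positive bottom, hence bounded inverse (cf.~\eqref{eq:Axibottom}), one gets $\widetilde{\varphi}_k=A_{\alpha,F}(k)^{-1}g_k$. Recalling $A_{\alpha,F}(k)^{-1}=R_{G_{\alpha,k}}$ (Proposition \ref{eq:RGisSFinv}) and comparing integral kernels, the composition $k^2x^{2\alpha}A_{\alpha,F}(k)^{-1}$ is \emph{exactly} the operator $R_{G_{\alpha,k}}^{(2\alpha)}$ whose kernel is $\mathscr{G}_{\alpha,k}^{(2\alpha)}(x,\rho)=x^{2\alpha}k^2G_{\alpha,k}(x,\rho)$. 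Therefore $k^2x^{2\alpha}\widetilde{\varphi}_k=R_{G_{\alpha,k}}^{(2\alpha)}g_k$, and the crucial input is Lemma \ref{lem:RGbddsa}(ii): at the endpoint value $a=2\alpha$ (the upper edge of the admissible range $(-\tfrac{1-\alpha}{2},2\alpha]$), the operator $R_{G_{\alpha,k}}^{(2\alpha)}$ is bounded on $L^2(\mathbb{R}^+)$ by a constant $\widetilde{Z}_{2\alpha,\alpha}$ that is uniform in $k$. This yields $\|k^2x^{2\alpha}\widetilde{\varphi}_k\|_{L^2}\leqslant\widetilde{Z}_{2\alpha,\alpha}\,\|\overline{A_\alpha^+(k)}\widetilde{\varphi}_k\|_{L^2}$.

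Summing over $k$ and setting $D_\alpha:=\widetilde{Z}_{2\alpha,\alpha}$ then produces \eqref{eq:boundedness_x2alphad2y}; the case $\alpha=0$ is subsumed, since there $k^2x^{2\alpha}A_{0,F}(k)^{-1}=k^2A_{0,F}(k)^{-1}\leqslant\mathbbm{1}$ directly from the spectral bound $A_{0,F}(k)\geqslant k^2$. The only genuine obstacle is precisely the \emph{uniformity in $k$} of the per-fibre constant: a naive relative-boundedness estimate on each fibre would give a $k$-dependent constant that could diverge as $|k|\to\infty$, rendering the sum uncontrollable. It is exactly the endpoint Schur-test bookkeeping of Lemma \ref{lem:RGbddsa}(ii) — flagged as crucial in the Remark following that lemma — that furnishes the $k$-independent bound and allows the fibrewise inequalities to be reassembled into the global estimate. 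The verification that the assumptions of Lemma \ref{lem:grand_auxiliary_lemma} follow (as asserted in Proposition \ref{prop:Hclosurecontrol}) is then a routine consequence, using this lemma together with the double-Hardy inequality of Lemma \ref{lem:doubleHardy}.
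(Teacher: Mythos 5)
Your proof is correct and follows essentially the same route as the paper's: reduce via $\mathcal{F}_2^+$ and Plancherel to a fibrewise estimate, write $k^2x^{2\alpha}\widetilde{\varphi}_k=R_{G_{\alpha,k}}^{(2\alpha)}\overline{A_\alpha^+(k)}\widetilde{\varphi}_k$ using $A_{\alpha,F}(k)^{-1}=R_{G_{\alpha,k}}$ (Proposition \ref{eq:RGisSFinv}), and invoke the $k$-uniform bound of Lemma \ref{lem:RGbddsa}(ii), reassembling the sum via Lemma \ref{lem:Halphaadj-decomposable}. The only cosmetic difference is that the paper first proves the bound for $\widetilde{\varphi}\in C^\infty_c(\mathbb{R}^+_x\times\mathbb{S}^1_y)$ and concludes by closure, whereas you apply it directly on $\mathcal{D}(\overline{\mathsf{H}_\alpha^+})$, which is equally valid given \eqref{eq:Halphaclosure-decomposable}.
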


\begin{proof}
 It is enough to prove \eqref{eq:boundedness_x2alphad2y} for any $\widetilde{\varphi}\in C^\infty_c(\mathbb{R}^+_x\times\mathbb{S}^1_y)$; then the general inequality is merely obtained by closure, owing to \eqref{eq:DHaclosedclosure}. To this aim, let $(\widetilde{\varphi}_k)_{k\in\mathbb{Z}}:=\mathcal{F}_2^+\widetilde{\varphi}\in\cH^+\cong\ell^2(\mathbb{Z},L^2(\mathbb{R}^+,\ud x))$. One has
 \begin{equation*}
  \begin{split}
   \big\|x^{2\alpha}\partial_y^2 \widetilde{\varphi}\big\|^2_{L^2(\mathbb{R}^+_x\times\mathbb{S}^1_y)}\;&=\;\sum_{k\in\mathbb{Z}}\|x^{2\alpha}k^2\,\widetilde{\varphi}_k\|_{L^2(\mathbb{R}^+)}^2 \\
   &=\;\sum_{k\in\mathbb{Z}\setminus\{0\}}\|x^{2\alpha}k^2\,R_{G_{\alpha,k}}A_{\alpha,F}(k)\widetilde{\varphi}_k\|_{L^2(\mathbb{R}^+)}^2 \\
   &\leqslant\;\sum_{k\in\mathbb{Z}\setminus\{0\}}\|x^{2\alpha}k^2\,R_{G_{\alpha,k}}\big\|_{\mathrm{op}}^2\big\|\overline{A_{\alpha}^+(k)}\widetilde{\varphi}_k\|_{L^2(\mathbb{R}^+)}^2\,,
  \end{split}
 \end{equation*}
where we used Plancherel's formula in the first identity and Proposition \ref{eq:RGisSFinv} in the second identity. Owing to Lemma \ref{lem:RGbddsa}(ii), $\|x^{2\alpha}k^2\,R_{G_{\alpha,k}}\|_{\mathrm{op}}\leqslant D_\alpha$ uniformly in $k$ for some $D_\alpha>0$. Based on this fact, and on Lemma \ref{lem:Halphaadj-decomposable} (formula \eqref{eq:Halphaclosure-decomposable}), one then has
\[
 \begin{split}
   \big\|x^{2\alpha}\partial_y^2 \widetilde{\varphi}\big\|^2_{L^2(\mathbb{R}^+_x\times\mathbb{S}^1_y)}\;&\leqslant\; D_\alpha^2\sum_{k\in\mathbb{Z}}\big\|\overline{A_{\alpha}^+(k)}\widetilde{\varphi}_k\|_{L^2(\mathbb{R}^+)}^2\;=\;D_\alpha^2\,\big\|\overline{\mathscr{H}_\alpha^+}(\widetilde{\varphi}_k)_{k\in\mathbb{Z}}\big\|_{\cH}^2 \\
   &=\;D_\alpha^2\,\big\|\overline{\mathsf{H}_\alpha^+}\,\widetilde{\varphi}\big\|_{L^2(\mathbb{R}^+_x\times\mathbb{S}^1_y)}^2\,,
 \end{split}
\]
which completes the proof.
\end{proof}

Based upon the above estimates, we can prove Proposition \ref{prop:Hclosurecontrol}.

\begin{proof}[Proof of Proposition \ref{prop:Hclosurecontrol}]
 Again, it suffices to establish \eqref{eq:Hclosurecontrol} when $\widetilde{\varphi}\in C^\infty_c(\mathbb{R}^+_x\times\mathbb{S}^1_y)$, and then conclude by density from \eqref{eq:DHaclosedclosure}.
 
 One has
 \begin{equation*}
 \begin{split}
  \|\partial_x^2\widetilde{\varphi}\|_{L^2(\mathbb{R}_x\times\mathbb{S}^1_y)}\;&\leqslant\;\big\|\overline{\mathsf{H}_\alpha^+}\,\widetilde{\varphi}\big\|_{L^2(\mathbb{R}_x\times\mathbb{S}^1_y)}+\big\|x^{2\alpha}\partial_y^2 \widetilde{\varphi}\big\|_{L^2(\mathbb{R}^+_x\times\mathbb{S}^1_y)}+C_\alpha\|x^{-2}\widetilde{\varphi}\|_{L^2(\mathbb{R}_x^+\times\mathbb{S}^1_y)} \\
  &\leqslant\;\big\|\overline{\mathsf{H}_\alpha^+}\,\widetilde{\varphi}\big\|_{L^2(\mathbb{R}_x^+\times\mathbb{S}^1_y)}+D_\alpha\big\|\overline{\mathsf{H}_\alpha^+}\,\widetilde{\varphi}\big\|_{L^2(\mathbb{R}^+_x\times\mathbb{S}^1_y)}+\frac{4C_\alpha}{3}\|\partial_x^2\widetilde{\varphi}\|_{L^2(\mathbb{R}_x^+\times\mathbb{S}^1_y)}\,,
 \end{split}
 \end{equation*}
 where the first inequality is a triangular inequality based on \eqref{eq:actionofHaclosed}, whereas the second inequality follows directly from Corollary \ref{cor:doubleHardy} and Lemma \ref{lem:boundedness_x2alphad2y}.
 
 Therefore,
 \[
  \|\partial_x^2\widetilde{\varphi}\|_{L^2(\mathbb{R}_x^+\times\mathbb{S}^1_y)}\;\leqslant\;\frac{1+D_\alpha}{\,1-\frac{4}{3}C_\alpha}\,\big\|\overline{\mathsf{H}_\alpha^+}\,\widetilde{\varphi}\big\|_{L^2(\mathbb{R}_x^+\times\mathbb{S}^1_y)}\,.
 \]
 As $C_\alpha=\frac{1}{4}\alpha(2+\alpha)$, the constant $K_\alpha:=(1+D_\alpha)(1-\frac{4}{3}C_\alpha)^{-1}$ is strictly positive for any $\alpha$ of interest, namely, $\alpha\in(0,1)$. Moreover, $K_\alpha\to +\infty$ as $\alpha\uparrow 1$ (indeed, tracing back the constant $D_\alpha$ through the proof of Lemma 3.3 where it was imported from in Lemma \ref{lem:boundedness_x2alphad2y}, it is easy to see that $D_\alpha$ does not diverge when $\alpha\uparrow 1$). The proof is thus completed.  
\end{proof}

\subsection{Control of $\vartheta$}\label{sec:q0q1}~

 As a counterpart to Subsect.~\ref{sec:control-of-tildephi}, let us now prove the needed short-scale behaviour of the function $\vartheta\in L^2(\mathbb{R}\times\mathbb{S}^1,\ud x\ud y)$ defined in  \eqref{eq:deffunctionvartheta}.

 Let us recall that $\vartheta^\pm$ may well fail to belong to $\mathcal{D}(\overline{\mathsf{H}_\alpha^\pm})$ and therefore cannot be controlled by means of Prop.~\ref{prop:Hclosurecontrol}: a separate analysis is needed, and we base it on the explicit expression and homogeneity properties of $\vartheta$.

 Our main result here is the following.

 \begin{proposition}\label{prop:varthetacontrol}
   Let $\alpha\in[0,1)$. For almost every $y\in\mathbb{S}^1$,
 \begin{itemize}
  \item[(i)] the function $x\mapsto\vartheta^\pm(x,y)$ belongs to $C^1(0,1)$,
  \item[(ii)] $\vartheta^\pm(x,y)=o(|x|^{\frac{3}{2}})$ as $x\to 0^\pm$,  
  \item[(iii)] $\partial_x\vartheta^\pm(x,y)=o(|x|^{\frac{1}{2}})$ as $x\to 0^\pm$.
 \end{itemize} 
 \end{proposition}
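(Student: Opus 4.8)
The plan is to apply the auxiliary Lemma~\ref{lem:grand_auxiliary_lemma} to the function $R(x,y):=\vartheta^\pm(x,y)\,\mathbf{1}_{(0,1)}(|x|)$ (treating the $+$ case, the $-$ case being identical). By that Lemma, it suffices to verify the two hypotheses (a) and (b), namely the $L^2$-finiteness of $|x|^{-(3+\alpha)/2}\vartheta^\pm$ and of $\partial_x^2\vartheta^\pm$ on $(0,1)\times\mathbb{S}^1$; conclusions (i)--(iii) then follow automatically from membership in $H^2_0((0,1])$. The key feature to exploit is that, unlike $\varphi$, the function $\vartheta=\mathcal{F}_2^{-1}(\vartheta_k)_{k\in\mathbb{Z}}$ is given \emph{explicitly} through \eqref{eq:q=q0+q1}--\eqref{eq:defq0q1decomp-q1}, as a superposition over $k$ of the known functions $\vartheta_{0,k}^\pm$ and $\vartheta_{1,k}^\pm$, each built from $\widetilde{\Phi}_{\alpha,k}$, $\widetilde{\Psi}_{\alpha,k}$ and the explicit counterterms $|x|^{-\alpha/2}P$, $|x|^{1+\alpha/2}P$ designed precisely to cancel their leading short-distance singularities.

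First I would estimate, for each fixed $k\neq0$, the two relevant norms of $\vartheta_{0,k}^\pm$ and $\vartheta_{1,k}^\pm$ over $(0,1)$, tracking the dependence on $|k|$. For this I would use the short-distance asymptotics \eqref{eq:Asymtotics_0} for $\widetilde{\Phi}_{\alpha,k}$ and \eqref{eq:Psi_asymptotics} (together with the explicit form \eqref{eq:explicitPsika}) for $\widetilde{\Psi}_{\alpha,k}$: by construction the subtraction in \eqref{eq:defq0q1decomp-q0}--\eqref{eq:defq0q1decomp-q1} removes the $|x|^{-\alpha/2}$ and $|x|^{1+\alpha/2}$ terms, so $\vartheta_{0,k}^\pm$ and $\vartheta_{1,k}^\pm$ vanish fast enough at $x=0$ to make $|x|^{-(3+\alpha)/2}\vartheta_{j,k}^\pm$ square-integrable near the origin. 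The factor $|k|$-scaling of these norms is the crucial quantitative output; I expect, after the change of variable $\rho\mapsto|k|^{1/(1+\alpha)}\rho$ in \eqref{eq:explicitPsika}, bounds of the form $\| |x|^{-(3+\alpha)/2}\vartheta_{0,k}^\pm\|_{L^2((0,1))}\lesssim|k|^{1/2}|c_{0,k}^\pm|$ and similarly for the $\vartheta_{1,k}$ piece with an appropriate $|k|$-power times $|c_{1,k}^\pm|$. Here the uniform-in-$k$ boundedness of $R_{G_{\alpha,k}}^{(2\alpha)}$ from Lemma~\ref{lem:RGbddsa}(ii) is exactly the tool that will make the $\widetilde{\Psi}_{\alpha,k}$-estimate uniform.

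Next I would sum these fibrewise bounds over $k$ using Plancherel for $\mathcal{F}_2^{-1}$: $\| |x|^{-(3+\alpha)/2}\vartheta^\pm\|^2_{L^2((0,1)\times\mathbb{S}^1)}=\sum_k \| |x|^{-(3+\alpha)/2}\vartheta_k^\pm\|^2_{L^2((0,1))}$, and likewise for $\partial_x^2$ after differentiating the explicit expressions twice in $x$. Substituting the $|k|$-powers found above and invoking the \emph{enhanced} summability of the coefficients $(c_{0,k}^\pm)$ and $(c_{1,k}^\pm)$ proved in Proposition~\ref{prop:g0g1Sobolev} and Remark~\ref{rem:enhanced_summability} --- precisely the extra factor $|k|^{2/(1+\alpha)}$ gained by belonging to $\mathcal{D}(\mathscr{H}_\alpha^{\mathrm{u.f.}})$ rather than merely to $\mathcal{D}(\mathscr{H}_\alpha^*)$ --- I would check that the resulting $k$-series converge. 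This verifies hypotheses (a) and (b) of Lemma~\ref{lem:grand_auxiliary_lemma}, completing the proof.

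\textbf{Main obstacle.} The delicate point is the bookkeeping of the $|k|$-powers in the $\partial_x^2$ estimate of $\vartheta_{1,k}^\pm$: differentiating $\widetilde{\Psi}_{\alpha,k}$ twice re-introduces high powers of $|k|$ (morally restoring the $k^2|x|^{\cdots}$ action seen in \eqref{eq:action_on_short_x}), and one must confirm that the cancellation with the counterterm $|x|^{1+\alpha/2}L_{1,k}$ survives differentiation and that the net $|k|$-growth is still beaten by the enhanced summability, rather than by the weaker a~priori bound \eqref{eq:pileupcond3}. Lemma~\ref{lem:singular_decomposition_adjoint} shows this is a genuine tightrope --- the individual singular pieces do \emph{not} lie in $\mathcal{D}(\mathscr{H}_\alpha^*)$ --- so the estimate must use the self-adjointness-induced decay and cannot be loosened. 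Getting this power-counting sharp, uniformly across all five extension types via their respective summability statements \eqref{eq:g0g1Sobolev_typeF}--\eqref{eq:g0g1Sobolev_typeIII}, is where the real work lies.
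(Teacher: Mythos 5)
Your proposal follows essentially the same route as the paper's proof: the paper also verifies hypotheses (a)--(b) of Lemma \ref{lem:grand_auxiliary_lemma} fibrewise via the homogeneity $\vartheta_{j,k}^\pm\propto w_j(|k|\,|x|^{1+\alpha})$ (Lemmas \ref{lem:homogeneity}--\ref{lem:finitenessnormsh0h1}, precisely your change of variable $\rho\mapsto|k|^{\frac{1}{1+\alpha}}\rho$), sums by Plancherel, and closes the $\vartheta_{0,k}$ series with the enhanced summability of Remark \ref{rem:enhanced_summability}. The only refinements worth noting: the paper estimates the stronger weight $|x|^{-2}\geqslant|x|^{-\frac{3+\alpha}{2}}$ on $(0,1)$ (which also keeps the extended integrals over $\mathbb{R}^+$ finite at $\alpha=0$), it needs no uniform-in-$k$ bound on $R_{G_{\alpha,k}}^{(2\alpha)}$ here (the scaling reduces everything to the single explicit function $h_{1,1}$ at $k=1$), and your feared tightrope for $\partial_x^2\vartheta_{1,k}$ resolves more easily than you expect --- Corollary \ref{cor:varthetasummability} gives $\|(\vartheta_{1,k}^\pm)''\|^2\lesssim|c_{1,k}^\pm|^2|k|^{-\frac{2}{1+\alpha}}$, so the a priori bound \eqref{eq:pileupcond3} suffices for the $c_{1,k}$ piece, the enhancement being required only for $c_{0,k}$.
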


 In preparation for the proof of this result, in terms of the functions
\begin{equation}\label{eq:funtionsh0h1}
 \begin{split}
  h_{0,k}\;&:=\; {\textstyle{\sqrt{\frac{2}{\pi(1+\alpha)}}}}\,|k|^{\frac{1}{2(1+\alpha)}}\Big(\Phi_{\alpha,k}-{\textstyle\sqrt{\frac{\pi(1+\alpha)}{2|k|}}\,x^{-\frac{\alpha}{2}}+\sqrt{\frac{\pi|k|}{2(1+\alpha)}}\,x^{1+\frac{\alpha}{2}}}\Big)\,,\\
  h_{1,k}\;&:=\;{\textstyle{\sqrt{\frac{2}{\pi(1+\alpha)}}}}\,|k|^{\frac{5}{2(1+\alpha)}}\Big(\Psi_{\alpha,k}-{\textstyle\sqrt{\frac{2|k|}{\pi(1+\alpha)^3}}}\,\|\Phi_{\alpha,k}\|_{L^2(\mathbb{R}^+)}^2\,|x|^{1+\frac{\alpha}{2}}\Big)
 \end{split}
\end{equation}
 defined on $\mathbb{R}^+$ for each $k\in\mathbb{Z}\setminus\{0\}$, one sees from \eqref{eq:defq0q1decomp-q0}-\eqref{eq:defq0q1decomp-q1} that 
  \begin{equation}\label{eq:q0q1scaling}
  \begin{split}
   \vartheta_{0,k}^\pm(x)\;&=\;c_{0,k}^{\pm}{\textstyle\sqrt{\frac{\pi(1+\alpha)}{2}}}\,|k|^{-\frac{1}{2(1+\alpha)}}\,h_{0,k}(|x|)\,,\qquad 0<\pm x<1\,, \\
   \vartheta_{1,k}^\pm(x)\;&=\;c_{1,k}^{\pm}{\textstyle\sqrt{\frac{\pi(1+\alpha)}{2}}}\,|k|^{-\frac{5}{2(1+\alpha)}}\,h_{1,k}(|x|)\,,\qquad 0<\pm x<1\,.
  \end{split}
 \end{equation}
 Clearly the above identities are not valid when $|x|>1$.

\begin{lemma}\label{lem:homogeneity}
 Let $\alpha\in[0,1)$ and $k\in\mathbb{Z}\setminus\{0\}$. For $x\in\mathbb{R}^+$ one has
\begin{equation}\label{eq:q0q1scaling2}
  h_{0,k}(x)\;:=\;w_0\big(|k|x^{1+\alpha}\big)\,,\qquad h_{1,k}(x)\;:=\;w_1\big(|k|x^{1+\alpha}\big)
\end{equation}
with
\begin{equation}\label{eq:q0q1scalingw0}
 w_0(x)\;:=\;x^{-\frac{\alpha}{2(1+\alpha)}}\big(e^{-\frac{x}{1+\alpha}}-1+{\textstyle\frac{x}{1+\alpha}}\big)
\end{equation}
and 
\begin{equation}\label{eq:q0q1scalingw1}
 \begin{split}
  w_1(x)\;&:=\;x^{-\frac{\alpha}{2(1+\alpha)}}\,e^{-\frac{x}{1+\alpha}}\int_0^{x^{\frac{1}{1+\alpha}}}\ud\rho\,\rho^{-\alpha}\sinh{\textstyle(\frac{\rho^{1+\alpha}}{1+\alpha})}\,e^{-\frac{\,\rho^{1+\alpha}}{1+\alpha}} \\
  &\qquad +x^{-\frac{\alpha}{2(1+\alpha)}}\,\sinh{\textstyle(\frac{x}{1+\alpha})}\int_{x^{\frac{1}{1+\alpha}}}^{+\infty}\!\!\ud\rho\,\rho^{-\alpha}\,e^{-\frac{2\rho^{1+\alpha}}{1+\alpha}} \\
  &\qquad-2^{-\frac{1-\alpha}{1+\alpha}}\,(1+\alpha)^{-\frac{1+3\alpha}{1+\alpha}}\,\Gamma\big({\textstyle\frac{1-\alpha}{1+\alpha}}\big)\,x^{\frac{2+\alpha}{2(1+\alpha)}}\,.
 \end{split}
\end{equation} 
\end{lemma}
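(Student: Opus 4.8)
The plan is to prove the two identities \eqref{eq:q0q1scaling2} by direct substitution, reducing everything to the explicit Bessel-function representations already available. The key observation driving the whole computation is the scaling structure of $\Phi_{\alpha,k}$ and $\Psi_{\alpha,k}$: from the explicit formula \eqref{eq:Phi_and_F_explicit} one reads off that $\Phi_{\alpha,k}(x)$ depends on $k$ and $x$ only through the combination $\frac{|k|}{1+\alpha}x^{1+\alpha}$ together with an overall prefactor, and from \eqref{eq:explicitPsika} the same is true of $\Psi_{\alpha,k}$ after the change of variable $\rho\mapsto |k|^{1/(1+\alpha)}\rho$ that was already performed there. So the first thing I would do is record the substitution $t:=|k|\,x^{1+\alpha}$ and verify that $x^{-\alpha/2}=|k|^{\alpha/(2(1+\alpha))}t^{-\alpha/(2(1+\alpha))}$ and $x^{1+\alpha/2}=|k|^{-(2+\alpha)/(2(1+\alpha))}t^{(2+\alpha)/(2(1+\alpha))}$, which is what converts the $k$-dependent prefactors in \eqref{eq:funtionsh0h1} into pure functions of $t$.

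For $h_{0,k}$ this is essentially immediate. Substituting \eqref{eq:Phi_and_F_explicit} into the definition \eqref{eq:funtionsh0h1} and factoring out $\sqrt{\tfrac{\pi(1+\alpha)}{2|k|}}$, I would check that the three terms in the parenthesis become
\[
 \sqrt{{\textstyle\frac{\pi(1+\alpha)}{2|k|}}}\,x^{-\frac{\alpha}{2}}\Big(e^{-\frac{|k|}{1+\alpha}x^{1+\alpha}}-1+{\textstyle\frac{|k|}{1+\alpha}}x^{1+\alpha}\Big),
\]
and then the prefactor $\sqrt{\tfrac{2}{\pi(1+\alpha)}}\,|k|^{1/(2(1+\alpha))}$ in \eqref{eq:funtionsh0h1} cancels the $\sqrt{\tfrac{\pi(1+\alpha)}{2|k|}}$ up to the power of $|k|$ that $x^{-\alpha/2}$ absorbs under $t=|k|x^{1+\alpha}$. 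Writing $x^{-\alpha/2}=|k|^{\alpha/(2(1+\alpha))}t^{-\alpha/(2(1+\alpha))}$ one sees the total power of $|k|$ is zero, leaving exactly $w_0(t)=t^{-\alpha/(2(1+\alpha))}\big(e^{-t/(1+\alpha)}-1+\tfrac{t}{1+\alpha}\big)$, which is \eqref{eq:q0q1scalingw0}. This step is pure bookkeeping of exponents and I would present it compactly.

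The function $h_{1,k}$ is where the real work lies, and I expect the main obstacle to be tracking the integral terms through the substitution rather than any conceptual difficulty. Here I would start from the explicit expression \eqref{eq:explicitPsika} for $\Psi_{\alpha,k}$ and from the value $\|\Phi_{\alpha,k}\|_{L^2}^2=\pi(1+\alpha)^{(1-\alpha)/(1+\alpha)}\Gamma\big(\tfrac{1-\alpha}{1+\alpha}\big)(2|k|)^{-2/(1+\alpha)}$ given in \eqref{eq:Phinorm}. The subtracted term $\sqrt{\tfrac{2|k|}{\pi(1+\alpha)^3}}\|\Phi_{\alpha,k}\|_{L^2}^2\,x^{1+\alpha/2}$ must be shown, after multiplication by the prefactor $\sqrt{\tfrac{2}{\pi(1+\alpha)}}|k|^{5/(2(1+\alpha))}$ and the substitution $t=|k|x^{1+\alpha}$, to produce precisely the third line $-2^{-(1-\alpha)/(1+\alpha)}(1+\alpha)^{-(1+3\alpha)/(1+\alpha)}\Gamma\big(\tfrac{1-\alpha}{1+\alpha}\big)t^{(2+\alpha)/(2(1+\alpha))}$ of \eqref{eq:q0q1scalingw1}; this is another exponent-and-constant reconciliation using \eqref{eq:Phinorm}. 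For the two integral terms in \eqref{eq:explicitPsika}, the inner variable is already the rescaled $\rho$, and I would verify that the upper/lower limits $x|k|^{1/(1+\alpha)}=t^{1/(1+\alpha)}$ and that the surviving prefactors $x^{-\alpha/2}e^{-\frac{|k|}{1+\alpha}x^{1+\alpha}}$ and $x^{-\alpha/2}\sinh(\tfrac{|k|}{1+\alpha}x^{1+\alpha})$ become $|k|^{\alpha/(2(1+\alpha))}t^{-\alpha/(2(1+\alpha))}e^{-t/(1+\alpha)}$ and the analogous $\sinh$ expression — so that, combined with the $|k|^{5/(2(1+\alpha))}$ and the $\sqrt{\tfrac{\pi(1+\alpha)}{2}}|k|^{-5/(2(1+\alpha))}$ from \eqref{eq:explicitPsika}, all powers of $|k|$ again cancel and leave the first two lines of \eqref{eq:q0q1scalingw1}. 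The careful part is ensuring every numerical constant and every power of $(1+\alpha)$ lines up, and in particular that the $\Gamma$-factor emerges with the stated normalization; I would organise this by computing the net power of $|k|$ first (to confirm $k$-independence of $w_1$), then matching the remaining $\alpha$-dependent constants term by term.
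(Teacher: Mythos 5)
Your proposal is correct and follows exactly the paper's own route: plug the explicit formulas \eqref{eq:Phi_and_F_explicit}, \eqref{eq:explicitPsika}, and \eqref{eq:Phinorm} into \eqref{eq:funtionsh0h1} and verify, via the substitution $t=|k|x^{1+\alpha}$, that all powers of $|k|$ cancel (indeed, e.g., for the subtracted term in $h_{1,k}$ the net exponent is $\frac{5}{2(1+\alpha)}+\frac{1}{2}-\frac{2}{1+\alpha}-\frac{2+\alpha}{2(1+\alpha)}=0$) and the $\alpha$-dependent constants match \eqref{eq:q0q1scalingw0}--\eqref{eq:q0q1scalingw1}. The paper's proof performs the same direct substitution, writing the intermediate expressions in terms of $\big(|k|^{1/(1+\alpha)}x\big)$ just as you do.
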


\begin{proof}
 Plugging the explicit expression \eqref{eq:Phi_and_F_explicit} for $\Phi_{\alpha,k}$ into the first formula in \eqref{eq:funtionsh0h1}  one finds
 \[
  h_{0,k}(x)\;=\;\big(|k|^{\frac{1}{1+\alpha}}x)^{-\frac{\alpha}{2}}\Big(e^{-\frac{|k|}{1+\alpha}x^{1+\alpha}}-1+{\textstyle\frac{|k|x^{1+\alpha}}{1+\alpha}}\Big)\;=\;w_0\big(|k|x^{1+\alpha}\big)
 \]
 with $w_0$ defined by \eqref{eq:q0q1scalingw0}. Analogously, inserting the expression \eqref{eq:explicitPsika} for $\Psi_{\alpha,k}$ and the expression \eqref{eq:Phinorm} for $\| \Phi_{\alpha,k} \|_{L^2(\mathbb{R}^+)}^2$ into the second formula in \eqref{eq:funtionsh0h1}, one obtains
 \[
 \begin{split}
  h_{1,k}^{\pm}(x)\;&=\; \big(|k|^{\frac{1}{1+\alpha}}x\big)^{-\frac{\alpha}{2}}e^{-\frac{|k|x^{1+\alpha}}{1+\alpha}}\!\int_0^{x|k|^{\frac{1}{1+\alpha}}}\ud\rho\,\rho^{-\alpha}\sinh{\textstyle(\frac{\rho^{1+\alpha}}{1+\alpha})}\,e^{-\frac{\,\rho^{1+\alpha}}{1+\alpha}} \\
  &\qquad\quad +\big(|k|^{\frac{1}{1+\alpha}}x\big)^{-\frac{\alpha}{2}}\sinh{\textstyle\big(\frac{|k|x^{1+\alpha}}{1+\alpha}\big)}\!\int_{x|k|^{\frac{1}{1+\alpha}}}^{+\infty}\!\!\ud\rho\,\rho^{-\alpha}\,e^{-\frac{2\rho^{1+\alpha}}{1+\alpha}} \\
  &\qquad\quad-2^{-\frac{1-\alpha}{1+\alpha}}\,(1+\alpha)^{-\frac{1+3\alpha}{1+\alpha}}\,\Gamma\big({\textstyle\frac{1-\alpha}{1+\alpha}}\big)\,\big(|k|^{\frac{1}{1+\alpha}}x\big)^{1+\frac{\alpha}{2}}  \\
  &=\;w_1\big(|k|x^{1+\alpha}\big)
 \end{split}
 \]
 with $w_1$ defined by \eqref{eq:q0q1scalingw1}.
\end{proof}

\begin{lemma}\label{lem:scaling_on_norms_hjk}
 Let $\alpha\in[0,1)$ and $k\in\mathbb{Z}\setminus\{0\}$. The functions $h_{0,k}$ and $h_{1,k}$ defined in \eqref{eq:funtionsh0h1} satisfy
 \begin{eqnarray}
  \big\|x^{-2}h_{j,k}\big\|_{L^2((0,1))}^2\!\!&\leqslant&\!|k|^{\frac{3}{1+\alpha}}\,\big\|x^{-2}h_{j,1}\big\|_{L^2(\mathbb{R}^+)}^2\,, \label{eq:scalingx-2h} \\
  \big\|h_{j,k}''\big\|_{L^2((0,1))}^2\!\!&\leqslant&\!|k|^{\frac{3}{1+\alpha}}\,\big\|h_{j,1}''\big\|_{L^2(\mathbb{R}^+)}^2 \label{eq:scalingd2h}
 \end{eqnarray}
 for $j\in\{0,1\}$. 
\end{lemma}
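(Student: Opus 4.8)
The plan is to establish both estimates by a single scaling argument, exploiting the homogeneity structure recorded in Lemma \ref{lem:homogeneity}, namely that $h_{j,k}(x)=w_j(|k|x^{1+\alpha})$ for a fixed profile $w_j$ independent of $k$. First I would change variables in each $L^2((0,1))$-integral to push the $k$-dependence out of the integrand and into an overall power of $|k|$, and then relax the domain of integration from $(0,1)$ to all of $\mathbb{R}^+$, using that the integrands are non-negative; the resulting full-line integral is exactly the $k=1$ quantity.

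Concretely, for \eqref{eq:scalingx-2h} I would compute, with the substitution $s=|k|^{\frac{1}{1+\alpha}}x$ (so that $h_{j,k}(x)=h_{j,1}(|k|^{\frac{1}{1+\alpha}}x)=h_{j,1}(s)$ and $\ud x=|k|^{-\frac{1}{1+\alpha}}\ud s$),
\begin{equation*}
 \big\|x^{-2}h_{j,k}\big\|_{L^2((0,1))}^2=\int_0^1 x^{-4}\,|h_{j,1}(|k|^{\frac{1}{1+\alpha}}x)|^2\,\ud x=|k|^{\frac{3}{1+\alpha}}\int_0^{|k|^{\frac{1}{1+\alpha}}}s^{-4}\,|h_{j,1}(s)|^2\,\ud s\,,
\end{equation*}
where the prefactor $|k|^{\frac{3}{1+\alpha}}$ arises from $x^{-4}\ud x=|k|^{\frac{4}{1+\alpha}}s^{-4}\cdot|k|^{-\frac{1}{1+\alpha}}\ud s$. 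Since the integrand $s^{-4}|h_{j,1}(s)|^2$ is non-negative and $|k|^{\frac{1}{1+\alpha}}\geqslant 1$ for $k\in\mathbb{Z}\setminus\{0\}$, enlarging the upper limit from $|k|^{\frac{1}{1+\alpha}}$ to $+\infty$ only increases the integral, which yields precisely the bound \eqref{eq:scalingx-2h}. For the second-derivative estimate \eqref{eq:scalingd2h} I would proceed identically, noting that differentiating $h_{j,k}(x)=h_{j,1}(|k|^{\frac{1}{1+\alpha}}x)$ twice gives $h_{j,k}''(x)=|k|^{\frac{2}{1+\alpha}}h_{j,1}''(|k|^{\frac{1}{1+\alpha}}x)$, so after the same substitution the Jacobian produces $|k|^{\frac{4}{1+\alpha}}\cdot|k|^{-\frac{1}{1+\alpha}}=|k|^{\frac{3}{1+\alpha}}$, and again extending the domain to $\mathbb{R}^+$ gives the claim.

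The only genuine point requiring care, and the step I expect to be the main (though modest) obstacle, is to confirm that the reference quantities $\big\|x^{-2}h_{j,1}\big\|_{L^2(\mathbb{R}^+)}$ and $\big\|h_{j,1}''\big\|_{L^2(\mathbb{R}^+)}$ are actually finite, so that the right-hand sides of \eqref{eq:scalingx-2h}-\eqref{eq:scalingd2h} are meaningful; this amounts to checking integrability of $w_j$ and its derivatives both at the origin and at infinity. Near $x=0$ the subtractions built into the definitions \eqref{eq:defq0q1decomp-q0}-\eqref{eq:defq0q1decomp-q1} of $\vartheta_{0,k}$ and $\vartheta_{1,k}$ ensure the cancellation of the leading singular terms, so that $h_{0,k},h_{1,k}\in\mathcal{D}(\overline{A^\pm_\alpha(k)})$ by \eqref{eq:regularityoftheta01}; Proposition \ref{prop:domAclosure} then guarantees the requisite $H^2_0$ and weighted-$L^2$ control, whence $x^{-2}h_{j,1}$ and $h_{j,1}''$ are square-integrable (the behaviour at infinity being governed by the exponential decay of $\Phi_{\alpha,1}$ and $\Psi_{\alpha,1}$ together with the explicit polynomial profiles in \eqref{eq:q0q1scalingw0}-\eqref{eq:q0q1scalingw1}). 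With finiteness secured, the scaling computation above completes the proof, and these two bounds will then feed directly into the application of Lemma \ref{lem:grand_auxiliary_lemma} to $\vartheta$ in the proof of Proposition \ref{prop:varthetacontrol}.
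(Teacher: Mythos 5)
Your proof is correct and follows essentially the same route as the paper's: the dilation identity $h_{j,k}(x)=h_{j,1}(|k|^{\frac{1}{1+\alpha}}x)$ (equivalently the homogeneity of Lemma \ref{lem:homogeneity}), the change of variables producing the Jacobian factor $|k|^{\frac{3}{1+\alpha}}$, and the extension of the integration domain from $(0,|k|^{\frac{1}{1+\alpha}})$ to $\mathbb{R}^+$ by non-negativity — your chain-rule shortcut for $h_{j,k}''$ is just a slightly slicker packaging of the paper's explicit computation with $w_j$. One caveat on your final paragraph: the inequalities themselves hold irrespective of finiteness, which the paper defers to the separate Lemma \ref{lem:finitenessnormsh0h1}, and there your appeal to \eqref{eq:regularityoftheta01} would not suffice on its own (the paper explicitly warns that $h_{j,1}$ agrees with $\vartheta_{j,1}$ only on $(0,1)$, so membership of $\vartheta_{j,k}$ in $\mathcal{D}(\overline{A_\alpha(k)})$ gives no control of the full-line norms — indeed $h_{0,1}\notin L^2(\mathbb{R}^+)$), though your parenthetical direct check of the asymptotics at infinity is exactly the needed repair.
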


\begin{proof}
 By means of the homogeneity properties \eqref{eq:q0q1scaling2} one finds
 \[
  \begin{split}
    \big\|x^{-2}h_{j,k}\big\|_{L^2((0,1))}^2\;&=\;\int_0^1\big|x^{-2} w_j\big(|k|x^{1+\alpha}\big)\big|^2\,\ud x \\
    &=\;|k|^{\frac{3}{1+\alpha}}\int_0^{|k|^{\frac{1}{1+\alpha}}}|x^{-2}w_j(x^{1+\alpha})|^2\,\ud x \\
    &\leqslant\;|k|^{\frac{3}{1+\alpha}}\int_0^{+\infty}|x^{-2}h_{j,1}(x)|^2\,\ud x
  \end{split}
 \]
 and 
\[
 \begin{split}
  \big\|&h_{j,k}''\big\|_{L^2((0,1))}^2\;=\;\int_0^1\Big| \frac{\ud^2}{\ud x^2}\,w_j\big(|k|x^{1+\alpha}\big) \Big|^2\,\ud x \\
  &=\;\int_0^1\big| (1+\alpha)^2|k|^2 x^{2\alpha}w_j''\big(|k|x^{1+\alpha}\big)+\alpha(1+\alpha)|k|x^{-(1-\alpha)}w_j'\big(|k|x^{1+\alpha}\big) \big|^2\,\ud x \\
  &=\;|k|^{\frac{3}{1+\alpha}}\int_0^{|k|^{\frac{1}{1+\alpha}}}\big| (1+\alpha)^2 x^{2\alpha}w_j''\big(x^{1+\alpha}\big)+\alpha(1+\alpha)x^{-(1-\alpha)}w_j'\big(x^{1+\alpha}\big) \big|^2\,\ud x \\
  &=\;|k|^{\frac{3}{1+\alpha}}\int_0^{|k|^{\frac{1}{1+\alpha}}}\Big| \frac{\ud^2}{\ud x^2}\,w_j\big(x^{1+\alpha}\big) \Big|^2\,\ud x\;\leqslant\;|k|^{\frac{3}{1+\alpha}}\int_0^{+\infty}|h_{j,1}''(x)|^2\,\ud x\,,
 \end{split}
\]
which proves, respectively, \eqref{eq:scalingx-2h} and \eqref{eq:scalingd2h}.
\end{proof}

\begin{lemma}\label{lem:finitenessnormsh0h1}
 Let $\alpha\in[0,1)$. The functions $h_{0,1}$ and $h_{1,1}$ defined in \eqref{eq:funtionsh0h1} satisfy
  \begin{eqnarray}
  \big\|x^{-2}h_{j,1}\big\|_{L^2(\mathbb{R}^+)}^2\!\!&<&\!+\infty \label{eq:x-2h-finite}\\
  \big\|h_{j,1}''\big\|_{L^2(\mathbb{R}^+)}^2\!\!&<&\!+\infty \label{eq:D2h-finite}
 \end{eqnarray}
 for $j\in\{0,1\}$. 
\end{lemma}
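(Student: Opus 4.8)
The plan is to prove the two finiteness statements \eqref{eq:x-2h-finite} and \eqref{eq:D2h-finite} for the single functions $h_{0,1}$ and $h_{1,1}$ (the case $k=1$), since Lemmas \ref{lem:scaling_on_norms_hjk} and \ref{lem:homogeneity} have already reduced the whole family $(h_{j,k})_{k\in\mathbb{Z}\setminus\{0\}}$ to this base case via the homogeneity scaling. The finiteness must be checked at two distinct regimes: near $x=0$ and near $x=+\infty$. The key mechanism throughout is that the singular parts $\Phi_{\alpha,1}$ and $\Psi_{\alpha,1}$, and their leading-order short-distance terms $x^{-\alpha/2}$, $x^{1+\alpha/2}$, \emph{cancel} in the very definitions \eqref{eq:funtionsh0h1}: the functions $h_{j,1}$ were constructed precisely so that the $O(x^{-\alpha/2})$ and $O(x^{1+\alpha/2})$ asymptotics of $\Phi_{\alpha,1}$ and $\Psi_{\alpha,1}$ are subtracted off, leaving a more regular remainder as $x\downarrow 0$.

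\textbf{First, the behaviour near zero.} I would substitute the explicit short-distance asymptotics into the definitions \eqref{eq:funtionsh0h1}. From \eqref{eq:Asymtotics_0}, $\Phi_{\alpha,1}(x)=\sqrt{\tfrac{\pi(1+\alpha)}{2}}\,x^{-\alpha/2}-\sqrt{\tfrac{\pi}{2(1+\alpha)}}\,x^{1+\alpha/2}+O(x^{2+\frac{3}{2}\alpha})$, so after subtracting exactly the first two terms (which is what \eqref{eq:funtionsh0h1} does) we get $h_{0,1}(x)\stackrel{x\downarrow 0}{=}O(x^{2+\frac{3}{2}\alpha})$. Consequently $x^{-2}h_{0,1}(x)=O(x^{\frac{3}{2}\alpha})$, which is square-integrable near $0$ for every $\alpha\in[0,1)$. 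For $h_{1,1}$, Lemma \ref{lem:Psi_asymptotics} gives $\Psi_{\alpha,1}(x)=\sqrt{\tfrac{2}{\pi(1+\alpha)^3}}\|\Phi_{\alpha,1}\|_{L^2}^2\,x^{1+\alpha/2}+o(x^{3/2})$; subtracting the leading term as in \eqref{eq:funtionsh0h1} yields $h_{1,1}(x)=o(x^{3/2})$, so $x^{-2}h_{1,1}=o(x^{-1/2})$, again square-integrable near $0$. To control $h_{j,1}''$ near zero I would differentiate the expansions twice — the safest route is to use the expression \eqref{eq:q0q1scalingw0}-\eqref{eq:q0q1scalingw1} for $w_j$ (with $k=1$, so $h_{j,1}(x)=w_j(x^{1+\alpha})$) and Taylor-expand the analytic factors $e^{-x/(1+\alpha)}$ and $\sinh$, confirming that the lowest surviving powers of $x$ after differentiating twice are still positive enough to be square-integrable.

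\textbf{Second, the behaviour near infinity.} This is where the estimates are more delicate, and I expect it to be the main obstacle. Near $+\infty$ there is no cancellation to exploit; instead I would use the large-distance asymptotics \eqref{eq:Asymtotics_Inf}, which show $\Phi_{\alpha,1}(x)$ decays like $e^{-x^{1+\alpha}/(1+\alpha)}x^{-\alpha/2}$. In $h_{0,1}$, the \emph{polynomial} correction terms $x^{-\alpha/2}$ and $x^{1+\alpha/2}$ that were subtracted do \emph{not} decay, so one must check that $x^{-2}h_{0,1}$ and $h_{0,1}''$ remain square-integrable at infinity despite these non-decaying pieces. For $x^{-2}h_{0,1}$, the worst term is $x^{-2}\cdot x^{1+\alpha/2}=x^{-1+\alpha/2}$, and $\int^{+\infty}|x^{-1+\alpha/2}|^2\,dx=\int^{+\infty}x^{-2+\alpha}\,dx<+\infty$ precisely because $\alpha<1$. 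The second derivative of the $x^{1+\alpha/2}$ term behaves like $x^{-3+\alpha/2}$, which is even more strongly integrable. For $h_{1,1}$, I would work from \eqref{eq:q0q1scalingw1}: the integral representation must be analysed as $x\to+\infty$, showing that the combination of the two integral terms with the subtracted $x^{(2+\alpha)/(2(1+\alpha))}$-term (which corresponds to the $x^{1+\alpha/2}$ tail) yields a remainder whose weighted $L^2$-norms at infinity are finite, again with the constraint $\alpha<1$ being the decisive ingredient. The crux is a careful bookkeeping of the asymptotic expansion of the incomplete-Gamma-type integrals in \eqref{eq:q0q1scalingw1}; I would estimate the tail integral $\int_{x^{1/(1+\alpha)}}^{+\infty}\rho^{-\alpha}e^{-2\rho^{1+\alpha}/(1+\alpha)}\,d\rho$ by its leading exponential behaviour and verify that, after multiplication by $\sinh(x/(1+\alpha))$, the exponential growth is exactly balanced, leaving controlled polynomial decay. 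Collecting the near-zero and near-infinity estimates then gives \eqref{eq:x-2h-finite} and \eqref{eq:D2h-finite} for $j\in\{0,1\}$, completing the proof.
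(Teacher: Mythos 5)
Your overall architecture is the paper's: smoothness of $h_{j,1}$ on $\mathbb{R}^+$ reduces everything to the two regimes $x\downarrow 0$ and $x\to+\infty$, the built-in subtraction of the leading $x^{-\alpha/2}$ and $x^{1+\alpha/2}$ terms handles the origin, and $\alpha<1$ is the decisive constraint at infinity. For $h_{0,1}$ (both norms) and for $x^{-2}h_{1,1}$ your route coincides with the paper's, except that at infinity the paper disposes of $x^{-2}h_{1,1}$ even more cheaply, term by term, using only $\Psi_{\alpha,1}\in L^2(\mathbb{R}^+)$ together with $x^{-2}\cdot x^{1+\alpha/2}=x^{-1+\alpha/2}$ being square-integrable at infinity. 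Where you genuinely diverge is the hardest piece, $\|h_{1,1}''\|_{L^2}$: you plan to differentiate the integral representation \eqref{eq:q0q1scalingw1} twice and track the asymptotics of the incomplete-Gamma-type integrals, including the boundary terms produced by the $x$-dependent limits. That can be pushed through, but the paper sidesteps the entire computation with an operator identity: since $\Psi_{\alpha,1}=R_{G_{\alpha,1}}\Phi_{\alpha,1}$ and $R_{G_{\alpha,1}}=(A_{\alpha,F}^+(1))^{-1}$ (Proposition \ref{eq:RGisSFinv}), the differential equation gives $\Psi_{\alpha,1}''=-\Phi_{\alpha,1}+\big(x^{2\alpha}+C_\alpha x^{-2}\big)\Psi_{\alpha,1}$, whence $h_{1,1}''$ is a linear combination of $\Phi_{\alpha,1}$, $x^{2\alpha}\Psi_{\alpha,1}$, and $x^{-2}h_{1,1}$; the first is in $L^2$, the second by $\mathrm{ran}\,R_{G_{\alpha,1}}\subset L^2(\mathbb{R}^+,\langle x\rangle^{4\alpha}\,\ud x)$ (Corollary \ref{cor:RGtoWeightedL2}), and the third by the already-established \eqref{eq:x-2h-finite} for $j=1$. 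This buys a three-line argument with no asymptotic bookkeeping at all, whereas your route buys nothing extra here; if you execute it, the paper's trick is strictly preferable.

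Three concrete corrections to your estimates. First, the second derivative of the $x^{1+\alpha/2}$ tail is $\sim x^{-1+\alpha/2}$, not $x^{-3+\alpha/2}$; it is still square-integrable at infinity, but only because $\alpha<1$, so the margin is genuine rather than comfortable. Second, near zero the lowest surviving power in $h_{1,1}''$ is $x^{-\alpha/2}$ (inherited from $\Phi_{\alpha,1}$ through the identity above), a \emph{negative} power that is square-integrable only thanks to $\alpha<1$; your expectation that "the lowest surviving powers are still positive" fails for $j=1$. Third, and most substantively: your near-zero argument for $x^{-2}h_{1,1}$ infers square-integrability from $h_{1,1}=o(x^{3/2})$, i.e.\ $x^{-2}h_{1,1}=o(x^{-1/2})$ — but $o(x^{-1/2})$ does \emph{not} imply $L^2$ near the origin (e.g.\ $x^{-1/2}(\log(1/x))^{-1/2}$ is $o(x^{-1/2})$ with divergent squared integral). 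You need the sharper remainder visible in the proof of Lemma \ref{lem:Psi_asymptotics}, which yields $h_{1,1}=O(x^{2-\alpha/2})$ and hence $x^{-2}h_{1,1}=O(x^{-\alpha/2})$, square-integrable near zero for $\alpha<1$; this is the refinement the paper implicitly invokes by referring back to the routine checks in the proof of Theorem \ref{prop:g_with_Pweight}. With these repairs your proposal is correct.
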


\begin{proof}
As $h_{0,1}$ (resp., $h_{1,1}$) only agrees with $\vartheta^+_{0,1}$ (resp., $\vartheta^+_{1,1}$) over the interval $(0,1)$, apart from a $\alpha$-dependent pre-factor, one cannot deduce \eqref{eq:x-2h-finite}-\eqref{eq:D2h-finite} from \eqref{eq:regularityoftheta01}, because the considered norms are over the whole $\mathbb{R}^+$. However, the reasoning made in the proof of Theorem \ref{prop:g_with_Pweight}, which led to \eqref{eq:regularityoftheta01}, can be essentially repeated here. Clearly, both $h_{0,1}$ and $h_{1,1}$ are $C^\infty(\mathbb{R}^+)$-functions; therefore, the finiteness of the norms in \eqref{eq:x-2h-finite}-\eqref{eq:D2h-finite} is only to be checked as $x\downarrow 0$ and $x\to+\infty$. In fact, for
\[
 h_{0,1}\;=\;x^{-\frac{\alpha}{2}}\big(e^{-\frac{x^{1+\alpha}}{1+\alpha}}-1+{\textstyle\frac{x^{1+\alpha}}{1+\alpha}}\big)
\]
one can perform a straightforward computation and find
\[
 \begin{split}
  & \;\,h_{0,1}(x)\;\stackrel{x\downarrow 0}{=}\;x^{2+\frac{3}{2}\alpha}(1+O(x^{1+\alpha}))\,, \\
  & h_{0,1}(x)\stackrel{x\to +\infty}{=}{\textstyle\frac{1}{1+\alpha}}x^{1-\frac{\alpha}{2}}(1+O(x^{-1})) \,,
 \end{split}
\]
and 
\[
 \begin{split}
  &h_{0,1}''(x)(x)\;\stackrel{x\downarrow 0}{=}\;x^{\frac{3}{2}\alpha}\big({\textstyle\frac{9}{8}-\frac{1}{8(1+\alpha)^2}}\big)(1+O(x^{1+\alpha}))\,, \\
  &\quad h_{0,1}''(x)\stackrel{x\to +\infty}{=}{\textstyle\frac{\alpha(2+\alpha)}{4(1+\alpha)}}x^{-(1+\frac{\alpha}{2})}(1+o(1))\,.
  \end{split}
\]
Such asymptotics imply \eqref{eq:x-2h-finite}-\eqref{eq:D2h-finite} when $j=0$, as $\alpha\in(0,1)$. Concerning
\[
  h_{1,1}\;=\;{\textstyle{\sqrt{\frac{2}{\pi(1+\alpha)}}}}\,\Psi_{\alpha,1}-{\textstyle\frac{2}{\pi(1+\alpha)^2}}\,\|\Phi_{\alpha,1}\|_{L^2(\mathbb{R}^+)}^2\,x^{1+\frac{\alpha}{2}}\,,
\]
the square-integrability of $x^{-2}h_{1,1}$ is controlled analogously to the proof of Theorem \ref{prop:g_with_Pweight}: the short-distance asymptotics \eqref{eq:Psi_asymptotics} for $\Psi_{\alpha,1}$ gives a convenient compensation in $h_{1,1}$ as $x\downarrow 0$, whereas at infinity the control can be simply made term by term, as $\Psi_{\alpha,1}\in L^2(\mathbb{R}^+)$. Thus, \eqref{eq:x-2h-finite} is also proved for $j=1$. Next, we consider
\[
 h_{1,1}''\;=\;{\textstyle{\sqrt{\frac{2}{\pi(1+\alpha)}}}}\,\Psi_{\alpha,1}''-{\textstyle\frac{2}{\pi(1+\alpha)^2}}\,\|\Phi_{\alpha,1}\|_{L^2(\mathbb{R}^+)}^2\,{\textstyle\frac{\alpha(2+\alpha)}{2}}\,x^{-(1-\frac{\alpha}{2})}\,.
\]
As $\Psi_{\alpha,1}=R_{G_{\alpha,1}}\Phi_{\alpha,1}$ and $R_{G_{\alpha,1}}=(A_{\alpha,F}^+(1))^{-1}$ (see \eqref{eq:defPsi} and Prop.~\ref{eq:RGisSFinv} above), then
\[
 \begin{split}
  \Psi_{\alpha,1}''\;&=\;-\big({\textstyle -\frac{\ud^2}{\ud x^2}}+x^{2\alpha}+{\textstyle\frac{\alpha(2+\alpha)}{2}}x^{-2}\big)R_{G_{\alpha,1}}\Phi_{\alpha,1}+\big(x^{2\alpha}+{\textstyle\frac{\alpha(2+\alpha)}{2}}x^{-2}\big)\Psi_{\alpha,1} \\
  &=\;-\Phi_{\alpha,1}+\big(x^{2\alpha}+{\textstyle\frac{\alpha(2+\alpha)}{2}}x^{-2}\big)\Psi_{\alpha,1}\,,
 \end{split}
\]
whence
\[
 \begin{split}
  h_{1,1}''\;&=\;-{\textstyle{\sqrt{\frac{2}{\pi(1+\alpha)}}}}\,\Phi_{1,\alpha}+{\textstyle{\sqrt{\frac{2}{\pi(1+\alpha)}}}}\big(x^{2\alpha}+{\textstyle\frac{\alpha(2+\alpha)}{2}}x^{-2}\big)\Psi_{\alpha,1} \\ 
 &\qquad\qquad  -{\textstyle\frac{2}{\pi(1+\alpha)^2}}\,\|\Phi_{\alpha,1}\|_{L^2(\mathbb{R}^+)}^2\,{\textstyle\frac{\alpha(2+\alpha)}{2}}\,x^{-(1-\frac{\alpha}{2})} \\
 &=\;-{\textstyle{\sqrt{\frac{2}{\pi(1+\alpha)}}}}\,\Phi_{1,\alpha}+{\textstyle{\sqrt{\frac{2}{\pi(1+\alpha)}}}}\,x^{2\alpha}\,\Psi_{\alpha,1}+{\textstyle\frac{\alpha(2+\alpha)}{2}}\,x^{-2}\,h_{1,1}\,.
 \end{split}
\]
Each of the three summands in the r.h.s.~above belongs to $L^2(\mathbb{R}^+)$: in particular, the second does so because $\Psi_{\alpha,1}\in\mathrm{ran}\,R_{G_{\alpha,k}}\subset L^2(\mathbb{R}^+,\langle x\rangle^{4\alpha} \ud x)$ (Corollary \ref{cor:RGtoWeightedL2}). This proves \eqref{eq:D2h-finite} for $j=1$.
\end{proof}

From \eqref{eq:q0q1scaling}, and from Lemmas \ref{lem:scaling_on_norms_hjk} and \ref{lem:finitenessnormsh0h1}, one immediately deduces:

\begin{corollary}\label{cor:varthetasummability}
 Let $\alpha\in[0,1)$ and $k\in\mathbb{Z}\setminus\{0\}$. Then
 \begin{equation}\label{eq:varthetasummability0}
  \begin{split}
   \big\|x^{-2}\vartheta_{0,k}^\pm\big\|_{L^2(I^\pm)}^2\;&\lesssim\;|c_{0,k}^\pm|^2\,|k|^{\frac{2}{1+\alpha}}\,, \\
   \big\|(\vartheta_{0,k}^\pm)''\big\|_{L^2(I^\pm)}^2\;&\lesssim\;|c_{0,k}^\pm|^2\,|k|^{\frac{2}{1+\alpha}}\,,
  \end{split}
 \end{equation}
 and 
  \begin{equation}\label{eq:varthetasummability1}
  \begin{split}
   \big\|x^{-2}\vartheta_{1,k}^\pm\big\|_{L^2(I^\pm)}^2\;&\lesssim\;|c_{1,k}^\pm|^2\,|k|^{-\frac{2}{1+\alpha}}\,, \\
   \big\|(\vartheta_{1,k}^\pm)''\big\|_{L^2(I^\pm)}^2\;&\lesssim\;|c_{1,k}^\pm|^2\,|k|^{-\frac{2}{1+\alpha}}\,,
  \end{split}
 \end{equation}
 with $I^+=(0,1)$ and $I^-=(-1,0)$, where the constants in the above inequalities only depend on $\alpha$. 
\end{corollary}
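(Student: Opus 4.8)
The plan is to read off the four inequalities directly from the scaling identity \eqref{eq:q0q1scaling} by substituting it into the left-hand sides and then invoking the two preceding lemmas; the computation is purely a matter of tracking powers of $|k|$, since all the analytic work has already been carried out.

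First I would observe that, because of the absolute value appearing in the argument $h_{j,k}(|x|)$ in \eqref{eq:q0q1scaling}, the reflection $x\mapsto|x|$ reduces the estimate over $I^-=(-1,0)$ to the one over $I^+=(0,1)$ verbatim; it therefore suffices to treat the `$+$' case, and I work on $I^+=(0,1)$ throughout. On this interval \eqref{eq:q0q1scaling} reads
\[
 \vartheta_{0,k}^+(x)\;=\;c_{0,k}^{+}\,{\textstyle\sqrt{\frac{\pi(1+\alpha)}{2}}}\,|k|^{-\frac{1}{2(1+\alpha)}}\,h_{0,k}(x)\,,\qquad \vartheta_{1,k}^+(x)\;=\;c_{1,k}^{+}\,{\textstyle\sqrt{\frac{\pi(1+\alpha)}{2}}}\,|k|^{-\frac{5}{2(1+\alpha)}}\,h_{1,k}(x)\,,
\]
so that the two operations of interest, namely multiplication by $x^{-2}$ and double differentiation in $x$, commute with the scalar prefactor and act only on $h_{j,k}$.

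Next I would square the norms. For $j=0$ this gives
\[
 \big\|x^{-2}\vartheta_{0,k}^{+}\big\|_{L^2(I^+)}^2\;=\;|c_{0,k}^{+}|^2\,{\textstyle\frac{\pi(1+\alpha)}{2}}\,|k|^{-\frac{1}{1+\alpha}}\,\big\|x^{-2}h_{0,k}\big\|_{L^2((0,1))}^2\,,
\]
and the identical identity with $(\vartheta_{0,k}^{+})''$ and $h_{0,k}''$ in place of $x^{-2}\vartheta_{0,k}^{+}$ and $x^{-2}h_{0,k}$; for $j=1$ the same holds with the prefactor $|k|^{-\frac{5}{1+\alpha}}$. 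I would then bound the $h_{j,k}$-norms by Lemma \ref{lem:scaling_on_norms_hjk}, which picks up a factor $|k|^{\frac{3}{1+\alpha}}$ and replaces $h_{j,k}$ on $(0,1)$ by $h_{j,1}$ over the whole of $\mathbb{R}^+$, and finally invoke Lemma \ref{lem:finitenessnormsh0h1} to absorb $\|x^{-2}h_{j,1}\|_{L^2(\mathbb{R}^+)}^2$ and $\|h_{j,1}''\|_{L^2(\mathbb{R}^+)}^2$ into an $\alpha$-dependent constant. Collecting exponents yields $|k|^{-\frac{1}{1+\alpha}+\frac{3}{1+\alpha}}=|k|^{\frac{2}{1+\alpha}}$ in the $j=0$ case and $|k|^{-\frac{5}{1+\alpha}+\frac{3}{1+\alpha}}=|k|^{-\frac{2}{1+\alpha}}$ in the $j=1$ case, which are exactly \eqref{eq:varthetasummability0} and \eqref{eq:varthetasummability1}.

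There is no genuine obstacle at this stage: the entire content of the corollary is the arithmetic of the exponents, the substantive estimates having been established in Lemmas \ref{lem:scaling_on_norms_hjk} and \ref{lem:finitenessnormsh0h1}. The only point deserving a word of care is the implicit claim that the constant is uniform in $k$, which is guaranteed precisely because the prefactor ${\textstyle\sqrt{\frac{\pi(1+\alpha)}{2}}}$ and the two $\mathbb{R}^+$-norms of $h_{j,1}$ depend on $\alpha$ alone, so that all $k$-dependence is displayed explicitly in the stated powers of $|k|$.
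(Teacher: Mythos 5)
Your proposal is correct and coincides with the paper's own (implicit) argument: the paper states the corollary as an immediate consequence of \eqref{eq:q0q1scaling} together with Lemmas \ref{lem:scaling_on_norms_hjk} and \ref{lem:finitenessnormsh0h1}, which is precisely the substitution-and-exponent-tracking you carry out, with the exponents $-\frac{1}{1+\alpha}+\frac{3}{1+\alpha}=\frac{2}{1+\alpha}$ and $-\frac{5}{1+\alpha}+\frac{3}{1+\alpha}=-\frac{2}{1+\alpha}$ computed correctly. Your additional remarks -- the reduction of $I^-$ to $I^+$ via the reflection built into $h_{j,k}(|x|)$ and the uniformity in $k$ of the constants -- are both sound and make explicit what the paper leaves tacit.
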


In fact, \eqref{eq:varthetasummability0}-\eqref{eq:varthetasummability1} are trivially true also for $k=0$: recall indeed (see \eqref{eq:varthetazeromode} above) that $\vartheta_0\equiv 0$.

 \begin{proof}[Proof of Proposition \ref{prop:varthetacontrol}]
  It clearly suffices to discuss the proof for the `+' component $\vartheta^+=\mathcal{F}_2^{-1}(\vartheta^+_k)_{k\in\mathbb{Z}}$. Recall also that $\vartheta_0^+\equiv 0$.

  Now, owing to Corollary \ref{cor:varthetasummability},
  \[
   \begin{split}
    \big\|x^{-2}(\vartheta^+_{0,k})_{k\in\mathbb{Z}}\big\|^2_{\ell^2(\mathbb{Z},L^2((0,1),\ud x))}\;&\lesssim\;\sum_{k\in\mathbb{Z}\setminus\{0\}}|c_{0,k}^\pm|^2\,|k|^{\frac{2}{1+\alpha}} \,,\\
    \big\|((\vartheta_{0,k}^\pm)'')_{k\in\mathbb{Z}}\big\|^2_{\ell^2(\mathbb{Z},L^2((0,1),\ud x))}\;&\lesssim\;\sum_{k\in\mathbb{Z}\setminus\{0\}}|c_{0,k}^\pm|^2\,|k|^{\frac{2}{1+\alpha}}\,.
   \end{split}
  \]
  The series in the r.h.s.~above are \emph{finite}, because of the enhanced summability of the $c_{0,k}$'s due to the fact that the initially considered $(g_k)_{k\in\mathbb{Z}}$ belongs to the domain of a uniformly fibred extension (as observed already in Remark \ref{rem:enhanced_summability}).

  As a first consequence, $(\vartheta^+_{0,k})_{k\in\mathbb{Z}}$ belongs to $\ell^2(\mathbb{Z},L^2((0,1),\ud x))$, and so too does $(\vartheta^+_{1,k})_{k\in\mathbb{Z}}$ by difference from $(\vartheta^+_{k})_{k\in\mathbb{Z}}$: therefore, the inverse Fourier transform can be separately applied to
  \[
   \vartheta^+\;=\;\mathcal{F}_2^{-1}(\vartheta^+_k)_{k\in\mathbb{Z}}\;=\;\mathcal{F}_2^{-1}(\vartheta^+_{0,k})_{k\in\mathbb{Z}}+\mathcal{F}_2^{-1}(\vartheta^+_{1,k})_{k\in\mathbb{Z}}\,.
  \]

  As a further consequence, the above estimates imply, by means of Plancherel's formula, 
  \[
   \begin{split}
    \big\|x^{-2}\mathcal{F}_2^{-1}(\vartheta^+_{0,k})_{k\in\mathbb{Z}}\big\|^2_{L^2((0,1)\times\mathbb{S}^1,\ud x \ud y)}\;=\;\big\|x^{-2}(\vartheta^+_{0,k})_{k\in\mathbb{Z}}\big\|^2_{\ell^2(\mathbb{Z},L^2((0,1),\ud x))}\;&<\;+\infty \,,\\
    \big\|\partial^2_{x}\mathcal{F}_2^{-1}(\vartheta^+_{0,k})_{k\in\mathbb{Z}}\big\|^2_{L^2((0,1)\times\mathbb{S}^1,\ud x \ud y)}\;=\;\big\|(\partial_x^2\vartheta^+_{0,k})_{k\in\mathbb{Z}}\big\|^2_{\ell^2(\mathbb{Z},L^2((0,1),\ud x))}\;&<\;+\infty\,.
   \end{split}
  \]

  Analogously, Corollary \ref{cor:varthetasummability} also implies
  \[
   \begin{split}
    \big\|x^{-2}(\vartheta^+_{1,k})_{k\in\mathbb{Z}}\big\|^2_{\ell^2(\mathbb{Z},L^2((0,1),\ud x))}\;&\lesssim\;\sum_{k\in\mathbb{Z}\setminus\{0\}}|c_{1,k}^\pm|^2\,|k|^{-\frac{2}{1+\alpha}} \,,\\
    \big\|((\vartheta_{1,k}^\pm)'')_{k\in\mathbb{Z}}\big\|^2_{\ell^2(\mathbb{Z},L^2((0,1),\ud x))}\;&\lesssim\;\sum_{k\in\mathbb{Z}\setminus\{0\}}|c_{1,k}^\pm|^2\,|k|^{-\frac{2}{1+\alpha}}\,,
   \end{split}
  \]
  and the series in the r.h.s.~above are \emph{finite} because of the general summability for elements in $\mathcal{D}(\mathscr{H}_{\alpha}^*)$ established in Lemma \ref{lem:gkkrepr}, formula \eqref{eq:pileupcond3}. Thus, for almost every $y\in\mathbb{S}^1$,
  \[
   \begin{split}
    \big\|x^{-2}\mathcal{F}_2^{-1}(\vartheta^+_{1,k})_{k\in\mathbb{Z}}\big\|^2_{L^2((0,1)\times\mathbb{S}^1,\ud x \ud y)}\;&<\;+\infty\,, \\
    \big\|\partial^2_{x}\mathcal{F}_2^{-1}(\vartheta^+_{1,k})_{k\in\mathbb{Z}}\big\|^2_{L^2((0,1)\times\mathbb{S}^1,\ud x \ud y)}\;&<\;+\infty\,.
   \end{split}
  \]
  
  Summarising, 
  \[
   \|x^{-2}\vartheta^+\|_{L^2((0,1)\times\mathbb{S}^1,\ud x \ud y)}+ \|\partial_x^2\vartheta^+\|_{L^2((0,1)\times\mathbb{S}^1,\ud x \ud y)}\;<\;+\infty\,.
  \]
  Therefore, $\vartheta^+$ satisfies the assumptions (a) and (b) of Lemma \ref{lem:grand_auxiliary_lemma} (for, obviously, $|x^{-(\frac{3}{2}+\frac{\alpha}{2})}\vartheta^+(x,y)|\leqslant|x^{-2}\vartheta^+(x,y)|$ when $x\in(0,1)$, since $\alpha\in(0,1)$). The thesis then follows by applying Lemma \ref{lem:grand_auxiliary_lemma}.  
 \end{proof}

 \subsection{Proof of the classification theorem}\label{sec:proofclassifthm}

 \begin{proof}[Proof of Theorem \ref{thm:classificationUF}]
 Let us characterise the domain $\mathcal{D}(\mathcal{F}_2^{-1}\mathscr{H}_\alpha^{\mathrm{u.f.}}\mathcal{F}_2)$ of the various uniformly fibred extensions of $\mathsf{H}_\alpha=\mathcal{F}_2^{-1}\mathscr{H}_\alpha\mathcal{F}_2$.

The expression \eqref{eq:HHalphaadjointagain} for $\mathsf{H}_\alpha^*$ provided in the statement of the theorem was already found in \eqref{eq:HHalphaadjoint}.

Next, let us consider a generic $\phi=\mathcal{F}_2^{-1}(g_k)_{k\in\mathbb{Z}}\in\mathcal{D}(\mathcal{F}_2^{-1}\mathscr{H}_\alpha^{\mathrm{u.f.}}\mathcal{F}_2)$, where $(g_k)_{k\in\mathbb{Z}}\in\mathcal{D}(\mathscr{H}_\alpha^{\mathrm{u.f.}})$. Owing to the definitions \eqref{eq:afterF2-1}-\eqref{eq:F2-1g1} and to Corollary \ref{cor:(g0)k_(g1)k_in_Hs}, 
\begin{equation}\label{eq:nowsafespliting}
 \phi(x,y)\;=\;\varphi(x,y)+g_1(y)|x|^{1+\frac{\alpha}{2}}P(x)+g_0(y)|x|^{-\frac{\alpha}{2}}P(x)\,,
\end{equation}
where $P$ is a smooth cut-off which is identically equal to one for $|x|<1$ and zero for $|x|>2$, and $g_0,g_1\in L^2(\mathbb{S}^1)$ with further Sobolev regularity as specified therein.

Moreover, upon splitting $\varphi=\widetilde{\varphi}+\vartheta$ as in \eqref{eq:splittingphiphitildetheta}, and using Prop.~\ref{prop:Hclosurecontrol} for $\widetilde{\varphi}$ and Prop.~\ref{prop:varthetacontrol} for $\vartheta$, we deduce that for almost every $y\in\mathbb{S}^1$
 \begin{itemize}
  \item the function $x\mapsto\varphi^\pm(x,y)$ belongs to $C^1(0,1)$,
  \item $\varphi^\pm(x,y)=o(|x|^{3/2})$ as $x\to 0^\pm$,  
  \item $\partial_x\varphi^\pm(x,y)=o(|x|^{1/2})$ as $x\to 0^\pm$.
 \end{itemize} 
Plugging this information into \eqref{eq:nowsafespliting} yields
\[
 \begin{split}
  \lim_{x\to 0^\pm} |x|^{\frac{\alpha}{2}}\,\phi^\pm(x,y)\;&=\;g_0^\pm(y)\,, \\
  \lim_{x\to 0^\pm} |x|^{-(1+\frac{\alpha}{2})}\big(\phi^\pm(x,y)-g_0^\pm(y)|x|^{-\frac{\alpha}{2}}\big)\;&=\;  g_1^\pm(y)+\lim_{x\to 0^\pm} |x|^{-(1+\frac{\alpha}{2})}\varphi^\pm(x,y)\\
  &=\;g_1^\pm(y)\,,
 \end{split}
\]
namely
\begin{equation}\label{eq:g0g1f0f1}
 g_0\;=\;\phi_0\,,\qquad g_1\;=\;\phi_1\,,
\end{equation}
proving also that the limits \eqref{eq:limitphi0}, as well as the limits of the first line of \eqref{eq:limitphi1}, do exist. Also, the Sobolev regularity stated for $\phi_0$ and $\phi_1$ follows directly from Corollary \ref{cor:(g0)k_(g1)k_in_Hs}.

The second identity in \eqref{eq:limitphi1} is obtained as follows. By means of \eqref{eq:nowsafespliting} we compute
 \[
  \begin{split}
   \pm(1+\alpha)^{-1}&\lim_{x\to 0^\pm} |x|^{-\alpha}\partial_x\big(|x|^{\frac{\alpha}{2}}\phi^\pm(x,y)\big)\;=\; \\
   &=\;\pm(1+\alpha)^{-1}\lim_{x\to 0^\pm} |x|^{-\alpha}\partial_x\big(|x|^{\frac{\alpha}{2}}\varphi^\pm(x,y)+g_1^\pm(y)|x|^{1+\alpha}+g_0^\pm(y)\big) \\
   &=\;g_1^\pm(y)\pm(1+\alpha)^{-1}\lim_{x\to 0^\pm} |x|^{-\alpha}\partial_x\big(|x|^{\frac{\alpha}{2}}\varphi^\pm(x,y)\big)\,.
  \end{split}
 \]
 On the other hand,
 \[
   \lim_{x\to 0^\pm} |x|^{-\alpha}\partial_x \big(|x|^{\frac{\alpha}{2}}\varphi^\pm(x,y)\big)= \lim_{x\to 0^\pm} \big({\textstyle\frac{\alpha}{2}|x|^{-(1+\frac{\alpha}{2})}}\varphi^\pm(x,y)+|x|^{-\frac{\alpha}{2}}\partial_x\varphi^\pm(x,y)\big)=0\,,
 \]
 having used the properties  $\varphi^\pm(x,y)=o(|x|^{\frac{3}{2}})$ and $\partial_x\varphi^\pm(x,y)=o(|x|^{\frac{1}{2}})$ as $x\to 0^\pm$. This yields the second identity in \eqref{eq:limitphi1}.

It remains to show that for each type of extension, the stated boundary conditions of self-adjointness do hold for $\phi_0$ and $\phi_1$. As, by  \eqref{e1:F2-1g0}-\eqref{eq:F2-1g1} and by \eqref{eq:g0g1f0f1}
\[
 \begin{split}
  \phi_0^{\pm}(y)\;&=\;\frac{1}{\sqrt{2\pi}}\sum_{k\in\mathbb{Z}} e^{\ii k y}g_{0,k}^{\pm} \\
  \phi_1^{\pm}(y)\;&=\;\frac{1}{\sqrt{2\pi}}\sum_{k\in\mathbb{Z}} e^{\ii k y}g_{1,k}^{\pm}\,,
 \end{split}
\]
the above series converging in $L^2(\mathbb{S}^1)$,
and since for each \emph{uniformly fibred} extension $\mathscr{H}_\alpha^{\mathrm{u.f.}}$ the boundary conditions are expressed by \emph{the same linear combinations} of the $g_{0,k}^{\pm}$'s and $g_{1,k}^{\pm}$'s for each $k$, then now the boundary conditions of self-adjointness in terms of $\phi_0$ and $\phi_1$ are immediately read out from those of the classification Theorem \ref{thm:bifibre-extensions} for bilateral-fibre extensions (see also Table \ref{tab:extensions}) in terms of $g_{0,k}^{\pm}$ and $g_{1,k}^{\pm}$.
\end{proof}

\section{Putting all together}\label{sec:proof_xy_Euclidean}

We can finally get back to the statements made in the introduction, Subsect.~\ref{sec:scheme-and-main-results}, that are still to be proved.

 \begin{proof}[Proof of Proposition \ref{prop:adjoint_on_M}]
  The thesis is actually immediate from the analogous statement \eqref{eq:HHalphaadjointagain} in Theorem \ref{thm:classificationUF} for $(\mathsf{H}_\alpha^\pm)^*$, by exploiting the unitary equivalence \eqref{eq:tildeHalpha}, namely
  \begin{equation*}
  \begin{split}
   H_\alpha^\pm\;&=\;(U_\alpha^\pm)^{-1}\mathsf{H}_\alpha^\pm\,U_\alpha^\pm \,,\\
   (H_\alpha^\pm)^*\;&=\;(U_\alpha^\pm)^{-1}(\mathsf{H}_\alpha^\pm)^*\,U_\alpha^\pm\,,
  \end{split}
  \end{equation*}
  where, as set in \eqref{eq:unit1}, $\phi^\pm=U_\alpha^\pm f^\pm=|x|^{-\frac{\alpha}{2}}f^\pm$. Tacitly we used the well-known fact, which is trivial for a finite sum and we also reviewed in Lemma \ref{lem:sumstar-sumclosure} for an infinite sum, that the adjoint of the direct sum of two operators is the direct sum of the adjoints.  
  \end{proof}

\begin{proof}[Proof of Theorem \ref{thm:H_alpha_fibred_extensions}]
 Also in this case, the proof is a matter of exporting the classification of Theorem \ref{thm:classificationUF} for the uniformly fibred self-adjoint extensions of $\mathsf{H}_\alpha$, via unitary equivalence, to the corresponding extensions of
 \[
  H_\alpha\;=\;U_\alpha^{-1}\mathsf{H}_\alpha\,U_\alpha\,.
 \]

 We then define
 \[
  \begin{split}
   H_{\alpha,F}\;&:=\;U_\alpha^{-1}\,\mathsf{H}_{\alpha,F}\,U_\alpha\,, \\
   H_{\alpha,R}^{[\gamma]}\;&:=\;U_\alpha^{-1}\,\mathsf{H}_{\alpha,R}^{[\gamma]}\,U_\alpha\,, \\
   H_{\alpha,L}^{[\gamma]}\;&:=\;U_\alpha^{-1}\,\mathsf{H}_{\alpha,L}^{[\gamma]}\,U_\alpha\,, \\
   H_{\alpha,a}^{[\gamma]}\;&:=\;U_\alpha^{-1}\,\mathsf{H}_{\alpha,a}^{[\gamma]}\,U_\alpha\,, \\
   H_{\alpha}^{[\Gamma]}\;&:=\;U_\alpha^{-1}\,\mathsf{H}_{\alpha}^{[\Gamma]}\,U_\alpha\,.
  \end{split}
 \]
 By construction, the above operators are self-adjoint and extend $H_\alpha$. They are restrictions of $H_\alpha^*$ and as such, in view of Prop.~\ref{prop:adjoint_on_M}, each element in their domain satisfy the integrability and regularity condition \eqref{eq:DHalpha_cond1}.

 A generic function $f$ in the domain of one of the above extensions is by construction, owing to \eqref{eq:unit1}, of the form
 \[
  f\;=\;|x|^{\frac{\alpha}{2}}\phi
 \]
 for some $\phi$ in the domain of the corresponding unitarily equivalent operator. This and  \eqref{eq:limitphi0}-\eqref{eq:limitphi1} then yield
 \[
  \begin{split}
   \phi_0^\pm(y)\;&=\;\lim_{x\to 0^\pm}f(x,y)\;=:\;f_0^\pm(y) \,\\
   \phi_1^\pm(y)\;&=\;\pm(1+\alpha)^{-1}\lim_{x\to 0^\pm}|x|^{-\alpha}\partial_xf(x,y)\;=:\;f_1^\pm(y)\,.
  \end{split}
 \]
 We thus see that the limits \eqref{eq:DHalpha_cond2_limits-1}-\eqref{eq:DHalpha_cond2_limits-2} do exists, and are finite for a.e.~$y$ because both $\phi_0$ and $\phi_1$ belong to $L^2(\mathbb{S}^1)$.

 In fact, the additional Sobolev regularity of $f_0$ and $f_1$ is the same as for $\phi_0$ and $\phi_1$, and it is immediately imported from Theorem \ref{thm:classificationUF}.

 The very same applies to the expression of the boundary conditions of self-adjointness for each family of extensions: \eqref{eq:DHalpha_cond3_Friedrichs-NOWEIGHTS}-\eqref{eq:DHalpha_cond3_III-NOWEIGHTS} immediately imply \eqref{eq:DHalpha_cond3_Friedrichs}-\eqref{eq:DHalpha_cond3_III}.
\end{proof}

\section*{Acknowledgements}

This work was partially supported by the Istituto Nazionale di Alta Matematica (INdAM), the MIUR-PRIN 2017 project MaQuMA cod.~2017ASFLJR, the Alexander von Humboldt Foundation, and the European Union's Horizon 2020 Research and Innovation Programme under the Marie Sklodowska-Curie grant agreement no.~765267 (QuSCo). We are warmly grateful to U.~Boscain, D.~Dimonte, D.~Noja, and A.~Posilicano for very fruitful and instructive discussions on this subject.


\def\cprime{$'$}

\end{document}